\documentclass[12pt]{amsart}
\usepackage{graphicx} 

\usepackage[T1]{fontenc}
\usepackage[utf8]{inputenc}
\usepackage[english]{babel}
\usepackage{amsmath,amsthm,amssymb,latexsym,amsfonts, mathtools}
\usepackage{mathrsfs} 
\usepackage{tikz-cd}
\usepackage{framed}
\usepackage{listings}
\usepackage{shuffle}
\usepackage{faktor} 
\usepackage{nicefrac}
\usepackage{color,soul}
\usepackage{setspace}
\usepackage{graphicx}
\usepackage{faktor}
\usepackage[normalem]{ulem} 
\usepackage[a4paper, margin=1in]{geometry}
\usepackage[alphabetic]{amsrefs}
\usepackage[colorlinks,linkcolor=black,anchorcolor=blue,citecolor=blue]{hyperref}

\usepackage{xpatch}
\makeatletter   
\xpatchcmd{\@tocline}
{\hfil\hbox to\@pnumwidth{\@tocpagenum{#7}}\par}
{\ifnum#1<0\hfill\else\dotfill\fi\hbox to\@pnumwidth{\@tocpagenum{#7}}\par}
{}{}
\makeatother

\usepackage{etoolbox}
\makeatletter
\patchcmd\@thm
  {\let\thm@indent\indent}{\let\thm@indent\noindent}%
  {}{}
\makeatother

\makeatletter
\patchcmd{\@maketitle}
  {\ifx\@empty\@dedicatory}
  {\ifx\@empty\@date \else {\vskip3ex \centering\footnotesize\@date\par\vskip1ex}\fi
   \ifx\@empty\@dedicatory}
  {}{}
\patchcmd{\@adminfootnotes}
  {\ifx\@empty\@date\else \@footnotetext{\@setdate}\fi}
  {}{}{}
\makeatother

\makeatletter
\patchcmd{\@thm}{\thm@headfont{\scshape}}{\thm@headfont{\scshape\normalfont\bfseries}}{}{}
\patchcmd{\@thm}{\thm@notefont{\fontseries\mddefault\upshape}}{}{}{}
\makeatother
\usepackage{subfiles}

\DeclareFontFamily{U}{wncy}{}
\DeclareFontShape{U}{wncy}{m}{n}{<->wncyr10}{}
\DeclareSymbolFont{mcy}{U}{wncy}{m}{n}
\DeclareMathSymbol{\sh}{\mathord}{mcy}{"78}

\renewcommand{\epsilon}{\varepsilon}
\renewcommand{\theta}{\vartheta}
\renewcommand{\rho}{\varrho}
\let\temp\phi
\let\phi\varphi
\let\varphi\temp
\newcommand{\insubfile}[1]{\ifx\@onlypreamble\@notprerr\else#1\fi}
\newcommand{\f}[1]{\mathfrak{#1}}

\newcommand{\Z}{\mathbb{Z}}
\newcommand{\Q}{\mathbb{Q}}
\newcommand{\R}{\mathbb{R}}
\newcommand{\C}{\mathbb{C}}
\newcommand{\F}{\mathcal{F}}
\newcommand{\A}{\mathcal{A}}
\newcommand{\G}{\mathcal{G}}
\renewcommand{\S}{\mathcal{S}}

\theoremstyle{definition}
\newtheorem{definition}{Definition}[subsection]
\theoremstyle{plain}
\newtheorem{proposition}[definition]{Proposition}
\theoremstyle{plain}
\newtheorem{theorem}[definition]{Theorem}
\theoremstyle{plain}
\newtheorem{lemma}[definition]{Lemma}
\theoremstyle{plain}
\newtheorem{corollary}[definition]{Corollary}
\theoremstyle{plain}

\theoremstyle{definition}
\newtheorem{lemma*}{Lemma}
\theoremstyle{definition}
\newtheorem{remark}[definition]{Remark}
\theoremstyle{definition}
\newtheorem{example}[definition]{Example}
\theoremstyle{plain}
\newtheorem{intro}{Theorem}

\AtBeginDocument{%
   \def\MR#1{}
}

\title{Periods of hyperplane arrangements and multiple polylogarithms}
\author{Riccardo Tosi}
\date{\today}

\address[Riccardo Tosi]{ University of Duisburg-Essen, Fakultät für Mathematik, Thea-Leymann-Str. 9, 45127 Essen, Germany.}

\email[]{riccardo.tosi@uni-due.de}

\begin{document}

    \begin{abstract}
        
    We compute the periods associated with a special class of hyperplane arrangements. In particular, we exhibit a combinatorial condition on the intersection lattice of a hyperplane arrangement that ensures that its associated periods are linear combinations of special values of multiple polylogarithms. Our method generalizes Brown's approach to the periods of moduli spaces of curves of genus zero. We apply this result to the reflection arrangement of the full monomial group, whose periods are shown to be linear combinations of values of multiple polylogarithms at roots of unity.

    \end{abstract}
    \noindent
     
    \maketitle

    \noindent
    {\small 2020 \textit{Mathematics Subject Classification.} \\
    Primary 11M32, 14N20, 32G20. \\
    Secondary  11G55, 32S22, 33B30. \\
    \textit{Keywords:} Hyperplane arrangements; multiple polylogarithms; periods. \\
    }
    \tableofcontents

    \newpage

    \section*{Introduction}
    
    The connection between zeta values and the moduli space $\overline{\f{M}}_{0,n}$ of stable curves of genus zero with $n$ marked points was first outlined by Goncharov and Manin \cite{Goncharov-Manin-Multiple_zeta_motive_and_moduli_spaces_of_curves} and further explored by Brown \cite{Brown-Multiple_zeta_values_and_periods_of_moduli_spaces_of_curves}, who showed that the periods of $\overline{\f{M}}_{0,n}$ are $\Q[2\pi i]$-linear combinations of multiple zeta values. His argument was effective and was later implemented into an algorithm by Panzer~\cite{Panzer-Algortihms_for_the_symbolic_integration_of_hyperlogarithms_with_applications_to_Feynman_integrals}. Brown \cite{Brown-Mixed_Tate_motives_over_Z} also gave a motivic version of this fact, showing that all periods of mixed Tate motives unramified over $\Z$ are generated by multiple zeta values together with $2\pi i$. \\
    In view of these results, there has been an extensive search for linear combinations of zeta values among period integrals over moduli spaces of curves, in the hope that the geometry of these varieties may lead to new irrationality proofs. Promising families of these periods have been studied via the theory of the so-called \emph{cellular integrals} \cite{Brown-Irrationality_proofs_for_zeta_values_moduli_spaces_and_dinner_parties}, which have recovered many classical irrationality results \cites{Apery-Irrationalité_de_zeta2_et_zeta3, Rhin-Viola-On_a_permutation_group_related_to_zeta2,
    Ball-Rivoa-Irrationalité_d'une_infinité_de_valuers_de_la_fonction_zeta_aux_entiers_impairs, Rhin-Viola-The_group_structure_for_zeta3} and have led, for example, to the best known rational approximations of $\zeta(5)$ \cite{Brown-Zudilin-Cellular_rational_approximations_to_zeta5}. \\
    Many of the key insights of \cite{Brown-Multiple_zeta_values_and_periods_of_moduli_spaces_of_curves} rely on the fact that $\overline{\f{M}}_{0,n}$ is the compactification of the complement of a hyperplane arrangement in the affine space, namely
    \[
        \f{M}_{0,n}=\text{Spec}\; \Z[t_1,\dots, t_l]\left[ \frac{1}{t_i},\frac{1}{t_i-1}, \frac{1}{t_i-t_j} \,\middle\vert\, i,j=1,\dots, l, \; i\neq j \right].
    \]
    The aim of this work is to understand which hyperplane arrangements have periods that can be computed by means of Brown's techniques. We will prove that such techniques do not depend on the special symmetries of $\overline{\f{M}}_{0,n}$, but rather only on purely combinatorial properties of the associated arrangement. \\ 
    In particular, we isolate a class of hyperplane arrangements, which we call \emph{with enough modular elements}, whose periods are generated by special values of multiple polylogarithms and can be effectively computed with the Brown-Panzer algorithm. This new property of arrangements depends only on the lattice-isomorphism class of the arrangement and is strictly stronger than supersolvability. \\
    More precisely, given a hyperplane arrangement $\A$ defined over a subfield $k$ of $\C$, let $Y_\A$ be its complement in the affine space. Write $l=\dim Y_\A$ and denote by $\overline{Y}_\A$ a De Concini-Procesi compactification of $Y_\A$ \cite{De_Concini-Procesi-Wonderful_models_of_subspace_arrangements}. The condition for $\A$ to have enough modular elements is satisfied when the modular lines in the intersection lattice of $\A$ generate the ambient vector space of $\A$. Geometrically, this ensures the existence of retractions from $\overline Y_\A$ onto each irreducible component of the boundary divisor $\overline Y_\A\setminus Y_\A$.\\
    Roughly speaking, our main result reads as follows.
    \begin{intro}[see Theorem~\ref{theo: periods of hyperplane arrangements with enough modular elements}]
        \label{theo: intro periods of arrangements with enough modular elements}
        Assume that $\A$ has enough modular elements. Let $A,B\subseteq \overline Y_\A\setminus Y_\A$ be two divisors without common irreducible components. Consider an admissible relative homology class $\Delta\in H_l(\overline Y_\A\setminus A, B\setminus (A\cap B))$ and an algebraic differential $l$-form $\omega$ of $\overline Y_\A \setminus A$ over $k$.
        Then the period integral 
        \[
            \int_\Delta \omega
        \]
        lies in the $k$-subalgebra of $\C$ generated by
        \begin{enumerate}
            \item values of multiple polylogarithms at a specific finite set of points of $k$, which depends only on the one-dimensional hyperplane arrangements obtained from $\A$ by iterated restrictions and deletions;
            \item a finite set of logarithms of elements of $k$ which depends explicitly on $\Delta$.
        \end{enumerate}
        Moreover, this $k$-algebra can be endowed with a natural filtration and $\int_\Delta \omega$ lies in the $l$-th term of this filtration.
        \end{intro}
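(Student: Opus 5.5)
The strategy is to generalize Brown's fibration method for $\f{M}_{0,n}$. Everything is set up by induction on $l=\dim Y_\A$, and the period is computed by integrating out one coordinate at a time.

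\textbf{Step 1: reduction to hyperlogarithm integrals.} Since $\A$ has enough modular elements, it is in particular supersolvable. Choose a maximal chain of modular flats $0=X_0\subset X_1\subset\dots\subset X_l$ adapted to the modular lines. Projection along a modular line gives a fibration $Y_\A\to Y_{\A'}$ whose fibres are $\mathbb{P}^1$ minus finitely many points, and these points are given by linear functions of the base coordinates. Here $\A'$ is the arrangement obtained by restriction/deletion; it again has enough modular elements, which I would verify as a combinatorial lemma on the intersection lattice. Iterating, $Y_\A$ becomes a tower of punctured-line fibrations, i.e.\ in suitable coordinates $t_1,\dots,t_l$ an iterated fibration of the type considered by Brown and Panzer.

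\textbf{Step 2: removing the divergences.} The form $\omega$ is regular on $\overline Y_\A\setminus A$ and $\Delta$ is admissible, so $\int_\Delta\omega$ converges. However, integrating one variable at a time introduces divergences along boundary components. To handle this I would use the retractions from $\overline Y_\A$ onto each boundary component $D$, which are exactly what the enough-modular-elements hypothesis guarantees. They allow us to write $\omega$ as a sum of a form with at most log poles along $D$ and pullbacks from $D$. Using these retractions, one regularizes as in Brown's treatment of $\overline{\f{M}}_{0,n}$: the chain $\Delta$ is subdivided into cells, and on each cell the integral equals a regularized iterated integral.

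\textbf{Step 3: induction via hyperlogarithms.} Let $\mathcal{L}_m$ be the algebra of hyperlogarithms in $t_1,\dots,t_m$ with singularities along the restricted arrangements, with coefficients in the algebra of Step~4. I would prove by induction on $m$ that integrating $f\,dt_m$ with $f\in\mathcal{L}_m$ from $0$ to $1$ (or along the boundary limits dictated by $\Delta$) yields an element of $\mathcal{L}_{m-1}$. This is the key closure property. It follows from the existence of primitives in the shuffle algebra and from regularized limits at tangential base points, which are again hyperlogarithms. It requires that the singular points in the fibre be linear in the base coordinates, and this is the modularity condition. The weight increases by at most one at each integration, which gives the filtration statement: after $l$ steps we land in weight $\le l$.

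\textbf{Step 4: the constants.} At the last step, the regularized values at the points of the one-dimensional arrangements are multiple polylogarithms evaluated at the ratios of the punctures of those one-dimensional arrangements. This gives item (1). Logarithms of the endpoints coming from the vertices of $\Delta$ arise from tangential-base-point regularization, which gives item (2).

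The main obstacle is Step 2 combined with the closure property in Step 3. One must show that the chosen fibration order is compatible with every boundary divisor meeting $\Delta$, so that limits at each stage exist in the hyperlogarithm algebra rather than producing non-linear singular loci (a discriminant issue). I would attack this first, by proving that the retractions coming from modular lines send the singular locus of each fibre to a union of hyperplanes of the restricted arrangement.
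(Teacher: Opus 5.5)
\textbf{The gap is in Steps 2--3.} Integrating out one fibre coordinate at a time only makes sense if $\Delta$ is \emph{fibred} over your tower. That means $\Delta$ must be a union of pieces lying over cells of the base, whose fibres are paths with both endpoints on punctures of the fibre, i.e.\ on components of $B$. Nothing in the hypotheses gives this for an arbitrary admissible class in $H_l(\overline{Y}_\A\setminus A, B\setminus(A\cap B))$.

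\begin{itemize}
\item For $l\ge 2$, a real $l$-chain in a complex $l$-fold generically projects onto an $l$-dimensional subset of the base, which has real dimension $2l-2\ge l$. So its fibres are finite sets, not paths.
\item Subdividing $\Delta$ into cells does not change this. It only creates walls that are not contained in the arrangement, where fibre primitives would be evaluated at points with no reason to land in the algebra of the statement.
\item To repair this you would need to show that every admissible relative class has a fibred representative compatible with one fixed tower. You neither state nor prove this. For special domains such as real chambers or the cells $\Delta_{\sigma,n}$ a Panzer-style fibre integration is plausible, but the theorem concerns arbitrary admissible $\Delta$.
\end{itemize}

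Two further points. First, your Step 2 misplaces the role of the retractions. $\omega$ is already regular along $B$, so there is nothing to split off from $\omega$; the object that acquires spurious poles along the divisors containing $\partial\Delta$ is a \emph{primitive}. Second, the obstacle you propose to attack first does not arise. In a fibre-type arrangement the punctures are cut out by hyperplanes of $\A$ and their differences are units on the base, so no non-linear discriminants appear.

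\textbf{How the paper proceeds.} The paper never fibres $\Delta$; it applies Stokes to the whole form.
\begin{enumerate}
\item It realizes the reduced bar complex as the algebra of coefficients of the solutions $L_\S$ of the holonomy equation (Proposition~\ref{prop: realizing reduced bar complex as functions}). Fibre type then gives a primitive $G$ of the full $l$-form inside this algebra.
\item Fix a divisor $D_X$ meeting $\Delta$ (Proposition~\ref{prop: regularizing primitives}). The paper expands $G$ in $u_X^\S$ and pulls the Laurent coefficients of the polar part back along the retraction $\pi\colon\overline{Y}_\A\to D_X$ (Lemma~\ref{lemma: generalized polylogs contain regularized restrictions}). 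This gives a \emph{globally defined} form $H$ in the same algebra. Since $dH$ is purely polar along $D_X$ while $F-d(G-H)$ has no pole there, $dH=0$, so $G-H$ is a primitive without poles along $D_X$. Globality is exactly what allows iterating over all such divisors; Brown does the same on $\overline{\f{M}}_{0,n}$ with forgetful maps in place of retractions.
\item Stokes gives $\int_\Delta F=\sum_{X\in\mathcal{R}}\int_{\partial\Delta\cap D_X}G$.
\item The induction on dimension runs over the boundary divisors $D_X\cong\overline{Y}(V/X^\perp,\A\vert_X,\G\vert_X)\times\overline{Y}(X^\perp,\A/X,\G/X)$. It uses $L_\S(u_X^\S)^{-t_X}\vert_{D_X}=L_{\S\vert_X}L_{\S/X}$, K\"unneth and Fubini.
\item Admissibility descends to $\partial\Delta\cap D_X$ because the retraction makes $\pi_1(D_X\setminus A)\to\pi_1(\overline{Y}_\A\setminus A)$ injective.
\item Items (1) and (2) come from Proposition~\ref{prop: decomposition of associators}. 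Again via the retractions, associators between adjacent maximal nested sets factor into associators of the one-dimensional strata $D_{\S'}$ (restriction/deletion arrangements) times exponentials of $t_Y\log\alpha^{\S_1,\S_2}_Y$.
\end{enumerate}
Your Step 4 asserts this shape of the constants but gives no mechanism producing it.
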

        \noindent
        In the case of $\overline{\f{M}}_{0,n}$, Brown introduces a differential algebra of generalized multiple polylogarithms that is closed under taking primitives. This enables him to apply Stokes' theorem and reduce the computation of the integral in the statement to an integral on the boundary of the homology class. The irreducible boundary divisors of $\overline{\f{M}}_{0,n}$ are isomorphic to products of the form $\overline{\f{M}}_{0,n_1}\times \overline{\f{M}}_{0,n_2}$ with $n_1+n_2=n-1$. One may therefore argue inductively; the one-dimensional case involves hyperlogarithms over $\mathbb{P}^1\setminus\{0,1,\infty\}$, which finally yield multiple zeta values when evaluated at $1$.  \\
        To handle the case of a general hyperplane arrangement, we also introduce an analogous algebra of generalized multiple polylogarithms that is closed under primitives. We do this by considering the solutions to the holonomy equation associated with $\A$ \cite{De_Concini-Procesi-Hyperplane_arrangements_and_holonomy_equations}, and we prove some specific properties of these functions that are needed in our situation. \\
        The irreducible boundary divisors of $\overline{Y}_\A$ are isomorphic to products of the form $\overline{Y}_{\A_1}\times \overline{Y}_{\A_2}$ for some arrangements $\A_1$, $\A_2$ of smaller dimension, which makes it possible to adapt Brown's argument. However, Stokes' theorem requires the primitive that we have found to have no poles along the boundary of the homology class. To take care of this, one needs a regularization procedure, which is where the assumption of $\A$ having enough modular elements enters the picture. As we will see, the retractions from $\overline{Y}_\A$ onto each irreducible boundary divisor will play a crucial role in this regard and substitute the morphisms between moduli spaces of curves corresponding to forgetting a marked point.\\

        A motivating example of our exposition is the reflection arrangement $\A_{l,n}$ of the full monomial group, whose complement is given by
        \[
            Y_{\A_{l,n}}= \text{Spec}\; \Q(\mu)[t_1,\dots, t_l]
            \left[ \frac{1}{t_i},\frac{1}{t_i-\mu^m}, \frac{1}{t_i-\mu^mt_j} \,\middle\vert\, i\neq j, \; m=1,\dots, n \right],
        \]
        where $\mu$ is an $n$-th root of unity. This arrangement is dear to the literature \cite{Orlik-Terao-Arrangements_of_hyperplanes} due to its large group of symmetries and, for $n=2$, it contains several subarrangements of Coxeter type. $\A_{l,n}$ can be shown to have enough modular elements, so Theorem~\ref{theo: intro periods of arrangements with enough modular elements} enables us to effectively compute its periods.
        \begin{intro}[see Theorem~\ref{theo: full monomial group}]
            \label{theo: intro full monomial group}
            Period integrals of $\overline{Y}_{\A_{l,n}}$ relative to the boundary divisor are $\Q(\mu) [2\pi i]$- linear combinations of multiple polylogarithmic values of weight at most $l$, which are numbers of the form
            \[
                \sum_{1\le k_1<\dots<k_r} \frac{\mu^{k_1n_1+\dots +k_rn_r}}{k_1^{s_1}\dots k_r^{s_r}}
            \]
            for integers $n_i,s_i\ge 1$ with $s_r\ge 2$, $s_1+\dots +s_r\le l$.
        \end{intro}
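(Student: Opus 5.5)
The plan is to obtain Theorem~\ref{theo: intro full monomial group} as a special case of Theorem~\ref{theo: intro periods of arrangements with enough modular elements}. This requires three things: checking that $\A_{l,n}$ has enough modular elements, identifying the one-dimensional arrangements reached by iterated restriction and deletion, and controlling the logarithms that appear.

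\emph{Step 1: modular elements.} Write the hyperplanes as $t_i=0$, $t_i=\mu^m$ and $t_i=\mu^m t_j$. Passing to the central (cone) arrangement with an extra coordinate $t_0$, these become
\[
t_i=0,\qquad t_i=\mu^m t_0,\qquad t_i=\mu^m t_j .
\]
This is the reflection arrangement $\A(n,1,l+1)$ of the full monomial group in $l+1$ variables, with the hyperplane $t_0=0$ removed from the affine picture.
\begin{itemize}
\item This arrangement is fiber-type (supersolvable). The flats $X_k=\{t_{k+1}=\dots=t_l=0\}$ form a maximal modular chain, and each step is the projection forgetting one coordinate. The fibers are $\C$ minus the points $0$, $\mu^m t_0$ and $\mu^m t_j$.
\item By the $S_{l+1}$ symmetry permuting $t_0,\dots,t_l$, we get similar modular chains in which any chosen coordinate appears last.
\item I will check that the modular lines (rank-$(l)$ modular flats, i.e.\ the one-dimensional ones in the corresponding projective sense) obtained this way span the ambient space. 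The candidates are the coordinate axes and the lines $t_i=\mu^m t_j$ for all $i\neq j$; modularity of a line $L$ amounts to $L+H$ being a flat for every hyperplane $H$.
\end{itemize}
Verifying modularity of the coordinate axes should be a direct rank computation using the explicit description of flats of $\A(n,1,l+1)$ as partial "colored" partitions.

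\emph{Step 2: the one-dimensional arrangements.}
\begin{itemize}
\item Restrictions of $\A_{l,n}$ to any flat are again arrangements of the same monomial type $\A(n,1,k)$. Their deletions are subarrangements whose points all lie in $\{0\}\cup\mu_n\cup\{\infty\}$ after rescaling.
\item Hence every one-dimensional arrangement that occurs is $\mathbb{P}^1$ minus a subset of $\{0,\infty\}\cup\mu_n$.
\item The polylogarithm values from item (1) of Theorem~\ref{theo: intro periods of arrangements with enough modular elements} are therefore iterated integrals on $\mathbb{P}^1\setminus\{0,\infty,\mu_n\}$ between such points, regularized.
\item These are $\Q(\mu)$-combinations of $\mathrm{Li}_{s_1,\dots,s_r}$ at roots of unity, i.e.\ of the displayed sums. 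Convergence is guaranteed by shuffle regularization: divergent words ($s_r=1$ with trivial character) are expressed via $\log$ regularized to $0$, and the remaining terms via $2\pi i$. The weight bound $\le l$ comes from the filtration statement.
\end{itemize}

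\emph{Step 3: logarithms.} For $\Delta$ relative to the whole boundary divisor, the logarithms in item (2) are logarithms of ratios of points in $\{0,\infty\}\cup\mu_n$ along the boundary. Any such logarithm is $\log\mu^a\in 2\pi i\,\Q$. This gives the coefficient ring $\Q(\mu)[2\pi i]$.

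The main obstacle is Step 1. Showing that enough lines are genuinely modular, and not merely that the arrangement is supersolvable, requires care with the lines $t_i=\mu^m t_j$; if those fail I would use only coordinate-type modular lines from the symmetric family of chains. A secondary obstacle is the precise bookkeeping in Step 2 to land exactly on the stated sums with $s_r\ge2$.
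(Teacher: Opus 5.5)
Your overall route is the paper's: specialize Theorem~\ref{theo: periods of hyperplane arrangements with enough modular elements}, use the coordinate flats $M_j=\langle x_i\mid i\neq j\rangle$ as modular elements, observe that the one-dimensional pieces are $\A_{1,q}$ or $\A_{1,1}$, and show that the logarithmic corrections are logarithms of roots of unity. However, Step 3, which is where the real content of this corollary lies, has a genuine gap.

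The logarithms generating $F_{\Sigma}(V,\A,\G)$ are the $\log\alpha^{\S_1,\S_2}_Y$. Here $\alpha^{\S_1,\S_2}_Y$ is the value of $P^{\S_1}_{\beta_2(Y)}$ at the origin of $U_{\S_1}$, i.e.\ the coefficient of $\beta_1(p_{\S_1}(\beta_2(Y)))$ when the adapted basis vector $\beta_2(Y)$ is expanded in the adapted basis $\beta_1$.
\begin{itemize}
\item These are not ratios of the removed points $0,\infty,\mu^m$. They depend on the adapted bases attached to the corners of $\Delta$.
\item Those bases must make the coordinates $u^{\S}$ real and nonnegative on $\Delta$, so that they form a manifold-with-corners atlas.
\item Your argument never fixes these bases. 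Rescaling a single basis vector by $2$ already puts $\log 2$ into $F_\Sigma$ and moves the induced tangential base points used in your Step 2. For a general relative class, nothing you say produces charts whose $\alpha$'s are roots of unity.
\end{itemize}

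The paper handles this by working only with the explicit cells $\overline{\Delta}_{\sigma,n}$, defined by $0<|x_{\sigma(1)}|<\dots<|x_{\sigma(l+1)}|$ with all $\mu^{-n_i}x_i$ of equal argument. Their adapted bases are built from $y_h=\mu^{-n_h}x_h$ and the differences $y_{\sigma(j)}-y_{\sigma(i)}$. On such a cell:
\begin{itemize}
\item every $u^{\S}$ is real and nonnegative;
\item all change-of-basis coefficients are $\pm1$, since the $y_h$ and $y_b-y_a$ form a totally unimodular system, so $F_\Sigma=\Q(\mu)[2\pi i]$ by Remark~\ref{remark: logarithms reduce to 2 pi i};
\item the induced one-dimensional base points are the standard ones rotated by roots of unity;
\item admissibility, which you never check, is automatic because the cell is contractible.
\end{itemize}
This is why Theorem~\ref{theo: full monomial group} is stated only for the $\overline{\Delta}_{\sigma,n}$. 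Your claim for arbitrary relative classes needs an extra argument, for instance that these cells generate the relevant relative homology, and you do not give one.

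There are also two smaller points.

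First, the general theorem requires enough $\G$-modular elements for the building set you compactify with, not just enough modular elements. For $\G=\F(\A_{l,q})$ and the $M_j$ this is a one-line check: $X(\delta)\cap M_j=X(\delta\setminus\{j\})$, and $X(\lambda,\nu)\cap M_j$ is $X(\lambda,\nu)$, $X(\lambda\setminus\{j\},\nu\vert_{\lambda\setminus\{j\}})$, or $0$. It must still be made. Your other candidates, the flats $\langle x_h\mid h\neq i,j\rangle\oplus\langle x_i-\mu^m x_j\rangle$, are not modular once $l\ge2$: intersecting with $\langle x_i,\,x_h-\mu^a x_j\rangle$ gives a line with three nonzero coordinates. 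So your fallback is the actual argument. Also, your test ``$L+H$ is a flat for every hyperplane $H$'' is vacuous. The correct test is that $M\cap Y$ is a line of $\A$ for every two-dimensional $Y\in L(\A)$ not contained in $M$.

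Second, do not try to land on $s_r\ge2$: for $q\ge2$ it is not attainable. Already for $l=1$, $q=2$, the cell $[0,1]$ gives
\[
\int_0^1\frac{dt}{1+t}=\log 2=-\mathrm{Li}_1(-1).
\]
The constraints $s_r\ge2$, $s_1+\dots+s_r\le1$ would then force $\log 2\in\Q[2\pi i]$, contrary to the expected algebraic independence of $\log 2$ and $\pi$. Theorem~\ref{theo: full monomial group} only claims multiple polylogarithmic values at roots of unity in the sense defined just before it, allowing $s_r=1$ when $\mu^{n_r}\neq1$. That is exactly what the regularization in your Step 2 produces.
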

        \noindent
        Multiple polylogarithmic values include special values of Dirichlet $L$-functions and of the Hurwitz zeta function. Some recent irrationality proofs \cites{Fischler-Sprang-Zudilin-Many-odd-zeta_values_are_irrational, Lai-Yu-A_note_on_the_number_of_irrational_odd_zeta_values} construct linear combinations of these numbers with related coefficients, then combine them in a clever way to obtain linear forms in zeta values only. Linking these irrationality proofs to the periods of the arrangement $\A_{l,n}$ constitutes the first step towards a geometric interpretation of these more recent results. \\

        We now elaborate on further possible developments motivated by the results of this work. It would be interesting to understand more about the properties characterizing hyperplane arrangements with enough modular elements. At the moment, for example, it is not clear when an arrangement can be embedded into one with enough modular elements. For such arrangements, the conclusion of Theorem~\ref{theo: intro periods of arrangements with enough modular elements} still holds by pulling integrals back to the larger arrangement. \\
        Moreover, arrangements with enough modular elements may provide a good framework for extending the work of Brown \cite{Brown-Irrationality_proofs_for_zeta_values_moduli_spaces_and_dinner_parties} on cellular integrals over moduli spaces of curves to more general arrangements, with possible implications to irrationality proofs. In particular, we will explore the case of the reflection arrangement of the full monomial group in upcoming work.\\
        There are also some motivic implications of Theorem~\ref{theo: intro periods of arrangements with enough modular elements} worth investigating. Assume now that $k$ is a number field. The motives associated with relative cohomology groups of $\overline{Y}_\A$ for a hyperplane arrangement $\A$ lie in the category $\text{MT}(k)$ of mixed Tate motives over $k$ \cite{Levine-Tate_motives_and_the_vanishing_conjectures_for_algebraic_K_theory}. 
        Deligne and Goncharov \cite{Deligne-Goncharov-Groupes_fondamentaux_motiviques_de_Tate_mixte} have defined a subcategory of $\text{MT}(k)$, the periods of whose objects are generated by values of multiple polylogarithms. This subcategory is generated by the motivic avatar of the pro-unipotent completion of the fundamental groupoid of $\mathbb{P}^1$ with suitable points removed; in general, it does not coincide with the whole $\text{MT}(k)$. Some candidates for the missing generators have been found in motivic versions of Aomoto polylogarithms \cite{Beilinson-Varchenko-Goncharov-Shekhtman-Projective_geometry_and_K_theory}. \\
        Theorem~\ref{theo: intro periods of arrangements with enough modular elements} seems to suggest that, when $\A$ has enough modular elements, these motives actually belong to the subcategory defined by Deligne and Goncharov. Giving a motivic version of this result may enable us to have a combinatorial criterion for which an arrangement lies in this smaller category of motives.\\

        The paper is structured as follows. The first part introduces hyperplane arrangements with enough modular elements and their basic properties. We first set up the notation and recall the geometry of compactifications of complements of hyperplane arrangements. Then, we turn to arrangements with enough modular elements and construct the retractions mentioned above. \\
        In the second part, we start by recalling the holonomy equation and then we prove several properties of its solutions. We use the condition of having enough modular elements to establish a regularization procedure for primitives, then conclude with the proof of Theorem~\ref{theo: intro periods of arrangements with enough modular elements}.
        In the end, we sketch how to deduce Theorem~\ref{theo: intro full monomial group} from Theorem~\ref{theo: intro periods of arrangements with enough modular elements}. For the sake of the length of the present exposition, we defer further investigations of the periods of the reflection arrangement of the full monomial group to future work. \\

        \noindent
        \textbf{Acknowledgements.} The author expresses his heartfelt gratitude to Johannes Sprang for his careful supervision, unceasing encouragement, and constant guidance. The author also thanks Clément Dupont for his valuable suggestions and Jochen Heinloth for several helpful discussions. Moreover, the author is grateful for the stimulating work environment at the University of Duisburg-Essen and expresses his gratitude to all his friends and colleagues for their precious support. \\
        The financial support of the DFG Research Training Group 2553 \emph{Symmetries and classifying spaces: analytic, arithmetic and derived} is gratefully acknowledged.

    \section{Hyperplane arrangements and modular elements}
        
    We start by recalling some key facts in the theory of hyperplane arrangements. We also introduce the main class of hyperplane arrangements whose associated periods will be computed in Theorem~\ref{theo: periods of hyperplane arrangements with enough modular elements}.\\
    In the first section, we recall the definition of a hyperplane arrangement and some basic combinatorial invariants attached to it. We also introduce the De Concini-Procesi compactification of the complement of an arrangement of hyperplanes and explain some crucial aspects of its geometry. In the second section, we focus on supersolvable arrangements. The main references for this part are \cites{Orlik-Terao-Arrangements_of_hyperplanes, De_Concini-Procesi-Wonderful_models_of_subspace_arrangements}.\\
    In the last two sections, we introduce a new class of arrangements, which we call \emph{with enough modular elements}. More precisely, in the third section we define these arrangements, listing their basic properties and a few simple examples. In the last section, we illustrate the geometric implications of the condition of having enough modular elements on the compactification of the complement of an arrangement. 

    \subsection{Compactifications of complements of hyperplane arrangements}
        
    \subsubsection{Hyperplane arrangements} 
    Let $k$ be a subfield of $\C$.
    \begin{definition}
        A \emph{hyperplane arrangement} is a pair $(V,\A)$ where $V$ is a finite-dimensional $k$-vector space and $\A$ is a non-empty finite set of one-dimensional linear subspaces of the dual space $V^*$. If $\dim V=l$, we say that $(V,\A)$ is a \emph{hyperplane $l$-arrangement}.
    \end{definition}
    \noindent
    By abuse of notation, we will at times drop $V$ from the notation when the ambient vector space is clear from the context. Given a hyperplane arrangement $(V,\A)$, for all subspaces $H$ of $V^*$ we denote by $H^\perp$ the kernel of $H$; in particular, if $H\in \A$, then $H^\perp$ is a hyperplane in $V$.\\
    A morphism of hyperplane arrangements $f\colon (V,\A)\to (V',\A')$ is a linear homomorphism $f\colon V\to V'$ such that for all $H\in \A$ the image of $H^\perp$ via $f$ is contained in $K^\perp$ for some $K\in H'$.
    \begin{remark}
        In the literature, hyperplane arrangements are typically defined by a finite set of affine hyperplanes $\A$ of an affine space $V$. The arrangement $(V,\A)$ is then said to be \emph{central} if all elements of $\A$ are linear subspaces of $V$, so if they contain the origin of $V$. With our definition, all hyperplane arrangements are central according to this typical definition in the literature. When considering complements of hyperplane arrangements, we will recover non-central hyperplane arrangements by looking at the complement of a central hyperplane arrangement in a projective space, as we will explain later.
    \end{remark}
    \begin{definition}
        Let $(V,\A)$ be a hyperplane arrangement. The \emph{intersection lattice} $L(\A)$ of $\A$ is the set of all subspaces of $V^*$ that are sums of elements of $\A$. We agree that the zero subspace belongs to $L(\A)$.\\
        The set $L(\A)$ carries a partial order induced by set-theoretic inclusion.
    \end{definition}
    \noindent
    The reason for this terminology for $L(\A)$ is that this set can be endowed with a structure of a geometric lattice \cite[Lemma 2.3]{Orlik-Terao-Arrangements_of_hyperplanes}. This will not play a significant role in the exposition, as we will mainly focus on the structure of partially ordered set of $L(\A)$.\\
    As is common in the literature, if $\A$ is defined as a set of hyperplanes in $V$, one lets $L(\A)$ be the set of all intersections of the hyperplanes in $\A$, ordered by reverse inclusion. This is thus dual to our definition of $L(\A)$, so one obtains isomorphic partially ordered sets anyway. Our convention agrees with that of \cite{De_Concini-Procesi-Wonderful_models_of_subspace_arrangements}. This should allow for a more natural description of nested sets and building families later in the discussion.  
    \begin{definition}
        A hyperplane arrangement $(V,\A)$ is \emph{essential} if $V^*\in L(\A)$.
    \end{definition}
    \noindent
    Almost all hyperplane arrangements considered in this work are essential. If $(V,\A)$ is not essential, let $W$ be the sum of all lines in $\A$. One then obtains an induced essential arrangement on $V/W^\perp$. \\
    There are two main operations that can be performed starting from a hyperplane arrangement $(V,\A)$ to obtain more arrangements. We collect them in the following definition.
    \begin{definition}
        Let $(V,\A)$ be a hyperplane arrangement and fix $X\in L(\A)$.
        \begin{enumerate}
            \item The \emph{restriction arrangement} of $(V,\A)$ to $X$ is defined as the hyperplane arrangement $(X^\perp, \A/X)$ over $X^\perp$ with
            \[
                \A/X=\{ (H+X)/X\subseteq V^*/X \mid H\in \A, H\not\subseteq X\},
            \]
            where we identify $(X^\perp)^*$ with $V^*/X$.
            \item The \emph{quotient arrangement} of $(V,\A)$ modulo $X$ is defined as the hyperplane arrangement $(V/X^\perp, \A\vert_X)$ over $V/X^\perp$ with 
            \[
                \A\vert_X=\left\{ H\in \A \mid H\subseteq X \right\},
            \]
            where we identify $(V/X^\perp)^*$ with $X$.
        \end{enumerate}
    \end{definition}
    \noindent
    For reference, the arrangement $(X^\perp, \A/X)$ is denoted by $\A^{X^\perp}$ in \cite{Orlik-Terao-Arrangements_of_hyperplanes}. On the other hand, the so-called \emph{deletion arrangement} $\A_{X^\perp}$ of \cite{Orlik-Terao-Arrangements_of_hyperplanes} corresponds in our notation to $(V,\A\vert_X)$. We prefer to consider $(V/X^\perp, \A\vert_X)$ to have an essential arrangement. Throughout our exposition, we will sometimes loosely refer to $(V/X^\perp, \A\vert_X)$ with the term \emph{deletion arrangement} in any case. Notice that 
    \[
        L(\A/X)=\{ (Y+X)/X\subseteq V^*/X \mid Y\in L(\A)\}, \qquad L(\A\vert_X)=\{Y\in L(\A)\mid Y\subseteq X\}.
    \]
   For these and similar properties, we refer to \cite[Lemma 2.11]{Orlik-Terao-Arrangements_of_hyperplanes}.\\
    The main geometric objects of study connected with a hyperplane arrangement are the complement thereof in the affine and projective space.
    \begin{definition}
        Let $(V,\A)$ be a hyperplane arrangement.
        \begin{enumerate}
            \item The complement of $(V,\A)$ in the affine space is denoted by 
            \[
                X(V,\A)= V \setminus \bigcup_{H\in \A} H^\perp.
            \]
            \item The complement of $(V,\A)$ in the projective space is denoted by
            \[
                Y(V,\A)= \mathbb{P}(V)\setminus \bigcup_{H\in \A} \mathbb{P}(H^\perp).
            \]
        \end{enumerate}
    \end{definition}
    \noindent
    Since $\A$ is not empty, both $X(V,\A)$ and $Y(V,\A)$ are affine varieties over $k$. If $\dim V=l+1$, then $X(V,\A)$ has dimension $l+1$, while $Y(V,\A)$ has dimension $l$. Notice that, if we fix an element of $\A$ as a hyperplane at infinity, $Y(V,\A)$ is the complement in the affine space of a finite set of affine hyperplanes. One can therefore view $Y(V,\A)$ as the complement of what is usually defined in the literature as a non-central arrangement.\\
    As a notational remark, the global sections of the structure sheaf of an affine variety $X$ over $k$ shall be denoted by $\mathcal{O}_X$, while those of the sheaf of Kähler differentials by $\Omega^1_{X/k}$. We take this decision to simplify the notation when working with the affine varieties $X(V,\A)$ and $Y(V,\A)$.\\
    In the sequel, we will focus mainly on the study of the projective complement $Y(V,\A)$ because the geometry of its De Concini-Procesi compactification, which will be explained later, is somewhat more symmetric. One can still realize $X(V,\A)$ as the projective complement $Y(\widetilde{V}, \widetilde{\A})$ of a hyperplane arrangement $(\widetilde{V}, \widetilde{\A})$, as we now explain.
    \begin{definition} $ $
        \begin{enumerate}
            \item Let $(V,\A)$ be a hyperplane arrangement. The \emph{cone over $(V,\A)$} is the hyperplane arrangement $(\widetilde{V}, \widetilde{\A})$ given by the product of $(V,\A)$ with a one-dimensional arrangement. This means that there is a one-dimensional $k$-vector space $L$ such that $\widetilde{V}=V\oplus L$ and $\widetilde{\A}=\A\sqcup \{L^*\}$.
            \item We say that a hyperplane arrangement $(\widetilde{V}, \widetilde{\A})$ is \emph{central} if it is the cone over some hyperplane arrangement.
        \end{enumerate}
    \end{definition}
    \noindent
    If $(\widetilde{V}, \widetilde{\A})$ is the cone over $(V,\A)$, then $X(V,\A)=Y(\widetilde{V}, \widetilde{\A})$.
    \begin{remark}
        We point out once again the difference of our notion of central arrangement with the usual one given in the literature. To recover the latter from our definition, one should always think of the complement of the arrangement in the projective space, and not in the affine one.
    \end{remark}
    \noindent
    Our next goal is to describe the explicit compactification of $Y(V,\A)$ via a simple normal crossings divisor constructed in \cite{De_Concini-Procesi-Wonderful_models_of_subspace_arrangements}. In short, one first constructs a smooth quasi-projective variety $\overline X$ in which $X(V,\A)$ embeds with complement given by a simple normal crossings divisor. Then, a certain proper closed subvariety $\overline Y$ of $\overline X$ is shown to be the desired compactification for $Y(V,\A)$. In order to obtain $\overline X$, one performs a chain of blow-ups of $\mathbb{A}^{l+1}$ with centers the successive strict transforms of a certain subset $\G$ of $L(\A)$. To make sense of this, we start by describing what subsets of $L(\A)$ will play this role; these will lead to the notion of a \emph{building set}. \\
    We first need some preliminary combinatorial concepts. We follow the conventions of \cite{De_Concini-Procesi-Wonderful_models_of_subspace_arrangements}. Fix a hyperplane arrangement $(V,\A)$.
    \begin{definition}
        A \emph{decomposition} of $X\in L(\A)$ is a subset $\{X_1,\dots, X_r\}$ of $L(\A)\setminus\{ 0\}$ with $X=X_1\oplus \dots \oplus X_r$ such that for all $Y\in L(\A)$ with $Y\subseteq X$ we have $Y\cap X_i\in L(\A)$ for all $i=1,\dots, r$ and
        \[
        Y=(X_1\cap Y)\oplus \dots \oplus (X_r\cap Y).
        \]
        $X$ is called \emph{irreducible} if $\{X\}$ is the only decomposition of $X$.
    \end{definition}
    \noindent
    In the case where $\{X_1,\dots, X_r\}$ is a decomposition of $X$, we will often loosely write that $X=X_1\oplus \dots \oplus X_r$ is a decomposition of $X$. Every element of $\A$ is irreducible and we will typically assume that $V^*$ is irreducible. The set of irreducible elements of $L(\A)$ is denoted by $\F(\A)$.
    \begin{lemma}[{\cite[Section 2.1]{De_Concini-Procesi-Wonderful_models_of_subspace_arrangements}}]
        $ $
        \begin{enumerate}
            \item Every $X\in L(\A)$ admits a unique decomposition $X=X_1\oplus \dots \oplus X_r$ into irreducible elements.
            \item If $X,Y\in \F(\A)$, then either $ X+Y\in \F(\A)$ or $X\oplus Y$ is a decomposition.
        \end{enumerate}
    \end{lemma}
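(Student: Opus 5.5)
The plan is to first produce decompositions by refinement and extract uniqueness from a comparison argument, then deduce part (2) from a matroid-type observation about the two summands.

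\emph{Existence in (1).} Call a decomposition of $X$ \emph{maximal} if it has the largest number of summands; the number of summands is at most $\dim X$, so such a decomposition exists. The key lemma is \textbf{transitivity}: if $X=X_1\oplus\dots\oplus X_r$ is a decomposition and $X_1=Z_1\oplus Z_2$ is a decomposition of $X_1$, then $X=Z_1\oplus Z_2\oplus X_2\oplus\dots\oplus X_r$ is a decomposition of $X$. To check this, take $Y\in L(\A)$ with $Y\subseteq X$. We have $Y\cap X_1\in L(\A)$ and $Y\cap X_1\subseteq X_1$, so $Y\cap X_1=(Z_1\cap Y)\oplus(Z_2\cap Y)$ with $Z_j\cap Y\in L(\A)$ (note $Z_j\cap(Y\cap X_1)=Z_j\cap Y$). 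Combining this with $Y=\bigoplus_i (X_i\cap Y)$ gives the claim. It follows that every summand of a maximal decomposition is irreducible; otherwise we could refine it.

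\emph{Uniqueness in (1).} Let $X=\bigoplus X_i=\bigoplus X'_j$ be two decompositions into irreducibles. Apply the defining property of the first decomposition to $Y=X'_j\subseteq X$. This gives $X'_j=\bigoplus_i (X_i\cap X'_j)$ with each summand in $L(\A)$.

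We then need to show this is a decomposition of $X'_j$. For $Y'\subseteq X'_j$ in $L(\A)$, first $Y'\cap X_i\in L(\A)$. Next, $Y'=\bigoplus_i (X_i\cap Y')$, and $X_i\cap Y'=(X_i\cap X'_j)\cap Y'$. Dropping the zero summands, irreducibility of $X'_j$ forces $X'_j\subseteq X_i$ for a single $i$. Symmetrically, each $X_i$ lies in some $X'_{j'}$. Since the sums are direct and all summands are nonzero, the two sets must coincide. This step is easy once one notices that the intersections again form a decomposition.

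\emph{Part (2).} Assume $X+Y$ is not irreducible and write $X+Y=W_1\oplus\dots\oplus W_s$ with $W_i$ irreducible and $s\ge 2$. Applying the decomposition property to $X\subseteq X+Y$ gives $X=\bigoplus_i(X\cap W_i)$. As in the uniqueness argument, this is a decomposition of $X$, so by irreducibility $X\subseteq W_a$ for some $a$; likewise $Y\subseteq W_b$. If $a=b$, then $X+Y\subseteq W_a\subsetneq X+Y$, a contradiction. Hence $a\neq b$. Then $W_a+W_b\supseteq X+Y$ together with directness gives $s=2$, $X=W_a$ and $Y=W_b$. So $X\oplus Y$ is the decomposition.

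The main obstacle I expect is verifying the restriction step used twice: that intersecting a decomposition with a lattice element yields a decomposition. This needs the identity $(X_i\cap Y)\cap Y'=X_i\cap Y'$, and it relies on the intersection-closure condition being built into the definition, because $L(\A)$ is defined via sums, so closure under intersection is not automatic.
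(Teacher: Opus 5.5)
Your proof is correct. The paper gives no argument for this lemma and only cites De Concini--Procesi, Section~2.1, so there is no in-paper proof to compare against. Your three ingredients give a complete, self-contained verification along the standard lines:
\begin{itemize}
\item refining one summand of a decomposition by a decomposition of that summand yields a decomposition;
\item a decomposition of $X$ restricts to a decomposition of any $Y\in L(\A)$ with $Y\subseteq X$, after dropping zero intersections;
\item irreducibility then forces containment in a single summand, which gives both uniqueness in (1) and the dichotomy in (2).
\end{itemize}

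The points you leave implicit are harmless. A decomposition with the maximal number of summands exists because $\{X\}$ is itself a decomposition for $X\neq 0$, and $X=0$ has the empty decomposition. Your two-summand transitivity check also works verbatim when a non-irreducible summand splits into more than two pieces.
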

    \noindent
    The family $\F(\A)$ of irreducible elements is the first example of subset of $L(\A)$ which provides suitable centers for a chain of blow-ups on $\mathbb{A}^{l+1}$ to obtain a variety $\overline X$ in which $X(V,\A)$ embeds with boundary given by a simple normal crossings divisor. The more general subsets of $L(\A)$ that we can allow for this purpose are introduced in the next definition.
    \begin{definition}
        A subset $\G\subseteq L(\A)\setminus \{0\}$ is called a \emph{building set} for $L(\A)$ if the following two conditions are satisfied:
        \begin{enumerate}
            \item $\F(\A)\subseteq \G$;
            \item if $X,Y\in \G$ and $X+Y\notin \G$, then $X\oplus Y$ is a decomposition.
        \end{enumerate}
        Equivalently, $\G$ is a building set for $L(\A)$ if every $X\in L(\A)$ is the direct sum of the set of maximal elements of $\G$ contained in $X$. 
    \end{definition}
    \noindent
    The equivalence of these two definitions requires some justification, for which we refer to \cite[Theorem 2.3]{De_Concini-Procesi-Wonderful_models_of_subspace_arrangements}. If $\G$ is a building set and $X\in L(\A)$, the maximal elements of $\G$ contained in $X$ are a decomposition for $X$, called the \emph{$\G$-decomposition} of $X$.\\
    Notice that both $\F(\A)$ and $L(\A)\setminus \{0\}$ are building sets. These are, respectively, the minimal and maximal building sets with respect to set-theoretic inclusion. Building sets behave well with respect to intersections, restrictions and quotients, as the following lemmas clarify.
    \begin{lemma}
        Let $(V,\A)$ be a hyperplane arrangement and $\G_1, \G_2$ building sets for $L(\A)$. Then $\G_1\cap \G_2$ is a building set for $L(\A)$.
        In particular, for all $X, Y\in L(\A)$ there is a minimal building set for $L(\A)$ containing both $X$ and $Y$. 
    \end{lemma}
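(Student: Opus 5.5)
We verify the two axioms of a building set for $\G=\G_1\cap\G_2$ directly from the first definition. Axiom (1) is immediate: both $\G_1$ and $\G_2$ contain $\F(\A)$, so their intersection does too. Moreover $0\notin\G$, since $0\notin\G_1$.

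For axiom (2), take $X,Y\in\G$ with $X+Y\notin\G$. Then $X+Y\notin\G_1$ or $X+Y\notin\G_2$; say $X+Y\notin\G_1$. Since $X,Y\in\G_1$ and $\G_1$ is a building set, $X\oplus Y$ is a decomposition. This is exactly what axiom (2) requires for $\G$. The point to stress is that being a decomposition is a property of $L(\A)$ alone and does not depend on the building set. So whichever $\G_i$ fails to contain $X+Y$ provides the conclusion, and no case distinction beyond this is needed.

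For the second assertion, fix $X,Y\in L(\A)$. Implicitly they are nonzero, since building sets exclude $0$; the degenerate case $X=0$ or $Y=0$ is handled by ignoring the zero element. The family of building sets containing both $X$ and $Y$ is nonempty, because it contains the maximal building set $L(\A)\setminus\{0\}$. It is also finite, since $L(\A)$ is finite. Apply the first part inductively to the finitely many members of this family. Their intersection is again a building set, it still contains $X$ and $Y$, and it is contained in every building set with this property. Hence it is the minimal one.

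I expect no real obstacle here. The only point requiring care is the observation above, that decompositions are intrinsic to $L(\A)$, together with finiteness of $L(\A)$ to reduce arbitrary intersections to repeated pairwise ones.
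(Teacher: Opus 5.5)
Your argument for the first part is the paper's own: $\F(\A)$ lies in both sets, and if $X+Y\notin\G_1\cap\G_2$ then $X+Y$ lies outside some $\G_i$, which already forces $X\oplus Y$ to be a decomposition (a notion intrinsic to $L(\A)$). The paper leaves the ``in particular'' clause implicit, and your deduction for it is correct: intersect the nonempty, finite family of building sets containing $X$ and $Y$, which includes $L(\A)\setminus\{0\}$. Finiteness is not even needed, since the same one-line argument shows directly that an arbitrary intersection of building sets is a building set.
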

    \begin{proof}
        It is clear that $\G_1\cap \G_2$ contains the set of irreducible elements of $L(\A)$. Take $X, Y\in \G_1\cap\G_2$ with $X+Y\not\in \G_1\cap \G_2$. Then $X+Y\not\in \G_i$ for some $i\in \{1,2\}$. Since $\G_i$ is a building set containing $X$ and $Y$, it follows that $X\oplus Y$ is a decomposition. 
    \end{proof}
    \begin{lemma}[{\cite[Lemma 4.3]{De_Concini-Procesi-Wonderful_models_of_subspace_arrangements}}]
        Let $\G$ be a building set for $L(\A)$ and fix $X\in \G$. 
        \begin{enumerate}
            \item The set
            \[
                \G/X=\{ (Y+X)/X\subseteq V^*/X \mid Y\in \G\}\subseteq L(\A/X)
            \]
            is a building set for $L(\A/X)$.
            \item The set
            \[
                \G\vert_X=\{ Y\in\G \mid Y\subseteq X \}\subseteq L(\A\vert_X)
            \]
            is a building set for $L(\A\vert_X)$. Also, if $\G=\F(\A)$, then $\G\vert_X=\F(\A\vert_X)$.
        \end{enumerate}
    \end{lemma}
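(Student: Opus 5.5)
Both parts will be checked against the first (defining) characterization of a building set: containment of the irreducible elements, and the condition that if two members have a sum outside the set, then their sum is a decomposition. The tools are the descriptions
\[
L(\A/X)=\{(Y+X)/X\mid Y\in L(\A)\},\qquad L(\A\vert_X)=\{Y\in L(\A)\mid Y\subseteq X\},
\]
together with the fact (first lemma above) that every element has a unique decomposition into irreducibles. It is also convenient to use the equivalent characterization: every element is the direct sum of the maximal members of $\G$ contained in it.

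\textbf{Part (2).} Here $L(\A\vert_X)$ is just the down-set of $X$ in $L(\A)$. First I show that a decomposition of $Y\subseteq X$ relative to $L(\A\vert_X)$ is the same as one relative to $L(\A)$. The defining condition only quantifies over $Z\in L(\A)$ with $Z\subseteq Y$, and such $Z$ automatically lie in $L(\A\vert_X)$; intersections $Z\cap Y_i$ are likewise in $L(\A)$ if and only if they are in $L(\A\vert_X)$. Hence irreducibility is unchanged, so $\F(\A\vert_X)=\F(\A)\cap\{Y\subseteq X\}$. This gives $\F(\A\vert_X)\subseteq\G\vert_X$, and equality when $\G=\F(\A)$.

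For the second axiom, take $Y,Z\in\G\vert_X$ with $Y+Z\notin\G\vert_X$. Since $Y+Z\subseteq X$, this means $Y+Z\notin\G$. So $Y\oplus Z$ is a decomposition in $L(\A)$, hence also in $L(\A\vert_X)$.

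\textbf{Part (1).} This is the substantive part, and I follow De Concini--Procesi's argument via the maximal-elements characterization. Write $\pi\colon V^*\to V^*/X$. Fix $W\in L(\A/X)$ and let $\tilde W=\pi^{-1}(W)\in L(\A)$, which contains $X$. Let $\tilde W=G_1\oplus\dots\oplus G_r$ be its $\G$-decomposition. Since $X\in\G$ and $X\subseteq\tilde W$, $X$ lies in some maximal element, say $X\subseteq G_1$. Then
\[
W=\pi(G_1)\oplus\pi(G_2)\oplus\dots\oplus\pi(G_r),
\]
where $\pi(G_1)=G_1/X$ and $\pi(G_i)\cong G_i$ for $i\ge2$. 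Each $\pi(G_i)$ lies in $\G/X$. Two things remain to verify:
\begin{itemize}
\item[(a)] the $\pi(G_i)$ are exactly the maximal elements of $\G/X$ inside $W$;
\item[(b)] this is a decomposition in $L(\A/X)$.
\end{itemize}

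For (a), suppose $\pi(G)\subseteq W$ with $G\in\G$. Then $G+X\subseteq\tilde W$. If $G\not\subseteq G_i$ for every $i$, decompose $G+X$ into its $\G$-decomposition and use the building axiom on the pair $G, X$. Either $G+X\in\G$, and then it sits inside some $G_i$; this must be $G_1$ because it contains $X$, so $\pi(G)\subseteq\pi(G_1)$. Or $G\oplus X$ is a decomposition, and then $G$ sits in some $G_i$ anyway, because $G$ is contained in $\tilde W$, which is the sum of its maximal $\G$-elements. Maximality of each $\pi(G_i)$ follows from the same comparison.

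For (b), take $U\in L(\A/X)$ with $U\subseteq W$. Then $\pi^{-1}(U)\supseteq X$ decomposes compatibly with $\tilde W=\bigoplus G_i$, which gives $U=\bigoplus(U\cap\pi(G_i))$. The point to check is that $\pi$ respects the intersections. This holds because the summand containing $X$ is $G_1$ alone, so $\pi$ is injective on $G_2\oplus\dots\oplus G_r$.

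\textbf{Main obstacle.} I expect step (a) to be the main difficulty: showing that images of $\G$-elements not containing $X$ cannot be maximal without being one of the $\pi(G_i)$. This requires careful bookkeeping with the building axiom applied to $G$ and $X$. If it becomes unwieldy, I would instead verify the two axioms for $\G/X$ directly. Irreducibles of $L(\A/X)$ lift to elements of the form $X+F$ with $F$ irreducible or $F\supseteq X$, and $X+F$ lies in $\G$ or splits as $X\oplus F$. Since the paper attributes this lemma to \cite[Lemma 4.3]{De_Concini-Procesi-Wonderful_models_of_subspace_arrangements}, I would finally cross-check the argument against that reference.
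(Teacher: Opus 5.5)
The paper does not prove this lemma; it only cites De Concini--Procesi, so there is no internal argument to compare against. Your proof is correct and self-contained. Part (2) is fine as written. In part (1), three points should be tightened.

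\textbf{The zero subspace.} As displayed, $\G/X$ contains $0=(Y+X)/X$ for every $Y\subseteq X$, and $0$ must be discarded. Correspondingly, when $G_1=X$ the summand $\pi(G_1)$ vanishes and has to be dropped from your list of maximal elements. Your maximality argument silently uses that the $\pi(G_i)$ are nonzero and independent, so this matters.

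\textbf{Step (a) is immediate.} You flag (a) as the main obstacle, but it needs no building axiom on the pair $G,X$. Since $\pi^{-1}(W)=\tilde W$, one has $\pi(G)\subseteq W$ if and only if $G\subseteq\tilde W$. So $G$ already lies in some maximal $G_i$, and $\pi(G)\subseteq\pi(G_i)$. Your case split starts from the impossible assumption that $G\not\subseteq G_i$ for all $i$, and can simply be deleted. Maximality of the nonzero $\pi(G_i)$ then follows from their independence.

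\textbf{The justification in step (b).} The identity you need is $\pi(\pi^{-1}(U)\cap G_i)=U\cap\pi(G_i)$. It is the general set-theoretic fact $f(f^{-1}(U)\cap B)=U\cap f(B)$, valid for any map. It does not come from injectivity of $\pi$ on $G_2\oplus\dots\oplus G_r$; injectivity is what gives directness of $\sum_i\pi(G_i)$, which you had already established. With the identity in hand, $U\cap\pi(G_i)=\bigl((\pi^{-1}(U)\cap G_i)+X\bigr)/X$ lies in $L(\A/X)$, because $\pi^{-1}(U)\cap G_i\in L(\A)$ by the $\G$-decomposition of $\tilde W$. It follows that $U=\bigoplus_i\bigl(U\cap\pi(G_i)\bigr)$. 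Hence the nonzero $\pi(G_i)$ form a decomposition of $W$ in $L(\A/X)$, and the maximal-elements characterization of building sets (De Concini--Procesi, Theorem 2.3, quoted in the paper) shows that $\G/X\setminus\{0\}$ is a building set.
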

    \begin{example}
        \label{example: irreducibles do not pass to quotients}
        If $\G=\F(\A)$, it is not true in general that $\G/X=\F(\A/X)$. For example, consider $V^*=\Q x\oplus \Q y\oplus \Q z$ with arrangement 
        \[
            \A=\{ \langle x\rangle, \langle y\rangle, \langle z\rangle, \langle x+y\rangle, \langle x+z\rangle\}.
        \]
        Irreducible elements of $L(\A)$ of dimension $2$ are planes which contain at least three distinct lines of $\A$. These are therefore $\langle x, y\rangle$ and $\langle x,z\rangle$. In dimension three, we can check that $V^*$ is irreducible. It is clear that $V^*$ does not admit a decomposition as a sum of three lines, because $\#\A> 3$. Moreover, the direct sums $\langle x,y\rangle\oplus \langle z\rangle$ and $\langle x,y\rangle\oplus \langle x+z\rangle$ are not decompositions, as can be seen by intersecting with $\langle x+ z\rangle $ and $\langle z\rangle$ respectively. Similarly, $\langle x,z\rangle\oplus \langle y\rangle$ and $\langle x,z\rangle\oplus \langle x+y\rangle$ are not decompositions either.\\
        Consider the arrangement $\A/\langle x\rangle$. It is apparent that $\F(\A)/\langle x\rangle$ equals the whole $L(\A/\langle x\rangle)$. However, $\A/\langle x\rangle$ consists of the two lines $\langle x,y \rangle/\langle x\rangle$ and $\langle x,z\rangle/\langle x\rangle$ only, so $V^*/\langle x\rangle$ is not irreducible in $L(\A/\langle x\rangle)$.
    \end{example}
    \noindent
    A building set for $L(\A)$ can be extended to the cone over $(V,\A)$:
    \begin{lemma}
        \label{lemma: building sets for cones}
        Let $(V,\A)$ be an arrangement, $\G$ a building set for $L(\A)$. Let $(\widetilde{V}, \widetilde{\A})$ be the cone over $(V,\A)$ with $\widetilde{V}=V\oplus L$. Then $\widetilde{\G}=\G\cup\{X+L^*\mid X\in \G\}$ is a building set for $L(\widetilde{\A})$.
        \begin{proof}
            It is straightforward to see that for all $X\in L(\A)$ the sum $X\oplus L^*$ is a decomposition. The family of irreducible elements of $L(\widetilde{\A})$ is, therefore, $\F(\A)\cup \{L^*\}$. The claim follows at once. 
        \end{proof}
    \end{lemma}

    \subsubsection{De Concini-Procesi compactifications}
    We may now introduce the main geometric construction of this section; we shall follow the exposition of \cite{De_Concini-Procesi-Wonderful_models_of_subspace_arrangements}. Let $(V,\A)$ be an essential hyperplane arrangement and fix a building set $\G$ for $L(\A)$. For all $X\in L(\A)$, the quotient map $V\to V/X^\perp$ induces a rational morphism $V\dashrightarrow \mathbb{P}(V/X^\perp)$, which restricts to a regular morphism on $V\setminus X^\perp$. Thus, for all $X\in L(\A)$ we have a regular map $\phi_X \colon X(V,\A)\to \mathbb{P}(V/X^\perp)$; these maps induce a morphism 
    \[
        \phi: X(V,\A)\to \prod_{X\in \G} \mathbb{P}(V/X^\perp).
    \]
    The graph of $\phi$ is a closed subvariety of $X(V,\A)\times \prod_{X\in \G} \mathbb{P}(V/X^\perp)$, which is an open subset of $V\times \prod_{X\in \G} \mathbb{P}(V/X^\perp)$. We then define $\overline X(V,\A,\G)$ to be the closure of $X(V,\A)$ in $V\times \prod_{X\in \G} \mathbb{P}(V/X^\perp)$ via this embedding through the graph of $\phi$.
    \begin{theorem}[{\cite[Proposition 1.5]{De_Concini-Procesi-Wonderful_models_of_subspace_arrangements}}]
        The variety $\overline{X}(V,\A,\G)$ is smooth and the complement of $X(V,\A)$ in $\overline{X}(V,\A,\G)$ is a simple normal crossings divisor. The irreducible components of this complement are in bijection with the elements of $\G$. Moreover, every intersection of these irreducible components is irreducible.
    \end{theorem}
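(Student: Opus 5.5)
The plan is to reduce the statement to a local computation in explicit affine charts, using the combinatorics of nested sets for $\G$. First, I describe the boundary. For each $X\in\G$, set $D_X\subseteq \overline X(V,\A,\G)$ to be the closure of the locus of points lying over $X^\perp$ (for hyperplanes $X=H\in\A$ this is the strict transform of $H^\perp$), so $\overline X\setminus X(V,\A)=\bigcup_{X\in\G}D_X$. I then show, by induction on the number of elements of $\G$ via the alternative description as an iterated blow-up of $V$ along strict transforms of the $X^\perp$ (in order of increasing dimension of $X^\perp$), that $\overline X(V,\A,\G)$ agrees with the closure of the graph of $\phi$. With this description $D_X$ is the exceptional divisor of the blow-up along $X^\perp$, or the strict transform of $H^\perp$. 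This already gives a bijection between the components and $\G$.

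The core step is to produce, for every point $p$ of the boundary, an affine chart around $p$ in which the boundary is a union of coordinate hyperplanes. Let $\S=\{X\in\G: p\in D_X\}$. I would prove that $\S$ is \emph{nested}: for pairwise non-comparable $X_1,\dots,X_r\in\S$ with $r\ge 2$, the sum $X_1+\dots+X_r$ is a direct sum that is not in $\G$. The reason is that the projective coordinates $\phi_X(p)$ must be compatible. Next, I choose an adapted basis of $V^*$. For each $X\in\S$, pick $x_X\in X$ outside the sum of the elements of $\S$ strictly contained in $X$, and complete this to a basis using further coordinates. I then write each linear form $\alpha\in V^*$ as a product, namely a monomial $\prod_{X\in\S,\ \alpha\in X,\ X \text{ minimal}} u_X$ times a unit. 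Here $u_X$ is obtained from $x_X$ by dividing by $x_Y$ for $Y$ the smallest element of $\S$ properly containing $X$. The building-set axiom, namely that every element of $L(\A)$ decomposes as the sum of the maximal elements of $\G$ it contains, is exactly what makes the leftover factor nonvanishing near $p$. In the coordinates $u_X$ (plus complementary ones), the map to $V\times\prod\mathbb{P}(V/X^\perp)$ is a closed embedding with smooth image, and each $D_X$ is $\{u_X=0\}$. This yields smoothness and the simple normal crossings property.

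Finally, for irreducibility of intersections, I observe that $\bigcap_{X\in\S}D_X$ is nonempty only when $\S$ is nested. In that case it is isomorphic to a product of smaller wonderful models, one for each restriction/quotient pair, built from $\G/X$ and $\G\vert_X$ using the earlier lemma on restrictions and quotients of building sets. These factors are irreducible by induction on $\dim V$, since each is the closure of an irreducible open set. Each $D_X$ itself is irreducible for the same reason.

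I expect the main obstacle to be the chart step: checking that the monomial-times-unit factorization holds for all forms in $\A$ at once, and that the coordinates $u_X$ really separate points of the closure, including on boundary points where several projective factors degenerate simultaneously. My first approach there is induction on $|\S|$, peeling off a maximal element of $\S$ and reducing to the quotient arrangement.
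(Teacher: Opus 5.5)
Your overall strategy (nested sets, adapted bases, the substitution $\beta(X)=\prod_{Y\in\S,\,X\subseteq Y}u_Y$, charts in which the boundary is a union of coordinate hyperplanes) is the De Concini--Procesi chart argument that the paper cites and whose charts $U'_\S$ it records. As written, however, there is a genuine gap, exactly at the step you flag, and the building-set axiom is not used where it matters most.

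First, a correction. The monomial-times-unit factorization fails for general $\alpha\in V^*$. In the braid arrangement $\A_2$ with $\G=\F(\A_2)$, $\S=\{\langle x_1\rangle,\langle x_2-x_3\rangle,V^*\}$ and $\beta(V^*)=x_3$, one gets $x_1+x_2-x_3=u_{V^*}\bigl(u_{\langle x_1\rangle}+u_{\langle x_2-x_3\rangle}\bigr)$, whose second factor vanishes at the origin. The factorization does hold for $x_H$ with $H\in\A$. The monomial is then $\prod_{Y\in\S,\,H\subseteq Y}u_Y$, the whole chain above $p_\S(H)$ and not just its minimal element. The reason is that the maximal elements of $\S$ strictly inside $p_\S(H)$ form a decomposition of their sum. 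A line of $\A$ inside that sum therefore lies in one of them, which $H$ does not by minimality.

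What your sketch never addresses is the extension of the factor maps. To have any morphism from the chart to $V\times\prod_{Y\in\G}\mathbb{P}(V/Y^\perp)$, the map $\phi_Y$ must extend for \emph{every} $Y\in\G$, not only for $Y\in\S$. Let $Z$ be the smallest element of $\S$ containing $Y$. The maximal elements of $\S$ strictly inside $Z$ are the maximal elements of $\G$ contained in their sum, so $Y\in\G$ is not contained in that sum. Hence some $H\in\A$ with $H\subseteq Y$ has $p_\S(H)=Z$, and $\phi_Y=[\,y/\beta(Z)\,]_{y\in Y}$ is a tuple of polynomials one of which is the unit $P^\S_H$. This use of the building-set property cannot be dropped. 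Take $\A=\{\langle x\rangle,\langle y\rangle,\langle z\rangle\}$ in $V^*=kx\oplus ky\oplus kz$ with the non-building family $\{\langle x\rangle,\langle y\rangle,\langle z\rangle,\langle x,y\rangle,\langle y,z\rangle\}$. Near the point over $0$ with $[x:y]=[1:0]$ and $[y:z]=[0:1]$, the closure of the graph is the singular cone $\{xb=zc\}$ in the coordinates $x,z,b=y/x,c=y/z$.

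The obstacle you name is the heart of the theorem, and the idea that removes it is missing. Since $\beta(X)$ and $\beta(X^+)$ both lie in $X^+$, the coordinate $u_X=\beta(X)/\beta(X^+)$ is a ratio of homogeneous coordinates on the single factor $\mathbb{P}(V/(X^+)^\perp)$; for the maximal elements of $\S$, $u_X=\beta(X)$ is a function on the factor $V$. These ratios define a morphism $\psi$ on the open subset of the ambient product where each $\beta(X^+)$ is nonzero on the $X^+$-component, and $\psi$ is a left inverse of the chart map. Hence:
\begin{itemize}
\item the chart map is a section of a separated morphism, so it is a closed immersion into $\psi^{-1}(U'_\S)$, and separation of points is automatic;
\item density of the graph identifies its image with the open subset $\overline{X}(V,\A,\G)\cap\psi^{-1}(U'_\S)$;
\item on this open subset, smoothness and the normal crossings property are read off from the factorization of the $x_H$.
\end{itemize}

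What remains is covering, and for this the nested set and the adapted basis must be chosen depending on $p$. You need a \emph{maximal} nested set and an adapted basis such that, for each $Z$ in it, $\beta(Z)$ and every $x_H$ with $p_\S(H)=Z$ are nonzero on the component $\phi_Z(p)\in\mathbb{P}(V/Z^\perp)$. One builds it top-down. The elements directly below $Z$ are taken to be the $\G$-decomposition of the span of the lines of $\A$ vanishing on $\phi_Z(p)$; one recurses and then completes to a maximal nested set. Your recipe (any $x_X$ outside the sum of the elements of $\S_p$ strictly inside $X$) does not achieve this. When $\S_p$ is not maximal, that sum can have codimension greater than one in $X$, so $x_X$ may vanish on $\phi_X(p)$, and then the ratios with denominator $x_X$ need not be regular at $p$.

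Finally, the iterated blow-up cannot be the starting point for defining $D_X$ and $\S_p$. Identifying it with the closure of the graph, with every strict transform of a center smooth and irreducible when it is blown up, is itself a theorem resting on the same local analysis. Read literally, the preimage of $X^\perp$ is also reducible: it contains the divisor attached to every $Y\in\G$ with $X\subseteq Y$. In the chart approach one defines $D_X$ as the closure of $\{u_X=0\}$ and checks independence of the chart afterwards.
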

    \noindent
    Given $X\in \G$, we denote by $D'_X$ the irreducible boundary divisor of $\overline{X}(V,\A,\G)$ corresponding to $X$.
    \begin{remark}
        The variety $\overline X(V,\A,\G)$ can also be realized as a sequence of blow-ups of $\mathbb{A}^{l+1}$ by blowing up the successive strict transforms of the closed subschemes $X^\perp$ for $X\in \G$. There is some freedom in the order of the blow-up, and it is possible to proceed by increasing dimension. This description is useful in cohomological computations. 
    \end{remark}
    \noindent
    A similar construction can be done for the projective complement $Y(V,\A)$. In this case, we let $\overline Y(V,\A,\G)$ be the closure inside $\mathbb{P}(V) \times \prod_{X\in \G} \mathbb{P}(V/X^\perp)$ of the graph of the map
    \[
        Y(V,\A)\to \prod_{X\in \G} \mathbb{P}(V/X^\perp)
    \]
    defined similarly to $\phi$ above.\\
    In this case, since $(V^*)^\perp=0$, the first factor $\mathbb{P}(V)$ in $\mathbb{P}(V) \times \prod_{X\in \G} \mathbb{P}(V/X^\perp)$ appears repeated also in $\prod_{X\in \G}\mathbb{P}(V/X^\perp)$ if $V^*\in \G$. Hence, $\overline{Y}(V,\A,\G)$ does not depend on whether $V^*$ lies in $\G$ or not. For convenience, we will always assume that $V^*\in \G$.
    \begin{theorem}[{\cite[Theorem 4.1]{De_Concini-Procesi-Wonderful_models_of_subspace_arrangements}}]
        $ $
        \begin{enumerate}
            \item The variety $\overline{Y}(V,\A,\G)$ is smooth and projective. Moreover, the complement of $Y(V,\A)$ in $\overline{Y}(V,\A,\G)$ is a simple normal crossings divisor.
            \item The irreducible components of $\overline{Y}(V,\A,\G)\setminus Y(V,\A)$ are in bijection with the elements of $\G\setminus \{V^*\}$. The irreducible component corresponding to $X\in \G\setminus\{V^*\}$ will be denoted by $D_X$.
            \item The variety $\overline{Y}(V,\A,\G)$ is isomorphic to the irreducible boundary divisor $D_{V^*}'$ of $\overline{X}(V,\A,\G)$. Under this isomorphism, for all $X\in \G\setminus \{V^*\}$ the divisor $D_X$ corresponds to $D_{V^*}'\cap D_{X}'$. In particular, intersections of irreducible boundary divisors of $\overline{Y}(V,\A,\G)$ are irreducible. 
        \end{enumerate}
    \end{theorem}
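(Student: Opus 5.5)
The plan is to deduce all three parts from the affine compactification $\overline X(V,\A,\G)$ and the De Concini--Procesi result already quoted for it. We assume $V^*\in\G$. Throughout, $\overline Y=\overline Y(V,\A,\G)$ and $\overline X=\overline X(V,\A,\G)$.

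\textbf{Step 1: the identification in (3).} First exploit the $\mathbb{G}_m$-action on $X(V,\A)$ by scaling. Every map $\phi_X$ is invariant under this action, and so is the factor $\mathbb{P}(V)=\mathbb{P}(V/(V^*)^\perp)$ indexed by $X=V^*$. Hence $\overline X$ sits inside $V\times \prod_{X\in\G}\mathbb{P}(V/X^\perp)$, and its projection to the second factor has image equal to $\overline Y$: on the dense open set the projection of $\overline X$ is exactly the graph defining $\overline Y$.

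The divisor $D'_{V^*}$ is the locus over $0\in V$; this is the exceptional divisor of the first blow-up, at the origin, in the blow-up description. Consider the map $\overline X\to V\times\overline Y$, and restrict it to the fibre over $0$. This gives a morphism $D'_{V^*}\to\overline Y$.

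To show it is an isomorphism, use the blow-up picture. The blow-up of $V$ at $0$ is the total space of the tautological line bundle over $\mathbb{P}(V)$. The remaining centres are the strict transforms of the $X^\perp$ with $X\neq V^*$. Each of these is a cone, so its strict transform is the line bundle restricted to $\mathbb{P}(X^\perp)$. Therefore all further blow-ups are pulled back from the corresponding iterated blow-up of $\mathbb{P}(V)$ along the strict transforms of the $\mathbb{P}(X^\perp)$. Consequently $\overline X$ is a line bundle over $\overline Y$, and $D'_{V^*}$ is its zero section.

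\textbf{Step 2: deducing (1) and (2).} Smoothness of $\overline Y$ follows from Step 1, since the zero section of a line bundle over $\overline Y$ is a smooth divisor inside the smooth variety $\overline X$. Projectivity holds because $\overline Y$ is closed in a product of projective spaces.

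On the line bundle, $Y(V,\A)$ corresponds to the complement of the zero section over $Y$. The boundary components $D'_X$ with $X\neq V^*$ are pullbacks of divisors $D_X$ on $\overline Y$. Since the $D'_X$ form a simple normal crossings divisor together with $D'_{V^*}$, restricting to $D'_{V^*}$ shows that the divisors $D'_{V^*}\cap D'_X$ form a simple normal crossings divisor in $D'_{V^*}\cong\overline Y$. These are exactly the $D_X$, giving the bijection with $\G\setminus\{V^*\}$.

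Finally, any intersection of the $D_X$ equals $D'_{V^*}\cap\bigcap D'_X$. This is an intersection of boundary components of $\overline X$, which is irreducible by the quoted theorem.

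\textbf{Main obstacle.} The delicate point is the compatibility in Step 1 between the closure definition and the blow-up description. Specifically, one must verify that the strict transforms of the cones $X^\perp$ are really the restrictions of the line bundle, and that the closures taken in the two products agree. I would check this locally in the coordinate charts of the blow-up of the origin, where the tautological coordinate $t$ splits off as a product factor.
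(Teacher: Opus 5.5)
The paper gives no proof of this statement; it only cites De Concini--Procesi, so there is no internal argument to compare against. Your proposal is correct in substance. Everything rests on showing that $\overline X$ is the total space of the pull-back of $\mathcal{O}_{\mathbb{P}(V)}(-1)$ to $\overline Y$, with $D'_{V^*}$ the zero section and bundle projection $\pi$. Given that, your Step 2 deduces (1)--(3) correctly from the quoted theorem on $\overline X$, with two small corrections. First, read $X(V,\A)$ for $Y(V,\A)$ in ``corresponds to the complement of the zero section''. Second, add the one-line check that $\overline Y\setminus Y(V,\A)=\bigcup_{X\neq V^*}D_X$, obtained by removing the zero section from $\overline X\setminus X(V,\A)$.

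Where you differ from the most economical route is in how you get the line-bundle structure. You use the blow-up description: origin first, then flat base change of blow-ups and strict transforms along $\mathrm{Bl}_0V\to\mathbb{P}(V)$. This has the bonus of exhibiting $\overline Y$ as an iterated blow-up of $\mathbb{P}(V)$. However, it produces that iterated blow-up $B$ rather than the closure $\overline Y$, which is exactly the loose end you flag. It closes without charts. Set $P=\prod_{X\in\G}\mathbb{P}(V/X^\perp)$. The map $\overline X\to V\times P$ is a closed embedding, so $B\cong D'_{V^*}$ embeds as a closed subvariety of $\{0\}\times P$ contained in $\overline Y$. A closed embedding between integral varieties of the same dimension $l$ is an isomorphism.

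Alternatively, you can skip the blow-ups entirely. The incidence variety $\mathcal{L}_P=\{(v,p)\in V\times P \mid v\in p_{V^*}\}$ is closed in $V\times P$ and is the total space of the pulled-back tautological bundle. The graph embedding identifies $X(V,\A)$ with the complement of the zero section in $\mathcal{L}_P$ restricted to the image of $Y(V,\A)$. This is a nonempty open subset of the irreducible closed set $\mathcal{L}_P|_{\overline Y}$, so $\overline X=\mathcal{L}_P|_{\overline Y}$ straight from the definitions. This also gives the remaining facts you need:
\begin{itemize}
\item The fibre over $0\in V$ is exactly the zero section, hence equals $D'_{V^*}$.
\item The map $\overline X\to P$ has image all of $\overline Y$. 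Your density remark in Step 1 alone gives only a dense image, since $V\times P\to P$ is not proper.
\item Since $\mathbb{G}_m$ is connected, each $D'_X$ is stable under scaling. For $X\neq V^*$ it is not contained in the zero section, which forces $D'_X=\pi^{-1}(\pi(D'_X))$. This is the pull-back property you assert.
\end{itemize}
The direct route uses nothing beyond the closure definitions and the quoted theorem on $\overline X$. Yours additionally needs the blow-up description with the origin blown up first, which the paper's remark permits, and the compatibility of strict transforms with flat base change.
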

    \begin{remark}
        It is possible to embed $X(V,\A)$ in a projective variety of the above kind by considering the cone over $(V,\A)$.
    \end{remark}
    \noindent
    It is particularly important to observe that the irreducible boundary divisors $D_X$ for $X\in \G$ are themselves compactifications of complements of hyperplane arrangements, as the following result shows.
    \begin{proposition}[{\cite[Theorem 4.3]{De_Concini-Procesi-Wonderful_models_of_subspace_arrangements}}]
        \label{prop: buondary divisor as product of compactifications}
        Let $\G$ be a building set for $(V,\A)$ and fix $X\in \G$. Then there is an isomorphism
        \[
            D_X \cong \overline{Y}(V/X^\perp, \A\vert_X, \G\vert_X) \times \overline{Y}(X^\perp, \A/X, \G/X).
        \]
    \end{proposition}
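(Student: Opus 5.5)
We construct the isomorphism explicitly on the ambient products and then compare closures. The natural candidate comes from splitting the index set $\G$ into the two pieces that govern the two factors. Every $Y\in\G$ with $Y\neq X$ falls into exactly one of two classes: either $Y\subseteq X$, or $Y\not\subseteq X$. In the second case the relevant datum is $(Y+X)/X\in\G/X$.

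On $D_X$ the coordinates split accordingly:
\begin{enumerate}
\item The $\mathbb{P}(V/Y^\perp)$-coordinates with $Y\subseteq X$ factor through $\mathbb{P}(V/X^\perp)$. They assemble into a point of $\overline Y(V/X^\perp,\A\vert_X,\G\vert_X)$, since $(V/X^\perp)/Y^\perp = V/Y^\perp$.
\item The point of $\mathbb{P}(V)$, together with the coordinates indexed by $Y\not\subseteq X$, should land in $\mathbb{P}(X^\perp)$ and in $\mathbb{P}(X^\perp/((Y+X)^\perp))$ respectively. These give a point of $\overline Y(X^\perp,\A/X,\G/X)$.
\end{enumerate}
The identification in the second item uses $(X^\perp)^*=V^*/X$, together with the fact that the kernel of $(Y+X)/X$ in $X^\perp$ is $X^\perp\cap Y^\perp$.

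The first step is to show that on $D_X$ the $\mathbb{P}(V)$-coordinate lies in $\mathbb{P}(X^\perp)$. This holds because $D_X$ is the locus where the point has degenerated onto $\mathbb{P}(X^\perp)$. It is seen on the open dense part of $D_X$ by taking limits of curves $\gamma(t)=p+t q$ with $p\in X^\perp$ generic.

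For $Y\not\subseteq X$ we must check that the coordinate in $\mathbb{P}(V/Y^\perp)$ is determined by a point of $\mathbb{P}(X^\perp/(X^\perp\cap Y^\perp))$. This is where the building set axiom enters. If $X+Y\in\G$, the limit in $\mathbb{P}(V/Y^\perp)$ is the image of $p$, provided $p\notin Y^\perp$; this is generic on $D_X$ when $Y\not\subseteq X$. If $X+Y\notin\G$, then $X\oplus Y$ is a decomposition, and the relevant coordinate is that of $(X+Y)/X\cong Y$. In both cases the coordinate defines a morphism
\[
D_X\to \overline Y(V/X^\perp,\A\vert_X,\G\vert_X)\times\overline Y(X^\perp,\A/X,\G/X).
\]

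The second step is to construct the inverse and show that both maps are mutually inverse on dense opens. On the open stratum $Y(\A\vert_X)\times Y(\A/X)$ of the product, we send a pair $([q],[p])$ to the limit as $t\to0$ of the curve $p+tq$. Here $q\in V$ is a lift of $[q]\in\mathbb{P}(V/X^\perp)$. This limit lies in $D_X$ and is independent of the choice of lift up to the expected equivalence. Since both varieties are irreducible of dimension $l-1$ and smooth, it suffices to show the following: the morphism from $D_X$ is bijective and birational, and the limit map extends. Then Zariski's main theorem, applied with normality of the target, gives an isomorphism.

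The main obstacle is verifying that the morphism is well defined and injective along deeper strata, where $p$ itself lies in further hyperplanes. I would handle this by induction on $\#\G$ via the blow-up description of the preceding Remark. Blowing up strict transforms in order of increasing dimension, the exceptional divisor over the strict transform of $X^\perp$ is a projective bundle over that strict transform, with fiber $\mathbb{P}(V/X^\perp)$. Later blow-ups act on the base by centers indexed by $\G/X$, and on the fiber by centers indexed by $\G\vert_X$. The decomposition axiom ensures that no center mixes the two. Tracking this, and using Lemma~\ref{lemma: building sets for cones} to pass between $\overline X$ and $\overline Y$, yields the product structure.
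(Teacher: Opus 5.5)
Your overall strategy is viable: the coordinate morphism $f\colon D_X\to\overline Y_1\times\overline Y_2$ (with $\overline Y_1=\overline{Y}(V/X^\perp,\A\vert_X,\G\vert_X)$ and $\overline Y_2=\overline{Y}(X^\perp,\A/X,\G/X)$), the limit section $s([q],[p])=\lim_{t\to0}(p+tq)$ on $P^\circ=Y(V/X^\perp,\A\vert_X)\times Y(X^\perp,\A/X)$, and Zariski's main theorem once $f$ is known to be bijective and birational. The gap is exactly the step you flag as the main obstacle: that $f$ is well defined and injective over the deeper strata of $D_X$. This is the real content of the proposition, and you do not prove it. You only announce a blow-up induction, and that sketch is not accurate as stated.
\begin{itemize}
\item In any admissible order, the centres $Y^\perp\subsetneq X^\perp$ are blown up \emph{before} $X^\perp$. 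These are the $Y\supsetneq X$, which index elements of $\G/X$. So the base of the exceptional divisor has already been modified when that divisor is created.
\item Identifying its projectivised normal bundle with a trivial $\mathbb{P}(V/X^\perp)$-bundle needs proof.
\item Lemma~\ref{lemma: building sets for cones} is not what transfers statements from $\overline X(V,\A,\G)$ to $\overline Y(V,\A,\G)$; the relevant link is $\overline Y(V,\A,\G)\cong D'_{V^*}$.
\end{itemize}
Done properly, this would be an independent proof, making your first half superfluous.

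The missing idea is that the relations you check only at generic points are closed conditions, and they make injectivity automatic. Work in a chart $U_\S$ with $X\in\S$, and recall that $\{u^\S_X=0\}$ is dense in $D_X$. For $Y\in\G$, let $W_Y$ be the smallest element of $\S$ containing $Y$. The $\mathbb{P}(V/Y^\perp)$-coordinate is given by the forms of $Y$ divided by the pull-back of $\beta(W_Y)$. Now suppose $Y\not\subseteq X$ (including $Y=V^*$).
\begin{itemize}
\item Either $W_Y\supsetneq X$, or $W_Y\oplus X$ is a decomposition, in which case $X\cap Y=0$.
\item In the first case, every $x\in X\cap Y$ equals $\beta(p_\S(x))P^\S_x$ with $p_\S(x)\subseteq X$, so it retains a factor $u^\S_X$ after this division.
\end{itemize}
Hence on all of $D_X$ the $\mathbb{P}(V)$-coordinate lies in $\mathbb{P}(X^\perp)$. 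Likewise, each $\mathbb{P}(V/Y^\perp)$-coordinate with $Y\not\subseteq X$ is a functional on $Y$ killing $X\cap Y$. That is, it lies in $\mathbb{P}\bigl((X^\perp+Y^\perp)/Y^\perp\bigr)\cong\mathbb{P}\bigl(X^\perp/(X+Y)^\perp\bigr)$, the factor of $\overline Y_2$ indexed by $(Y+X)/X$. This holds whether or not $X+Y\in\G$, so your case distinction is unnecessary.

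Thus $f$ forgets nothing: every ambient coordinate of $z\in D_X$ is a fixed linear image of a coordinate of $f(z)$. The coordinatewise linear embedding $g$ of the ambient space of $\overline Y_1\times\overline Y_2$ into $\mathbb{P}(V)\times\prod_{Y\in\G}\mathbb{P}(V/Y^\perp)$ is therefore a morphism on the whole product, and it restricts to $s$ on $P^\circ$.

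The closure of $s(P^\circ)$ is an irreducible closed subset of the boundary of dimension $l-1$, hence equals some $D_Z$. The same chart computation forces $Z=X$:
\begin{itemize}
\item on $D_Z$ the $\mathbb{P}(V)$-coordinate lies in $\mathbb{P}(Z^\perp)$, which forces $Z\subseteq X$;
\item if $Z\subsetneq X$, the $\mathbb{P}(V/X^\perp)$-coordinate lies in $\mathbb{P}(Z^\perp/X^\perp)$, which is excluded at the limit points.
\end{itemize}
Density then gives three things: $g(\overline Y_1\times\overline Y_2)\subseteq D_X$; independence of the representative $Y$ of $(Y+X)/X$; and $f\circ g=\mathrm{id}$, $g\circ f=\mathrm{id}_{D_X}$. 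No Zariski main theorem or blow-ups are needed.

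For comparison, the paper does not prove this global statement; it cites De Concini--Procesi. It only proves the chart-level identification $\{u^\S_X=0\}\cong U_{\S\vert_X}\times U_{\S/X}$ in Proposition~\ref{prop: local decomposition of boundary divisors}. Your argument, completed as above, is its global counterpart in the ambient coordinates.
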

    \noindent
    A similar description is also there for the irreducible boundary divisor $D_X'$ of $\overline X(V,\A,\G)$, although it involves compactifications of both affine and projective complements of hyperplane arrangements. For these reasons, we prefer to work only with $\overline Y(V,\A,\G)$.\\
    Next, we focus on a combinatorial description of the intersection theory of the boundary divisor $\overline Y(V,\A,\G)\setminus Y(V,\A)$. To describe when some of its irreducible components have non-empty intersections, we recall the notion of nested set from \cite{De_Concini-Procesi-Wonderful_models_of_subspace_arrangements}.
    \noindent
    \begin{definition}
        Let $\G$ be a building set for $L(\A)$. A subset $\S$ of $\G$ is said to be \emph{$\G$-nested} if for all $X_1,\dots, X_r\in \S$ pairwise non-comparable with $r\ge 2$ we have $\sum_{i=1}^r X_i\notin \G$.
    \end{definition}
    \noindent
    We summarize the main properties of $\G$-nested sets. All of these can be found in \cite{De_Concini-Procesi-Wonderful_models_of_subspace_arrangements}.
    \begin{lemma}
        Let $\S$ be a $\G$-nested set.
        \begin{enumerate}
            \item Let $X_1,\dots, X_r\in \S$ be pairwise incomparable. Then $X=X_1\oplus \dots \oplus X_r$ is the $\G$-decomposition of $X$. 
            \item For all $v\in V^*$ the set $\{Y\in \S \mid v\in Y\}\cup \{V^*\}$ is linearly ordered; we denote by $p_\S(v)$ its minimal element.\\
            In particular, for all $X\in \S$ the set $\{Y\in \S \mid X\subsetneq Y\}$ is linearly ordered with respect to inclusion. If $X\neq V^*$, we denote by $X^+$ its minimal element.
            \item $\S$ is maximal with respect to set-theoretic inclusion of $\G$-nested sets if and only if $\#\S=\dim V$. Moreover, every $\G$-nested set can be completed to a maximal one.
        \end{enumerate}
        Furthermore, given $X_1,\dots, X_r\in \G$, the divisors $D_{X_1},\dots,D_{X_r}$ have non-empty intersection if and only if $\{X_1,\dots,X_r\}$ is $\G$-nested.
    \end{lemma}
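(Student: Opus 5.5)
We prove the three combinatorial assertions directly from the definitions of building set and $\G$-nested set. The final geometric statement about the divisors $D_{X_i}$ is quoted from \cite{De_Concini-Procesi-Wonderful_models_of_subspace_arrangements}.

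\emph{Part (1).} Let $X_1,\dots,X_r\in\S$ be pairwise incomparable and put $X=\sum X_i$. Let $Z_1,\dots,Z_s$ be the maximal elements of $\G$ contained in $X$. Since $\G$ is a building set, $X=Z_1\oplus\dots\oplus Z_s$ is a decomposition. Each $X_i\in\G$ lies in some $Z_j$. We claim each $Z_j$ contains exactly one $X_i$ and equals it. Indeed, decompositions are compatible with intersections, so $X_i=\bigoplus_j (X_i\cap Z_j)$. Irreducibility of $X_i$ in the $\G$-sense (it is its own $\G$-decomposition) forces $X_i\subseteq Z_j$ for a single $j$. Now fix $j$ and let $I_j=\{i : X_i\subseteq Z_j\}$. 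Then $Z_j=\bigoplus_{i\in I_j}X_i$, because $X\cap Z_j=Z_j$ and the projections split. Suppose $\#I_j\ge2$. Then $\sum_{i\in I_j}X_i=Z_j\in\G$, which contradicts nestedness. Hence $\#I_j=1$ and $Z_j=X_i$. This shows the sum is direct and that $\{X_1,\dots,X_r\}$ is the set of maximal elements, i.e.\ the $\G$-decomposition.

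\emph{Part (2).} Take $Y_1,Y_2\in\S$ with $v\in Y_1\cap Y_2$, $v\neq0$, and suppose they are incomparable. By (1), $Y_1\oplus Y_2$ is a direct sum, so $Y_1\cap Y_2=0$, which is a contradiction. Hence the set $\{Y\in\S\mid v\in Y\}\cup\{V^*\}$ is a chain. The second statement follows by choosing $v\in X$ nonzero: every $Y\in\S$ with $X\subsetneq Y$ contains $v$.

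\emph{Part (3).} This is the main step; it is a dimension count via the map $X\mapsto X/\sum_{Y\in\S,\,Y\subsetneq X}Y$. For $X\in\S$, let $X_1,\dots,X_t$ be the maximal elements of $\S$ strictly contained in $X$. By (1) they form a direct sum $Y_X\subsetneq X$. Strictness holds because $Y_X\notin\G$ when $t\ge2$, while for $t=1$ it is immediate. Hence $\dim X-\dim Y_X\ge1$.

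Summing over the forest structure given by (2), and recalling $V^*\in\S$ once $\S$ is maximal, we get $\sum_{X\in\S}(\dim X-\dim Y_X)=\dim V^*$. Therefore $\#\S\le\dim V$.

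For the converse, suppose some $X$ has $\dim X/Y_X\ge2$. Choose a line $H\in\A$ with $H\subseteq X$ and $H\not\subseteq Y_X$; essentiality of $\A|_X$ guarantees one exists. Then take $Z\in\G$ to be the element generated by $H$ together with suitable $X_i$, namely the $\G$-component of $H+\sum_{i\in J}X_i$ containing $H$, chosen so that $Z\subsetneq X$. We check that $\S\cup\{Z\}$ is nested. This adjoining step is the delicate point: one must show that no new incomparable family sums into $\G$, using the building-set axiom (2) repeatedly. Granting it, any nested set can be enlarged until every quotient $X/Y_X$ is one-dimensional, at which point $\#\S=\dim V$. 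This gives both the characterization of maximality and the completion statement.

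The last assertion, that $D_{X_1}\cap\dots\cap D_{X_r}\neq\emptyset$ if and only if $\{X_1,\dots,X_r\}$ is $\G$-nested, is geometric. We cite it from \cite{De_Concini-Procesi-Wonderful_models_of_subspace_arrangements} (Theorem 3.2 there together with part (3) of the theorem above), where it is proved by working in local coordinate charts adapted to a maximal nested set.
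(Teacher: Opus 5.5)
The paper gives no proof of this lemma and cites De Concini--Procesi for all of it, so your argument is a self-contained alternative. Parts (1), (2) and the bound $\#\S\le\dim V$ in (3) are correct.

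One justification in (1) needs repair. Elements of a building set need not be irreducible (take $\G=L(\A)\setminus\{0\}$), so ``irreducibility in the $\G$-sense'' is not why $X_i$ lies in a single $Z_j$. The correct reason is the one you give just before: $X_i\in\G$ and $X_i\subseteq X$ force $X_i$ into some maximal $Z_j$, and $j$ is unique because $Z_j\cap Z_{j'}=0$.

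The genuine gap is the hard direction of (3). The completion statement is trivial because $\G$ is finite. The real content is that a \emph{maximal} nested set has $\dim V$ elements, and this rests entirely on the enlargement step that you leave as ``granting it'', with $J$ unspecified. As written, (3) is not proved. Your own construction closes it if you take $J$ to be all children:
\begin{enumerate}
\item Let $X\in\S$ have children $X_1,\dots,X_t$ and $\dim X-\dim Y_X\ge 2$. Pick $H\in\A$ with $H\subseteq X$ and $H\not\subseteq Y_X$, and put $W=H+Y_X$. Then $W\subsetneq X$ by dimension.
\item Each of $H,X_1,\dots,X_t$ lies in $\G$, hence in exactly one component of the $\G$-decomposition of $W$. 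Each component is the sum of the generators it contains. By nestedness, a component not containing $H$ is a single $X_i$. Hence $W=Z\oplus\bigoplus_{i\notin J}X_i$, where $Z$ is the component containing $H$ and $J=\{i\mid X_i\subseteq Z\}$.
\item $Z\notin\S$: indeed $Z\subsetneq X$ and $Z\not\subseteq Y_X$, while every element of $\S$ strictly inside $X$ lies in $Y_X$.
\item Every $U\in\S$ is of one of four kinds:
\begin{itemize}
\item $U\supseteq X\supseteq Z$;
\item $U\subseteq X_i\subseteq Z$ with $i\in J$;
\item $U\subseteq X_i$ with $i\notin J$, so $U\cap Z=0$;
\item $U$ incomparable with $X$, so $U\cap Z\subseteq U\cap X=0$ by (1).
\end{itemize}
\item In a pairwise incomparable family $Z,W_1,\dots,W_n$ with $n\ge1$, each $W_k$ is of the third or fourth kind. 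Set $S=Z+\sum_k W_k$.
\begin{itemize}
\item If all $W_k$ are of the third kind, then $S\subseteq W$ contains $Z$ and a nonzero subspace of some $X_i$ with $i\notin J$. So $S$ lies in no single $\G$-component of $W$, hence $S\notin\G$.
\item Otherwise $S\subseteq X\oplus\bigoplus W_k$, the sum running over the $W_k$ of the fourth kind. This is a $\G$-decomposition by (1). $S$ is not inside $X$, since it contains some $W_k\not\subseteq X$. It is not inside any such $W_k$, since it contains $Z\neq0$ with $Z\cap W_k\subseteq X\cap W_k=0$. Again $S\notin\G$.
\end{itemize}
\end{enumerate}
Thus $\S\cup\{Z\}$ is $\G$-nested. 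With this lemma, your iteration and the tree-sum identity $\sum_{X\in\S}(\dim X-\dim Y_X)=\dim V$ give (3). Citing De Concini--Procesi for the divisor statement is consistent with what the paper does.
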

    \noindent
    In view of this lemma, maximal $\G$-nested sets correspond to maximal intersections of irreducible boundary divisors. Since $\overline Y(V,\A,\G)\setminus Y(V,\A)$ is stratified by locally closed subsets corresponding to intersections of its irreducible components, its zero-dimensional stratum is in bijection with the set of maximal $\G$-nested sets. 
    \begin{lemma}
        \label{lemma: restrictions and quotients of maximal nested sets}
        Let $\S$ be a maximal $\G$-nested set and fix $X\in \S$.
        \begin{enumerate}
            \item $\S\vert_X=\{Y\in \S\mid Y\subseteq X\}$ is a maximal $\G\vert_X$-nested set.
            \item $\S/X=\{ (Y+X)/X\mid Y\in \S, Y\not\subseteq X \}$ is a maximal $\G/X$-nested set.
        \end{enumerate}
    \end{lemma}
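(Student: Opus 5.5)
Both parts come down to checking the nested condition directly and then counting elements, using the fact that maximality of a nested set means having exactly $\dim$ of the ambient space many elements. Throughout, $\G\vert_X$ is a building set for $L(\A\vert_X)$ with ambient $X$, and $\G/X$ is a building set for $L(\A/X)$ with ambient $V^*/X$; by the lemma on $\G$-nested sets, it is enough to show that $\S\vert_X$ and $\S/X$ are nested with $\#\S\vert_X=\dim X$ and $\#\S/X=\dim V^*-\dim X$.

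For (1), nestedness is immediate. Take pairwise incomparable $Y_1,\dots,Y_r\in\S\vert_X$ with $r\ge 2$. They are pairwise incomparable in $\S$, so $\sum Y_i\notin\G$, and therefore $\sum Y_i\notin\G\vert_X$.

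For (2), take $Z_1,\dots,Z_r\in\S/X$ pairwise incomparable with $r\ge2$, and write $Z_i=(Y_i+X)/X$ with $Y_i\in\S$, $Y_i\not\subseteq X$. Then:
\begin{enumerate}
\item Incomparability of the $Z_i$ forces the $Y_i$ to be pairwise incomparable. If the $Y_i$ are pairwise incomparable with $X$ as well, the set $\{Y_1,\dots,Y_r,X\}$ is pairwise incomparable in $\S$. By part (1) of the nested-set lemma, $Y_1\oplus\dots\oplus Y_r\oplus X$ is then a $\G$-decomposition.
\item Some $Y_i$ could contain $X$. If two of them did, they would lie in the chain of elements above $X$ and so be comparable. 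Hence at most one, say $Y_1\supsetneq X$, contains $X$. In that case $\{Y_1,\dots,Y_r\}$ is pairwise incomparable, so $\sum Y_i=Y_1\oplus\dots\oplus Y_r$ is a $\G$-decomposition.
\item Suppose, for contradiction, that $\sum Z_i=(W+X)/X$ with $W\in\G$. Without loss of generality $W\supseteq X$, replacing $W$ by the appropriate element. One then wants to show that $W=X+\sum Y_i$ contradicts the decomposition above. If $W$ lay in $\G$ and equalled a direct sum of at least two summands, each in $\G$, this would contradict the fact that the $\G$-decomposition of an element of $\G$ is trivial.
\end{enumerate}
The delicate point in step 3 is replacing an arbitrary preimage of $\sum Z_i$ by $X+\sum Y_i$ itself. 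I would use the description of $\G/X$ from De Concini--Procesi: for $W\in\G$, $(W+X)/X$ equals the image of the $\G$-factor of $W+X$ containing $X$. So I would argue via the $\G$-decomposition of $W+X$, which is $X+\sum Y_i$.

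For the counts, $\#\S\vert_X=\dim X$ is the main obstacle, and I plan to get it from the map $v\mapsto p_\S(v)$. Choose a basis adapted to $\S$. For each $Y\in\S$, the quotient $Y/\sum_{Y'\subsetneq Y,\,Y'\in\S}Y'$ is one-dimensional; maximality gives this, since $\#\S=\dim V^*$ and the filtration is exhaustive. The elements of $\S$ contained in $X$ then account for exactly $\dim X$ of these one-dimensional pieces, so $\#\S\vert_X=\dim X$. The remaining elements of $\S$ all have $Y\not\subseteq X$, and because $\S$ is nested their images in $\S/X$ are distinct. The images could collide only if $Y+X=Y'+X$ for incomparable $Y,Y'$, which the decomposition argument excludes. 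This gives $\#\S/X=\dim V^*-\dim X$, and maximality follows.
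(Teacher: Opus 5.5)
Your route to maximality differs from the paper's. You use the criterion that a $\G$-nested set is maximal if and only if its cardinality equals the dimension. The paper instead shows directly that any element that could be adjoined to $\S\vert_X$ (resp.\ $\S/X$) yields one that could be adjoined to $\S$. Counting is cleaner, and your counts are sound. For $\#\S\vert_X=\dim X$ you can simply quote the existence of an adapted basis. Injectivity of $Y\mapsto (Y+X)/X$ on $\S\setminus\S\vert_X$ does follow from nestedness; this includes comparable pairs, since if $Y\subsetneq Y'$ with $X\subseteq Y'$ and $Y$ incomparable to $X$, then $Y'\neq Y+X$ because $Y+X\notin\G$.

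The gap is in step 3 of the nestedness of $\S/X$, which is the one nontrivial point of the lemma. Your reduction to $W\supseteq X$ rests on the claim that $(W+X)/X$ is the image of the $\G$-factor of $W+X$ containing $X$. This is false whenever $W+X\notin\G$. In that case $W\oplus X$ is the $\G$-decomposition of $W+X$, the factor containing $X$ is $X$ itself (with image $0$), and no element of $\G$ containing $X$ has image $(W+X)/X$. The case $W\supseteq X$ is the trivial one: it only says $X+\sum_i Y_i\notin\G$, which is immediate from nestedness of $\S$. The case where $W\oplus X$ is a decomposition is exactly what must be argued, and the paper does it. Each $Y_i\in\G$ with $Y_i\not\subseteq X$ must lie in $W$, so $W=\sum_i Y_i$ by dimension. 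Hence no $Y_i$ contains $X$, the $Y_i$ are pairwise incomparable, and $W=\sum_i Y_i\notin\G$, a contradiction.

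Within your own setup the repair is short and needs no reduction. Put $S=X+\sum_i Y_i$; you computed its $\G$-decomposition in steps 1 and 2 as $\{Y_1,\dots,Y_r,X\}$, or $\{Y_1,\dots,Y_r\}$ if $X\subsetneq Y_1$. Suppose $W\in\G$ and $W+X=S$. Then $W$ is contained in a single factor $F$, because the factors are the maximal elements of $\G$ inside $S$, so $S=W+X\subseteq F+X$. But $F+X$ is a proper subspace of $S$ by a dimension count, since $r\ge 2$. In the second case with $F=Y_j$, $j\ge 2$, use $\dim X<\dim Y_1$. With this step filled in, your argument goes through.
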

    \begin{proof}
        For the first point, since a $\G$-decomposition of an element in $\G\vert_X$ is also a $\G\vert_X$-decomposition, it follows that $\S\vert_X$ is $\G\vert_X$-nested. For maximality, take $Y\in \G\vert_X$ such that $\S\vert_X\cup\{Y\}$ is $\G\vert_X$-nested. Let us show that $\S\cup\{Y\}$ is $\G$-nested.\\
        Given $Z_1,\dots, Z_n\in \S$ pairwise incomparable together with $Y$, we need to show that $Y+Z_1+\dots +Z_n\notin \G$. If $Z_1,\dots, Z_n$ are contained in $X$, this follows from $\S\vert_X\cup\{Y\}$ being $\G\vert_X$-nested. Otherwise, consider $X+Z_1+\dots+Z_n$. Since $\S$ is $\G$-nested, the latter is not in $\G$ and we may write $G_1\oplus \dots \oplus G_r$ with $r\ge 2$ for its $\G$-decomposition. If we assume by contradiction that $Y+Z_1+\dots + Z_n\in \G$, then it is contained in one among $G_1,\dots, G_r$, say $G_1$. Since $X\in \G$ and $Y\subseteq X\cap G_1$, we infer that $X\subseteq G_1$. This would imply that $X+Z_1+\dots+Z_n\subseteq G_1$, hence $r=1$. As $\S\cup \{Y\}$ is $\G$-nested, by maximality of $\S$ we conclude that $Y\in \S\vert_X$.
        For the second point, let $Y_1,\dots, Y_n$ be elements of $\S$ not contained in $X$ and such that $Y_1+X,\dots, Y_n+X$ are pairwise incomparable. Since $\S$ is $\G$-nested, it follows that $Y_1+\dots +Y_n+X\notin \G$, as this can still be written as a sum of pairwise incomparable elements of $\S$. This implies that $(Y_1+\dots+ Y_n+X)/X\not\in \G/X$. Indeed, suppose that $Y_1+\dots+Y_n+X=Y+X$ for some $Y\in \G$. Since $Y+X\not\in \G$, we deduce that $Y\oplus X$ is a $\G$-decomposition. Each $Y_i\in \G$ is not contained in $X$, hence $Y_i\subseteq Y$. It follows that $Y_1+\dots+Y_n=Y\in \G$ and no $Y_i$ contains $X$. As a result, $Y_1,\dots, Y_n$ are pairwise incomparable, so the fact that $\S$ is $\G$-nested implies that $Y\not\in \G$, a contradiction.\\
        For maximality, take $Y\in \G$ such that $\S/X\cup \{(Y+X)/X\}$ is $\G/X$-nested. If $Y+X\in \G$, set $Z=Y+X$, otherwise $Z=Y$. By maximality of $\S$, we only need to show that $\S\cup \{Z\}$ is $\G$-nested. Choose $Z_1,\dots, Z_n\in \S$ pairwise incomparable together with $Z$. If $Z_1,\dots, Z_n\subseteq X$, then $Y+X\not\in \G$, otherwise $Z_i\subseteq Z$ for all $i=1,\dots, n$ by the definition of $Z$. It follows that $Y\oplus X$ is a decomposition, hence $Y\oplus (Z_1 + \dots + Z_n)$ is a decomposition as well, so $Z+Z_1+\dots + Z_n\not\in \G$.\\
        Thus, we may assume that $Z_1\subseteq X$, which implies that $Z+X$ and $Z_1+X$ are incomparable. Indeed, suppose that $Z+X\subseteq Z_1+X$. Since $Z_1$ and $X$ belong to $\S$, either $X\subseteq Z_1$ or $Z_1\oplus X$ is a decomposition. In both cases we have $Z\subseteq Z_1$, which contradicts the fact that $Z$ and $Z_1$ are incomparable. The case $Z_1+X\subseteq Z+X$ can be dealt with in an analogous manner using the definition of $Z$. As a result, $Z+Z_1+\dots + Z_n+X$ is the sum of at least two incomparable elements of $\G$ whose quotient modulo $X$ lies in $\S/X\cup \{(Y+X)/X\}$. This implies that $(Z+Z_1+\dots +Z_n+X)/X\not\in \G/X$, so in particular $Z+Z_1+\dots + Z_n\not\in \G$.
    \end{proof}
    \noindent
    We now give an explicit description of $\overline Y(V,\A,\G)$ by local charts. This variety is covered by complements of hypersurfaces in the affine space centered at the points that are maximal intersections of irreducible boundary divisors. To explain this thoroughly, we need one more construction.
    \begin{definition}
        Let $\S$ be a maximal $\G$-nested set. An \emph{adapted basis} for $\S$ is a function $\beta \colon  \S\to V^*$ such that for all $X\in \S$ the set $\{ \beta(Y) \mid Y\in \S, Y\subseteq X\}$ is a basis for $X$.
    \end{definition}
    \begin{lemma}[{\cite[Lemma 1.3]{De_Concini-Procesi-Wonderful_models_of_subspace_arrangements}}]
        Every maximal $\G$-nested set admits an adapted basis $\beta$. Moreover, for all $X\in \S$ we have $p_\S(\beta(X))=X$.
    \end{lemma}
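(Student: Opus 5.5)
We build $\beta$ by induction on $\S$, working upward along inclusion, and then verify the property $p_\S(\beta(X))=X$.

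\textbf{Construction.} For each $X\in\S$, let $X_1,\dots,X_r$ be the maximal elements of $\S$ strictly contained in $X$. These are pairwise incomparable, so by the properties of $\G$-nested sets their sum $X_1\oplus\dots\oplus X_r$ is a direct sum. Put $U_X=\sum_{Y\in\S,\,Y\subsetneq X}Y=X_1\oplus\dots\oplus X_r$.

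\textbf{Key claim: $\dim X=\dim U_X+1$.} I would prove this by a counting argument on maximal nested sets. By Lemma~\ref{lemma: restrictions and quotients of maximal nested sets}, $\S\vert_X$ is a maximal $\G\vert_X$-nested set. The arrangement $\A\vert_X$ is essential on $V/X^\perp$, so its maximal nested sets have cardinality $\dim X$. Hence $\#\{Y\in\S\mid Y\subseteq X\}=\dim X$. Applying the same count to each $X_i$ gives
\[
\#\{Y\in\S\mid Y\subsetneq X\}=\sum_i\#\S\vert_{X_i}=\sum_i\dim X_i=\dim U_X .
\]
The first equality holds because every $Y\subsetneq X$ in $\S$ lies under a unique maximal $X_i$, since the elements of $\S$ containing a given element are linearly ordered. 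Comparing the two counts yields $\dim X=\dim U_X+1$. The most delicate point here is checking that $\G\vert_{X_i}$-maximality really gives cardinality $\dim X_i$. This uses that $X_i\in\G$ and that the arrangement is essential, which the cited lemma supplies.

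\textbf{Choice of $\beta$.} Since $U_X$ is a proper subspace of $X$, we may choose $\beta(X)\in X\setminus U_X$. By induction, the vectors $\beta(Y)$ for $Y\subsetneq X$ form a basis of $U_X$: they are bases of the $X_i$, the $X_i$ are pairwise disjoint in $\S$, and the sum of the $X_i$ is direct. Adding $\beta(X)$ then gives a basis of $X$ by the dimension count, so $\beta$ is an adapted basis.

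\textbf{The property $p_\S(\beta(X))=X$.} Clearly $\beta(X)\in X$. Suppose some $Y\in\S$ with $Y\subsetneq X$ contained $\beta(X)$. Then $Y\subseteq X_i$ for some $i$, so $\beta(X)\in U_X$, which contradicts the choice of $\beta(X)$. Since the elements of $\S$ containing $\beta(X)$ are linearly ordered, the minimal one is $X$.
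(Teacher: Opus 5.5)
Your argument is correct. The paper gives no proof of this lemma (it only cites De Concini--Procesi), so the comparison is with a self-contained argument.

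Relative to what the paper lists beforehand, your reduction is legitimate. Maximality of $\S\vert_X$ and of the $\S\vert_{X_i}$ (the lemma on restrictions of maximal nested sets), combined with property (3) of the nested-set lemma for the essential arrangements $(V/X^\perp,\A\vert_X,\G\vert_X)$ and $(V/X_i^\perp,\A\vert_{X_i},\G\vert_{X_i})$, does give $\dim X=\dim U_X+1$. The rest is routine.

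Be aware that this makes the lemma a formal consequence of the implication ``maximal $\Rightarrow$ $\#\S=\dim V$''. That implication is essentially equivalent to the lemma: conversely, an adapted basis of a maximal $\S\ni V^*$ is a basis of $V^*$ indexed by $\S$. So the real combinatorics is hidden inside (3).

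A self-contained proof runs the other way. Assume $\dim X/U_X\ge 2$, take a line $H\in\A$ with $H\subseteq X$ and $H\not\subseteq U_X$, and let $D$ be the member of the $\G$-decomposition of $H+U_X$ that contains $H$. Then $D\subsetneq X$ and $D\notin\S$. A direct check with the building-set axioms shows that $\S\cup\{D\}$ is $\G$-nested, contradicting maximality; the key point is that, by nestedness, the other members of that decomposition are single $X_i$'s. That route is longer, but it proves the codimension-one fact from scratch and yields (3) as a corollary. Yours is a short bookkeeping argument that is fine inside this paper, but it would be circular if it were also meant to justify (3).

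Two small remarks. First, for any adapted basis the set $\{\beta(Y)\mid Y\subsetneq X\}$ spans $U_X$, so linear independence forces $\beta(X)\notin U_X$. Your last step therefore gives $p_\S(\beta(X))=X$ for every adapted basis, which is how the paper uses it later. Second, to respect the paper's convention that adapted bases take values among the fixed vectors $x_H$, choose $\beta(X)=x_H$ for a line $H\subseteq X$ not contained in $U_X$. Such a line exists because $X$ is spanned by the lines of $\A$ it contains.
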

    \noindent
    We will sometimes fix for every $H\in \A$ a non-zero vector $x_H\in H$. In this case, we will tacitly assume that adapted bases take values among these vectors. Thus, an adapted basis will be regarded as a function $\beta \colon \S\to \A$ and the vector corresponding to $X\in \S$ will be denoted by $x_{\beta(X)}$.  \\
    Notice that, if $\beta$ is an adapted basis for a $\G$-nested set $\S$, for all $X\in \S$ there are canonically induced adapted bases $\beta\vert_X$ and $\beta/X$ for $\S\vert_X$ and $\S/X$ respectively.\\
    To find some local charts for $\overline Y(V,\A,\G)$, we start by fixing a maximal $\G$-nested set $\S$ with adapted basis $\beta$. Write $l=\dim V-1$ and consider the morphism
    \[
        \rho \colon  \mathbb{A}^{l+1}=\text{Spec}\,k[u_X\mid X\in \S]\longrightarrow \mathbb{A}^{l+1}=\text{Spec}\, k[\beta(X)\mid X\in \S]
    \]
    which, at the level of regular functions, is defined by
    \[
        \beta(X)\longmapsto \prod_{Y\in \S, X\subseteq Y} u_Y.
    \]
    The map $\rho$ restricts to an isomorphism between the open subsets defined by removing the hyperplanes $u_X=0$ in the source and $\beta(X)=0$ in the target for all $X\in \S$. Its inverse is given by
    \[
        u_X\longmapsto \frac{\beta(X)}{\beta(X^+)},
    \]
    formally setting $\beta((V^*)^+)=1$.
    \begin{lemma}[{\cite[Lemma 3.1]{De_Concini-Procesi-Wonderful_models_of_subspace_arrangements}}]
        Let $H\in \A$ and $x\in H$, $x\neq 0$. Then $x=\beta(p_\S(x)) P^\S_x$, where the image of $P^\S_x$ under $\rho$ is a polynomial that depends only on the coordinates $u_Y$ for $Y\in \S$, $Y\subsetneq p_\S(x)$ and does not vanish at $0$.
    \end{lemma}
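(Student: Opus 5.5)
We need to prove: for $H\in\A$ and nonzero $x\in H$, we have $x=\beta(p_\S(x))P^\S_x$, where $\rho^*P^\S_x$ is a polynomial in the $u_Y$ with $Y\subsetneq p_\S(x)$ that does not vanish at $0$.

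Set $X=p_\S(x)$. By definition of an adapted basis, $\{\beta(Y)\mid Y\in\S,\ Y\subseteq X\}$ is a basis of $X$, and $x\in X$. So we can write
\[
x=\sum_{Y\in\S,\,Y\subseteq X} c_Y\,\beta(Y),\qquad c_Y\in k .
\]

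The first step is to show $c_X\neq 0$. Suppose instead that $c_X=0$. Then $x$ lies in $\sum_{Y\in\S,\,Y\subsetneq X} Y$. Let $Y_1,\dots,Y_r$ be the maximal elements of $\S$ strictly contained in $X$. They are pairwise incomparable, so by the lemma on nested sets $Y_1\oplus\dots\oplus Y_r$ is the $\G$-decomposition of their sum $W$.

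Since $x$ spans the line $H\in\A$, we have $H\subseteq W$. The line $H$ is irreducible, hence lies in $\G$, so it is contained in some maximal element of $\G$ inside $W$. Those maximal elements are exactly the $Y_i$, so $H\subseteq Y_i$ for some $i$. This gives $x\in Y_i\subsetneq X$, contradicting the minimality of $X=p_\S(x)$. (If $r=0$, then $W=0$ and $x=0$, which is excluded.)

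The second step is to apply $\rho$. For $Y\subseteq X$ we have
\[
\rho(\beta(Y))=\prod_{Z\supseteq Y}u_Z=\Bigl(\prod_{Z\supseteq X}u_Z\Bigr)\cdot\prod_{Y\subseteq Z\subsetneq X}u_Z .
\]
This uses that the elements of $\S$ containing $Y$ form a chain, which passes through $X$ because $Y\subseteq X$. Factoring out $\rho(\beta(X))=\prod_{Z\supseteq X}u_Z$, we get
\[
\rho(x)=\rho(\beta(X))\cdot\Bigl(c_X+\sum_{Y\subsetneq X}c_Y\prod_{Y\subseteq Z\subsetneq X}u_Z\Bigr).
\]
Define $P^\S_x=\sum_Y c_Y\beta(Y)/\beta(X)$. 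Its image under $\rho$ is the bracket above. That bracket involves only $u_Z$ with $Z\subsetneq X$, and its value at $0$ is $c_X\neq0$, because each product with $Y\subsetneq X$ contains at least the factor $u_Y$.

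The only real obstacle is the first step, $c_X\neq0$, which relies on the building-set decomposition property.
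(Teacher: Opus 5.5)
Your proof is correct. It follows essentially the standard argument of De Concini--Procesi, which the paper cites without reproducing: first show that the coefficient of $\beta(p_\S(x))$ is nonzero, using the $\G$-decomposition of the sum of the maximal elements of $\S$ strictly below $p_\S(x)$ together with $H\in\F(\A)\subseteq\G$; then obtain the factorization from the fact that the elements of $\S$ containing a given $Y$ form a chain through $p_\S(x)$. You invoke the hypothesis $x\in H\in\A$ exactly where it is needed, since for an arbitrary vector of $V^*$ that leading coefficient can vanish.
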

    \noindent
    For all $H\in \A$ fix a non-zero vector $x_H\in H$ and write $P^\S_H$ for $P^\S_{x_H}$. Consider the open subset of $\mathbb{A}^{l+1}$ given by
    \[
        U_{\S}'=\text{Spec}\, k[u_X\mid X\in \S]\left[ \left(P^\S_H\right)^{-1} \,\middle\vert H\in \A \right].
    \]
    The morphism $\rho$ induces an isomorphism between the complement in $U_\S'$ of the coordinate hyperplanes $u_X=0$ for $X\in \S$ and $X(V,\A)$.\\
    Given any $X\in \G$, the morphism $\phi_X \colon  X(V,\A)\to \mathbb{P}(V/X^\perp)$ defined above extends to a map $\phi_X \colon  U_\S'\to \mathbb{P}(V/X^\perp)$. In this way, one obtains an embedding $U_\S'\hookrightarrow \overline X(V,\A,\G)$. Notice that, although the definition of $U_\S'$ depends on the choice of an adapted basis for $\S$, its image in $\overline X(V,\A,\G)$ does not.
    \begin{proposition}[{\cite[Proposition 1.5]{De_Concini-Procesi-Wonderful_models_of_subspace_arrangements}}]
        The varieties $U_\S'$ for $\S$ running through all maximal $\G$-nested sets cover $\overline{X}(V,\A,\G)$. For all $X\in\S$ the closure of the subscheme of equation $u_X=0$ in $\overline X(V,\A,\G)$ coincides with $D_X'$.
    \end{proposition}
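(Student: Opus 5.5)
The proposition has two parts: the charts $U_\S'$ cover $\overline X(V,\A,\G)$, and inside each chart the boundary divisor $D_X'$ is cut out by $u_X=0$. My plan is to derive both from a single local description of points of $\overline X$.

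\textbf{Step 1: the chart map extends and embeds.} First I check that $\rho$ composed with each $\phi_Y$, $Y\in\G$, extends regularly to $U_\S'$. Write $\G$-decomposition $Y=Y_1\oplus\dots\oplus Y_r$ and pick coordinates on $V/Y^\perp$ given by the lines $H\subseteq Y$. By the lemma on $P^\S_x$, every $x_H$ with $H\subseteq Y$ equals $\beta(p_\S(x_H))P^\S_H$. Here $\beta(p_\S(x_H))$ is a monomial in the $u_Z$, and $P^\S_H$ is a unit on $U_\S'$.

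The nested property then gives a unique minimal $Z_0\in\S$ containing $Y$. Expanding via the lemma, all coordinates $x_H$ with $H\subseteq Y$ are divisible by $\prod_{Z\supseteq Z_0}u_Z$. At least one of them is that monomial times a unit; I take $x_H=\beta(Z_0)$ when $Z_0\subseteq Y$, and otherwise argue through the decomposition. Dividing by this common factor gives a regular map to $\mathbb{P}(V/Y^\perp)$. This produces $U_\S'\to V\times\prod_{Y\in\G}\mathbb{P}(V/Y^\perp)$, and it is a closed embedding onto an open subset of $\overline X$. Indeed, the coordinates $u_X=\beta(X)/\beta(X^+)$ are recovered as ratios of homogeneous coordinates of $\phi_{X^+}$, so the map has a regular inverse on its image.

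\textbf{Step 2: covering.} Let $p\in\overline X$. I associate to it the set $\S_p$ of those $Y\in\G$ for which the $V$-component of $p$ lies in the limiting position, that is, $p$ lies in the closure of the locus where the projection to $V$ lands in $Y^\perp$ in the appropriate iterated sense. Concretely, I build $\S_p$ inductively from the projective components: starting from the image of $p$ in $V$, take the minimal $Y$ with that image in $Y^\perp$, then look at the point of $\mathbb{P}(V/Y^\perp)$, and so on.

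I then show $\S_p$ is $\G$-nested and complete it to a maximal $\S$. Next I choose an adapted basis $\beta$ whose elements do not vanish at the relevant projective coordinates. This is possible because nonvanishing is an open condition and the lines of $\A$ span each $X$. With this choice, the candidate values $u_X=\beta(X)/\beta(X^+)$ are finite at $p$, so $p\in U_\S'$. I expect this step to be the main obstacle. The difficulty is making "the limiting flag of $p$ is nested" rigorous, including handling sums of incomparable elements via the $\G$-decomposition.

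\textbf{Step 3: divisors.} On $U_\S'$ the complement of $X(V,\A)$ is $\bigcup_{X\in\S}\{u_X=0\}$, since the $P^\S_H$ are units there. Each $\{u_X=0\}$ is irreducible, being a coordinate hyperplane intersected with an open set. Its generic point maps under $\rho$ into $X^\perp$, with its $\mathbb{P}(V/X^\perp)$-component generic. So its closure is the component corresponding to $X$, which by definition is $D_X'$. Finally, the image of $U_\S'$ in $\overline X$ does not depend on $\beta$. This is because changing the adapted basis changes the $u_X$ by units, by the same lemma on $P^\S_x$.
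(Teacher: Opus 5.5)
The paper gives no proof of this statement; it is quoted from De Concini--Procesi. So I am comparing your outline with the standard argument there. Your Steps 1 and 3 match that argument and are fine in outline. In Step 1, the fact you need is that some $H\subseteq Y$ has $p_\S(H)=Z_0$. This holds because otherwise $Y$ would lie in the $\G$-decomposition $W_1\oplus\dots\oplus W_m$ of the maximal elements of $\S$ strictly below $Z_0$, hence in a single $W_i$.

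\textbf{The gap is Step 2}, which is the actual content of the covering claim. The two points you leave open are exactly the ones that matter.

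\textbf{First, the construction of $\S_p$ cannot be a chain} built by picking ``the minimal $Y$'' at each stage. Write $v$ for the $V$-component of $p$ and $\ell_Z\in\mathbb{P}(V/Z^\perp)$ for its $Z$-component. Suppose $v$ lies on $H_1^\perp\cap H_2^\perp$ and on no other hyperplane, with $H_1+H_2\notin\G$. Then every chart containing $p$ must have both $H_1$ and $H_2$ in its nested set. So the recursion has to branch:
\begin{itemize}
\item let $C$ be the span of the lines of $\A$ vanishing at $v$, and put all members of its $\G$-decomposition into $\S_p$;
\item inside each member $Z$, take the span of the lines $H\subseteq Z$ with $x_H(\ell_Z)=0$. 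This is a proper subspace of $Z$. Add the members of its $\G$-decomposition (the ``children'' of $Z$), and repeat.
\end{itemize}
Nestedness then follows because an element of $\G$ contained in such a decomposition lies in a single member.

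\textbf{Second, and more seriously, finiteness of the $u_X$ at $p$ does not put $p$ in $U'_\S$.} The chart is the complement of $\{P^\S_H=0\}$ for every $H\in\A$. By density of $X(V,\A)$, the value of $P^\S_H$ at $p$ is $x_H/\beta(p_\S(H))$ evaluated on $\ell_{p_\S(H)}$. So you must show $x_H(\ell_{p_\S(H)})\neq 0$ for all $H$.

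This is a condition on $\S$, not on $\beta$. Your choice of $\beta$ only controls $H=\beta(X)$, where $P^\S_H=1$. For a bad maximal $\S$, as in the example above, $p$ lies in no $U'_{\S,\beta}$ at all.

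With the branching $\S_p$ and an arbitrary completion $\S$, the condition follows from minimality. Let $Y=p_\S(H)$, and let $Z$ be the smallest element of $\S_p$ containing $Y$, or $v$ itself if there is none.
\begin{itemize}
\item Then $\ell_Z$ (resp.\ $v$) does not vanish on $Y$, since otherwise $Y$ would lie in a child of $Z$.
\item Hence $\ell_Y$ is the image of $\ell_Z$. These compatibilities hold on $\overline X$ because they are closed conditions on the open set where the projection is defined.
\item If $x_H$ vanished on $\ell_Z$, then $H$ would lie in a child of $Z$. Being nested with $Y$, that child is comparable to $Y$, which contradicts either $p_\S(H)=Y$ or the minimality of $Z$.
\end{itemize}

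\textbf{Finally}, to conclude that $p$ is the image of the point with coordinates $u_X(p)$, you need more than Step 1 gives. Call the chart map $\iota$ and your inverse $r$, defined where the relevant coordinates of the $\ell_{X^+}$ are nonzero. You need the image of $\iota$ to be all of $r^{-1}(U'_\S)$. This follows from $\iota\circ r=\mathrm{id}$ on the dense subset $X(V,\A)$ together with separatedness. Step 1 as written, a left inverse on the image, does not provide it.
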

    \noindent
    A similar description applies to $\overline Y(V,\A,\G)$ by setting to zero the coordinate $u_{V^*}$ in $U_\S'$. Notice that the polynomials $P^\S_H$ do not depend on $u_{V^*}$. We then have a local chart
    \[ 
        U_{\S}=\text{Spec}\, k[u_X\mid X\in \S, X\neq V^*]\left[ \left(P^\S_H\right)^{-1} \,\middle\vert H\in \A \right].
    \]
    \begin{proposition}[{\cite[Section 4.1]{De_Concini-Procesi-Wonderful_models_of_subspace_arrangements}}]
        The varieties $U_\S$ for $\S$ running through all maximal $\G$-nested sets cover $\overline{Y}(V,\A,\G)$. For all $X\in\S\setminus \{V^*\}$ the closure of the subscheme of equation $u_X=0$ in $\overline Y(V,\A,\G)$ coincides with $D_X$.
    \end{proposition}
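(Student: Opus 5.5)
The plan is to deduce this from the corresponding statement for $\overline X(V,\A,\G)$ (the proposition on the charts $U_\S'$), using the identification $\overline Y(V,\A,\G)\cong D'_{V^*}$ from the theorem quoted above. We always assume $V^*\in\G$, so every maximal $\G$-nested set $\S$ contains $V^*$, and $u_{V^*}$ is one of its coordinates.

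\textbf{Step 1: the charts $U_\S$ embed and cover.} For each maximal $\G$-nested set $\S$, the chart $U_\S'$ is an open subset of $\overline X(V,\A,\G)$. By the proposition on $U_\S'$, the closure of $\{u_{V^*}=0\}$ in $\overline X(V,\A,\G)$ is $D'_{V^*}$, so
\[
U_\S'\cap D'_{V^*}=\{u_{V^*}=0\}\subseteq U_\S' .
\]
The polynomials $P^\S_H$ do not involve $u_{V^*}$, because $P^\S_H$ only depends on $u_Y$ with $Y\subsetneq p_\S(x_H)\subseteq V^*$. Hence this slice is exactly $U_\S$. Since the $U_\S'$ cover $\overline X(V,\A,\G)$, their intersections with $D'_{V^*}$ cover $D'_{V^*}\cong \overline Y(V,\A,\G)$.

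\textbf{Step 2: compatibility with the direct construction of $\overline Y$.} I would check that the isomorphism $D'_{V^*}\cong\overline Y(V,\A,\G)$ restricted to $U_\S$ is the map built from the $\phi_X$. On $U_\S$, the coordinate of $\mathbb P(V)=\mathbb P(V/(V^*)^\perp)$ is $\phi_{V^*}$. Setting $u_{V^*}=0$ kills the common factor $u_{V^*}$ in all $\beta(X)$, so $\phi_{V^*}$ extends to the slice as the point $[\beta(X)/u_{V^*}]$. These are products of the $u_Y$ with $Y\neq V^*$, and the coordinate $\beta(V^*)/u_{V^*}$ equals $1$. The same check applies to each $\phi_X$ with $X\ne V^*$.

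\textbf{Step 3: the boundary divisors.} For $X\in\S\setminus\{V^*\}$, the theorem quoted above gives $D_X\leftrightarrow D'_{V^*}\cap D'_X$. In the chart, $D'_X\cap U'_\S=\{u_X=0\}$, so
\[
D_X\cap U_\S=\{u_X=0\}\cap U_\S .
\]
The closure statement follows because $D_X$ is irreducible and closed in $\overline Y(V,\A,\G)$, and $\{u_X=0\}\cap U_\S$ is a nonempty open subset of it (it contains the origin of the chart).

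\textbf{Main obstacle.} The delicate point is Step 2: making sure the abstract isomorphism of the quoted theorem really is given chartwise by $u_{V^*}=0$. The alternative, redoing De Concini–Procesi's argument directly for the graph closure in $\mathbb P(V)\times\prod_{X\in\G}\mathbb P(V/X^\perp)$, is the fallback: scale away $u_{V^*}$ projectively and repeat the proof of Proposition 1.5 there.
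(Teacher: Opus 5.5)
Your proposal is correct and follows the paper's route: the paper gives no argument of its own beyond citing De Concini--Procesi and remarking that $U_\S$ is obtained from $U'_\S$ by setting $u_{V^*}=0$, using that the $P^\S_H$ do not involve $u_{V^*}$ together with $\overline Y(V,\A,\G)\cong D'_{V^*}$, which is exactly what your Steps 1--3 do. The compatibility you flag in Step 2 is settled by homogeneity: since $\rho^*\beta(Y)=u_{V^*}\prod_{Z\in\S,\,Y\subseteq Z\neq V^*}u_Z$ for every $Y\in\S$, each extended $\phi_X$ on $U'_\S$ is independent of $u_{V^*}$, so the slice $\{u_{V^*}=0\}$ maps to $\overline Y(V,\A,\G)$ by the same formulas.
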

    \noindent
    If the situation requires a more precise notation, we will denote $U_\S$ by $U_{\S,\beta}$ or $U_\S(V,\A,\G)$. Whenever we speak of the divisor $D_X$ for $X\in \S\setminus \{V^*\}$, we will implicitly assume that $X\neq V^*$ when needed. The same applies when we list the coordinates $u_X$ of $U_\S$. When necessary, these coordinates will be denoted also by $u_X^\S$. \\
    The isomorphism of Proposition~\ref{prop: buondary divisor as product of compactifications} can be already described at the level of the local charts $U_\S$:
    \begin{proposition}
        \label{prop: local decomposition of boundary divisors}
        Let $\S$ be a maximal $\G$-nested set and fix $X\in \S$. Then there is an isomorphism
        \[
        \{u_X^\S=0\} \cong U_{\S\vert_X}(V/X^\perp, \A\vert_X, \G\vert_X)\times U_{ \S/X}(X^\perp, \A/X,\G/X).
        \]
    \end{proposition}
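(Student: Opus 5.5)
Since $X\in\S$ and $X\neq V^*$, the coordinate ring of $U_\S$ is a localization of $k[u_Y\mid Y\in \S\setminus\{V^*\}]$, with $u_X$ one of the variables. Setting $u_X=0$ gives a localization of a polynomial ring in the variables $u_Y$, $Y\neq X,V^*$. By Lemma~\ref{lemma: restrictions and quotients of maximal nested sets}, these variables split into two groups. The first group is $\{u_Y\mid Y\in\S\vert_X, Y\neq X\}$, which are the coordinates of $U_{\S\vert_X}(V/X^\perp,\A\vert_X,\G\vert_X)$; there $X$ plays the role of the whole dual space, so $u_X$ itself is dropped. The second group is $\{u_Y\mid Y\in \S, Y\not\subseteq X, Y\neq V^*\}$. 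The map $Y\mapsto (Y+X)/X$ is a bijection from these $Y$ onto $(\S/X)\setminus\{V^*/X\}$; both sets have size $\#\S-\dim X-1=\dim X^\perp-1$. So the variables match those of $U_{\S/X}(X^\perp,\A/X,\G/X)$. We use the adapted bases $\beta\vert_X$ and $\beta/X$ induced by $\beta$. The proof then reduces to comparing the localizing polynomials, that is, checking that
\[
\left\{P^\S_H\big\vert_{u_X=0}\right\}_{H\in\A}
\]
generates the same multiplicative set, up to units, as $\{P^{\S\vert_X}_H\}_{H\in\A\vert_X}\cup\{P^{\S/X}_{\bar H}\}_{\bar H\in\A/X}$.

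I split into cases according to the position of $p_\S(x_H)$ relative to $X$. Recall that $x_H=\beta(p_\S(x_H))\,P^\S_H$, where $P^\S_H$ involves only the $u_Y$ with $Y\subsetneq p_\S(x_H)$.

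\emph{Case 1: $H\subseteq X$.} Then $p_\S(x_H)\subseteq X$, and the expansion of $x_H$ in the adapted basis uses only the $\beta(Y)$ with $Y\subseteq X$. The formula $\beta(Y)\mapsto\prod_{Z\supseteq Y}u_Z$ factors as $\prod_{Y\subseteq Z\subseteq X}u_Z\cdot\rho(\beta(X))/u_X$. Dividing by $\beta(p_\S(x_H))$ cancels the common factor $\prod_{Z\supseteq p_\S(x_H)}u_Z$. So $P^\S_H$ equals exactly the polynomial $P^{\S\vert_X}_H$ computed with $\beta\vert_X$, and it does not involve $u_X$.

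\emph{Case 2: $H\not\subseteq X$.} Here $p_\S(x_H)\not\subseteq X$. Write $x_H=\sum_Y c_Y\beta(Y)$. After substituting $u_X=0$, every term with $Y\subseteq X$ dies, because such a $\beta(Y)$ maps to a product containing $u_X$. Note that $Y\subseteq X$ is exactly the condition under which $u_X$ appears in the image of $\beta(Y)$. The surviving terms are the $Y\not\subseteq X$. Their images modulo $X$ give the expansion of $\bar x_H\in(H+X)/X$ in the basis $\beta/X$. I must check that the monomials $\prod_{Z\supseteq Y}u_Z$ for $Y\not\subseteq X$ agree with the $\S/X$-monomials $\prod_{\bar Z\supseteq \bar Y}u_{\bar Z}$ under the bijection above. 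This amounts to showing that for $Y,Z\in\S$ not contained in $X$, $Y\subseteq Z$ iff $Y+X\subseteq Z+X$. The forward direction is clear. For the converse, nestedness gives that $Z$ and $X$ are comparable or form a decomposition; then the argument in the proof of Lemma~\ref{lemma: restrictions and quotients of maximal nested sets} shows $Y\subseteq Z$.

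With this identity, $P^\S_H\vert_{u_X=0}=P^{\S/X}_{\bar H}$, possibly up to a nonzero scalar coming from the choice of $x_{\bar H}$. It also follows that $p_{\S/X}(\bar x_H)=(p_\S(x_H)+X)/X$. Different $H$ may have the same $\bar H$, which only repeats generators. Every $\bar H\in\A/X$ arises from some $H$ with $H\not\subseteq X$, so all localizing polynomials appear on both sides. The ring isomorphism is therefore
\[
k[u_Y]_{Y\ne X,V^*}\bigl[(P^\S_H)^{-1}\big\vert_{u_X=0}\bigr]\;\cong\; \mathcal{O}_{U_{\S\vert_X}}\otimes_k\mathcal{O}_{U_{\S/X}}.
\]

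The main obstacle is Case 2. There I need the monomial comparison, i.e.\ that the poset structure of $\{Y\in\S: Y\not\subseteq X\}$ is carried isomorphically onto $\S/X$. I also need $\beta/X$ to be a genuine adapted basis, meaning that $\{\beta(Y)+X : Y+X\subseteq Z+X\}$ is a basis of $(Z+X)/X$. I would prove both from the fact that two elements of a nested set are either comparable or give a $\G$-decomposition, counting dimensions with $\dim(Z+X)/X=\#\{Y\in\S:Y\subseteq Z+X\}-\dim(Z\cap X)$ in the decomposition case.
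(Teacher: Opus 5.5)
Your proposal is correct and follows the same route as the paper. You match the coordinates $u^\S_Y$ for $Y\subsetneq X$ and for $Y\not\subseteq X$ with those of $U_{\S\vert_X}$ and $U_{\S/X}$ via $Y\mapsto (Y+X)/X$, then compare the localizing polynomials $P^\S_H$ in the cases $H\subseteq X$ and $H\not\subseteq X$, and your check that this map is an order isomorphism on $\{Y\in\S : Y\not\subseteq X\}$ supplies a detail the paper leaves implicit.

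There is one small slip in your final remark. In the decomposition case $\#\{Y\in\S : Y\subseteq Z+X\}=\dim Z+\dim X$, so the count that works in both cases is $\dim (Z+X)/X=\#\{Y\in\S : Y\subseteq Z\}-\dim(Z\cap X)$. In any event, the adapted-basis property of $\beta/X$ follows directly from $\{\beta(Y) : Y\in\S,\ Y\subseteq Z\}$ being a basis of $Z$.
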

    \begin{proof}
        We consider the morphism 
        \[
            f \colon U_{\S\vert_X}(V/X^\perp, \A\vert_X, \G\vert_X)\times U_{ \S/X}(X^\perp, \A/X,\G/X) \longrightarrow  \{u_X^\S=0\}
        \]
        which, at the level of regular functions, is defined by
        \[
            u^\S_Y \longmapsto 
            \begin{cases}
                u_Y^{\S\vert_X} & \text{if $Y\subsetneq X$}; \\
                u_{(Y+X)/X}^{\S/X} & \text{if $Y\not\subseteq X$}.
            \end{cases}
        \]
        For all $H\in \A$ with $H\subseteq X$ the polynomial $P^\S_H$ depends only on the variables $u_Y^\S$ with $Y\subseteq X$, since $p_\S(H)\subseteq X$. It follows that the image of $P^\S_H$ under $f$ equals $P^{\S\vert_X}_H$.\\
        Take now $H\in \A$, $H\notin X$, so $p_\S(H)\not\subseteq X$. Write 
        \[
            x_H=\sum_{Y\in \S} c_Y x_{\beta(Y)}
        \]
        for some uniquely determined $c_Y\in k$. We then have
        \[
            x_{(H+X)/X}=\sum_{Y\in \S,Y\not\subseteq X} c_Y x_{\beta((Y+X)/X)},
        \]
        because $\beta(Y)\in Y\subseteq X$ for all $Y\in \S$, $Y\subseteq X$. On the other hand, since $x_\beta(Y)=\prod_{Z\in \S, Y\subseteq Z} u_Y^\S$, setting to zero $u_X^\S$ in $x_H$ has the effect of setting to zero all $x_{\beta(Y)}$ with $Y\subseteq X$. This implies that the image of $P^\S_H\vert_{u_X^\S=0}$ via $f$ equals $P^{\S/X}_{(H+X)/X}$. In particular, the restriction of $P^\S_H$ to $u_X^\S=0$ does not depend on the choice of a representative of $(H+X)/X$ in $\A$. Having proven this, it follows immediately that $f$ is well-defined and an isomorphism.
    \end{proof}

    \subsection{Examples}
        
    Let us spend a few words on a few key examples for our exposition. Given $l\ge 1$, let $V$ be an $(l+1)$-dimensional $\Q$-vector space with basis $x_1,\dots, x_{l+1}$.
    \begin{definition}
        The \emph{braid arrangement} is the arrangement $(V,\A_l)$ with
        \[
            \A_l=\{ \langle x_i\rangle, \langle x_i-x_j\rangle \mid i,j=1,\dots, l+1, \; i\neq j\}. 
        \]
    \end{definition}
    \noindent 
    What we define here as braid arrangement is the essentialization of the homonymous arrangement in \cite{Orlik-Terao-Arrangements_of_hyperplanes}. In affine coordinates $t_i=\frac{x_i}{x_{l+1}}$, the variety $Y(V,\A_l)$ is the complement in $\mathbb{A}^l$ of the hyperplanes of equation $t_i=0$, $t_i=1$, $t_i=t_j$. This scheme can be defined over $\Z$ and it is isomorphic to the moduli space $\f{M}_{0,l+3}$ of smooth projective curves of genus zero with $l+3$ marked points. This coincides with the quotient of $(\mathbb{P}^1)^{l+3}$ without the big diagonal modulo the diagonal action of $\text{PSL}_2$. For a more detailed description of this moduli space, we refer to \cite{Brown-Multiple_zeta_values_and_periods_of_moduli_spaces_of_curves}.\\
    Let us describe $L(\A_l)$. Let $P_{l+1}$ be the set of subsets of $\{0,\dots,l+1\}$ with cardinality at least $2$. Consider the map $f \colon P_{l+1}\to L(\A_l)\setminus\{0\}$ which sends $\lambda \in P_{l+1}$ to
    \[
    f(\lambda)=
    \begin{cases}
        \langle x_i\mid i\in \lambda\rangle & \text{if $0\in \lambda$}; \\
        \langle x_i-x_j \mid i,j\in \lambda,\; i\neq j\rangle & \text{if $0\notin\lambda$}.
    \end{cases}
    \]
    The image of $f$ coincides with the set of irreducible elements of $L(\A_l)\setminus\{0\}$. It is not difficult to see that $f$ induces an order-preserving bijection between $P_{l+1}$ and $\F(\A_l)\setminus \{0\}$, where we order $P_{l+1}$ by set-theoretic inclusion. \\
    For a subset $\lambda\subseteq \{1,\dots, l+1\}$, define $X(\lambda)=f(\lambda\cup \{0\})$ and, if $\#\lambda\ge 2$, set $Y(\lambda)=f(\lambda)$. Given $\Lambda\subseteq \{1,\dots, l+1\}$, let $\{\lambda_1,\dots, \lambda_r\}$  be a partition of $\Lambda$ and $\{\lambda_1',\dots, \lambda_s'\}$ be a partition of $\{ 1,\dots, l+1\}\setminus \Lambda$. Then the direct sum
    \[
        X(\lambda_1)\oplus \dots \oplus X(\lambda_r)\oplus Y(\lambda'_1)\oplus \dots \oplus Y(\lambda_s')
    \]
    is a decomposition, and all elements of $L(\A_l)$ may be written in this form for some $\Lambda\subseteq \{1,\dots, l+1\}$ and suitable partitions $\{\lambda_1,\dots, \lambda_r\}$, $\{\lambda_1',\dots, \lambda_s'\}$. \\
    Given $X\in L(\A)$ of dimension $n$, the arrangements $\A_l\vert_X$ and $\A_l/X$ are isomorphic to $\A_{n-1}$ and $\A_{l-n}$ respectively. Moreover, the building set $\F(\A_l)/X$ coincides with the family of irreducible elements $\F(\A_{l-n})$ of $L(\A_{l-n})$.\\
    The compactification $\overline Y(V,\A_l, \F(\A_l))$ of $Y(V,\A_l)$ is isomorphic to the moduli space $\overline{\f{M}}_{0,l+3}$ of stable projective curves of genus $0$ with $l+3$ marked points. The irreducible boundary divisor $D_X$ for $X\in \F(\A_l)$, $\dim X=n$, can be written as
    \[
        D_X\cong \overline{Y}(V/X, \A_{n-1}, \F(\A_{n-1}))\times \overline{Y}(X^\perp, \A_{l-n}, \F(\A_{l-n}))  \cong \overline{\f{M}}_{0,n+2} \times \overline{\f{M}}_{0,l-n+3}.
    \]
    We include here a picture of the real points of $Y(V,\A_2)$.
    \begin{center}
        \includegraphics[width=0.25\textwidth]{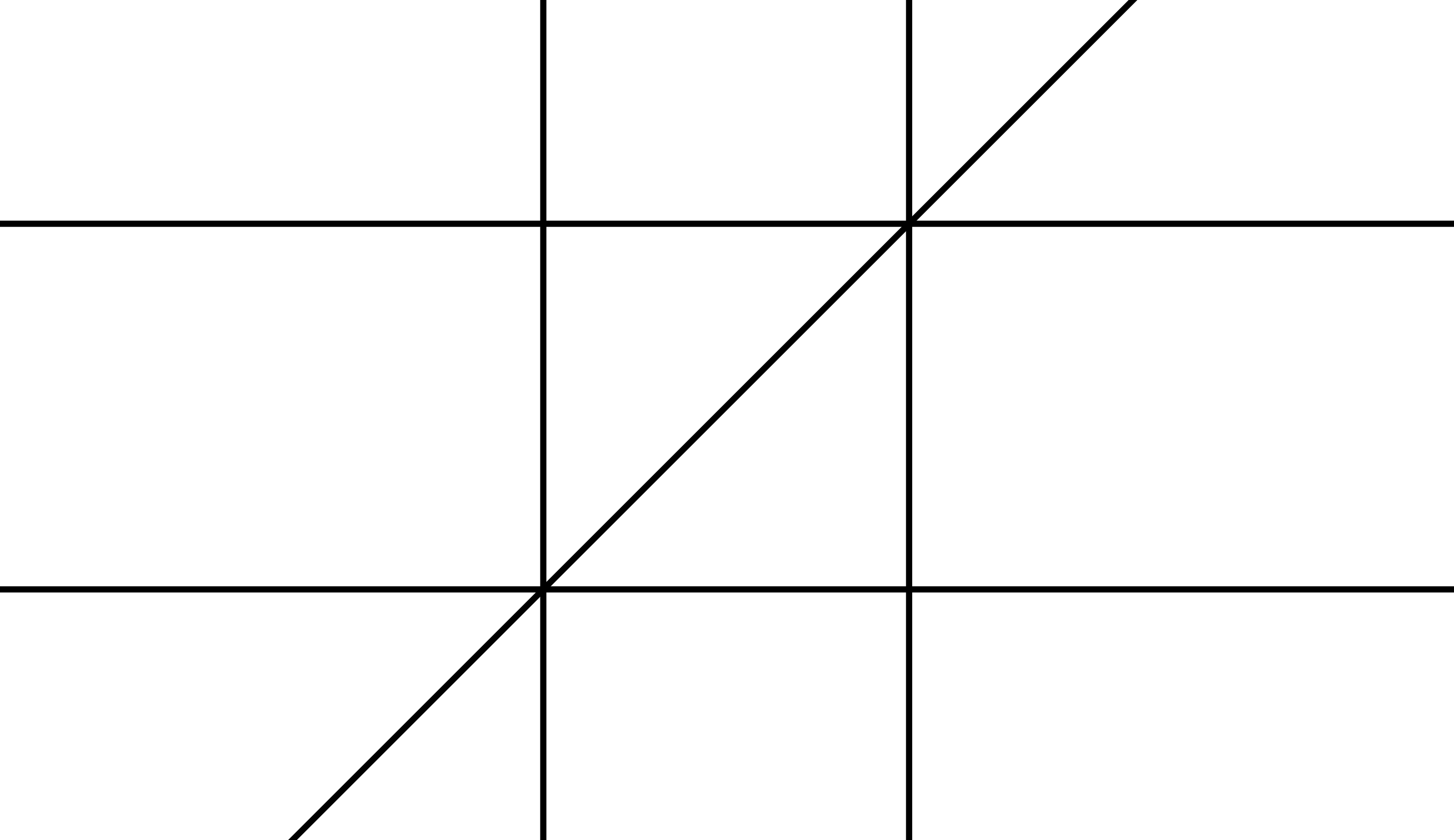}
    \end{center}
    We now illustrate the main motivating example for this work. Let $q$ be a positive integer and $\mu\in \C$ a primitive $q$-th root of unity. We work over the base field $k=\Q(\mu)$. Let $V$ be an $(l+1)$-dimensional $k$-vector space and fix a basis $x_1,\dots, x_{l+1}$ for $V^*$. 
    \begin{definition}
        \label{def: arrangement full monomial group}
        The \emph{reflection arrangement of the full monomial group} is the arrangement $(V,\A_{l,q})$ with
        \[
        \A_{l,q}=\left\{\langle x_1\rangle ,\dots,\langle x_{l+1}\rangle \right\} \cup \{ \langle x_i-\mu^n x_j\rangle \mid n=1,\dots, q,\; i,j=1,\dots,l+1, \;i\neq j \},
        \]
        whose defining polynomial is therefore
        \[
            x_1\dots x_{l+1} \prod_{1\le i <j\le l+1}(x_i^q-x_j^q).
        \]
    \end{definition}
    \noindent
    Putting the hyperplane $x_{l+1}=0$ at infinity, the variety $Y(V,\A)$ is the complement in $\mathbb{A}^l$ of the hyperplanes with equations, in affine coordinates $t_i=\frac{x_i}{x_{l+1}}$, given by
    \[ 
        t_i=0, \quad t_i=\mu^n, \quad t_i=\mu^n t_j
    \]
    for $ n=1,\dots, q$, $i,j=1,\dots,l$, $i\neq j $. \\
    Notice that for $q=1$ we recover the braid arrangement. If $q\ge 2$, the hyperplanes in $\A_{l,q}$ are the fixed points of the reflections of a certain finite reflection group, the so-called \emph{full monomial group} $G(q,1,l+1)$. The latter is isomorphic to the wreath product $C_q\wr \text{Sym}_{l+1}$, where $C_q$ is a cyclic group of order $q$. \\
    For $q=2$, the full monomial group $G(2,1,l)$ is the Coxeter group of type $B_l$. We include here a picture of the real points of $Y(V,\A_{2,2})$.\\
    \begin{center}
        \includegraphics[width=0.25\textwidth]{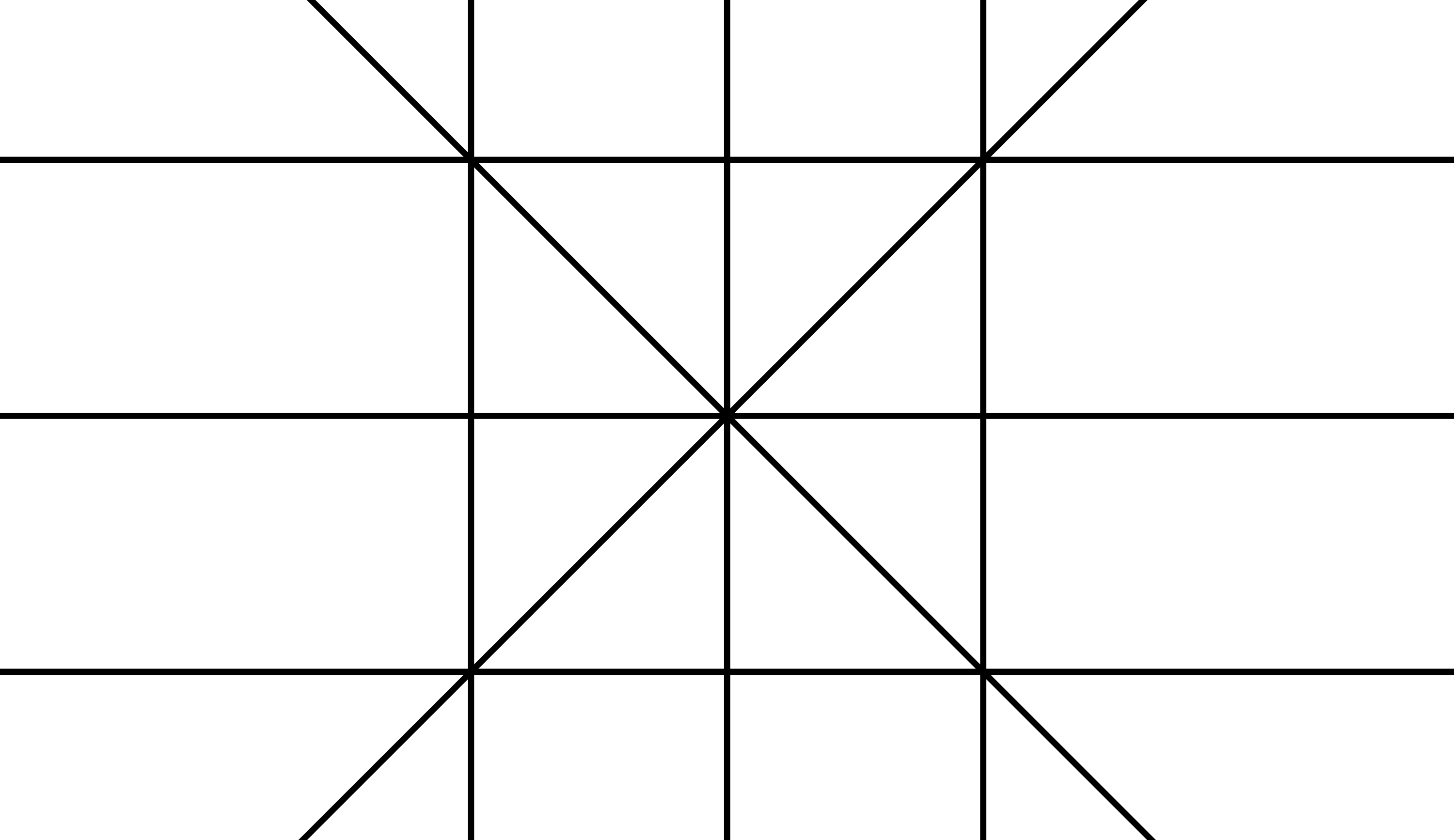}
    \end{center}Let us briefly describe $L(\A_{l,q})$. Let $\delta, \lambda\subseteq \{1,\dots,l+1\}$ with $\#\lambda\ge 2$ and consider any function $\nu \colon  \lambda\to \{1,\mu,\dots, \mu^{q-1}\}$. We define 
    \[
        X(\delta)=\sum_{i\in \delta} \langle x_i\rangle, \quad X(\lambda, \nu)=\sum_{i,j\in \lambda} \langle \nu(j)x_i-\nu(i)x_j\rangle.
    \]
    If $\delta=\emptyset$, we set $X(\delta)=0$. Clearly $X(\delta), X(\lambda,\nu)\in L(\A_{l,q})$ and $\dim X(\delta)=\#\delta$, $\dim X(\lambda,\nu)=\#\lambda-1$. By \cite[Proposition 6.74]{Orlik-Terao-Arrangements_of_hyperplanes}, every element of $L(\A_{l,q})$ is of the form
    \[
        X(\delta)\oplus X(\lambda_1,\nu_1)\oplus \dots \oplus X(\lambda_r,\nu_r)
    \]
    for some disjoint subsets $\delta,\lambda_1,\dots, \lambda_r\subseteq \{1,\dots,l+1\}$ with $\#\lambda_i\ge 2$ and some functions $\nu_i \colon \lambda_i\to \{1,\mu,\dots, \mu^{q-1}\}$. Furthermore, one may check that the above direct sum is an irreducible decomposition, so the irreducible elements of $L(\A_{l,q})$ are precisely those of the form $X(\delta)$ or $X(\lambda,\nu)$. \\
    Every one-dimensional hyperplane arrangement obtained from $\A_{l,q}$ by restriction or quotient is isomorphic to $\A_{l',q}$ or $\A_{l',1}$ for some $l'\le l$. In particular, it follows that for all $X\in \F(\A_{l,q})$ the boundary divisor $D_X$ of $\overline{Y}(V,\A_{l,q},\F(\A_{l,q}))$ is isomorphic to the product of varieties of the form $\overline{Y}(V,\A_{l',q'},\G')$ for some $l'<l$, $q'=q$ or $q'=1$ and some building set $\G'$ for $L(\A_{l',q'})$. 

    \subsection{Supersolvable arrangements}
        
    In this section, we describe a particular class of hyperplane arrangements, the so-called \emph{supersolvable arrangements}, following the exposition of \cite{Orlik-Terao-Arrangements_of_hyperplanes}.\\
    Let $(V,\A)$ be an essential hyperplane arrangement over $k$ of dimension $l+1$. Given $X,Y\in L(\A)$, in general it is not true that $X\cap Y\in L(\A)$. 
    \begin{definition}
        An element $X\in L(\A)$ is said to be \emph{modular} if for all $Y\in L(\A)$ we have $X\cap Y\in L(\A)$.     
    \end{definition}
    \noindent
    The elements $0$ and $V^*$ of $L(\A)$ are always modular, as well as the lines of $\A$. 
    \begin{example}
        In the braid arrangement $\A_l$ all irreducible elements of $L(\A_l)$ are modular. This is not true for the reflection arrangement of the full monomial group $\A_{l,q}$ with $q\ge 2$. For example, the only modular elements of codimension $1$ in $V^*$ are given by $\langle x_i \mid i\neq j\rangle$ for $i=1,\dots, l$.
    \end{example}
    \begin{lemma}
        \label{lemma: intersections with modular elts}
        Let $(V,\A)$ be a hyperplane arrangement and $M\in L(\A)$ a modular element.
        \begin{enumerate}
            \item If $N\in L(\A)$ is modular, then $M\cap N$ is modular in $L(\A)$.
            \item For all $X\in L(\A)$, the element $M\cap X \in L(\A)$ is modular in $L(\A\vert_X)$.
        \end{enumerate}
    \end{lemma}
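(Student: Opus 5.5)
Both parts follow directly from the definition of modularity: $X$ is modular in $L(\A)$ if $X\cap Y\in L(\A)$ for all $Y\in L(\A)$. The one fact I will use is that intersection of subspaces is associative and commutative.

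For the first point, let $M,N\in L(\A)$ be modular. Since $M$ is modular and $N\in L(\A)$, we get $M\cap N\in L(\A)$. Now take any $Y\in L(\A)$. Then
\[
(M\cap N)\cap Y = M\cap (N\cap Y).
\]
By modularity of $N$, the subspace $N\cap Y$ lies in $L(\A)$. By modularity of $M$, so does $M\cap(N\cap Y)$. Hence $M\cap N$ is modular in $L(\A)$.

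For the second point, recall from the discussion after the definition of restriction and quotient arrangements that
\[
L(\A\vert_X)=\{Y\in L(\A)\mid Y\subseteq X\}.
\]
First, $M\cap X\in L(\A)$ by modularity of $M$, and it is contained in $X$, so $M\cap X\in L(\A\vert_X)$. Next, take $Y\in L(\A\vert_X)$, so that $Y\in L(\A)$ and $Y\subseteq X$. Then $(M\cap X)\cap Y=M\cap Y$. This lies in $L(\A)$ because $M$ is modular, and it is contained in $Y\subseteq X$, so it lies in $L(\A\vert_X)$. This shows that $M\cap X$ is modular in $L(\A\vert_X)$.

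The only point needing any care is to identify $L(\A\vert_X)$ with the elements of $L(\A)$ contained in $X$. This identification is recorded earlier, citing \cite[Lemma 2.11]{Orlik-Terao-Arrangements_of_hyperplanes}. With it in hand, both statements are immediate and I expect no real obstacle.
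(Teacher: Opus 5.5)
Your proof is correct and follows the paper's argument essentially verbatim: associativity of intersection for part (1), and $(M\cap X)\cap Y=M\cap Y$ for $Y\subseteq X$ together with $L(\A\vert_X)=\{Y\in L(\A)\mid Y\subseteq X\}$ for part (2). You also check explicitly that $M\cap N\in L(\A)$ and $M\cap X\in L(\A\vert_X)$, which the paper leaves implicit.
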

    \begin{proof} 
        Let $M,N\in L(\A)$ be modular. For all $X\in L(\A)$ we have $N\cap X\in L(\A)$ by modularity of $N$. Since also $M$ is modular, $(M\cap N)\cap X=M\cap (N\cap X)\in L(\A)$. \\
        Let $M, X\in L(\A)$ with $M$ modular. For all $Y\in L(\A\vert_X)$ we have $Y\subseteq X$, so $M\cap X\cap Y=M\cap Y$ belongs to $L(\A)$ by modularity of $M$. But $M\cap X\cap Y\subseteq X$, hence it also lies in $L(\A\vert_X)$.
    \end{proof}
    \noindent
    The following result gives a geometric interpretation of the concept of modular elements.
    \begin{proposition}[{\cite[Theorem 5.111]{Orlik-Terao-Arrangements_of_hyperplanes}}]
    \label{prop: geometric meaning of modular element}
        Let $M\in L(\A)$. The following are equivalent:
        \begin{enumerate}
            \item $M$ is modular;
            \item The morphism $X(V,\A)\to X(V/M^\perp,\A\vert_M)$ induced by $V\to V/M^\perp$ is a fiber bundle projection.
        \end{enumerate}
    \end{proposition}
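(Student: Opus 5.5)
The plan is to compare the fibers of $\pi\colon X(V,\A)\to X(V/M^\perp,\A\vert_M)$ over different points of the base. Fix a point $p\in V/M^\perp$ in the base and a lift $\tilde p\in V$. The fiber $\pi^{-1}(p)$ is the affine space $\tilde p+M^\perp$ with the traces of the hyperplanes $H^\perp$, $H\in\A$, $H\not\subseteq M$, removed. The hyperplanes with $H\subseteq M$ do not meet this coset, because $x_H$ does not vanish at $p$.

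For $H\not\subseteq M$, the form $x_H$ is nonzero on $M^\perp$, since $(M^\perp)^\perp=M$. So each such $H$ cuts out a genuine affine hyperplane $A_H(p)$ in $\tilde p+M^\perp$. The fiber is therefore the complement of an affine arrangement in a space of dimension $\dim M^\perp$.

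For a subset $S\subseteq\{H\in\A\mid H\not\subseteq M\}$, let $Y_S=\sum_{H\in S}H$. The intersection $\bigcap_{H\in S}A_H(p)$ is empty precisely when some nonzero $y\in Y_S\cap M$ satisfies $y(p)\neq 0$, where $M$ is viewed as the dual of $V/M^\perp$. When this intersection is nonempty, its codimension is $\dim Y_S-\dim(Y_S\cap M)$.

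\textbf{(1)$\Rightarrow$(2).} Assume $M$ is modular.
\begin{enumerate}
\item Then $Y_S\cap M\in L(\A\vert_M)$, so if it is nonzero it is spanned by lines of $\A$ contained in $M$.
\item The forms spanning these lines do not vanish at any point $p$ of the base. Hence $\bigcap_{H\in S}A_H(p)$ is empty exactly when $Y_S\cap M\neq 0$.
\item This condition, and the dimension of the intersection when it is nonempty, do not depend on $p$. So the intersection poset of the fiber arrangement is constant over the base, and in particular no two hyperplanes $A_H(p)$ ever collide.
\item I would then produce local trivializations over small open sets of the base by smoothly isotoping the affine hyperplanes, using the fact that the combinatorial type is constant. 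This is the step I expect to be the main obstacle. The first thing I would try is to choose a linear complement $W$ of $M^\perp$ in $V$. Then, locally, I would construct a vector field on $V$ that is tangent to every $H^\perp$ and lifts constant vector fields on the base, and integrate it; the constant intersection poset is what makes this construction possible. Failing a clean argument, I would cite the standard lattice-isotopy (Randell) theorem for families of arrangements with constant intersection lattice.
\end{enumerate}

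\textbf{(2)$\Rightarrow$(1).} Assume $M$ is not modular.
\begin{enumerate}
\item I would first reduce to rank two: there exist $H,K\in\A$, neither contained in $M$, such that $(H+K)\cap M$ is a line $\ell$ that does not belong to $\A$. This is a standard reformulation of modularity obtained from a minimal counterexample $Y$ with $Y\cap M\notin L(\A)$; if it needs proof, I would induct on $\dim Y$.
\item Since $\ell\notin\A$, its hyperplane $\ell^\perp$ in $V/M^\perp$ is not one of the removed hyperplanes, so it meets the base.
\item At points $p\in\ell^\perp$ the forms $x_H$ and $x_K$ become proportional on the coset $\tilde p+M^\perp$, so $A_H(p)=A_K(p)$. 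At generic points of the base these two hyperplanes are distinct.
\item Consequently the number of distinct hyperplanes in the fiber, which equals $b_1$ of the fiber, is not constant over the base.
\item A fiber bundle over the connected base $X(V/M^\perp,\A\vert_M)$ has homeomorphic fibers, so $b_1$ would have to be constant. This contradiction shows that $M$ must be modular.
\end{enumerate}
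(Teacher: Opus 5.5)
Your set-up is correct, and so is (1)$\Rightarrow$(2) through item 3: each fiber is the complement of an affine arrangement in $\tilde p+M^\perp$ with fixed normals $x_H|_{M^\perp}$ and translation parts linear in $p$, and modularity makes its combinatorics constant. The paper gives no proof of its own (it cites Orlik--Terao). Item 4 still needs more than the one-parameter form of Randell's theorem. You need local triviality over a polydisc in the base, with fibers that are non-compact. What helps is that the normals do not depend on $p$, so the projective closures of the fibers have a constant trace at infinity. In codimension one the step is just a pullback of the Fadell--Neuwirth fibration.

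\textbf{The genuine gap is Step 1 of (2)$\Rightarrow$(1).} The rank-two reformulation of modularity is false once $\operatorname{codim}M\ge 2$, and with it the idea that $b_1$ of the fiber detects non-modularity. Take $V^*$ with basis $x_1,\dots,x_4$, $\A=\{\langle x_1\rangle,\langle x_2\rangle,\langle x_3\rangle,\langle x_4\rangle,\langle x_1+x_2+x_3+x_4\rangle\}$ and $M=\langle x_4,\,x_1+x_2+x_3+x_4\rangle$.
\begin{itemize}
\item $\langle x_1,x_2,x_3\rangle\cap M=\langle x_1+x_2+x_3\rangle\notin L(\A)$, so $M$ is not modular.
\item The only lines of $\A$ outside $M$ are $\langle x_i\rangle$ for $i\le 3$, and $\langle x_i,x_j\rangle\cap M=0$ for these. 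So no pair $H,K$ as in your Step 1 exists.
\item The fiber over the point $x_4=u$, $x_1+x_2+x_3+x_4=v$ (with $uv\neq 0$) is the plane $x_1+x_2+x_3=v-u$ minus the three lines $x_i=0$.
\item This fiber always has $b_1=3$. However, $b_2=3$ when $u\neq v$ (general position) and $b_2=2$ when $u=v$ (concurrent lines).
\end{itemize}
So the map is indeed not a bundle, but your invariant cannot see it. Your converse is complete only when $\operatorname{codim}M=1$. In that case the minimal witness $Y$ constructed below satisfies $\dim Y=\dim(Y\cap M)+1=2$.

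\textbf{How to repair the general case.} Choose $Y\in L(\A)$ of minimal dimension with $Y\cap M\notin L(\A)$. Put $Z=Y\cap M$, and let $Z_0\subsetneq Z$ be the span of the lines of $\A$ contained in $Z$.
\begin{itemize}
\item For every flat $Y'\subset Y$ of codimension one, $Y'\cap M=Y'\cap Z$ lies in $L(\A)$ by minimality, hence in $Z_0$. It also has codimension at most one in $Z$.
\item Therefore $Y'\cap Z=Z_0$ for all such $Y'$. These $Y'$ intersect in $0$, so $Z_0=0$ and $Z$ is a line not in $\A$.
\item Consequently $Z^\perp$ meets the base, and no $W\in L(\A)$ with $W\cap M=Z$ contains a line of $\A\vert_M$.
\end{itemize}
Now compare the Poincar\'e polynomials $P_p$, $P_{p'}$ of the fibers over a generic point $p$ of the base and a generic point $p'$ of $Z^\perp$. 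Use Whitney's formula $P(t)=\sum_T(-1)^{|T|}(-t)^{\operatorname{rk}T}$, summed over $T\subseteq\{H\in\A\mid H\not\subseteq M\}$ with $\bigcap_{H\in T}A_H\neq\emptyset$; coinciding hyperplanes do not affect it.

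By your own emptiness criterion, $T$ is central at $p$ iff $Y_T\cap M=0$, and at $p'$ iff $Y_T\cap M\in\{0,Z\}$. The new terms at $p'$ therefore have rank $\dim Y_T-1$. Group them by $W=Y_T$ and use the crosscut formula $\sum_{\operatorname{span}T=W}(-1)^{|T|}=\mu(0,W)$. This gives
\[
P_{p'}(t)-P_p(t)=-\sum_{W\in L(\A),\ W\cap M=Z}\lvert\mu(0,W)\rvert\,t^{\dim W-1},
\]
which is nonzero because $W=Y$ occurs. So the fibers have different Betti numbers, and the projection is not a fiber bundle.
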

    \noindent
    If $M\in L(\A)$ is a modular element of codimension $1$, the map $X(V,\A)\to X(V/M^\perp,\A\vert_M)$ induces a morphism $Y(V,\A)\to Y(V/M^\perp,\A\vert_M)$ which is also a fiber bundle projection. Write $x_1,\dots, x_{l+1}$ for a basis of $V^*$ and let $t_i=\frac{x_i}{x_{l+1}}$ be affine coordinates for $Y(V,\A)$. Up to a change of coordinates, we may suppose that $M=\langle x_2,\dots, x_{l+1}\rangle$, so $M^\perp$ is the line given by the equations $x_2=\dots =x_{l+1}=0$. By identifying $V/M^\perp$ with the linear subspace of equation $x_1=0$, the map $Y(V,\A)\to Y(V/M^\perp,\A\vert_M)$ is induced by the linear projection $\mathbb{A}^{l}\to \mathbb{A}^{l-1}$, $(t_1,\dots, t_l)\mapsto (t_2,\dots, t_l)$. Thus, modular elements yield fiber bundle linear projections also at the level of projective complements of arrangements.   
    \begin{definition}
        An essential hyperplane arrangement is called \emph{supersolvable} if there is a maximal chain  $0=X_0\subsetneq X_1 \subsetneq\dots \subsetneq X_{l+1}=V^*$ of modular elements in $L(\A)$.
    \end{definition}
    \noindent
    In the literature, supersolvable arrangements are usually assumed to be central, but we prefer to stick with this slightly more general definition.
    \begin{example}
        The braid arrangement $\A_l$ is supersolvable. A maximal chain of modular elements is
        \[
        0 \subsetneq \langle x_1 \rangle \subsetneq \langle x_1, x_2\rangle \subsetneq \dots \subsetneq \langle x_1,\dots, x_{l}\rangle \subsetneq V^*.
        \]
        An example of an arrangement which is not supersolvable is $(V,\A)$ with $V^*$ of dimension $3$ with basis $x,y,z$ and $\A=\{\langle x\rangle, \langle y\rangle, \langle z\rangle ,\langle x+y-z\rangle \} $. It is straightforward to see that $(V,\A)$ has no modular element of dimension $2$. 
    \end{example}
    \begin{lemma}[{\cite[Lemma 2.62]{Orlik-Terao-Arrangements_of_hyperplanes}}]
        Let $(V,\A)$ be a supersolvable arrangement. Then for all $X\in L(\A)$ the arrangements $(V/X^\perp, \A\vert_X)$ and $(X^\perp, \A/X)$ are supersolvable.
    \end{lemma}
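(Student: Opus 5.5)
Given a maximal chain of modular elements $0=M_0\subsetneq M_1\subsetneq\dots\subsetneq M_{l+1}=V^*$ in $L(\A)$, I will build maximal chains of modular elements in $L(\A\vert_X)$ and in $L(\A/X)$ by intersecting, respectively adding and projecting. The supersolvable property of each arrangement is then checked directly from the definition.

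\emph{Quotient case.} Set $N_i=M_i\cap X$. By Lemma~\ref{lemma: intersections with modular elts}(2), each $N_i$ lies in $L(\A)$, is contained in $X$, and is modular in $L(\A\vert_X)$. The $N_i$ form a chain from $0$ to $X$. Its successive dimensions increase by at most one, since
\[
\dim(M_{i}\cap X)-\dim(M_{i-1}\cap X)\le \dim M_i-\dim M_{i-1}=1.
\]
Deleting repetitions therefore gives a maximal chain $0\subsetneq\dots\subsetneq X$ of modular elements of $L(\A\vert_X)$. Since $X$ is a sum of lines of $\A$, it is the top element, so $(V/X^\perp,\A\vert_X)$ is essential and supersolvable.

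\emph{Restriction case.} Set $P_i=(M_i+X)/X\in L(\A/X)$. The elements of $L(\A/X)$ are the $(Y+X)/X$ with $Y\in L(\A)$, and
\[
\big((M_i+X)/X\big)\cap\big((Y+X)/X\big)=\big((M_i+X)\cap(Y+X)\big)/X .
\]
So I must show that $(M_i+X)\cap(Y+X)\in L(\A)$. This is where I use the lattice fact that for modular $M$ and arbitrary $Z$, the join $M+Z$ satisfies the modular law $(M+Z)\cap W=M\cap W+Z$ for $Z\subseteq W$. Here this is literal linear algebra with subspaces, since sum is the join in $L(\A)$. Taking $W=Y+X\supseteq X$, I get $(M_i+X)\cap(Y+X)=M_i\cap(Y+X)+X$, which is a sum of elements of $L(\A)$ and hence lies in $L(\A)$. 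Thus each $P_i$ is modular in $L(\A/X)$.

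The chain runs from $0$ to $V^*/X$ with dimension jumps of at most one. Removing repeats gives a maximal modular chain, and $\A/X$ is essential because $V^*=\sum H$ implies that the $(H+X)/X$ span $V^*/X$.

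The main point to be careful about is the restriction case. There, modularity in $L(\A/X)$ requires intersecting inside $L(\A)$ the possibly non-modular elements $M_i+X$ and $Y+X$, and the linear-algebra modular law resolves this. A secondary check is that the deduplicated chains are genuinely maximal, meaning every dimension between $0$ and the top occurs. This follows from the jumps being at most one and the endpoints being $0$ and the full space.
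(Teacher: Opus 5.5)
Your proof is correct. The paper gives no argument of its own for this lemma; it only cites Orlik--Terao, so there is no route in the paper to compare against.

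What you wrote is the standard lattice-theoretic argument. For $L(\A\vert_X)$, which is the lower interval $[0,X]$, you intersect a maximal modular chain with $X$. For $L(\A/X)$, which is identified with the upper interval $[X,V^*]$ via $Y\mapsto (Y+X)/X$, you add $X$ to each element of the chain. In the second case the key identity is $(M_i+X)\cap(Y+X)=(M_i\cap(Y+X))+X$, and you handle it correctly.

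A few small points:
\begin{itemize}
\item That identity is Dedekind's modular law, which holds for arbitrary subspaces. So the phrase ``for modular $M$'' is unnecessary there. Modularity of $M_i$ in $L(\A)$ is used only to ensure $M_i\cap(Y+X)\in L(\A)$, which you do state.
\item The equality $\big((M_i+X)/X\big)\cap\big((Y+X)/X\big)=\big((M_i+X)\cap(Y+X)\big)/X$ relies on both subspaces containing $X$.
\item The bound on dimension jumps in the quotient case comes from the injection $(M_i\cap X)/(M_{i-1}\cap X)\hookrightarrow M_i/M_{i-1}$.
\end{itemize}
These last two steps are routine, but each deserves a half-line of justification.

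Finally, for $X=0$ or $X=V^*$ one of the two arrangements is empty. The paper's definition of an arrangement excludes empty ones, so these cases are degenerate and can be set aside.
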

    \noindent
    Geometrically, central supersolvable arrangements fiber over $1$-dimensional arrangements. To draw this connection in a more neat way, we introduce the following definition.
    \begin{definition} $ $
        \begin{enumerate}
            \item A hyperplane arrangement $(V,\A)$ of dimension $l+1$ is \emph{strictly linearly fibered} if there is a hyperplane arrangement $(V',\A')$ of dimension $l$ together with a fiber bundle linear projection $Y(V,\A)\to Y(V',\A')$ whose fiber is given by $\mathbb{P}^1$ with finitely many points removed.
            \item \emph{Fiber type hyperplane arrangements} are defined inductively as follows. Every arrangement of dimension $2$ is fiber type. For $l\ge 2$ and $(V,\A)$ of dimension $l+1$, the arrangement $(V,\A)$ is fiber type if it is strictly linearly fibered over an arrangement $(V',\A')$ of dimension $l$ which is fiber type.
        \end{enumerate}
    \end{definition}
    \noindent
    These definitions are adapted to projective arrangements, but they are still equivalent to those of \cite{Orlik-Terao-Arrangements_of_hyperplanes}. If $(V,\A)$ is fiber type, one may find hyperplane arrangements $(V,\A)=(V_1,\A_1),(V_2,\A_2),\dots, (V_l,\A_l)$ together with maps
    \[
        Y(V,\A)=Y(V_1,\A_1) \overset{\pi_1}{\longrightarrow} Y(V_2,\A_2) \overset{\pi_2}{\longrightarrow} \dots \overset{\pi_{l-1}}{\longrightarrow}Y(V_l,\A_l)
    \]
    such that $\dim Y(V_i,\A_i)=l+1-i$ and $\pi_i$ is a fiber bundle projection whose fiber is isomorphic to $\mathbb{P}^1$ with finitely many points removed. \\
    If $(V,\A)$ is central, it is possible to construct a sequence of projections $\pi_1,\dots, \pi_{l-1}$ of this kind starting from a maximal chain of modular elements and using Proposition~\ref{prop: geometric meaning of modular element}. When making this idea precise, one obtains the following result, which gives a geometric interpretation to the concept of supersolvable arrangement. 
    \begin{proposition}[{\cite[Theorem 5.113]{Orlik-Terao-Arrangements_of_hyperplanes}}]
        \label{prop: fiber type equals supersolvable}
        Let $(V,\A)$ be a central hyperplane arrangement. Then $(V,\A)$ is supersolvable if and only if it is fiber type.
    \end{proposition}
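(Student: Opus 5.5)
The statement is the projective form of \cite[Theorem 5.113]{Orlik-Terao-Arrangements_of_hyperplanes}, and I would reduce it to the classical equivalence between supersolvability and being fiber type for central arrangements in the usual sense. The key tool is Proposition~\ref{prop: geometric meaning of modular element} together with the remark that follows it: a modular element of codimension one yields a linear fiber bundle projection on projective complements. I will argue by induction on $l=\dim V-1$; the case $l=1$ is trivial, since every arrangement of dimension $2$ is fiber type, and every essential $2$-arrangement is supersolvable because $0\subsetneq H\subsetneq V^*$ is a chain of modular elements for any $H\in\A$.

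\emph{Supersolvable implies fiber type.} Let $0=X_0\subsetneq\dots\subsetneq X_{l+1}=V^*$ be a maximal chain of modular elements, and set $M=X_l$, which has codimension $1$. By Proposition~\ref{prop: geometric meaning of modular element} and the subsequent remark, the projection $Y(V,\A)\to Y(V/M^\perp,\A\vert_M)$ is a linear fiber bundle projection. Its fiber over a point is the line through $M^\perp$ minus its intersections with the hyperplanes $H^\perp$ for $H\not\subseteq M$; this is $\mathbb{P}^1$ with finitely many points removed. To apply induction I need $(V/M^\perp,\A\vert_M)$ to be central and supersolvable. Supersolvability follows from \cite[Lemma 2.62]{Orlik-Terao-Arrangements_of_hyperplanes}, or directly from Lemma~\ref{lemma: intersections with modular elts}(2), since $X_0\subsetneq\dots\subsetneq X_l$ stays modular in $L(\A\vert_M)$. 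Centrality is where care is needed. I would use that $(V,\A)$ is a cone, $\widetilde V=V'\oplus L$, and choose the modular chain compatibly with the splitting, so that $L^*\not\subseteq M$ and $\A\vert_M$ is again a cone. Alternatively, I would verify that the argument never uses centrality of the base, only the existence of modular elements.

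\emph{Fiber type implies supersolvable.} Suppose we are given a strictly linear fibration $Y(V,\A)\to Y(V',\A')$. A linear projection $\mathbb{P}(V)\dashrightarrow\mathbb{P}(V')$ comes from a surjection $V\to V'$ with one-dimensional kernel $P$. Then $M=P^\perp\subseteq V^*$ is a hyperplane with $V'\cong V/M^\perp$. I would then identify $\A'$ with $\A\vert_M$: the hyperplanes of $\A$ containing $P$ are exactly those pulled back from $V'$, and the fiber-bundle condition forces the remaining ones to meet each fiber in a constant number of points. This constancy is the classical criterion for modularity of $M$ in rank-$2$ terms. Concretely, for $H_1,H_2\not\subseteq M$, the line $(H_1+H_2)\cap M$ must lie in $\A$, since otherwise two of the removed points would collide over the corresponding hyperplane of the base. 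This shows that $M\in L(\A)$ and that $M$ is modular by \cite[Theorem 5.111]{Orlik-Terao-Arrangements_of_hyperplanes}. By induction $\A\vert_M$ is supersolvable, and its maximal modular chain, extended by $V^*$, is modular in $L(\A)$, because modular elements of $L(\A\vert_M)$ with $M$ modular are modular in $L(\A)$ (a standard transitivity of modularity).

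The main obstacle I expect is the comparison with \cite{Orlik-Terao-Arrangements_of_hyperplanes}, where fibrations are of affine complements $X$. The cleanest route is to prove that $X(V,\A)\to X(V',\A')$ is a fiber bundle if and only if $Y(V,\A)\to Y(V',\A')$ is one; for a cone, this holds because $X=Y(\widetilde V,\widetilde\A)$ and the $\mathbb{C}^*$-actions are compatible. Once that is established, both directions follow verbatim from the cited theorems.
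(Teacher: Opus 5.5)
The paper does not prove this itself. It cites Orlik--Terao, Theorem 5.113, sketches only the direction supersolvable $\Rightarrow$ fiber type (a maximal modular chain gives the tower of projections), and asserts without proof that its projective definitions agree with Orlik--Terao's affine ones. You instead prove the projective statement directly. In the converse direction you re-derive the mechanism behind Orlik--Terao 5.111: constancy of the number of punctures forces $(H_1+H_2)\cap M\in\A$, which is the rank-two criterion for a codimension-one element to be modular. This buys something real. Nothing in your argument uses the cone structure, so it proves the equivalence for every essential arrangement, which is what the paper's projective conventions actually need. It also makes unnecessary the translation the paper leaves implicit. The citation route is shorter but hides exactly that translation.

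\emph{What needs fixing.} Run the induction over all essential arrangements. This is forced in the second direction, not an optional alternative, because the paper's definition of a strictly linear fibration does not require a central base. If $(V,\A)$ is the cone over $(W,\mathcal{B})$, the projection $V=W\oplus L\to W$ gives $Y(V,\A)=X(W,\mathcal{B})\to Y(W,\mathcal{B})$ with fiber $\C^*$, and $(W,\mathcal{B})$ is in general not a cone. So ``by induction $\A\vert_M$ is supersolvable'' is unavailable if the hypothesis covers only cones. Your first option in the first direction also has the inclusion reversed. A cone base is guaranteed by $L^*\subseteq M$, e.g. $M=N\oplus L^*$ with $N$ a modular coatom of $L(\mathcal{B})$, which gives $\A\vert_M=\mathcal{B}\vert_N\sqcup\{L^*\}$. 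With $L^*\not\subseteq M$, e.g. $M=W^*$, you just get $\mathcal{B}$ back.

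\emph{Two steps that need a line each.}
\begin{enumerate}
\item The claim $M\in L(\A)$ uses essentiality. Pick $H_0\in\A$ with $H_0\not\subseteq M$. Every other $H\not\subseteq M$ lies in $H_0\oplus\bigl((H_0+H)\cap M\bigr)$, and $(H_0+H)\cap M\in\A\vert_M$. Hence $V^*=H_0+\sum_{K\in\A\vert_M}K$, so $\A\vert_M$ spans $M$. This also makes the base essential, so the induction applies.
\item Modularity is not what Orlik--Terao 5.111 provides; that result is the affine fiber-bundle criterion. Modularity follows from the rank-two condition by the same computation. Take $Y\in L(\A)$ with $Y\not\subseteq M$ and choose $H_0\subseteq Y$ with $H_0\not\subseteq M$. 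Then the lines of $\A$ in $Y\cap M$ span a space of dimension $\dim Y-1=\dim(Y\cap M)$, so $Y\cap M\in L(\A)$.
\end{enumerate}

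Finally, your last paragraph becomes superfluous, and the cone plays no role there anyway. For any arrangement, choose $H_0\in\A\vert_M$. The trivializations $v\mapsto([v],y_{H_0}(v))$ then identify the affine projection $X(V,\A)\to X(V/M^\perp,\A\vert_M)$ with the projective one times $\mathrm{id}_{\C^*}$.
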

    \noindent
    With our conventions, supersolvable arrangements are not necessarily central. From the discussion above, modular elements give fiber bundle linear projections also on projective complements of arrangements. Thus, in our context, supersolvable hyperplane arrangements are fiber type.

    \subsection{Arrangements with enough modular elements}
        
    Let $(V,\A)$ be a hyperplane arrangement. For $X\in L(\A)$ the quotient map $V\to V/X^\perp$ always induces a morphism $Y(V,\A)\to Y(V/X^\perp, \A\vert_X)$. However, in general there need not be a morphism $Y(V,\A)\to Y(X^\perp, \A/X)$ which is induced by a linear projection $V\to X^\perp$, as the following example shows.\\
    Suppose $V$ has dimension $3$ with a dual basis $x_1,x_2,x_3$ and let $\A$ consist of the lines $\langle x_i\rangle$ for $i=1,2,3$ together with the line $\langle x_1+x_2-x_3 \rangle$. The projection $V\to \langle x_2\rangle^\perp$ with kernel $\langle x_1, x_3 \rangle^\perp$ does not induce a morphism between the complements of the arrangements $\A$ and $\A/\langle x_2\rangle$. Intuitively, the obstruction to this is the the fact that the line $\langle x_1-x_3\rangle$ does not belong to $\A$. Notice that this line is the intersection of $\langle x_1, x_3\rangle$, which is the kernel of the above projection, together with $\langle x_2, x_1+x_2-x_3\rangle\in L(\A)$. Thus, the fact that such line is missing from $\A$ implies that $\langle x_1, x_3\rangle$ is not a modular element of $L(\A)$. \\
    In this section, we wish to isolate a class of hyperplane arrangements which admit linear projections onto the restriction arrangements $Y(X^\perp, \A/X^\perp)$ for every element $X\in L(\A)$.
    \begin{definition}
        Let $(V,\A)$ be a hyperplane arrangement. We say that $(V,\A)$ \emph{has enough modular elements} if for every $H\in\A$ there is $M\in L(\A)$ of codimension $1$ which is modular and does not contain $H$.
    \end{definition}
    \begin{example}
        The braid arrangement $\A_l$ has enough modular elements. Indeed, for every $j=1,\dots, l+1$, the element $\langle x_i \mid i\neq j\rangle$ is modular. The condition of having enough modular elements is then readily verified. \\
        In a completely analogous way, one may show that the reflection arrangement of the full monomial group from Definition~\ref{def: arrangement full monomial group} has enough modular elements.
    \end{example}
    \noindent
    Our goal is to show that having enough modular elements ensures the existence of linear projections onto restriction arrangements. Before proceeding, we observe that having enough modular elements is a stronger property than being supersolvable.
    \begin{lemma}
        Hyperplane arrangements with enough modular elements are supersolvable.
    \end{lemma}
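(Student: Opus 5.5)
We argue by induction on $\dim V$, building a maximal chain of modular elements from the top down. If $\dim V\le 2$, any chain $0\subsetneq \langle x_H\rangle\subsetneq V^*$ with $H\in\A$ consists of modular elements (lines, $0$ and $V^*$ are always modular), so the arrangement is supersolvable. For $\dim V=l+1\ge 3$, the hypothesis applied to any $H\in\A$ gives a modular element $M\in L(\A)$ of codimension $1$. The plan is to show that the deletion arrangement $(V/M^\perp,\A\vert_M)$ again has enough modular elements. By induction it then has a maximal chain $0=X_0\subsetneq\dots\subsetneq X_l=M$ of elements that are modular in $L(\A\vert_M)$. We then check that each $X_i$ is in fact modular in $L(\A)$, and append $V^*$.

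\textbf{Upgrading modularity.} Let $X\subseteq M$ be modular in $L(\A\vert_M)$ and let $Y\in L(\A)$. Since $M$ is modular, $M\cap Y\in L(\A)$, and $M\cap Y\subseteq M$, so $M\cap Y\in L(\A\vert_M)$. Modularity of $X$ in $L(\A\vert_M)$ gives $X\cap(M\cap Y)\in L(\A\vert_M)\subseteq L(\A)$. Since $X\subseteq M$, this intersection equals $X\cap Y$. So every element of the chain is modular in $L(\A)$, and $\A$ is supersolvable. Essentiality of $\A\vert_M$ as an arrangement on $V/M^\perp$ holds because $M\in L(\A)$ is a sum of lines of $\A$.

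\textbf{Enough modular elements for $\A\vert_M$.} This is the main step. Take $H\in\A$ with $H\subseteq M$. By hypothesis there is a modular $N\in L(\A)$ of codimension $1$ in $V^*$ with $H\not\subseteq N$.

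If $N\neq M$, then $N\cap M$ has codimension $1$ in $M$. It lies in $L(\A)$, and indeed in $L(\A\vert_M)$, and it is modular in $L(\A\vert_M)$ by Lemma~\ref{lemma: intersections with modular elts}(2). It does not contain $H$, since $H\subseteq M$ but $H\not\subseteq N$.

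If $N=M$, then $H\not\subseteq M$, which contradicts $H\subseteq M$. So this case cannot occur.

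Hence $\A\vert_M$ has enough modular elements, and the induction closes. The only delicate point is the bookkeeping of the second paragraph: making sure that intersections stay inside $L(\A\vert_M)$ and that "modular in the deletion" lifts to "modular in $\A$". That lifting uses the modularity of $M$ itself in an essential way.
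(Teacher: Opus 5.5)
Your proof is correct, but it reaches the chain by a different route from the paper's.

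\textbf{The paper's route.} The paper never leaves $L(\A)$. Starting from a modular $M\neq 0$ (initially $V^*$), it picks $H\subseteq M$ and a modular coatom $N$ with $H\not\subseteq N$. Then $M+N=V^*$, so $M\cap N$ has codimension $1$ in $M$, and $M\cap N$ is modular in $L(\A)$ directly by part (1) of Lemma~\ref{lemma: intersections with modular elts}. Iterating gives the chain $V^*\supsetneq N_1\supsetneq N_1\cap N_2\supsetneq\cdots$.

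\textbf{Your route.} You induct on $\dim V$ through the deletion $\A\vert_M$. This needs two auxiliary facts:
\begin{enumerate}
\item $\A\vert_M$ again has enough modular elements. You get this via part (2) of that lemma; it is exactly the paper's later Lemma~\ref{lemma: enough modular elts preserved by restrictions and quotients}(1).
\item If $M$ is modular, then any element modular in $L(\A\vert_M)$ is modular in $L(\A)$.
\end{enumerate}
The second fact is something the paper never needs, and your proof of it is right.

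\textbf{Comparison.} Your approach exhibits the recursive structure: a modular coatom whose deletion again has enough modular elements, together with transitivity of modularity. This is the standard recursive picture of supersolvability and is reusable elsewhere. The paper's approach buys brevity. If you unwind your induction using the coatoms $N\cap M$ produced in your second step, you recover precisely the paper's chain. Modularity of each member then follows in one line from closure of modular elements under intersection, with no need to pass through $L(\A\vert_M)$ and lift back.
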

    \begin{proof}
        Let $M\neq 0$ be any modular element in $L(\A)$. Given $H\in \A$, $H\subseteq M$, there is $N\in L(\A)$ modular such that $N$ has codimension $1$ in $V^*$ and does not contain $H$. Since $H\subseteq M$, for dimension reasons we have $M+N=V^*$, thus $M\cap N$ has codimension $1$ in $M$. Moreover, $M\cap N$ is modular in $L(\A)$ by Lemma~\ref{lemma: intersections with modular elts}. Applying this argument inductively provides a maximal chain of modular elements in $L(\A)$, which proves supersolvability.
    \end{proof}
    \begin{example}
        \label{example: enough modular elts not supersolvable}
        An example of a supersolvable arrangement which does not have enough modular elements is the following. Consider a $3$-dimensional $k$-vector space $V$ with a dual basis $x_1,x_2,x_3$ and the hyperplane arrangement 
        \[
            \A=\{\langle x_1\rangle, \langle x_2\rangle, \langle x_3 \rangle, \langle x_1+x_2-x_3\rangle, \langle x_2-x_3\rangle\}.
        \]
        An affine picture of $\A$ in the chart $x_3\neq 0$ is the following:
        \smallskip
        \begin{center}
            \includegraphics[width=0.25\textwidth]{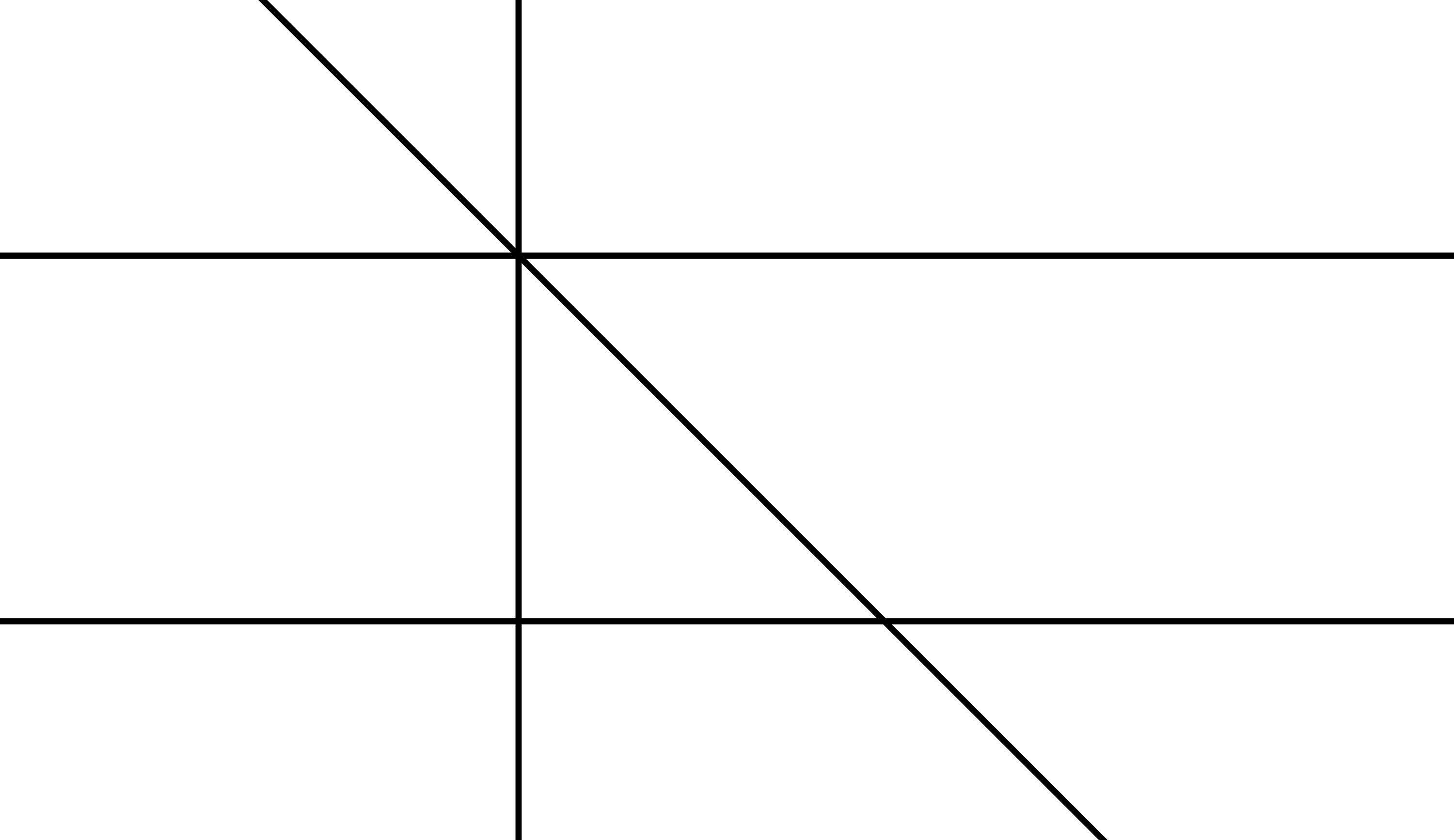}
        \end{center}

        \smallskip
        \noindent
        Since $\dim V=3$, to prove supersolvability it is enough to exhibit a modular element in $L(\A)$ of codimension $1$. By taking intersections with the other lattice elements of codimension $1$, one can check that both $\langle x_2, x_3\rangle $ and $\langle x_1,x_1+x_2-x_3\rangle$ are modular.\\
        However, these are the only modular elements of $L(\A)$ of codimension $1$. This is due to the fact that the intersections
        \begin{align*}
            \langle x_1, x_2\rangle \cap \langle x_3, x_1+x_2-x_3\rangle =\langle x_1+x_2\rangle, \\
            \langle x_1,x_3\rangle \cap \langle x_2,x_1+x_2-x_3\rangle = \langle x_1-x_3\rangle
        \end{align*}
        do not belong to $\A$.\\
        Since $\langle x_2,x_3\rangle\cap \langle x_1,x_1+x_2-x_3\rangle=\langle x_2-x_3\rangle$, there is no modular element of codimension $1$ that does not contain $\langle x_2-x_3\rangle$. This shows that $(V,\A)$ does not have enough modular elements. 
    \end{example}
    \noindent
    Having enough modular elements ensures the existence of fiber bundle linear projections onto all restriction arrangements $(H^\perp,\A/H)$ for all $H\in \A$ and actually, more generally, onto all restrictions to lattice elements $(X^\perp, \A/X)$ for $X\in L(\A)$. We start by dealing with the case of hyperplanes.
    \begin{lemma}
        \label{lemma: restrictions into quotients of modular elts for H}
        Let $(V,\A)$ be a hyperplane arrangement and let $H\in \A$. Suppose that there is a modular element $M\in L(\A)$ of codimension $1$ which does not contain $H$. Then the restriction of the quotient map $V\to V/M^\perp$ to $H^\perp$ induces an isomorphism of hyperplane arrangements between $(H^\perp, \A/H)$ and $(V/M^\perp, \A\vert_M)$.
    \end{lemma}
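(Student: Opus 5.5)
The plan is to first get the linear-algebra isomorphism $H^\perp\cong V/M^\perp$, and then compare the two arrangements through the dual map, where modularity of $M$ is exactly what makes the line sets match.

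First, the linear-algebra part. Since $M$ has codimension $1$ in $V^*$ and $H\not\subseteq M$, we have $V^*=M\oplus H$. Dually, $M^\perp$ is a line in $V$, $H^\perp$ is a hyperplane, and $M^\perp\cap H^\perp=(M+H)^\perp=0$. Hence the composite $H^\perp\hookrightarrow V\to V/M^\perp$ is injective between spaces of equal dimension $l$, so it is a linear isomorphism $g$. The dual map $g^*\colon M=(V/M^\perp)^*\to (H^\perp)^*=V^*/H$ is the composite $M\hookrightarrow V^*\to V^*/H$. It is an isomorphism because $M\cap H=0$ and the dimensions agree. It remains to show that $g^*$ carries $\A\vert_M$ bijectively onto $\A/H$. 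This suffices: since $g^*$ is a linear isomorphism, it then also matches the hyperplane kernels, so $g$ is an isomorphism of arrangements in the sense of the definition of morphisms.

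Second, we show $g^*(\A\vert_M)\subseteq \A/H$. Take $K\in\A$ with $K\subseteq M$. Then $K\neq H$, since $H\not\subseteq M$, and so $K\not\subseteq H$. Hence $(K+H)/H$ is a line of $\A/H$, and it equals $g^*(K)$. Injectivity of this assignment follows from injectivity of $g^*$.

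Third, surjectivity, which is the only place modularity enters and is the main point. Take $K\in\A$ with $K\neq H$. Then $K+H$ is a $2$-dimensional element of $L(\A)$. Since $M$ is modular, $M\cap(K+H)\in L(\A)$. Because $M$ has codimension $1$ and $K+H\not\subseteq M$ (as $H\not\subseteq M$), the intersection $M\cap(K+H)$ is a line. Being a $1$-dimensional element of $L(\A)$, which is a sum of lines of $\A$, it must itself be a line $K'\in\A$, and $K'\subseteq M$. Moreover $K'+H=K+H$, because $K'\subseteq K+H$ and $K'\neq H$. Thus $g^*(K')=(K+H)/H$, which proves surjectivity.

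The only subtle step is the observation that a $1$-dimensional lattice element lies in $\A$; this holds because $L(\A)$ consists of sums of lines of $\A$. I do not expect any further obstacle.
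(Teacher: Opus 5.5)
Your proof is correct and follows essentially the same route as the paper. Injectivity comes from $M\cap H=0$, and surjectivity comes from modularity, by intersecting a lattice element containing $H$ with $M$. The paper runs the surjectivity argument for every $X\in L(\A)$ containing $H$, via $X=(X\cap M)\oplus H$, whereas you need it only for the lines $K+H$; you also make explicit the linear isomorphism $H^\perp\to V/M^\perp$ and its dual $M\to V^*/H$, which the paper leaves implicit.
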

    \begin{proof}
        At the level of arrangements, the map in the statement is given by $\A\vert_{M}\to \A/H$, $X\mapsto (X+H)/H$. To prove injectivity, suppose that there are $X_1,X_2\in L(\A)$ such that $X_1, X_2\subseteq M$ and $X_1+H=X_2+H$. Since $H\not\subseteq M$, we have $H\not\subseteq X_1, X_2$, hence $\dim (X_1+H)=\dim X_1 +1$ and $\dim (X_2+H)=\dim X_2 +1$. If we assume $X_1\neq X_2$, for dimension reasons we have $X_1+H=X_1+X_2\subseteq M$, which leads to a contradiction since $H\not\subseteq M$. \\
        For surjectivity, let $X\in L(\A)$ containing $H$. Since $M$ is modular, the intersection $X\cap M$ still belongs to $L(\A)$ and is of course contained in $M$. Moreover, since $X$ contains $H$, we have $X=(X\cap M) \oplus H$, hence $(X\cap M + H)/H=X/H$.
    \end{proof}
    \begin{corollary}
        \label{cor: projections to restrictions}
        Let $(V,\A)$ be a hyperplane arrangement and let $H\in \A$. Suppose that there is a modular element $M\in L(\A)$ of codimension $1$ which does not contain $H$. Then there is a linear projection $Y(V,\A)\to Y(H^\perp, \A/H)$ which is also a fibration. 
    \end{corollary}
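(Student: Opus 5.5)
We compose two maps. The first is the morphism $Y(V,\A)\to Y(V/M^\perp,\A\vert_M)$ induced by the quotient $V\to V/M^\perp$. The second is the inverse of the isomorphism of Lemma~\ref{lemma: restrictions into quotients of modular elts for H}, which identifies $Y(V/M^\perp,\A\vert_M)$ with $Y(H^\perp,\A/H)$. Since $M$ has codimension $1$ in $V^*$, the line $M^\perp\subseteq V$ is one-dimensional. Since $H\not\subseteq M$, we have $M^\perp\not\subseteq H^\perp$, so $V=H^\perp\oplus M^\perp$. Let $p\colon V\to H^\perp$ be the linear projection with kernel $M^\perp$. The composite $H^\perp\hookrightarrow V\to V/M^\perp$ is a linear isomorphism, and $p$ is exactly the quotient map $V\to V/M^\perp$ followed by the inverse of this isomorphism. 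So $p$ is the candidate linear projection.

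The first step is to check that $p$ induces a morphism of complements $Y(V,\A)\to Y(H^\perp,\A/H)$, and not only a rational map $\mathbb{P}(V)\dashrightarrow\mathbb{P}(H^\perp)$.
\begin{itemize}
\item The rational map is defined away from $\mathbb{P}(M^\perp)$. Since $M$ is nonzero, it contains some $K\in\A$, so $M^\perp\subseteq K^\perp$ and the point $\mathbb{P}(M^\perp)$ does not lie in $Y(V,\A)$.
\item Take a point $v\in Y(V,\A)$. Its image under the quotient lies in $Y(V/M^\perp,\A\vert_M)$: every $K\in\A\vert_M$ satisfies $K\subseteq M$, so $K$ vanishes on $M^\perp$ and $K(v)\neq 0$ means $K$ does not vanish on the class of $v$.
\item Under the identification $V/M^\perp\cong H^\perp$, a line $K\in\A\vert_M$ corresponds to the functional $(K+H)/H$ on $H^\perp$. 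By Lemma~\ref{lemma: restrictions into quotients of modular elts for H}, the lines $(K+H)/H$ run over all of $\A/H$. Hence $p(v)$ avoids every hyperplane of $\A/H$.
\end{itemize}
This gives a morphism $Y(V,\A)\to Y(H^\perp,\A/H)$, and it is the composite of the quotient morphism with an isomorphism.

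The second step is the fibration property. Proposition~\ref{prop: geometric meaning of modular element} shows that $X(V,\A)\to X(V/M^\perp,\A\vert_M)$ is a fiber bundle projection. As explained in the discussion after that proposition, for $M$ of codimension $1$ this descends to a fiber bundle projection $Y(V,\A)\to Y(V/M^\perp,\A\vert_M)$. In coordinates with $M=\langle x_2,\dots,x_{l+1}\rangle$, it is the linear projection forgetting $t_1$, and its fiber is $\mathbb{P}^1$ minus finitely many points. Composing with the isomorphism onto $Y(H^\perp,\A/H)$ preserves the fiber bundle property. The result is a linear projection $Y(V,\A)\to Y(H^\perp,\A/H)$ that is a fibration, as claimed.

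The main obstacle is the projectivization step: passing from the affine fiber bundle statement of Proposition~\ref{prop: geometric meaning of modular element} to the projective complements. We rely on the paper's earlier remark for this. If it needs justification, the plan is to use the $\mathbb{G}_m$-equivariance of $X(V,\A)\to X(V/M^\perp,\A\vert_M)$. Restricting to the slice $x_{l+1}=1$, where $x_{l+1}$ is any line of $\A\vert_M$, identifies the affine map with the product of the projective map and $\mathbb{G}_m$. The local trivializations then restrict to that slice.
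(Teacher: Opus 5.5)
Your proposal is correct and follows the paper's proof: compose the quotient fibration $Y(V,\A)\to Y(V/M^\perp,\A\vert_M)$, which is a fiber bundle because $M$ is modular, with the linear isomorphism $Y(V/M^\perp,\A\vert_M)\cong Y(H^\perp,\A/H)$ from Lemma~\ref{lemma: restrictions into quotients of modular elts for H}. The extra checks you supply are sound and make explicit what the paper's two-line proof leaves implicit: that the projection with kernel $M^\perp$ is regular on $Y(V,\A)$ and lands in the complement, and the $\mathbb{G}_m$-slice argument for passing from affine to projective complements.
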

    \begin{proof}
        For all $M\in L(\A)$ a linear projection $Y(V,\A)\to Y(V/M^\perp, \A\vert_M)$ always exists and it is a fibration when $M$ is modular. By the previous Lemma, we have a linear isomorphism $Y(V/M^\perp, \A\vert_M)\cong Y(H^\perp, \A/H)$.
    \end{proof}
    \begin{proposition}
    \label{prop: restrictions into quotients of modular elts}
        Let $(V,\A)$ be a hyperplane arrangement with enough modular elements. Then for all $X\in L(\A)$ there is a modular element $M\in L(\A)$ such that $V^*=X\oplus M$. Moreover, the restriction of the quotient map $V\to V/M^\perp$ to $X^\perp$ induces an isomorphism of hyperplane arrangements $(X^\perp, \A/X)\cong (V/M^\perp, \A\vert_M)$.
    \end{proposition}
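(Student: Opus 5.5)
Both claims will follow from an induction on $\dim X$, peeling off one hyperplane at a time. For $X=0$ we take $M=V^*$, which is modular, and there is nothing to prove. For $X\neq 0$ we choose $H\in\A$ with $H\subseteq X$. By the assumption of enough modular elements there is a modular $N\in L(\A)$ of codimension $1$ with $H\not\subseteq N$, so $V^*=H\oplus N$. Since $N$ is modular, $X\cap N\in L(\A)$. Moreover $X=H\oplus(X\cap N)$, because $H\subseteq X$ and $\dim(X\cap N)=\dim X-1$.

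We now pass to the arrangement $(V/N^\perp,\A\vert_N)$, whose dual space is $N$. The key point is that $\A\vert_N$ again has enough modular elements. Given $H'\in\A$ with $H'\subseteq N$, pick a modular $N'\in L(\A)$ of codimension $1$ with $H'\not\subseteq N'$. Then $N\cap N'$ is modular in $L(\A\vert_N)$ by Lemma~\ref{lemma: intersections with modular elts}(2), and it has codimension $1$ in $N$ because $N'\not\supseteq N$. It also does not contain $H'$. Applying the inductive hypothesis to $X\cap N\in L(\A\vert_N)$ gives $M\in L(\A\vert_N)$, modular in $L(\A\vert_N)$, with $N=(X\cap N)\oplus M$. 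Hence
\[
V^*=H\oplus(X\cap N)\oplus M=X\oplus M.
\]
It remains to see that $M$ is modular in $L(\A)$ and not merely in $L(\A\vert_N)$. For $Y\in L(\A)$ we have $Y\cap M=(Y\cap N)\cap M$. Here $Y\cap N\in L(\A)$ because $N$ is modular, and it lies in $L(\A\vert_N)$, so $(Y\cap N)\cap M\in L(\A\vert_N)\subseteq L(\A)$.

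For the isomorphism of arrangements we argue as in Lemma~\ref{lemma: restrictions into quotients of modular elts for H}. Dually, the map $V^*\supseteq M\to V^*/X$ is an isomorphism since $V^*=X\oplus M$, and it sends $\A\vert_M$ to $\A/X$ via $K\mapsto (K+X)/X$. This map is injective on lines: if $K_1\neq K_2$ had $K_1+X=K_2+X$, then $K_1+X\ni$ a nonzero vector of $M$ outside $K_1$ would force $(K_1+K_2)\cap X\neq 0$ inside $M$, contradicting $M\cap X=0$. For surjectivity, let $H'\in\A$ with $H'\not\subseteq X$ and set $Z=H'+X\in L(\A)$. Modularity gives $Z\cap M\in L(\A)$, and $Z=X\oplus(Z\cap M)$ by dimension count since $Z\supseteq X$ and $V^*=X\oplus M$. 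Thus $Z\cap M$ is a line of $L(\A)$, hence an element of $\A\vert_M$, and it maps to $(H'+X)/X$. Transposing this statement gives the claimed isomorphism $X^\perp\to V/M^\perp$ of arrangements.

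The main obstacle is the inheritance step: showing that the deletion $\A\vert_N$ still has enough modular elements, and that modularity in $L(\A\vert_N)$ lifts back to $L(\A)$. Both rely on Lemma~\ref{lemma: intersections with modular elts} together with the modularity of $N$, so this is where the argument must be checked most carefully.
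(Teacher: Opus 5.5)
Your proof is correct, but it is organized differently from the paper's.

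\textbf{The paper's route.} The paper inducts inside the same arrangement. It writes $X=Y+H_n$ with $\dim Y=\dim X-1$ and takes by induction a modular $M$ with $V^*=Y\oplus M$. It uses the inductive isomorphism $(Y^\perp,\A/Y)\cong(V/M^\perp,\A\vert_M)$ to see that $L=X\cap M$ is a line of $\A\vert_M$. It then picks a codimension-one modular $N$ not containing $L$ and shows $V^*=X\oplus(M\cap N)$. The isomorphism for $X$ is assembled as a chain of identifications: restrict $\A/Y$ to $X/Y$, transport this to the restriction of $\A\vert_M$ to $L$, and apply Lemma~\ref{lemma: restrictions into quotients of modular elts for H} inside $\A\vert_M$.

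\textbf{Your route.} You split off a hyperplane first, $V^*=H\oplus N$, and recurse into the deletion $\A\vert_N$. This costs you two extra checks, and you carry out both correctly:
\begin{enumerate}
\item that $\A\vert_N$ again has enough modular elements (part (1) of Lemma~\ref{lemma: enough modular elts preserved by restrictions and quotients}, which the paper proves later but independently of this proposition);
\item that modularity in $L(\A\vert_N)$ lifts to $L(\A)$.
\end{enumerate}
Since you apply the inductive hypothesis to a different arrangement, the statement you induct on must quantify over all arrangements with enough modular elements. This is harmless but should be said explicitly.

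\textbf{What each buys.} The main gain of your approach is in the second claim. You prove directly that for \emph{any} modular $M$ with $V^*=X\oplus M$, the restriction $X^\perp\to V/M^\perp$ of the quotient map is an isomorphism of arrangements, by generalizing the argument of the hyperplane lemma. This decouples the two statements and shows that the isomorphism does not depend on how $M$ was constructed. It also makes evident that the isomorphism is the one induced by $V\to V/M^\perp$, which the paper's chain of identifications leaves implicit. Your injectivity step can be shortened: $M\to V^*/X$ is a linear isomorphism, hence injective on lines. The paper's version, in exchange, never leaves $\A$ and reuses the hyperplane lemma verbatim. That is the form it later adapts to the $\G$-modular setting in Proposition~\ref{prop: restrictions into quotients of G-modular elts}.
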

    \begin{proof}
        We argue by induction on $\dim X$. If $\dim X=1$, we may find some $M\in L(\A)$ modular of codimension $1$ with $X\not\subseteq M$. For dimension reasons, this means that $V^*=X\oplus M$. The rest of the claim is precisely the content of Lemma~\ref{lemma: restrictions into quotients of modular elts for H}.\\
        Assume that $\dim X=n\ge 2$ and the statements holds for all $(n-1)$-dimensional subspaces in $L(\A)$. Let us write $X=H_1+\dots+  H_n$ with $H_1,\dots, H_n\in \A$. Since $Y=H_1+\dots + H_{n-1}$ has dimension $n-1$, by induction hypothesis we can find $M\in L(\A)$ modular such that $V^*=Y\oplus M$. Moreover, there is a natural isomorphism $(Y^\perp, \A/Y)\cong (V/M^\perp, \A\vert_M)$. Since $X=Y+H_n$, under this isomorphism $X/Y\in \A/Y$ corresponds to the line $L=X\cap M\in\A\vert_M$. Thus, we have $X=Y+L$ with $L\subseteq M$. Since $\A$ has enough modular elements, there is $N\in L(\A)$ modular of codimension $1$ in $V^*$ which does not contain $L$. \\
        Let us now consider $M\cap N$. This is an intersection of modular elements, so it is itself modular. Since $M$ and $N$ have codimension $1$ in $V^*$ and $L\subseteq M$, but $L\not\subseteq N$, it follows that $M=L\oplus (M\cap N)$. Let us then show that $V^*=X\oplus M\cap N$. First, observe that $X\cap M\cap N=0$. Indeed, as $X=Y+L$, an element in this intersection can be written as $y+l=z$ with $y\in Y, l\in L$ and $z\in M\cap N$. Thus, $y=z-l\in M\cap Y=0$, hence $l=z\in L\cap N=0$. Secondly, we have $V^*=Y+M=Y+L+(M\cap N)=X+(M\cap N)$.\\
        In the arrangement $(Y^\perp, \A/Y)\cong (V^*/M^\perp, \A\vert_M)$ we have $L\in \A\vert_M$ and a modular element $M\cap N$ of codimension $1$ not containing $L$. By Lemma~\ref{lemma: restrictions into quotients of modular elts for H}, we have a chain of isomorphisms
        \begin{align*}
        (X^\perp, \A/X) &= (Y^\perp\cap L^\perp, (\A/Y)/L) \\
                        &\cong  \left( \frac{V^*/M^\perp}{(M\cap N)^\perp / M^\perp} , \left(\A\vert_M \right)\vert_{M\cap N} \right) \\
                        &\cong  (V^*/(M\cap N)^\perp, \A\vert_{M\cap N}). 
        \end{align*}
    \end{proof}
    \begin{corollary}
        Let $(V,\A)$ be a hyperplane arrangement with enough modular elements. Then for all $X\in L(\A)$ there is a fiber bundle linear projection $Y(V,\A)\to Y(X^\perp, \A/X)$.
    \end{corollary}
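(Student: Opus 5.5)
The plan is to combine Proposition~\ref{prop: restrictions into quotients of modular elts} with Proposition~\ref{prop: geometric meaning of modular element}, the same way Corollary~\ref{cor: projections to restrictions} handled a single hyperplane.

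Fix $X\in L(\A)$. By Proposition~\ref{prop: restrictions into quotients of modular elts} we choose a modular $M\in L(\A)$ with $V^*=X\oplus M$. Then the restriction of the quotient $q\colon V\to V/M^\perp$ to $X^\perp$ is an isomorphism of arrangements
\[
(X^\perp,\A/X)\cong(V/M^\perp,\A\vert_M).
\]
As a linear map it is an isomorphism, because $V^*=X\oplus M$ gives $V=X^\perp\oplus M^\perp$. The projection $p\colon V\to X^\perp$ with kernel $M^\perp$ is then the composite of $q$ with the inverse of this isomorphism. We therefore obtain
\[
Y(V,\A)\longrightarrow Y(V/M^\perp,\A\vert_M)\xrightarrow{\ \sim\ } Y(X^\perp,\A/X).
\]
This composite is induced by the linear projection $p$, and we take it as the desired map.

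Three things must be checked.

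\textbf{The first map is a well-defined morphism.} For $v$ in no $H^\perp$ with $H\in\A$, the image $q(v)$ avoids the hyperplanes of $\A\vert_M$, since those are the $H\in\A$ with $H\subseteq M$, which vanish on $M^\perp$. It also descends to projectivizations: if $q(v)=0$ then $v\in M^\perp$, so every $H\subseteq M$ vanishes on $v$. This is impossible because $M$ is a sum of lines of $\A$ and $M\ne 0$ when $X\ne V^*$. The degenerate cases $X=0$ and $X=V^*$ are trivial or excluded and should be handled separately.

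\textbf{The map is a fiber bundle.} Proposition~\ref{prop: geometric meaning of modular element} says that $X(V,\A)\to X(V/M^\perp,\A\vert_M)$ is a fiber bundle because $M$ is modular. We must pass to projective complements. As in the remark following that proposition, this works because both sides are $\C^*$-quotients and the affine map is $\C^*$-equivariant. I would argue that a $\C^*$-equivariant fiber bundle descends to the quotient by the free $\C^*$-action. Concretely, choose a line $H_0\subseteq M$ in $\A$. Then $Y(V,\A)$ is the slice $\{x_{H_0}=1\}$ of $X(V,\A)$, and this slice maps onto the corresponding slice of $X(V/M^\perp,\A\vert_M)$. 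The fibers are unchanged, so local triviality restricts to the slices.

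\textbf{Compatibility of the identification with $p$.} The isomorphism $Y(V/M^\perp,\A\vert_M)\cong Y(X^\perp,\A/X)$ is induced by a linear isomorphism of arrangements, so it sends complement to complement and is compatible with $p$ by construction.

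I expect the main obstacle to be the second point, the descent of the bundle structure from affine to projective complements when $M$ has codimension greater than one. The paper only spelled this out for codimension one. The slice argument above should suffice; if it proves delicate, the fallback is induction along a chain $M=M_0\subset M_1\subset\dots\subset V^*$ of modular elements, each of codimension one in the next. Such a chain exists by the supersolvability argument. Each step $Y(V/M_{i+1}^\perp,\A\vert_{M_{i+1}})\to Y(V/M_i^\perp,\A\vert_{M_i})$ is then the codimension-one case already discussed, applied inside $\A\vert_{M_{i+1}}$, where $M_i$ is modular by Lemma~\ref{lemma: intersections with modular elts}. A composite of such fibrations gives the required projection; if only a fibration rather than a locally trivial bundle is needed, this is immediate, and local triviality also holds by the affine result.
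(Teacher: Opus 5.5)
Your proposal is correct and follows essentially the paper's proof: choose a modular $M$ with $V^*=X\oplus M$ via Proposition~\ref{prop: restrictions into quotients of modular elts}, use that the quotient map $Y(V,\A)\to Y(V/M^\perp,\A\vert_M)$ is a fibration because $M$ is modular, and compose with the linear isomorphism onto $Y(X^\perp,\A/X)$. Your slice argument correctly supplies the passage from affine to projective complements, which the paper takes for granted: you restrict the affine bundle of Proposition~\ref{prop: geometric meaning of modular element} to the full preimage $\{x_{H_0}=1\}$ of the target slice, with $H_0\subseteq M$. The fallback induction along a modular chain is therefore unnecessary.
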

    \begin{proof}
        For all $X\in L(\A)$ there is a modular element $M\in L(\A)$ for which there is a linear isomorphism $(X^\perp, \A/X)\cong (V/M^\perp, \A\vert_M)$ by the previous Proposition. For $M$ is modular, the quotient map $Y(V,\A)\to Y(V/M^\perp, \A\vert_M)$ is a fibration and the claim follows by composing with said isomorphism.
    \end{proof}
    \noindent
    In the rest of the exposition, we will also need a few other elementary properties of arrangements with enough modular elements which we now collect.
    \begin{lemma}
        \label{lemma: enough modular elts preserved by restrictions and quotients}
        Let $(V,\A)$ be a hyperplane arrangement with enough modular elements and let $X\in L(\A)$. Then
        \begin{enumerate}
            \item the arrangement $(V/X^\perp, \A\vert_X)$ has enough modular elements;
            \item the arrangement $(X^\perp, \A/X)$ has enough modular elements.
        \end{enumerate}
    \end{lemma}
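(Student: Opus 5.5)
For part (1), I will produce modular elements of $L(\A\vert_X)$ directly from those of $L(\A)$, using Lemma~\ref{lemma: intersections with modular elts}(2). Fix $H\in\A\vert_X$, so $H\in\A$ and $H\subseteq X$.

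\begin{enumerate}
\item Since $\A$ has enough modular elements, there is a modular $M\in L(\A)$ of codimension $1$ in $V^*$ with $H\not\subseteq M$.
\item By Lemma~\ref{lemma: intersections with modular elts}(2), the element $M\cap X$ is modular in $L(\A\vert_X)$.
\item Since $H\subseteq X$ and $H\not\subseteq M$, we have $X\not\subseteq M$. Hence $M+X=V^*$, so $M\cap X$ has codimension $1$ in $X$.
\item Finally, $H\not\subseteq M\cap X$. So $M\cap X$ is the required codimension-one modular element of $\A\vert_X$ avoiding $H$.
\end{enumerate}

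For part (2), I will use Proposition~\ref{prop: restrictions into quotients of modular elts}. It gives a modular $M\in L(\A)$ with $V^*=X\oplus M$ and an isomorphism of arrangements $(X^\perp,\A/X)\cong(V/M^\perp,\A\vert_M)$. By part (1) applied to $M$, the arrangement $\A\vert_M$ has enough modular elements.

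It remains to check that this property is invariant under isomorphism of arrangements. This holds because it is phrased purely in terms of the poset $L(\A)$:
\begin{itemize}
\item modularity asks that a meet $X\cap Y$ lie in $L$;
\item codimension $1$ is rank in the poset;
\item the condition ``not containing $H$'' is an order relation.
\end{itemize}
A linear isomorphism $V/M^\perp\to X^\perp$ carrying $\A\vert_M$ onto $\A/X$ induces a dual isomorphism on $V^*$-side subspaces. This dual map carries $L(\A\vert_M)$ onto $L(\A/X)$ and preserves sums, intersections and dimensions. Therefore it preserves modular elements, codimension and containment.

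The main point to verify is that the isomorphism of Proposition~\ref{prop: restrictions into quotients of modular elts} really is induced by a linear isomorphism of the underlying vector spaces, not merely a bijection of the sets of lines. It comes from restricting $V\to V/M^\perp$ to $X^\perp$, which is a linear isomorphism since $V^*=X\oplus M$ gives $V=X^\perp\oplus M^\perp$. So intersections of subspaces are transported correctly and the argument goes through.

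As an alternative check for (2), one could work directly in the quotient. Take $H\not\subseteq X$ and $M$ modular of codimension $1$ with $H+X\not\subseteq M+X$. The difficulty is that $M+X$ may equal $V^*$, and the image of $M$ need not be modular in $L(\A/X)$. This is exactly why I route the argument through Proposition~\ref{prop: restrictions into quotients of modular elts}.
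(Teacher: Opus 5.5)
Your proof is correct and follows the paper's own argument. Part (1) goes through $M\cap X$ and Lemma~\ref{lemma: intersections with modular elts}. Part (2) applies part (1) to $\A\vert_M$ and transports it along the isomorphism of Proposition~\ref{prop: restrictions into quotients of modular elts}. Your check that this isomorphism is linear, and hence preserves the lattice, intersections and codimension, makes explicit a step the paper leaves tacit.

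One small correction to your closing aside, which does not affect the proof: $(M+X)/X$ is always modular in $L(\A/X)$. Indeed, $(M+X)\cap Z=(M\cap Z)+X$ for every $Z\in L(\A)$ with $Z\supseteq X$. The only real obstruction to the direct route is that this image may be all of $V^*/X$.
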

    \begin{proof} 
    For the first claim, let $H\in \A$, $H\subseteq X$. Since $(V,\A)$ has enough modular elements, there is $M\in L(\A)$ modular of codimension $1$ in $V^*$ which does not contain $H$. Then $X\cap M$ has codimension $1$ in $X$, as $H\subseteq X$, and does not contain $H$, as $H\not\subseteq M$. Moreover, $M\cap X$ is modular in $L(\A\vert_X)$ by Lemma~\ref{lemma: intersections with modular elts}. \\
    For the second point, by Proposition~\ref{prop: restrictions into quotients of modular elts}, there is a modular element $M\in L(\A)$ such that $(X^\perp, \A/X)\cong (V/M^\perp, \A\vert_M)$. It is therefore enough to prove the claim for the latter arrangement, which follows from the previous point.
    \end{proof}
    \begin{lemma}
        \label{lemma: enough modular elements preserved by sums}
        Let $(V_1,\A_1)$ and $(V_2,\A_2)$ be hyperplane arrangements with enough modular elements. Then the arrangement $(V_1\oplus V_2, \A_1\sqcup \A_2)$ has enough modular elements.
    \end{lemma}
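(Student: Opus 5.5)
Write $V=V_1\oplus V_2$, so $V^*=V_1^*\oplus V_2^*$ and $\A=\A_1\sqcup\A_2$. The plan is to show that every element of $L(\A)$ splits as a direct sum of an element of $L(\A_1)$ and an element of $L(\A_2)$. From this we get a description of the modular elements of the product. Then, for a given $H\in\A$, we exhibit the required codimension-one modular element explicitly.

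\emph{Step 1: lattice of the product.} Every element of $L(\A)$ is a sum of lines of $\A_1\sqcup\A_2$. Grouping these lines by origin, every $Z\in L(\A)$ can be written as $Z=Z_1\oplus Z_2$ with $Z_i\in L(\A_i)$. Since $Z_i=Z\cap V_i^*$, the decomposition is unique. Conversely, every such sum lies in $L(\A)$. Hence $L(\A)\cong L(\A_1)\times L(\A_2)$, and sums and intersections are computed componentwise: $(X_1\oplus X_2)\cap(Y_1\oplus Y_2)=(X_1\cap Y_1)\oplus(X_2\cap Y_2)$. The only point needing care is this intersection formula. It holds because both subspaces are graded with respect to the splitting $V_1^*\oplus V_2^*$: a vector $v=v_1+v_2$ lies in $X_1\oplus X_2$ if and only if $v_i\in X_i$ for each $i$.

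\emph{Step 2: modularity.} Let $M_1\in L(\A_1)$ be modular. We claim $M_1\oplus V_2^*$ is modular in $L(\A)$. Indeed, for $Y=Y_1\oplus Y_2\in L(\A)$, the intersection formula gives $(M_1\oplus V_2^*)\cap Y=(M_1\cap Y_1)\oplus Y_2$. Here $M_1\cap Y_1\in L(\A_1)$ by modularity of $M_1$, and $V_2^*\in L(\A_2)$ because $V_2^*$ is always in $L(\A_2)$ (essential arrangements are assumed, as implicit in the definition of enough modular elements; otherwise one replaces $V_2^*$ by the sum of the lines of $\A_2$). So the intersection lies in $L(\A)$ by Step 1. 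The symmetric statement holds for $V_1^*\oplus M_2$.

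\emph{Step 3: conclusion.} Let $H\in\A$, and say $H\in\A_1$. Since $\A_1$ has enough modular elements, there is a modular $M_1\in L(\A_1)$ of codimension $1$ in $V_1^*$ with $H\not\subseteq M_1$. Then $M=M_1\oplus V_2^*$ is modular by Step 2 and has codimension $1$ in $V^*$. Moreover $H\not\subseteq M$, because $M\cap V_1^*=M_1$ does not contain $H$. The case $H\in\A_2$ is symmetric.

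The only potential obstacle is the essentiality bookkeeping in Step 2. This is handled by noting that the definition of codimension presupposes that $V_i^*\in L(\A_i)$, and that the product of essential arrangements is essential.
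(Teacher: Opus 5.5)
Your proof is correct and is essentially the paper's argument: for $H\in\A_1$ you take $M_1\oplus V_2^*$ and check modularity through $(M_1\oplus V_2^*)\cap(Y_1\oplus Y_2)=(M_1\cap Y_1)\oplus Y_2$, which the paper derives from the decomposition property and you derive from gradedness with respect to $V_1^*\oplus V_2^*$. The fact $V_2^*\in L(\A_2)$ is needed only to have $M_1\oplus V_2^*\in L(\A)$, and it holds automatically: for any $H'\in\A_2$ and any codimension-one modular $M_2$ with $H'\not\subseteq M_2$ one has $V_2^*=M_2+H'\in L(\A_2)$, so no fallback is required (and replacing $V_2^*$ by a smaller sum would in any case destroy codimension one).
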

    \begin{proof}
        Let $H\in \A_1\sqcup \A_2$, which we may assume to lie in $\A_1$ without loss of generality. Since $(V_1,\A_1)$ has enough modular elements, there is $M\in L(\A_1)$ modular of codimension $1$ in $V_1^*$ which does not contain $H$. It follows that $M\oplus V_2^*$ has codimension $1$ in $V_1^*\oplus V_2^*$ and does not contain $H$; let us show that it is also modular. \\
        Every $X\in L(\A_1\sqcup \A_2)$ has a decomposition $X=X_1\oplus X_2$ with $X_1\in L(\A_1)$ and $X_2\in L(\A_2)$. Since $M\oplus V_2^*$ is a decomposition as well, we have
        \begin{align*}
            (M\oplus V_2^*)\cap X & = (M\cap X)\oplus (V_2^*\cap X)\\
                                  & = (M\cap X_1)\oplus (M\cap X_2)\oplus (V_2^*\cap X_1)\oplus (V_2^*\cap X_2)\\
                                  & = (M\cap X_1)\oplus X_2,
        \end{align*}
        which belongs to $L(\A_1\sqcup \A_2)$ by modularity of $M$.
    \end{proof}
    \noindent
    The following is a straightforward criterion for checking whether a hyperplane arrangement has enough modular elements.
    \begin{lemma}
        \label{lemma: criterion for enough modular elements}
        Let $(V,\A)$ be a hyperplane arrangement. Suppose that there exist modular elements $X_1,\dots, X_r\in L(\A)$ of codimension $1$ such that $\bigcap_{i=1}^rX_i=0$. Then $(V,\A)$ has enough modular elements. 
    \end{lemma}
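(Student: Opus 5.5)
The proof is a direct check of the definition. Fix $H\in\A$; we must find a modular element of codimension $1$ in $V^*$ that does not contain $H$. The candidates are the given modular elements $X_1,\dots,X_r$, which already have codimension $1$. So it suffices to show that $H\not\subseteq X_i$ for some $i$.

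Suppose instead that $H\subseteq X_i$ for every $i=1,\dots,r$. Then
\[
H\subseteq \bigcap_{i=1}^r X_i = 0,
\]
which contradicts the fact that $H$ is a one-dimensional subspace of $V^*$. Hence some $X_i$ omits $H$, and $(V,\A)$ has enough modular elements.

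The only point requiring care is that modularity and codimension are measured in $L(\A)$ and $V^*$ themselves, so the definition applies verbatim. Essentiality is not needed here. There is no real obstacle: the argument is a one-line contrapositive.
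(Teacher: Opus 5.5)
Your proof is correct and follows the paper's argument: the paper also argues by contradiction, noting that an $H\in\A$ contained in every codimension-$1$ modular element would lie in $\bigcap_{i=1}^r X_i=0$. Your direct version, showing that some $X_i$ omits a given $H$, is the same argument.
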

    \begin{proof}
        If $(V,\A)$ does not have enough modular elements, there is $H\in \A$ contained in all modular elements of $L(\A)$ of codimension $1$. This contradicts the assumption that the intersection of $X_1,\dots, X_r$ is zero.
    \end{proof}
    \noindent
    Let us now analyze Example~\ref{example: enough modular elts not supersolvable} in more detail. If we call $(V,\A)$ the arrangement appearing in this example, it is possible to add some hyperplanes to the arrangement to ensure the existence of enough modular elements. For instance, this can be achieved by considering the arrangements $\A\cup \{\langle x_1-x_3\rangle\}$ or $\A\cup \{\langle x_1+x_2\rangle \}$. It is therefore natural to ask whether every supersolvable arrangement can be embedded into one with enough modular elements.\\
    This question has a certain relevance when it comes to computing period integrals associated with hyperplane arrangements. If a hyperplane arrangement $(V,\A)$ embeds into another arrangement $(V,\A')$, there is an open immersion $Y(V,\A')\to Y(V,\A)$. If $\G'$ is a building set for $L(\A')$ and we set $\G=\G'\cap L(\A)$, there is an induced morphism $\overline{Y}(V,\A',\G')\to \overline{Y}(V,\A,\G)$ which is given by a chain of blow-ups. At this point, one can compute period integrals on $\overline{Y}(V,\A,\G)$ by pulling them back to $\overline{Y}(V,\A',\G')$. Thus, any result concerning period integrals associated with $(V,\A')$ would apply to all subarrangements $(V,\A)$ as well. It would therefore be interesting to find a criterion to understand whether a supersolvable arrangement can be embedded into one with enough modular elements.\\
    Returning to the case of $(V,\A)$ as in Example~\ref{example: enough modular elts not supersolvable}, it appears difficult to determine an algorithm to add certain hyperplanes to obtain enough modular elements. For instance, the arrangements $\A\cup \{\langle x_1-x_3\rangle\}$ and $\A\cup \{\langle x_1+x_2\rangle \}$ proposed above are chosen so to make $\langle x_1,x_3\rangle$ and $\langle x_3, x_1+x_2-x_3\rangle$ modular, respectively. In this case, this procedure suffices to get enough modular elements. However, if we were to make both $\langle x_1,x_3\rangle$ and $\langle x_3, x_1+x_2-x_3\rangle$ modular, we would get the arrangement $\A\cup \{\langle x_1-x_3\rangle,\langle x_1+x_2\rangle\}$, which can be checked not to have enough modular elements.\\
    \begin{center}
        \includegraphics[width=0.25\textwidth]{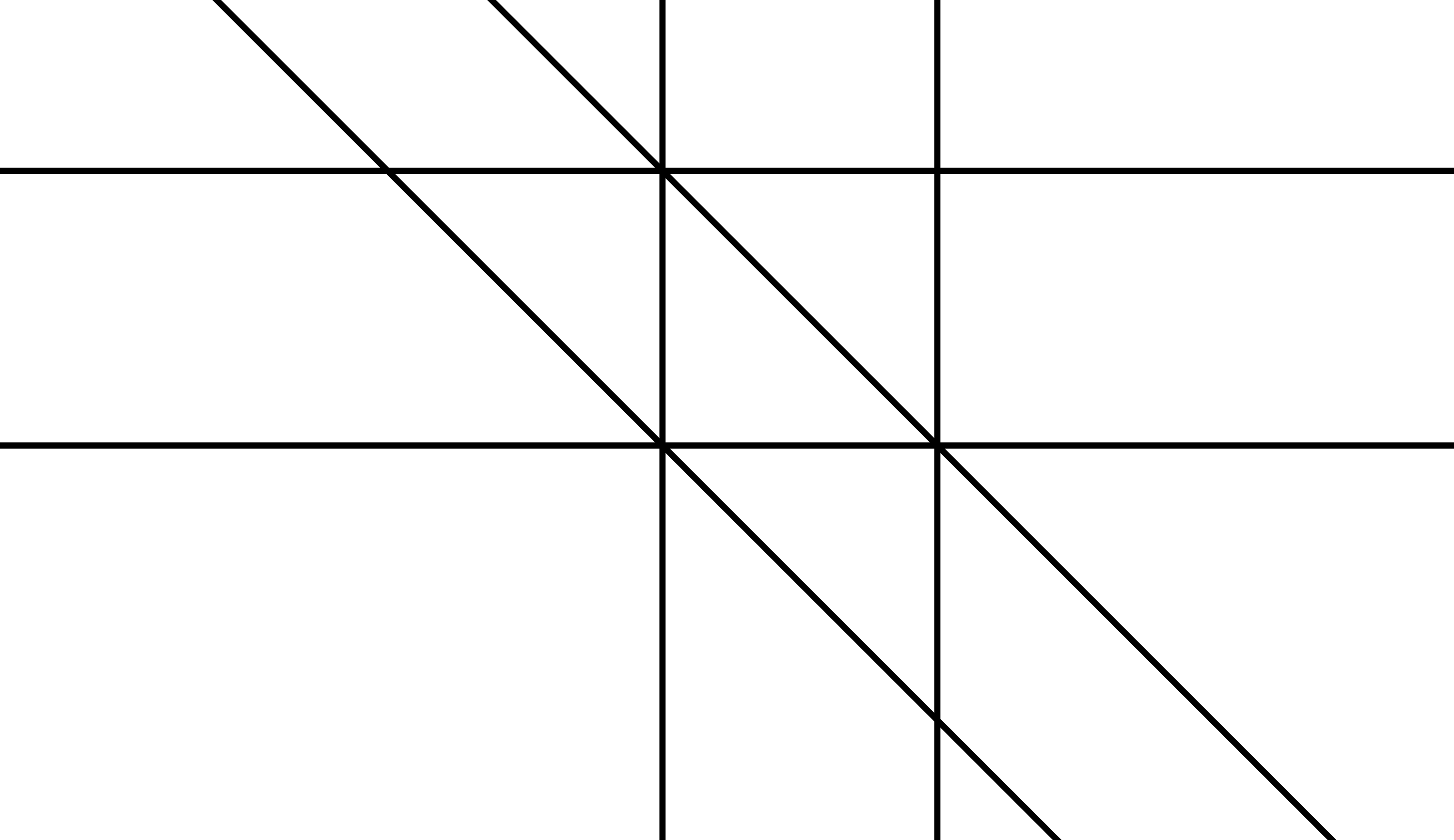}
    \end{center}
    \begin{example}
        \label{example: failure of recovering enough modular elements}
        Consider the hyperplane arrangement $\A$ over $V^*=\bigoplus_{i=1}^3 \Q x_i$ given by
    \begin{gather*}
        \langle x_1\rangle, \; \langle x_2\rangle, \; \langle x_3\rangle, \; \langle x_1+x_3\rangle, \;
        \langle x_2-x_3\rangle, \; \langle 2x_2-x_3\rangle, \\\langle x_2-2x_3\rangle, \;\langle x_1+x_2-x_3\rangle,\; \langle x_1-2x_2+2x_3\rangle.
    \end{gather*}
    The affine picture with respect to $x_3\neq 0$ is the following:
    \medskip
    \begin{center}
        \includegraphics[width=0.25\textwidth]{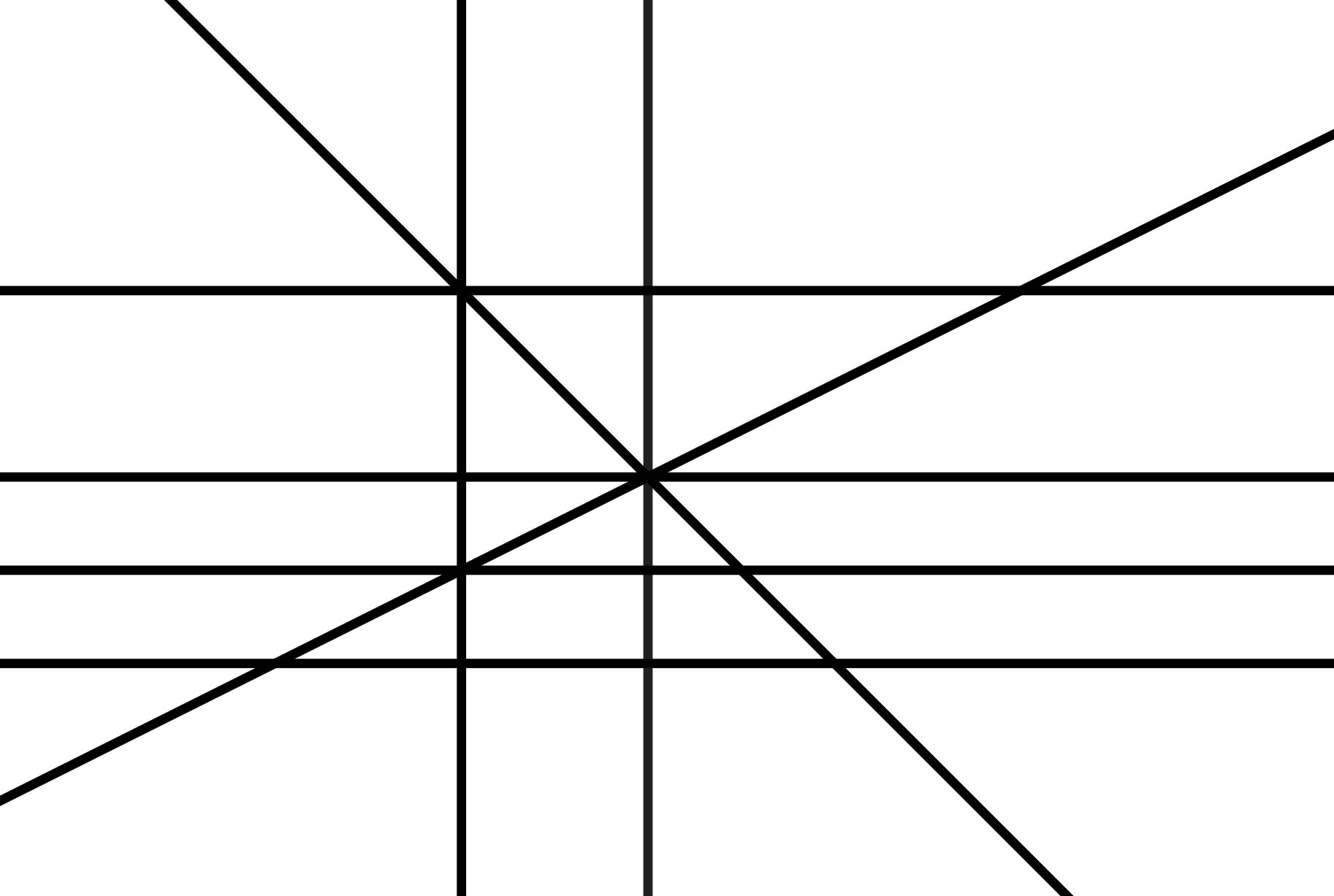}
    \end{center}
    \medskip
    The set $\F$ irreducible elements in $L(\A)$ consists of the lines in $\A$, $V^*$ and the following elements of codimension $1$ in $V^*$:
    \begin{enumerate}
        \item $\langle x_2,x_3,x_2-x_3,2x_2-x_3,x_2-2x_3\rangle$;
        \item $\langle x_1,x_3,x_1+x_3\rangle$;
        \item $\langle x_1,x_2-x_3,x_1-2x_2+2x_3,x_1+x_2-x_3\rangle$;
        \item $\langle x_1+x_3,2x_2-x_3, x_1-2x_2+2x_3\rangle$;
        \item $\langle x_1+x_3, x_2-2x_3, x_1+x_2-x_3\rangle$;
    \end{enumerate}
    Among these,
    \begin{enumerate}
        \item $\langle x_2,x_3,x_2-x_3,2x_2-x_3,x_2-2x_3\rangle$ is modular;
        \item $\langle x_1,x_3,x_1+x_3\rangle$ is not modular, because 
        \[
            \langle x_1,x_3\rangle\cap \langle x_2,x_1+x_2-x_3\rangle= \langle x_1-x_3\rangle \notin L(\A);
        \]
        \item $\langle x_1,x_2-x_3,x_1-2x_2+2x_3,x_1+x_2-x_3\rangle$ is not modular, because
        \[
            \langle x_1,x_2-x_3\rangle\cap \langle x_2,x_1+x_3\rangle=\langle x_1-x_2+x_3\rangle\notin L(\A);
        \]
        \item $\langle x_1+x_3, 2x_2-x_3, x_1-2x_2+2x_3\rangle$ is not modular, because
        \[
            \langle x_1+x_3,2x_2-x_3\rangle\cap \langle x_1,x_2\rangle= \langle x_1+2x_2\rangle \notin L(\A); 
        \]
        \item $\langle x_1+x_3, x_2-2x_3, x_1+x_2-x_3\rangle$ is not modular, because
        \[
            \langle x_1+x_3, x_2-2x_3 \rangle \cap \langle x_1,x_2\rangle=\langle 2x_1+x_2\rangle\notin L(\A).
        \]
    \end{enumerate}
    It is straightforward to conclude that $\langle x_2,x_3\rangle$ is the only modular element of $L(\A)$ of codimension $1$ in $V^*$. In particular, $(V,\A)$ is supersolvable, but it does not have enough modular elements. Also in this example it does not seem possible to embed $\A$ in an arrangement with enough modular elements.
    \end{example}
    \begin{remark}
        It would be interesting to study further properties of arrangements with enough modular elements, for example their characteristic polynomials and combinatorial or topological invariants.
    \end{remark}

    \subsection{Retractions to boundary divisors}
        
    \subsubsection{Modular elements and building sets}
    Let $(V,\A)$ be an essential arrangement over $k$ and fix a building set $\G$ for $L(\A)$. In this section, we study the geometric consequences of enough modular elements in $(V,\A)$ on the compactification $\overline{Y}(V,\A,\G)$. \\
    As we have seen, if $(V,\A)$ has enough modular elements, there exist linear projections $Y(V,\A)\to Y(X^\perp,\A/X)$ for every $X\in L(\A)$. Under a refinement of this assumption, we can construct retractions of $\overline{Y}(V,\A,\G)$ onto the boundary divisor $D_X$ for all $X\in \G$.
    \begin{definition}
        Let $(V,\A)$ be a hyperplane arrangement and $\G$ a building set for $L(\A)$.
        \begin{enumerate}
            \item An element $M\in \G$ is said to be $\G$-modular if it is modular and for all $X\in \G$ we have $X\cap M\in \G$.
            \item We say that $(V,\A)$ \emph{has enough $\G$-modular elements} if for all $H\in \A$ there is a $\G$-modular element $M\in \G$ of codimension $1$ in $V^*$ that does not contain $H$.
        \end{enumerate}
    \end{definition}
    \begin{remark}
        In the notation of the definition, $M\in L(\A)$ is modular if and only if it is $L(\A)$-modular. Thus, the definitions given in the previous section are a particular case of the present ones, namely the case $\G=L(\A)$.
    \end{remark}
    \begin{example}
        The braid arrangement $A_l$ has enough $\F(\A_l)$-modular elements. More generally, the reflection arrangement of the full monomial group from Definition~\ref{def: arrangement full monomial group} has enough $\F(\A_{l,q})$-modular elements.
    \end{example}
    \begin{lemma}
        \label{lemma: cone has enough modular elements}
        Let $(V,\A)$ be a hyperplane arrangement, $\G$ a building set for $L(\A)$. Let $(\widetilde{V}, \widetilde{\A})$ be the cone over $(V,\A)$ and write $\widetilde{V}=V\oplus L$. Let $\widetilde{\G}$ be the building set defined in Lemma~\ref{lemma: building sets for cones}. If $(V,\A)$ has enough $\G$-modular elements, then $(\widetilde{V},\widetilde{\A})$ has enough $\widetilde{\G}$-modular elements.
    \end{lemma}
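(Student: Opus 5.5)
The plan is to take, for each hyperplane of $\widetilde{\A}$, an explicit codimension-$1$ element of $\widetilde{\G}$ and check directly that it is $\widetilde{\G}$-modular. There are two kinds of hyperplanes in $\widetilde{\A}=\A\sqcup\{L^*\}$, and I would treat them separately.

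\emph{The hyperplane $L^*$.} Here I use $V^*$, which lies in $\G\subseteq\widetilde{\G}$, has codimension $1$ in $\widetilde{V}^*=V^*\oplus L^*$, and does not contain $L^*$. By the proof of Lemma~\ref{lemma: building sets for cones}, every $Y\in L(\widetilde{\A})$ decomposes as $Y=Y_1\oplus Y_2$ with $Y_1\in L(\A)$ and $Y_2\in\{0,L^*\}$, and this is a decomposition. Hence $V^*\cap Y=Y_1\in L(\A)\subseteq L(\widetilde{\A})$, so $V^*$ is modular. For $\widetilde{\G}$-modularity, note that an element $X\in\widetilde{\G}$ is either $X\in\G$, giving $X\cap V^*=X\in\G$, or $X=X'+L^*$ with $X'\in\G$, giving $X\cap V^*=X'\in\G$.

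\emph{A hyperplane $H\in\A$.} By hypothesis there is a $\G$-modular $M\in\G$ of codimension $1$ in $V^*$ with $H\not\subseteq M$. I take $\widetilde M=M\oplus L^*$. It lies in $\widetilde{\G}$ by construction, has codimension $1$ in $\widetilde{V}^*$, and does not contain $H$, since $H\subseteq V^*$ and $\widetilde M\cap V^*=M$.

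\begin{itemize}
\item[(a)] \emph{Modularity of $\widetilde M$.} I would reuse the computation in the proof of Lemma~\ref{lemma: enough modular elements preserved by sums}. The cone is $(V\oplus L,\A\sqcup\{L^*\})$. For $Y=Y_1\oplus Y_2$ as above, both $\widetilde M=M\oplus L^*$ and $Y$ are compatible with the splitting $V^*\oplus L^*$. This gives $\widetilde M\cap Y=(M\cap Y_1)\oplus Y_2$, which lies in $L(\widetilde{\A})$ because $M\cap Y_1\in L(\A)$ by modularity of $M$.
\item[(b)] \emph{The $\widetilde{\G}$-condition.} For $X\in\G$ we get $X\cap\widetilde M=X\cap M\in\G$. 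For $X=X'+L^*$ we get $X\cap\widetilde M=(X'\cap M)+L^*$, which lies in $\widetilde{\G}$ since $X'\cap M\in\G$.
\end{itemize}

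The main obstacle is a degenerate case in (b). The sum $(X'\cap M)+L^*$ is in $\widetilde{\G}$ as written only if $X'\cap M\neq 0$. If $X'\cap M=0$, the intersection is $L^*$ itself. This is still fine, because $L^*$ is irreducible and so belongs to every building set, in particular to $\widetilde{\G}$. A similar remark applies if $X\cap M=0$ for some $X\in\G$, e.g.\ $X$ a line with $X\not\subseteq M$. To handle this I will read the condition ``$X\cap M\in\G$'' in the definition of $\G$-modularity as allowing $0$. This convention is forced, since otherwise no codimension-$1$ element could be $\G$-modular at all. Under it the same convention transfers verbatim to $\widetilde{\G}$, and the case check is complete.
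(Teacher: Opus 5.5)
Your proposal is correct and follows the same route as the paper: $M\oplus L^*$ for $H\in\A$ and $V^*$ for $L^*$, with all intersections computed through the splitting $\widetilde{V}^*=V^*\oplus L^*$. You are in fact more careful than the paper, which leaves implicit both the modularity of $M\oplus L^*$ in $L(\widetilde{\A})$ and the convention that $X\cap M=0$ is allowed in the definition of $\G$-modularity; for completeness, also dispatch the trivial case $X=L^*\in\widetilde{\G}$, where $L^*\cap V^*=0$ and $L^*\cap(M\oplus L^*)=L^*$.
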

    \begin{proof}
        If $M\in \G $ is $\G$-modular, then $M+L^*$ is $\widetilde{\G}$-modular. Indeed, for $X\in \G$  we have that $X\oplus L^*$ is a decomposition, as well as $M\oplus L^*$. Hence
        \[
            (M\oplus L^*)\cap (X\oplus L^*)=(M\cap X)\oplus (M\cap L^*)\oplus (L^* \cap X)\oplus (L^*\cap L^*)= M\cap X\oplus L^*.
        \]
        Since $M$ is $\G$-modular $M\cap X\in \G$, so $M\cap X\oplus L^*\in \widetilde{\G}$. Similarly, one checks that $(M+L^*)\cap X\in \widetilde{\G} $.\\ 
            For all $H\in \A$ there is a $\G$-modular element $M\in \G$ such that $V^*=H\oplus M$. Thus, $M+L^*$ is a $\G$-modular element of $L(\widetilde{\A})$ of codimension $1$ that does not contain $H$. Finally, $V^*$ is clearly a $\G$-modular element of $L(\widetilde{\A})$ of codimension $1$ and does not contain $L^*$.
    \end{proof}
    \noindent
    All properties that have been previously exposed for arrangements with enough modular elements can be swiftly adapted to the $\G$-modular case. In particular, these arrangements are supersolvable, or, more precisely, $\G$-supersolvable \cite{Coron-Supersolvability_of_built_lattices_and_Koszulness_of_generalized_Chow_rings}.
    \begin{proposition}
        \label{prop: restrictions into quotients of G-modular elts}
        Let $(V,\A)$ be a hyperplane arrangement and $\G$ a building set for $L(\A)$. Suppose that $(V,\A)$ has enough $\G$-modular elements. Fix $X\in \G$.
        \begin{enumerate}
            \item There is a $\G$-modular element $M\in \G$ such that $V^*=X\oplus M$.
            \item The restriction of the quotient map $V\to V/M^\perp$ to $X^\perp$ induces an isomorphism of arrangements $(X^\perp, \A/X) \cong (V/M^\perp, \A\vert_M)$.
            \item  Via the induced bijection $L(\A/X)\to L(\A\vert_M)$, the building set $\G\vert_M$ is the image of the building set $\G/X$.
        \end{enumerate}
    \end{proposition}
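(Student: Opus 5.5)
The plan is to mimic the proof of Proposition~\ref{prop: restrictions into quotients of modular elts}, checking at each step that the modular elements produced lie in $\G$ and are $\G$-modular. Part (2) will then follow verbatim from the earlier proposition, since a $\G$-modular element is in particular modular; the same chain of isomorphisms built from Lemma~\ref{lemma: restrictions into quotients of modular elts for H} applies.

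For (1), I argue by induction on $\dim X$. Actually I induct over all $X\in L(\A)$ and not only $X\in\G$, so that the inductive step can pass through $Y=H_1+\dots+H_{n-1}$, which need not lie in $\G$. The base case $\dim X=1$ is the definition of having enough $\G$-modular elements. In the inductive step, exactly as before, I obtain a $\G$-modular $M$ with $V^*=Y\oplus M$, a line $L=X\cap M\in\A$, and a $\G$-modular $N$ of codimension $1$ with $L\not\subseteq N$. The new point is that $M\cap N$ must be $\G$-modular. It is modular by Lemma~\ref{lemma: intersections with modular elts}, and it lies in $\G$ because $N\in\G$ and $M$ is $\G$-modular. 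For $Z\in\G$ we have $Z\cap M\cap N=(Z\cap M)\cap N$. Here $Z\cap M\in\G$ since $M$ is $\G$-modular, and then intersecting with $N$ stays in $\G$ since $N$ is $\G$-modular. So $\G$-modularity is closed under intersection, and the complement argument carries over unchanged.

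For (3), the bijection $L(\A/X)\to L(\A\vert_M)$ is $(Z+X)/X\mapsto (Z+X)\cap M$; by surjectivity in the earlier lemma every lattice element containing $X$ equals $((Z+X)\cap M)\oplus X$. I need two inclusions.
\begin{enumerate}
\item If $Z\in\G$, then $(Z+X)\cap M\in\G$. When $Z+X\in\G$, this holds because $M$ is $\G$-modular. When $Z+X\notin\G$, the sum $Z\oplus X$ is a $\G$-decomposition, so every lattice element inside $Z+X$ splits along $Z$ and $X$. In particular $(Z+X)\cap M=(Z\cap M)\oplus(X\cap M)=Z\cap M$, and this lies in $\G$.
\item If $W\in\G\vert_M$, then $W$ is the image of $(W+X)/X\in\G/X$. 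This holds because $(W+X)\cap M=W$, which follows from $X\cap M=0$ and $W\subseteq M$.
\end{enumerate}
Together these identify $\G\vert_M$ with the image of $\G/X$.

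The main obstacle is case (a) with $Z+X\notin\G$. There one must make sure that the decomposition property of $Z\oplus X$ really applies to the lattice element $(Z+X)\cap M$; that element does lie in $L(\A)$ by modularity of $M$. One must also handle the degenerate case $Z\subseteq X$, where $(Z+X)/X=0$. Such $Z$ are excluded from $\G/X$ or map to $0$, so they should be discarded explicitly.
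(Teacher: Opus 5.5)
Your proposal is correct and follows the paper's proof. For (1) and (2) you rerun the earlier inductive construction with $M$ and $N$ chosen $\G$-modular and use that intersections of $\G$-modular elements are $\G$-modular; for (3) you split into the cases $Z+X\in\G$ and $Z\oplus X$ a decomposition, and use $(W+X)\cap M=W$ for the reverse inclusion. Inducting over all of $L(\A)$ rather than only over $\G$ is the right reading of the paper's terse ``by induction hypothesis'', since the intermediate $Y=H_1+\dots+H_{n-1}$ need not lie in $\G$.
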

    \begin{proof}
        For the first two points it is possible to apply the strategy of the proof of Proposition~\ref{prop: restrictions into quotients of modular elts} with some minor modifications. In the notation of this proof, since $X\in \G$, by induction hypothesis we may assume $M$ to be $\G$-modular. On the other hand, the fact that $(V,\A)$ has enough $\G$-modular elements allows to assume $N$ to be $\G$-modular. As the intersection of $\G$-modular elements is itself $\G$-modular, the first two points follow at once.\\
        Regarding the third claim, for all $Y\in L(\A)$ the bijection $L(\A/X)\to L(\A\vert_M)$ maps $(Y+X)/X$ to $(Y+X)\cap M$. If $Y\in \G$, since $X\in \G$ as well, either $Y\oplus X$ is a decomposition or $Y+X\in \G$. In the former case, we have $(Y+X)\cap M = (Y\cap M)\oplus(X\cap M)=Y\cap M$, as $X\cap M=0$. Since $Y\in \G$, by $\G$-modularity of $M$ we conclude that $Y\cap M\in \G$. In the latter case, we have $X+Y\in \G$, hence directly $(X+Y)\cap M\in \G$ by $\G$-modularity. This shows that the image of $\G/X$ is contained in $\G\vert_M$.\\
        For the opposite inclusion, it suffices to observe that for all $Y\in \G$, $Y\subseteq M$ we have $(Y+X)\cap M=Y+M\cap X=Y$. 
    \end{proof}
    \begin{remark}
        In the above situation, suppose that $\G=\F(V,\A)$ is the family of irreducible elements of $L(\A)$. Since a $\G\vert_M$-decomposition is also a $\G$-decomposition, it follows that $\G\vert_M$ is itself the family of irreducible elements of $L(\A\vert_M)$. Thus, $\G/X$ coincides with the set of irreducible elements of $L(\A/X)$, as well. Note that this is not always the case, in general, as we have seen in Example~\ref{example: irreducibles do not pass to quotients}.\\
        Geometrically, we can interpret this fact as follows. Recall that we have an isomorphism
        \[
            D_X\cong \overline{Y}(V/X^\perp, \A\vert_X, \G\vert_X) \times \overline{Y}(X^\perp, \A/X, \G/X),
        \]
        where $D_X$ is the irreducible component of the boundary divisor of $\overline{Y}(V,\A,\G)$ corresponding to $X\in \G$. If $\overline{Y}(V,\A,\G)$ is the minimal De Concini-Procesi compactification of $Y(V,\A)$, then $D_X$ is the minimal De Concini-Procesi compactification of 
        \[
            Y(V/X^\perp \oplus X^\perp, \A\vert_X \sqcup \A/X),
        \]
        provided that $(V,\A)$ has enough $\G$-modular elements. 
    \end{remark}
    \noindent
    As far as restrictions, quotients and sums of arrangements with enough $\G$-modular elements are concerned, we observe that Lemma~\ref{lemma: intersections with modular elts} still holds when replacing modular elements for $L(\A)$ by $\G$-modular ones, and modular elements for $L(\A\vert_X)$ by $\G\vert_X$-modular ones. The following result then follows immediately from Lemma~\ref{lemma: enough modular elts preserved by restrictions and quotients} and Lemma~\ref{lemma: enough modular elements preserved by sums}.
    \begin{lemma}
        Let $(V,\A)$ be a hyperplane arrangement and $\G$ a building set for $L(\A)$. Suppose that $(V,\A)$ has enough $\G$-modular elements. Fix $X\in L(\A)$. Then
        \begin{enumerate}
            \item the arrangement $(V/X^\perp, \A\vert_X)$ has enough $\G\vert_X$-modular elements;
            \item the arrangement $(X^\perp, \A/X)$ has enough $\G/X$-modular elements.
        \end{enumerate}
        Moreover, if $(V',\A')$ is another arrangement with enough $\G'$-modular elements for some building set $\G'$, the arrangement $(V\oplus V', \A\sqcup \A')$ has enough $\G\sqcup\G'$-modular elements.
    \end{lemma}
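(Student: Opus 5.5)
The plan is to follow the proofs of Lemma~\ref{lemma: enough modular elts preserved by restrictions and quotients} and Lemma~\ref{lemma: enough modular elements preserved by sums}, checking at each step that the modular elements produced are $\G$-modular for the relevant building set. First I would record the $\G$-version of Lemma~\ref{lemma: intersections with modular elts}. If $M,N$ are $\G$-modular, then $M\cap N$ is modular, and for $X\in\G$ we have $(M\cap N)\cap X=M\cap(N\cap X)\in\G$, since $N\cap X\in\G$ and $M$ is $\G$-modular. If $M$ is $\G$-modular and $X\in L(\A)$, then $M\cap X$ is modular in $L(\A\vert_X)$. For $Y\in\G\vert_X$ we get $(M\cap X)\cap Y=M\cap Y\in\G$, and it is contained in $X$, so it lies in $\G\vert_X$. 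The remaining point is $M\cap X\in\G\vert_X$ itself. When $X\in\G$ this is immediate. For general $X$, whose restriction $\G\vert_X$ is defined by the same formula, I would use that $M\cap X$ is the direct sum of the $M\cap X_i$ over the $\G$-decomposition $X=\bigoplus X_i$. Possibly the statement is only really needed for $X\in\G$, in which case I would restrict to that case.

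For (1), take $H\in\A$ with $H\subseteq X$ and a $\G$-modular $M$ of codimension $1$ with $H\not\subseteq M$. Then $M\cap X$ has codimension $1$ in $X$, does not contain $H$, and is $\G\vert_X$-modular by the previous paragraph.

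For (2), assuming $X\in\G$, Proposition~\ref{prop: restrictions into quotients of G-modular elts} gives a $\G$-modular $M$ with $V^*=X\oplus M$ and an isomorphism $(X^\perp,\A/X)\cong(V/M^\perp,\A\vert_M)$ carrying $\G/X$ onto $\G\vert_M$. Being $\G$-modular is intrinsic to the pair (lattice, building set), so it transports along this isomorphism. The claim then reduces to (1) applied to $M$. For $X\notin\G$ I would write $X$ as the sum of its $\G$-decomposition and iterate restrictions, using that $\G/X$ can be obtained stepwise. \textbf{This is the step I expect to be the main obstacle}: checking that iterated quotients by the $\G$-factors are compatible, or else noting that the statement is intended only for $X\in\G$.

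For the sum, take $H\in\A$, say $H\in\A$ rather than $\A'$, and a $\G$-modular $M$ of codimension $1$ avoiding $H$. Then $M\oplus V'^*$ is modular by Lemma~\ref{lemma: enough modular elements preserved by sums}, it lies in $\G\sqcup\G'$ if we include sums as that building set requires, and it intersects elements of $\G$ in $M\cap X\in\G$ and elements $X'$ of $\G'$ in $X'$. So it is $(\G\sqcup\G')$-modular, of codimension $1$, and does not contain $H$.
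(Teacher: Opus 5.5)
Your route is the paper's. Its proof is the remark that Lemma~\ref{lemma: intersections with modular elts} still holds for $\G$-modular elements (with $\G\vert_X$-modularity in the second part), after which the proofs of Lemma~\ref{lemma: enough modular elts preserved by restrictions and quotients} and Lemma~\ref{lemma: enough modular elements preserved by sums} are rerun.

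Your arguments for (1) and (2) are correct for $X\in\G$. That is also all the paper's argument covers, since (2) goes through Proposition~\ref{prop: restrictions into quotients of G-modular elts}, which requires $X\in\G$. You should drop the fallback for $X\notin\G$ rather than leave it open. Writing $M\cap X=\bigoplus_i (M\cap X_i)$ only shows that $M\cap X$ is a sum of elements of $\G$, not an element of $\G$. In fact (1) is false in that case. Take the braid arrangement $\A_l$ with $l\ge 4$, $\G=\F(\A_l)$, and $X=\langle x_1,x_2\rangle\oplus\langle x_3-x_4,x_4-x_5\rangle$. Every irreducible element contained in $X$ lies in one of the two summands, so $\G\vert_X$ has no element of codimension one in $X$ at all. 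The lemma should therefore be stated for $X\in\G$.

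The sum statement has a genuine gap, in your proposal and in the paper alike. Taken literally it is false. If $\dim V,\dim V'\ge 2$, every element of $\G\sqcup\G'$ has dimension at most $\max(\dim V,\dim V')<\dim V+\dim V'-1$, so there is no element of codimension one at all.

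Your repair, ``include sums'', is not yet a proof, for two reasons.
\begin{itemize}
\item Adjoining all sums $Y+Y'$ with $Y\in\G$, $Y'\in\G'$ does not give a building set. Take $Y_1,Y_2\in\G$ with $Y_1+Y_2\notin\G$ and $Y'\in\G'$. Then $Y_1+Y'$ and $Y_2+Y'$ meet in $Y'\neq 0$. Their sum $(Y_1+Y_2)+Y'$ is not adjoined, since its $V^*$-component is not in $\G$. So axiom (2) of a building set fails.
\item Whatever you adjoin must also contain elements $V^*\oplus M'$, to handle the lines of $\A'$. Modularity of $M\oplus V'^*$ with respect to the new building set must then be checked against these too. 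This forces $M\oplus M'$, and further intersections, into the family. You only checked intersections with elements of $\G$ and of $\G'$.
\end{itemize}

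To close the gap you have to specify the adjoined mixed elements explicitly. The family must be closed under the sums required by the building-set axiom and under intersection with the chosen codimension-one elements. Then verify both the axiom and $\G$-modularity against every element.

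For two braid arrangements this can be done. Write $x'_1,\dots,x'_{l'+1}$ for the coordinates of the second factor. Take $\F(\A_l)\sqcup\F(\A_{l'})$ together with all $\langle x_i\mid i\in\lambda\rangle\oplus\langle x'_i\mid i\in\lambda'\rangle$ for nonempty $\lambda,\lambda'$. The required modular hyperplanes are $\langle x_i\mid i\neq j\rangle\oplus V'^*$ and $V^*\oplus\langle x'_i\mid i\neq j\rangle$. The lemma as stated, with building set $\G\sqcup\G'$, does not hold.
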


    \subsubsection{Construction of retractions}
    Let us now turn to the main geometric property of arrangements with enough $\G$-modular elements that we are interested in. Let $(V,\A)$ be a hyperplane arrangement, $\G$ a building set for $L(\A)$ and $X\in \G$. The irreducible boundary divisor $D_X$ of $\overline{Y}(V,\A,\G)$ corresponding to $X$ is isomorphic to 
    \[
        D_X\cong \overline{Y}(V/X^\perp, \A\vert_X, \G\vert_X) \times \overline{Y}(X^\perp, \A/X, \G/X).
    \]
    We wish to construct a retraction $\overline{Y}(V,\A,\G)\to D_X$ by defining a projection onto each of the above factors which extends a linear map on the complements of the arrangements. \\
    For the first factor no assumption on the presence of enough $\G$-modular elements is needed. On the other hand, for the second factor we use Proposition~\ref{prop: restrictions into quotients of G-modular elts} to reduce to the case of quotient arrangements.
    \begin{lemma}
        \label{lemma: quotient maps extend to compactifications}
        Let $(V,\A)$ a hyperplane arrangement, $\G$ a building set for $L(\A)$ and take $X\in \G$. Then the natural quotient map $Y(V,\A)\to Y(V/X^\perp,\A\vert_X)$ extends to a morphism $\overline{Y}(V,\A,\G)\to \overline{Y}(V/X^\perp, \A\vert_X, \G\vert_X)$.
    \end{lemma}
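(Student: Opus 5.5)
The most direct approach uses the definition of $\overline{Y}(V,\A,\G)$ as a closure of a graph. By definition, $\overline{Y}(V,\A,\G)$ is the closure of $Y(V,\A)$ inside
\[
    P:=\mathbb{P}(V)\times \prod_{Y\in \G}\mathbb{P}(V/Y^\perp),
\]
embedded via the maps $\phi_Y$. Similarly, $\overline{Y}(V/X^\perp,\A\vert_X,\G\vert_X)$ is the closure of $Y(V/X^\perp,\A\vert_X)$ inside
\[
    P_X:=\mathbb{P}(V/X^\perp)\times \prod_{Y\in \G\vert_X}\mathbb{P}\big((V/X^\perp)/(Y^{\perp_X})\big),
\]
where $Y^{\perp_X}$ denotes the annihilator of $Y\subseteq X=(V/X^\perp)^*$ in $V/X^\perp$. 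The plan is to exhibit a projection $\pi\colon P\to P_X$ that restricts to the quotient map on $Y(V,\A)$. Then $\pi$ maps the closure of $Y(V,\A)$ into the closure of its image, which lies in $\overline{Y}(V/X^\perp,\A\vert_X,\G\vert_X)$.

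The key identification is as follows. For $Y\in \G$ with $Y\subseteq X$, we have $Y^\perp\supseteq X^\perp$, and the annihilator of $Y$ in $V/X^\perp$ is $Y^\perp/X^\perp$. Hence there is a canonical isomorphism
\[
    (V/X^\perp)/(Y^\perp/X^\perp)\cong V/Y^\perp,
\]
so each factor of $P_X$ is literally one of the factors of $P$. The first factor $\mathbb{P}(V/X^\perp)$ of $P_X$ also appears in $P$, as the factor indexed by $X\in\G$. We define $\pi$ as the product of the coordinate projections onto these factors, which is a morphism of projective varieties.

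Next we check compatibility on the open part. For $v\in Y(V,\A)$ with image $\bar v\in V/X^\perp$, the component of $\phi(v)$ indexed by $Y\subseteq X$ is the class of $v$ in $\mathbb{P}(V/Y^\perp)$, and this equals the class of $\bar v$ in $\mathbb{P}((V/X^\perp)/(Y^\perp/X^\perp))$. The $X$-component is $[\bar v]$ itself. These are defined because $v\notin H^\perp$ for all $H\in\A$, so $v\notin Y^\perp$. Therefore $\pi$ composed with the graph embedding of $Y(V,\A)$ equals the graph embedding of $Y(V/X^\perp,\A\vert_X)$ composed with the quotient map.

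By continuity, $\pi(\overline{Y}(V,\A,\G))$ is contained in the closure of the image of $Y(V/X^\perp,\A\vert_X)$, which is $\overline{Y}(V/X^\perp,\A\vert_X,\G\vert_X)$. This uses that $\G\vert_X$ is a building set containing $X$, the latter being the convention $V^*\in\G$ in the quotient. The restriction of $\pi$ is then the required extension, and it is a morphism of reduced varieties because the target is closed.

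The main point needing care is this last bookkeeping step: the convention that the top element $X$ lies in $\G\vert_X$, and the fact that the closure is taken scheme-theoretically with reduced structure. Neither requires any modularity assumption, which is consistent with the remark in the paper that no modularity hypothesis is needed for this factor.
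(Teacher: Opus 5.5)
Your proof is correct and follows the paper's argument. Like the paper, you realize both compactifications as closures in products of projective spaces and use $(V/X^\perp)/(Y^\perp/X^\perp)\cong V/Y^\perp$ for $Y\in \G\vert_X$ to see the target product as a factor of the source product. You then conclude by continuity of the coordinate projection. Your extra bookkeeping about the $\mathbb{P}(V)$ factor and the reduced structure on the closure is harmless; the paper leaves it implicit.
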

    \begin{proof}
        Recall that $\overline{Y}(V,\A,\G)$ is the closure of $Y(V,\A)$ inside
        \[
            \prod_{Y\in \G} \mathbb{P}(V/Y^\perp).
        \]
        Similarly, $\overline{Y}(V/X^\perp, \A\vert_X, \G\vert_X)$ is the closure of $Y(V/X^\perp, \A\vert_X)$ inside
        \[
            \prod_{Y\in \G\vert_X}\mathbb{P}\left( \frac{(V/X^\perp)}{(Y^\perp/X^\perp)} \right) \cong \prod_{Y\in \G\vert_X}\mathbb{P}\left(V/Y^\perp\right).
        \]
        Consider the diagram
        \[
        \begin{tikzcd}
            Y(V,\A) \ar[r, hook] \ar[d] & \prod_{Y\in \G} \mathbb{P}(V/Y^\perp) \ar[d, "\pi"] \\
            Y(V/X^\perp, \A\vert_X) \ar[r, hook] & \prod_{Y\in \G\vert_X}\mathbb{P}\left(V/Y^\perp\right)
        \end{tikzcd}
        \]
        where the left vertical arrow is the quotient map and $\pi$ is the projection onto the factors of the target. This diagram commutes. Since $\pi$ is continuous, we conclude that 
        \begin{align*}
        \pi\left(\overline{Y}(V,\A,\G)\right) &=\pi \left(\overline{Y(V,\A)}\right)\subseteq \overline{\pi(Y(V,\A))}\\
                                   &\subseteq \overline{Y(V/X^\perp, \A\vert_X)}=\overline{Y}(V/X^\perp, \A\vert_X, \G\vert_X).
        \end{align*}
        This proves the statement.
    \end{proof}
    \begin{proposition}
    \label{prop: retractions to boundary divisors}
        Let $(V,\A)$ be a hyperplane arrangement and $\G$ a building set for $L(\A)$. Assume that $(V,\A)$ has enough $\G$-modular elements. Then for every $X\in \G$ there is a retraction $\overline{Y}(V,\A,\G)\to D_X$.\\
        More precisely, consider the isomorphism
        \[
            D_X\cong \overline{Y}(V/X^\perp, \A\vert_X, \G\vert_X) \times \overline{Y}(X^\perp, \A/X, \G/X).
        \]
        Then there is a retraction $\overline{Y}(V,\A,\G)\to D_X$ such that 
        \begin{enumerate}
            \item its composition with the projection onto the first factor of $D_X$ restricts to the natural quotient map $Y(V,\A)\to Y(V/X^\perp,\A\vert_X)$;
            \item its composition with the projection onto the second factor of $D_X$ restricts to a linear projection $Y(V,\A)\to Y(X^\perp, \A/X)$.
        \end{enumerate}
    \end{proposition}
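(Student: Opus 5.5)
The plan is to build the retraction as a product $r=(r_1,r_2)$ of two morphisms $\overline{Y}(V,\A,\G)\to\overline{Y}(V/X^\perp,\A\vert_X,\G\vert_X)$ and $\overline{Y}(V,\A,\G)\to\overline{Y}(X^\perp,\A/X,\G/X)$. We then check that $r$ restricted to $D_X$ is the identity under the isomorphism of Proposition~\ref{prop: buondary divisor as product of compactifications}.

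For the first factor we take $r_1$ to be the extension of the quotient map given by Lemma~\ref{lemma: quotient maps extend to compactifications}. This realises property (1) by construction. For the second factor we use Proposition~\ref{prop: restrictions into quotients of G-modular elts}, which provides a $\G$-modular $M$ with $V^*=X\oplus M$, an isomorphism of arrangements $(X^\perp,\A/X)\cong(V/M^\perp,\A\vert_M)$, and the matching of building sets $\G/X\leftrightarrow\G\vert_M$. This isomorphism of arrangements together with the matching of building sets induces an isomorphism $\overline{Y}(X^\perp,\A/X,\G/X)\cong\overline{Y}(V/M^\perp,\A\vert_M,\G\vert_M)$, since both sides are closures in the matching products of projective spaces. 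Applying Lemma~\ref{lemma: quotient maps extend to compactifications} to $M\in\G$ then gives $\overline{Y}(V,\A,\G)\to\overline{Y}(V/M^\perp,\A\vert_M,\G\vert_M)$. We define $r_2$ as this map composed with the inverse isomorphism. On $Y(V,\A)$ this composite is the linear projection $V\to X^\perp$ with kernel $M^\perp$, which gives property (2).

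The main step is showing $r\vert_{D_X}=\mathrm{id}$. Since $D_X$ is irreducible and $Y$-type open strata are dense in it, it suffices to check this on a dense open subset of $D_X$, and I would do this in the local charts. Choose a maximal $\G$-nested set $\S$ containing $X$ together with an adapted basis $\beta$. For $Y\subseteq X$ take $\beta(Y)\in\A\vert_X$. For $Y\not\subseteq X$ I would try to choose $\beta(Y)\in M$, which should be possible because $M\cap(Y+X)\in\G$ is the image of $(Y+X)/X$. In the chart $U_\S$, the map $r_1$ should simply forget the coordinates $u_Y$ with $Y\not\subseteq X$ and set $u_X=0$ in the relevant $\mathbb{P}(V/Y^\perp)$ factors. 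Similarly, $r_2$ should express the coordinates $u^{\S/X}_{(Y+X)/X}$ as the ratios $\beta(Y)/\beta(Y^+)$ computed modulo $X$. By the choice $\beta(Y)\in M$ these ratios are exactly $u_Y$ up to the effect of setting $u_X=0$.

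I expect the hardest part to be verifying that, after restricting to $u_X=0$, these formulas agree with the map $f$ in the proof of Proposition~\ref{prop: local decomposition of boundary divisors}. The key point is that for $Y\not\subseteq X$ with $X\subsetneq Y$, the element $\beta(Y)$ has zero $X$-component once we project along $M^\perp$. This needs the compatibility of $\S/X$ with $\S\vert_M$ under the bijection $L(\A/X)\to L(\A\vert_M)$. Once the identity is established on the charts $U_\S\cap D_X$, which cover $D_X$, we conclude that $r$ is a retraction.
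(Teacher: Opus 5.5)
Your proof is correct. You build $r_1$ and $r_2$ exactly as the paper does (its $\pi_1$, $\pi_2$), but you verify $r\vert_{D_X}=\mathrm{id}$ by a different route.

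\textbf{The paper's route.} It argues globally. It splits $\prod_{Y\in\G}\mathbb{P}(V/Y^\perp)$ into the factors with $Y\subseteq X$ and those with $Y\not\subseteq X$. It then notes that $r_1$ factors through the first projection and $r_2$ through the second, because $V^*=X\oplus M$ forces every $Y\in\G\vert_M$ to satisfy $Y\not\subseteq X$. Finally it checks the second composite on the dense open $Y(X^\perp,\A/X)\subseteq Y(V,\A')$, where $\A'=\{H\in\A\mid H\not\subseteq X\}$; there the projection along $M^\perp$ is the identity on $X^\perp$, and density finishes. No special adapted basis is needed.

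\textbf{Your route and the two facts it needs.} Your chart computation requires two combinatorial facts that you only gesture at.

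(i) $M$-adapted bases exist. The relevant point is not that $(Y+X)\cap M\in\G$. It is that $Y\cap M\not\subseteq\sum_{Z\in\S,\,Z\subsetneq Y}Z$ for every $Y\in\S$ with $Y\not\subseteq X$. This holds because $Y=X\oplus(Y\cap M)$ when $X\subsetneq Y$, and $Y\subseteq M$ when $Y\oplus X$ is a decomposition (intersect $M$ with $Y\oplus X$ and count dimensions).

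(ii) The map $Y\mapsto(Y+X)/X$ sends the successor $Y^+$ in $\S$ to the successor of $(Y+X)/X$ in $\S/X$. This follows from a short nestedness argument. It gives $r_2^*u^{\S/X}_{(Y+X)/X}=\beta(Y)/\beta(Y^+)=u^\S_Y$. Note that this holds on all of $U_\S$, not only after setting $u^\S_X=0$, and likewise $r_1^*u^{\S\vert_X}_Y=u^\S_Y$. So $r\circ f=\mathrm{id}$ is immediate, and the ``hardest part'' you anticipate is really just (ii).

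\textbf{Comparison.} Your route relies on the chart isomorphism $f$ of Proposition~\ref{prop: local decomposition of boundary divisors} being the restriction of the global isomorphism $D_X\cong\overline{Y}_1\times\overline{Y}_2$. The paper also takes this for granted. In return, your route directly produces the coordinate description of the retraction. The paper derives that description only after the proof and uses it later in Corollary~\ref{cor: embedding regualr functions with enough modular elements} and Proposition~\ref{prop: global decomp. of holonomy solutions}. The paper's density argument avoids the combinatorics of (i) and (ii).
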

    \begin{proof}
        By the previous lemma, the quotient map $Y(V,\A)\to Y(V/X^\perp,\A\vert_X)$ extends to a morphism $\pi_1 \colon \overline{Y}(V,\A,\G)\to \overline{Y}(V/X^\perp, \A\vert_X, \G\vert_X)$.\\
        By Proposition~\ref{prop: restrictions into quotients of G-modular elts}, there is a $\G$-modular element $M\in \G$ such that $(X^\perp,\A/X)\cong (V/M^\perp, \A\vert_M)$ and the induced bijection $L(\A/X)\to L(\A\vert_M)$ sends $\G/X$ to $\G\vert_M$. Hence, there is an isomorphism $\overline{Y}(X^\perp, \A/X, \G/X)\cong \overline{Y}(V/M^\perp, \A\vert_M, \G\vert_M)$ induced by a linear isomorphism $X^\perp\to V/M^\perp$. Another application of the previous lemma then provides us with a morphism 
        \[
            \pi_2 \colon  \overline{Y}(V,\A,\G)\to \overline{Y}(V/M^\perp, \A\vert_M,\G\vert_M)\cong \overline{Y}(X^\perp, \A/X, \G/X)
        \]
        which extends a linear projection $Y(V,\A)\to Y(X^\perp, \A/X)$.\\
        It remains to show that $\pi = \pi_1\times \pi_2$ is a retraction. Let us set for brevity $\overline{Y}_1=\overline{Y}(V/X^\perp, \A\vert_X, \G\vert_X)$ and $\overline{Y}_2=  \overline{Y}(X^\perp, \A/X, \G/X)$. We need to check that the two compositions
        \begin{gather*}
            \overline{Y}_1\times \overline{Y}_2 \cong D_X \hookrightarrow \overline{Y}(V,\A,\G) \overset{\pi_1}{\longrightarrow} \overline{Y}_1, \\
            \overline{Y}_1 \times \overline{Y}_2 \cong D_X \hookrightarrow \overline{Y}(V,\A,\G) \overset{\pi_2}{\longrightarrow} \overline{Y}_2
        \end{gather*}
        are the projections onto the first and second component, respectively.\\
        Recall that $\overline{Y}(V,\A,\G)$ is the closure of $Y(V,\A)$ inside
        \[
        \prod_{Y\in \G} \mathbb{P}(V/Y^\perp)=\prod_{Y\in \G\vert_X} \mathbb{P}(V/Y^\perp)\times \prod_{Y\in \G, Y\not \subseteq X} \mathbb{P}(V/Y^\perp).
        \]
        Let $p_1$ and $p_2$ denote the projections from $\overline{Y}(V,\A,\G)$ to the first and second factor respectively. Moreover, $D_X$ is the closure of the preimage of $\overline{Y}(X^\perp, \A/X)$ under the projection $\overline{Y}(V,\A,\G)\to \mathbb{P}(V)$.\\
        Notice that $\pi_1$ factors through $p_1$, as it is straightforward, and $\pi_2$ factors through $p_2$. Indeed, by construction, $\pi_2$ is the restriction to $\overline{Y}(V,\A,\G)$ of the composition
        \[
        \phi \colon  \prod_{Y\in \G}\mathbb{P}(V/Y^\perp)\to \prod_{Y\in \G\vert_M} \mathbb{P}(V/Y^\perp)\cong \prod_{Y\in \G/X}\mathbb{P}(X^\perp/Y^\perp),
        \]
        where the first arrow is the projection onto the corresponding factors. Since $X$ and $M$ make up a direct sum, it is clear that this first map factors through $p_2$. As a result, it suffices to show that the compositions
        \begin{gather*}
            \overline{Y}_1\overset{i_1}{\hookrightarrow} p_1(\overline{Y}(V,\A,\G)) \overset{\pi_1}{\longrightarrow} \overline{Y}_1, \\
            \overline{Y}_2\cong p_2(D_X) \overset{i_2}{\hookrightarrow} p_2(\overline{Y}(V,\A,\G)) \overset{\pi_2}{\longrightarrow} \overline{Y}_2,
        \end{gather*}
        are the identities.\\
        For $\pi_1\circ i_1$, this fact follows immediately from the construction of $\pi_1$ in the previous lemma. For the other map, the embedding $Y(V,\A)\hookrightarrow p_2(\overline{Y}(V,\A,\G))$ factors through the complement $Y(V,\A')$ of $\A'=\{H\in \A\mid H\not\subseteq X\}$. The map $i_2$ therefore restricts to $Y(X^\perp,\A/X)\hookrightarrow Y(V,\A')$ induced by the inclusion $X^\perp \to V$. Thus, we have a commutative diagram
        \[
            \begin{tikzcd}
                                &  \prod_{Y\in \G, Y\not\subseteq X}\mathbb{P}(V/Y^\perp) \ar[r,"\phi"] & \prod_{Y\in \G/X}\mathbb{P}(X^\perp/Y^\perp) \\
            p_2(D_X) \ar[r,hook]& p_2(\overline{Y}(V,\A,\G)) \ar[u,hook]\ar[r,"\pi_2"] & \overline{Y}_2\ar[u,hook] \\
            Y(X^\perp,\A/X) \ar[r] \ar[u,hook] & Y(V,\A')\ar[r] \ar[u,hook] & Y(X^\perp, \A/X) \ar[u,hook]
            \end{tikzcd}
        \]
        Given the definition of $\phi$, the restriction of $\pi_2$ to $Y(V,\A')$ is the projection to $X^\perp$ with kernel $M^\perp$. It follows that the composition of the morphisms in the bottom row is the identity, hence $\pi_2\circ i_2$ is the identity as well by density.\\
        We then have $p_1(D_X)=\overline{Y}_1$ and thus, by the construction of $\pi_1$ in the previous lemma, it follows that $\pi_1\circ i_1$ is the projection onto the first coordinate.
    \end{proof}
    \noindent
    Let us now give an explicit description of the retraction just constructed in coordinates on the standard charts of $\overline{Y}(V,\A,\G)$. Although this does not provide any further geometric insight, it will come in handy a few times during our exposition.\\
    Let us fix a hyperplane arrangement $(V,\A)$ with enough $\G$-modular elements for some building set $\G$. Let $X\in \G$ and $\S$ be a maximal $\G$-nested set containing $X$ together with an adapted basis $\beta \colon  \S\to \A$. Consider the associated affine open subset $U_\S \subseteq \overline{Y}(V,\A,\G)$, on which we put coordinates $u^\S_Y$ for all $Y\in \S$, $Y\neq V^*$. The closure of $\{u^\S_Y=0\}$ in $\overline{Y}(V,\A,\G)$ is precisely $D_Y$. Keep in mind that these coordinates depend on the choice of $\beta$, although it has been suppressed in the notation.\\
    Consider the map 
    \[
        \pi_1 \colon \overline{Y}(V,\A,\G) \to \overline{Y}(V/X^\perp, \A\vert_X, \G\vert_X)
    \]
    constructed in Proposition~\ref{prop: retractions to boundary divisors}. The restriction of $\pi_1$ to $U_\S$ coincides with
    \[
        U_\S\to U_{\S\vert_X}, \quad (u^\S_Y)_{Y\in \S, Y\neq V^* }\mapsto (u^\S_Y)_{Y\in \S, Y\subsetneq X}.
    \]
    The other component of the retraction of Proposition~\ref{prop: retractions to boundary divisors}, namely the map
    \[
        \pi_2: \overline{Y}(V,\A,\G)\to \overline{Y}(X^\perp, \A/X, \G/X),
    \]
    can also be given an elementary description in terms of coordinates, although the situation is slightly more delicate. \\
    Let $M\in \G$ be a $\G$-modular element such that $V^*=X\oplus M$, whose existence is ensured by Proposition~\ref{prop: restrictions into quotients of G-modular elts}. We first need to understand the restriction of the natural map $\overline{Y}(V,\A,\G)\to \overline{Y}(V/M^\perp, \A\vert_M, \G\vert_M)$ to the affine open subset $U_\S$. In contrast to the previous case, the fact that $M\not\in \S$ prevents us from mirroring the description of $\pi_1$.\\
    Via the isomorphism $L(\A/X)\to L(\A\vert_M)$, the image of $\S/X$ is 
    \[
        \S_M=\{ (Y+X)\cap M\mid Y\in \S, Y\not\subseteq X\}.
    \]
    There is an adapted basis $\beta_M \colon  \S_M\to \A\vert_M$ which sends $(\beta(Y)+X)\cap M$ to $Y\in \S, Y\not\subseteq X$.
    \begin{definition}
        We say that $\beta$ is \emph{$M$-adapted} if for all $Y\in \S$ not contained in $X$ we have $\beta(Y)\subseteq M$.
    \end{definition}
    \noindent
    If $\beta$ is $M$-adapted, it follows that $\beta_M(Y)=\beta(Y)$ for all $Y\in \S, Y\not\subseteq X$. Moreover, we also have $\S_M=\S\vert_M$, since for all $Y\in\G, Y\not\subseteq X$ we have $(X+Y)\cap M=Y\cap M$, as we have seen in the proof of Proposition~\ref{prop: restrictions into quotients of G-modular elts}.  \\
    In this case, the projection $\overline{Y}(V,\A,\G)\to \overline{Y}(V/M^\perp, \A\vert_M, \G\vert_M)$ restricts to
    \[
        U_\S\to U_{\S\vert_M}, \quad (u^\S_Y)_{Y\in \S, Y\neq V^* }\mapsto (u^\S_Y)_{Y\in \S, Y\subsetneq M}.
    \]
    The isomorphism $\overline{Y}(V/M^\perp, \A\vert_M, \G\vert_M)\cong \overline{Y}(X^\perp, \A/X, \G/X)$ identifies $U_{\S\vert_M}$ with $U_{\S/X}$ by sending the coordinate $u^{\S\vert_M}_Y$ to $u^{\S/X}_{(Y+X)/X}$ for $Y\in \S$, $Y\subsetneq M$. As a result, the map $\pi_2$ restricts to the morphism
    \[
        U_\S\to U_{\S/X}, \quad (u^\S_Y)_{Y\in \S, Y\neq V^* } \mapsto (u^\S_Y)_{Y\in\S, Y\not\subseteq X, Y\neq V^*}.
    \]
    At the level of regular functions, the retraction $\pi \colon  \overline{Y}(V,\A,\G)\to D_X$ therefore restricts to a map $U_\S\to U_{\S\vert_X}\times U_{\S/X}$ such that 
    \begin{align*}
        u^{\S\vert_X}_Y \mapsto u^\S_Y \quad &   \text{for $Y\in \S\vert_X$, $Y\neq X$,} \\
        u^{\S/X}_{(Y+X)/X} \mapsto u^\S_Y \quad &  \text{for $Y\in \S$, $Y\not\subseteq X$, $Y\neq V^*$}.
    \end{align*}
    Notice that changing the adapted basis $\beta$ for $\S$ yields an automorphism of $U_S$, but the open subset $U_\S$ itself does not depend on the choice of said basis. Thus, it is possible to find a suitable adapted basis $\beta$ for $\S$, namely an $M$-adapted one, so that the retraction constructed in Proposition~\ref{prop: retractions to boundary divisors} locally assumes the simple shape written above.
    The necessity of choosing an $M$-adapted basis mirrors the fact that a different choice of a $\G$-modular element $M$ such that $V^*=X\oplus M$ leads to different projections $V^*\to X^\perp$. \\
    We summarize this discussion in the following
    \begin{corollary}
        \label{cor: embedding regualr functions with enough modular elements}
        Let $(V,\A)$ be a hyperplane arrangement and $\G$ a building set for $L(\A)$. Assume that $(V,\A)$ has enough $\G$-modular elements. Then for every $X\in \G$ and every maximal $\G$-nested set $\S$ containing $X$ the retractions constructed in Proposition~\ref{prop: retractions to boundary divisors} induce injective $k$-algebra homomorphisms
        \[
            \mathcal{O}_{U_{\S\vert_X}}\otimes_k \mathcal{O}_{U_{\S/X}} \to \mathcal{O}_{U_\S}.
        \]
        In particular, the coefficients of the Laurent series of a function $f\in \mathcal{O}_{Y(V,\A)}$ around $\{u^\S_X=0\}$ can be extended to rational functions on $Y(V,\A)$. 
    \end{corollary}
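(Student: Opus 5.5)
The plan is to obtain the homomorphism as the pullback along the retraction $\pi=\pi_1\times\pi_2$ of Proposition~\ref{prop: retractions to boundary divisors}, computed in the coordinates described just before the statement. First, given $X\in\G$ and a maximal $\G$-nested set $\S\ni X$, I fix a $\G$-modular $M$ with $V^*=X\oplus M$ (Proposition~\ref{prop: restrictions into quotients of G-modular elts}). Then I construct an $M$-adapted basis $\beta$ for $\S$. For $Y\in\S$ with $Y\not\subseteq X$, I take $\beta(Y)$ to be the adapted-basis vector of $\S\vert_M=\S_M$ attached to $Y\cap M$, or equivalently the image under $\beta_M$. For $Y\subseteq X$, I take any adapted basis of $\S\vert_X$. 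The adaptedness conditions for $Y\not\subseteq X$ should follow from the decomposition $Y=(Y\cap X)\oplus(Y\cap M)$ when $X\subseteq Y$, and from $Y=Y\cap M$ or a $\G$-decomposition otherwise. This is where the bijection $L(\A/X)\cong L(\A\vert_M)$ is used.

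Since $U_\S$ is independent of $\beta$, I may work in these coordinates. By the discussion preceding the statement, $\pi$ restricted to $U_\S$ lands in $U_{\S\vert_X}\times U_{\S/X}$. The pullback is given by $u^{\S\vert_X}_Y\mapsto u^\S_Y$ for $Y\subsetneq X$ and $u^{\S/X}_{(Y+X)/X}\mapsto u^\S_Y$ for $Y\not\subseteq X$, $Y\neq V^*$. The inverted polynomials $P^{\S\vert_X}_H$ and $P^{\S/X}_{(H+X)/X}$ pull back to regular invertible functions on $U_\S$. The reason is that $\pi$ maps $U_\S$ into the product chart, which can be seen from continuity and the density of $Y(V,\A)$, or directly from Proposition~\ref{prop: local decomposition of boundary divisors}. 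This yields a well-defined $k$-algebra map $\mathcal{O}_{U_{\S\vert_X}}\otimes_k\mathcal{O}_{U_{\S/X}}\to\mathcal{O}_{U_\S}$.

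For injectivity, I would use that $\pi$ is a retraction, so $\pi\circ\iota=\mathrm{id}$ on $D_X\cap U_\S$, where $\iota$ is the inclusion $\{u^\S_X=0\}\hookrightarrow U_\S$. Consequently $\iota^*\circ\pi^*$ is the isomorphism of Proposition~\ref{prop: local decomposition of boundary divisors}, and $\pi^*$ is injective. Alternatively, injectivity is visible directly, because the pulled-back coordinates are distinct independent variables of the polynomial ring, and localization preserves injectivity.

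For the final claim, I take $f\in\mathcal{O}_{Y(V,\A)}$ and expand it on $U_\S$ in powers of $u^\S_X$. Using the variables $u^\S_Y$ for $Y\neq X$ as coordinates on the image of $\pi^*$, the coefficients are functions of these variables only. Via $\pi^*$ they are regular on $Y(V/X^\perp,\A\vert_X)\times Y(X^\perp,\A/X)$, hence pullbacks of rational functions on $Y(V,\A)$. The main obstacle I expect is the existence of an $M$-adapted basis. Verifying that the $\beta(Y)\in M$ still give bases of the $Y\in\S$ with $X\subsetneq Y$ needs care with how $\S$ splits relative to $X$ and $M$.
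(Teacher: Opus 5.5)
Your route matches the paper's. You pass to an $M$-adapted basis, in which the retraction of Proposition~\ref{prop: retractions to boundary divisors} becomes the coordinate projection $u^{\S\vert_X}_Y\mapsto u^\S_Y$, $u^{\S/X}_{(Y+X)/X}\mapsto u^\S_Y$. You then read off the Laurent coefficients as $\frac{1}{(n+N)!}\partial^{n+N}_{u^\S_X}\bigl((u^\S_X)^N f\bigr)\vert_{u^\S_X=0}$, transported by $\pi^*$. You should write this explicit formula down: it is what makes your phrase ``the coefficients are functions of the other variables'' precise.

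Your construction of an $M$-adapted basis, from adapted bases of $\S\vert_X$ and of $\S_M=\{Y\cap M\mid Y\in\S,\ Y\not\subseteq X\}$, is correct. It supplies a step the paper only asserts. Drop the identification $\S_M=\S\vert_M$, however, since it fails in general. Take the braid arrangement $\A_2$ with $X=\langle x_1\rangle$, $\S=\{X,\langle x_1,x_2\rangle,V^*\}$ and $M=\langle x_2,x_3\rangle$. Then $\S\vert_M=\emptyset$, while $\S_M=\{\langle x_2\rangle,M\}$. What your construction actually uses is $\S_M$.

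\textbf{The gap: why the homomorphism is well defined.} You must show that $\pi^*$ sends the polynomials inverted on $U_{\S\vert_X}\times U_{\S/X}$ to units of $\mathcal{O}_{U_\S}$. Neither of your two justifications does this.
\begin{enumerate}
\item Continuity and density of $Y(V,\A)$ only give $\pi(U_\S)\subseteq\overline{\pi(Y(V,\A))}\subseteq D_X$. Nothing forces a retraction onto $D_X$ to carry the open set $U_\S$ into $U_\S\cap D_X$.
\item Proposition~\ref{prop: local decomposition of boundary divisors}, combined with the coordinate formula, only shows that $\pi_2^*P^{\S/X}_{(H+X)/X}$ is the polynomial $P^\S_H\vert_{u^\S_X=0}$, now regarded as a function on all of $U_\S$. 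It does not say this function is invertible there.
\end{enumerate}

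\textbf{The missing argument.} This is where $M$-adaptedness is really used.
\begin{itemize}
\item For $H\subseteq X$ one has $\pi_1^*P^{\S\vert_X}_H=P^\S_H$ directly.
\item For $H\not\subseteq X$, write $x_H=x'+x''$ with $x'\in X$ and $x''\in M$. Then $K=\langle x''\rangle=(H+X)\cap M$ is a line of $\A$ by modularity of $M$.
\item Since $\beta(Y)\subseteq M$ for every $Y\not\subseteq X$, $x''$ is exactly the part of $x_H$ along the $x_{\beta(Y)}$ with $Y\not\subseteq X$. These do not involve $u^\S_X$, whereas $x'$ vanishes on $\{u^\S_X=0\}$.
\item Hence $P^\S_H\vert_{u^\S_X=0}$ equals $P^\S_K$ up to a nonzero constant. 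A priori they agree only up to a Laurent monomial in the $u^\S_Y$, but that monomial is trivial because both polynomials are nonzero at the origin.
\item $P^\S_K$ is among the polynomials inverted on $U_\S$, so the pullback is a unit.
\end{itemize}
In the paper this is implicit in factoring $\pi_2$ through $U_\S\to U_{\S_M}$, under which $P^{\S_M}_K$ pulls back to $P^\S_K$ for $K\subseteq M$.

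\textbf{Why this is not a formality.} In the example above, take instead the non-$M$-adapted choice $x_{\beta(\langle x_1,x_2\rangle)}=x_1-x_2$, $x_{\beta(V^*)}=x_3$. The same coordinate formula pulls $P^{\S/X}_{(H+X)/X}$, for $H=\langle x_2-x_3\rangle$, back to $-(1+u^\S_{\langle x_1,x_2\rangle})$. This vanishes at the point $(u^\S_X,u^\S_{\langle x_1,x_2\rangle})=(2,-1)$, which lies in $U_\S$.
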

    \begin{proof}
        It remains to clarify only the claim on the coefficients of the Laurent series of a regular function $f\in \mathcal{O}_{Y(V,\A)}$. In a neighborhood of the origin of $U_S$ we may write 
        \[
            f=\sum_{n\ge -N}^\infty a_n(u^\S_Y\mid Y\neq X) (u^\S_X)^n
        \]
        for some $N\ge 0$. We then have
        \[
            a_n=\frac{1}{(n+N)!}\frac{\partial^{n+N}}{\partial (u^\S_X)^{n+N}} \left( (u^\S_X)^N f\right) \big\vert_{u^\S_X=0}\in \mathcal{O}_{U_{\S\vert_X}}\otimes_k \mathcal{O}_{U_{\S/X}},
        \]
        which extends to a rational function on $\mathcal{O}_{Y(V,\A)}$ via the embedding of regular functions provided by a retraction as in Proposition~\ref{prop: retractions to boundary divisors}.
    \end{proof}
    
    \noindent
    We conclude this section by exposing a purely combinatorial property of arrangements with enough modular elements. This result will not be necessary for the rest of the discussion.
    \begin{lemma}
        \label{lemma: predecessors in nested sets}
        Let $(V,\A)$ be a essential hyperplane arrangement, $\G$ a building family for $L(\A)$.
        Let $\S$ be a maximal $\G$-nested set and $X\in \S$. Let $P(X)$ be the set of $Y\in \S$ such that $Y^+=X$. If $(V,\A)$ has enough $\G$-modular elements, then $\#P(X)\le 2$.
    \end{lemma}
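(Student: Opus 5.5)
The plan is to reduce to the case $X=V^*$ and then use a single codimension-one $\G$-modular element together with a dimension count. Lemma~\ref{lemma: restrictions and quotients of maximal nested sets} says that $\S\vert_X$ is a maximal $\G\vert_X$-nested set for $(V/X^\perp,\A\vert_X)$. The lemma at the end of the previous subsection says that this arrangement again has enough $\G\vert_X$-modular elements. The children of $X$ in $\S$ are the same as the children of the top element in $\S\vert_X$. So we may assume $X=V^*$, and we write $P(V^*)=\{Y_1,\dots,Y_r\}$; we must show $r\le 2$.

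\emph{Step 1 (combinatorics of the children).} The $Y_i$ are the maximal elements of $\S\setminus\{V^*\}$, so they are pairwise incomparable. By nestedness, $Z=Y_1\oplus\dots\oplus Y_r$ is the $\G$-decomposition of $Z$, and $Z\notin\G$ when $r\ge2$. For each $Y\in\S$, the adapted basis of $\S\vert_Y$ has $\dim Y$ elements, so $\#\S\vert_{Y_i}=\dim Y_i$. Since $\#\S=\dim V^*$, we get $\sum_i\dim Y_i=\dim V^*-1$, that is, $Z$ is a hyperplane in $V^*$. In particular $Z\in L(\A)\setminus\G$ has codimension $1$ and is decomposable.

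\emph{Step 2 (intersecting with a modular hyperplane).} Enough $\G$-modular elements means that the codimension-one $\G$-modular elements have zero intersection. Hence there is such an $M$ with $M\neq Z$, and we can choose it to avoid any prescribed line of $\A$. Modularity of $M$ gives $M\cap Z\in L(\A)$, and $\G$-modularity gives $M\cap Y_i\in\G\cup\{0\}$. Because $Z=\bigoplus Y_i$ is a decomposition, $M\cap Z=\bigoplus_i (M\cap Y_i)$. Since $M\cap Z$ has codimension $1$ in $Z$, exactly one $Y_i$, say $Y_1$, fails to lie in $M$, and $M\cap Y_1$ is a hyperplane of $Y_1$. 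Thus $M\supseteq Y_2\oplus\dots\oplus Y_r$. Varying $M$ over modular hyperplanes that avoid lines of $\A$ in $Y_2$, in $Y_3$, and so on, we obtain, for each index $j$, a modular hyperplane $M_j$ containing every $Y_i$ with $i\neq j$.

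\emph{Step 3 (contradiction for $r\ge3$).} This is the main obstacle. Suppose $r\ge3$ and consider $M_1\cap M_2$. It is $\G$-modular by the $\G$-version of Lemma~\ref{lemma: intersections with modular elts}, it has codimension $2$, and it contains $Y_3\oplus\dots\oplus Y_r$. Modularity then produces the lattice element $(M_1\cap M_2)+(Y_1+Y_2)$. I would try to show that $V^*$ splits as a direct sum of $(Y_1+Y_2+H)\cap M_3$-type pieces and $Y_3\oplus\dots$, contradicting irreducibility of $V^*\in\G$ (recall $V^*\in\G$ by convention). The same step can be phrased through the fibration of Corollary~\ref{cor: projections to restrictions}. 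Projecting along $M_j$ sends $Y(V,\A)$ onto a $1$-dimensional-fiber base; the boundary point of $\S$ maps to a point whose nested set keeps all $Y_i$ with $i\ne j$ as children of $M_j$. Since each such projection loses only one child, three children would force the $\G$-decomposition of $Z$ to be compatible with three distinct modular complements. That compatibility should make some $Y_i+Y_j$ a direct summand of $V^*$, which is impossible for irreducible $V^*$. Turning this last implication into a clean lattice identity is the step I expect to require the most care.
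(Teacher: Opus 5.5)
Your Steps 0--2 are correct: the reduction to $X=V^*$, the count $\sum_i\dim Y_i=\dim V^*-1$, and the fact that every codimension-one $\G$-modular $M$ contains all but exactly one $Y_j$. They match the set-up of the paper's argument. Step 3, however, is where the lemma is actually proved, and it is only a plan: you never derive a contradiction.

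The contradiction you aim for is also not available. $V^*\in\G$ is only a convention and does not make $V^*$ irreducible. After your reduction the top element is an arbitrary $X\in\G$, and building sets may contain decomposable elements. The fibration heuristic is likewise not an argument.

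The missing idea is to use the hypothesis a second time. The intersection of \emph{all} codimension-one $\G$-modular elements is $0$, and you must show that for $r\ge 3$ this intersection is nonzero. The paper does this in two stages.
\begin{enumerate}
\item By induction on $\dim V^*$, it reduces to $r=3$ with every $Y_i$ a line. If $H\in\S\vert_{Y_1}$ is a line with $H\neq Y_1$, then $\S/H$ is a maximal $\G/H$-nested set, the arrangement still has enough $\G/H$-modular elements, and $V^*/H$ still has the three children $Y_1/H,(Y_2+H)/H,(Y_3+H)/H$.
\item Once the $Y_i$ are lines, $\dim V^*=4$, and $\G$-modularity forces each $M_j$ to be unique. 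If $M\neq N$ both omitted $Y_j$, then $M\cap N=\bigoplus_{i\neq j}Y_i$ would lie in $\G$, contradicting nestedness. So there are exactly three codimension-one $\G$-modular elements in a $4$-dimensional space. Their intersection is a nonzero modular element, hence contains a line of $\A$, which contradicts the hypothesis.
\end{enumerate}

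Alternatively, your Step 2 can be finished directly, without the induction.
\begin{enumerate}
\item Let $I_j$ be the intersection of all codimension-one $\G$-modular elements omitting $Y_j$. It is again $\G$-modular, so $I_j\in\G$.
\item $I_j$ contains $\bigoplus_{i\neq j}Y_i$, which has at least two summands when $r\ge 3$. Elements of $\G$ contained in $Z$ lie in a single $Y_i$, because the $Y_i$ form the $\G$-decomposition of $Z$. Hence $I_j\not\subseteq Z$.
\item Since $Z$ is a hyperplane, $\dim I_j=\dim(I_j\cap Z)+1\ge \dim V^*-\dim Y_j$.
\item By your Step 2, every codimension-one $\G$-modular element omits exactly one $Y_j$. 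So the intersection of all of them is $\bigcap_j I_j$, whose codimension is at most $\sum_j\dim Y_j=\dim V^*-1$.
\item This intersection is therefore nonzero, which contradicts the hypothesis.
\end{enumerate}
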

    \begin{proof}
        Write $n=\#P(X)$ and let $Y_1,\dots Y_n$ be the elements of $P(X)$. We may assume $X=V^*$. Consider the arrangement $(Y_1^\perp,\A/Y_1,\G/Y_1)$ which has $\S/Y_1$ as a maximal $\G/Y_1$-nested set. The maximal elements of $\S/Y_1\setminus\{V^*/Y_1\}$ are $n-1$ in total, namely $(Y_2+Y_1)/Y_1,\dots, (Y_n+Y_1)/Y_1$. This arrangement still has enough $\G/Y_1$-modular elements, so by induction on the dimension of $V^*$ we deduce that $n\le 3$. Assume by contradiction that $n=3$.\\
        Since $\S\vert_{Y_1}$ is a maximal $\G\vert_{Y_1}$-nested set, there is $H\in \S\vert_{Y_1}$ of dimension $1$. If we have $H\neq Y_1$, the arrangement $(H^\perp,\A/H,\G/H)$ has enough $\G/H$-modular elements, while the maximal $\G/H$-nested set $\S/H$ has $Y_1/H,(Y_2+H)/H$ and $(Y_3+H)/H$ as predecessors of $V^*/H$. Again by induction on the dimension of $Y^*$, these predecessors should be at most two, hence a contradiction. It follows that $Y_1=H$, and thus we may assume that $Y_1,Y_2,Y_3$ have dimension $1$. \\
        By the existence of an adapted basis for $\S$, we have $\dim V^*=1+\sum_{i=1}^3\dim Y_i=4$. Let $Y=Y_1\oplus Y_2 \oplus Y_3$, which is a decomposition by the fact that $\S$ is nested. In particular, $Y_1$, $Y_2$ and $Y_3$ are the unique lines of $\A$ contained in $Y$. We claim that for all $i=1,2,3$ there is a unique $\G$-modular element $M_i\in \G$ in $V^*$ of codimension $1$ which does not contain $Y_i$. Its existence is ensured by the presence of enough $\G$-modular elements. For uniqueness, suppose that $M$ and $N$ are two such elements. Without loss of generality, assume $i=1$. Since $M,N\in \G$, we have $M\neq Y$, $N\neq Y$, thus $\dim M\cap Y=\dim N\cap Y=2$. Since $M$ and $N$ are modular, $M\cap Y\in L(\A)$ and $N\cap Y\in L(\A)$. But $M$ and $N$ do not contain $Y_1$, hence $M\cap Y=N\cap Y=Y_2+Y_3$. If $M\neq N$, then $Y_2+Y_3\subseteq M\cap N\neq M$, thus for dimension reasons $M\cap N=Y_2+Y_3$. Since $M$ is $\G$-modular and $N\in \G$, also $N\cap M=Y_1+Y_2\in \G$, against the fact that $Y_1\oplus Y_2$ is a decomposition.\\
        Let $M_1$, $M_2$ and $M_3$ be as above. These are the unique $\G$-modular element of $\G$ of codimension $1$, for a further one would contain all $Y_1$, $Y_2$, $Y_3$, but $Y\not\in \G$. As above, $M_1\cap Y=Y_2+Y_3$ and $M_2\cap Y=Y_1+Y_3$, so $Y_3\subseteq M_1\cap M_2$. But $M_3$ does not contain $Y_3$, so for dimension reasons $\dim M_1\cap M_2\cap M_3=1$. By the existence of enough $\G$-modular elements, there is a $\G$-modular element $M\in \G$ of codimension $1$ not containing $M_1\cap M_2\cap M_3$. This means that $M$ is different from $M_1$, $M_2$, $M_3$, against uniqueness of the latter as $\G$-modular elements of codimension $1$. 
    \end{proof}
    \begin{example}
        The assumption of $(V,\A)$ with enough $\G$-modular elements in Lemma~\ref{lemma: predecessors in nested sets} is necessary. For instance, let $V^*=\bigoplus_{i=1}^4 k x_i$ with arrangement
        \[
            \A=\left\{ \langle x_1\rangle, \langle x_2\rangle, \langle x_3\rangle,\langle x_4\rangle, \langle x_1-x_4\rangle, \langle x_2-x_4\rangle, \langle x_3-x_4\rangle \right\}.
        \]
        Let $\F$ be the building set of irreducible elements of $L(\A)$.
        The elements of $\F$ of dimension $2$ are $\langle x_1, x_4 \rangle$, $\langle x_2, x_4 \rangle$, $\langle x_3, x_4 \rangle$, while the ones of dimension $3$ are $\langle x_1, x_2, x_4 \rangle$, $\langle x_1, x_3, x_4 \rangle$, $\langle x_2, x_3, x_4 \rangle$. It is straightforward to check that $\S=\{ \langle x_1\rangle, \langle x_2\rangle,\langle x_3\rangle , V^*\}$ is a maximal $\F$-nested set. Notice that $\#P(V^*)=3$, but $(V,\A)$ does not have enough $\F$-modular elements. Indeed, every element of $\F$ of codimension $1$ is modular, but contains $\langle x_4\rangle$.
    \end{example}

    \section{Periods of hyperplane arrangements}
        
    We will now turn to the proof of Theorem~\ref{theo: intro periods of arrangements with enough modular elements} of the introduction or, with a more precise statement, Theorem~\ref{theo: periods of hyperplane arrangements with enough modular elements}. Our strategy generalizes that of Brown \cite{Brown-Multiple_zeta_values_and_periods_of_moduli_spaces_of_curves} for moduli spaces of curves, i.e. for braid arrangements. \\
    Let $(V,\A)$ be a hyperplane arrangement, $\G$ a building set for $L(\A)$; suppose that $(V,\A)$ has enough $\G$-modular elements. In the first section, we introduce the reduced bar complex of $Y(V,\A)$, which provides an algebraic model of an algebra of generalized multiple polylogarithms. This is a differential algebra which a theorem of Brown \cite{Brown-Multiple_zeta_values_and_periods_of_moduli_spaces_of_curves} ensures to be closed under primitives, in a suitable sense. \\
    Starting from the second section, we realize the reduced bar complex as an algebra of multi-valued functions on $Y(V,\A)$ by studying the solutions of a well-known differential equation, the so-called \emph{holonomy equation}. We recall the results of existence and uniqueness of the solutions to this equation in the second section following \cite{De_Concini-Procesi-Hyperplane_arrangements_and_holonomy_equations} and prove some special properties when the arrangement has enough modular elements from section 2.3 to to section 2.6. In the seventh section, we conclude the computation of the periods, while in section 2.8 we make the result more explicit in some specific cases.

    \subsection{The reduced bar complex}
        
    \subsubsection{Definition and first properties}
     Consider an $(l+1)$-arrangement $(V,\A)$ over $k$ and a building set $\G$ for $L(\A)$. To compute the period integrals of $\overline Y(V,\A,\G)$, we need to construct an algebra of multi-valued functions on $Y(V,\A)(\C)$ in which primitives of algebraic differential $l$-forms exist. The reduced bar complex of $\A$ offers a purely algebraic model of this algebra.\\
    We start by recalling some classical facts about the cohomology of $Y(V,\A)$. For all $H\in \A$ fix a non-zero vector $y_H\in H$ and a hyperplane $H_0\in\A$. We get a regular function $x_H=\frac{y_H}{y_{H_0}}$ on $Y(V,\A)$ by regarding $H_0$ as a hyperplane at infinity. Consider the algebraic differential $1$-form
    \[
        \omega_H=d\log x_H = \frac{dx_H}{x_H}=\frac{dy_H}{y_H}-\frac{dy_{H_0}}{y_{H_0}}\in \Omega^1_{Y(V,\A)/k}.
    \]
    Since $\omega_H$ is closed, it induces a cohomology class in $H^1_{\text{dR}}(Y(V,\A),k)$. Notice that $\omega_{H_0}=0$. Brieskorn showed that the de Rham cohomology of $Y(V,\A)$ is completely determined by the forms $\omega_H$ for $H\in \A$.
    \begin{proposition}[{\cite[Lemme 5]{Brieskorn-Sur_les_groupes_de_tresses}}]
        The $k$-algebra $H^*_{\text{dR}}(Y(V,\A),k)$ is naturally isomorphic to the $k$-subalgebra of $\Omega^*_{Y(V,\A)/k}$ generated by the forms $\omega_H$ for all $H\in \A$. 
    \end{proposition}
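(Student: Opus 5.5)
This is the Brieskorn lemma (Orlik–Solomon / Brieskorn), and I would prove it by induction on the number of hyperplanes using deletion–restriction, in the setting of affine complements first and then pass to $Y(V,\A)$ via the cone construction. Write $B(\A)$ for the $k$-subalgebra of closed forms generated by the $d\log y_H$ on $X(V,\A)$. Since these forms are closed, there is a natural algebra map $B(\A)\to H^*_{\mathrm{dR}}(X(V,\A),k)$. The claim is that this map is an isomorphism, and in particular injective, so that no nonzero element of $B(\A)$ is exact.

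For surjectivity and injectivity together, fix $H\in\A$ and set $\A'=\A\setminus\{H\}$ and $\A''$ the restriction of $\A$ to $H^\perp$. The inclusion $X(V,\A)\subseteq X(V,\A')$ has complement $X(H^\perp,\A'')$, a smooth divisor, so there is a Gysin (residue) long exact sequence
\[
\cdots\to H^p(X(\A'))\to H^p(X(\A))\xrightarrow{\ \mathrm{Res}\ } H^{p-1}(X(\A''))\to H^{p+1}(X(\A'))\to\cdots
\]
On the algebra side I would construct the analogous short exact sequence $0\to B(\A')\to B(\A)\to B(\A'')\to 0$. The first map is inclusion of forms. The second sends $\omega_H\wedge\eta$ to the restriction of $\eta$ to $H^\perp$, and kills forms not involving $\omega_H$; on forms this is precisely the Poincaré residue along $H^\perp$.

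Exactness in the middle and surjectivity of the residue are formal. Injectivity on the left, namely that $B(\A')$ meets nothing with nonzero residue, is also formal. These maps commute with the geometric Gysin sequence. By induction on $\#\A$, with base case the coordinate arrangements where the Künneth formula gives the exterior algebra on the $d\log x_i$, the algebraic rows are isomorphic to cohomology. Since the algebraic sequence is short exact, the connecting maps of the geometric sequence vanish. The five lemma then gives $B(\A)\cong H^*(X(\A))$.

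The main obstacle is showing that the residue map is well defined on $B(\A)$. The difficulty is that relations among the $\omega$'s (the Orlik–Solomon relations coming from dependent triples) must be compatible with restriction. To handle this, I would work directly with forms rather than with a presented algebra, so that well-definedness is automatic, and only verify that the residue of a product of $d\log$ forms is again in $B(\A'')$. That reduces to the identity that for $H$ and $K\neq H$, the restriction of $\omega_K$ to $H^\perp$ equals $\omega_{(K+H)/H}$.

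Finally, to pass to $Y(V,\A)$, I would use $X(V,\A)\cong Y(V,\A)\times\mathbb G_m$ (choosing $H_0$). Under this identification the forms $\omega_H=d\log y_H-d\log y_{H_0}$ are pulled back from $Y$, and $d\log y_{H_0}$ generates the cohomology of the $\mathbb G_m$ factor. Künneth then identifies $H^*(Y(V,\A))$ with the subalgebra generated by the $\omega_H$.
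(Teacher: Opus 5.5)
The paper gives no argument of its own here; it only cites Brieskorn. So the question is whether your proof stands on its own, and it has a genuine gap at the five-lemma step. Write $\phi\colon B(\A)\to H^*(X(V,\A))$ and $\phi',\phi''$ for the comparison maps for $\A',\A''$.

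Several of your steps are fine:
\begin{enumerate}
\item The Poincar\'e residue is well defined on forms with logarithmic poles. So the step you single out as the main obstacle is in fact automatic.
\item The inclusion $i$ is injective, $\mathrm{res}\circ i=0$, and $\mathrm{res}$ is surjective.
\item Since $\mathrm{Res}\circ\phi=\phi''\circ\mathrm{res}$ is surjective, the Gysin sequence splits into short exact sequences. A lifting argument then shows that $\phi$ is surjective.
\end{enumerate}

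What is \emph{not} formal is exactness in the middle, $\ker(\mathrm{res})\subseteq B(\A')$. A form in $B(\A)$ with zero residue is regular along $H^\perp$. Nothing formal says it is a polynomial in the $\omega_K$ with $K\neq H$. Already for $\A=\{\langle x\rangle,\langle y\rangle,\langle x-y\rangle\}$ and $H=\langle x-y\rangle$, the element $\omega_H\wedge(\omega_x-\omega_y)$ has zero residue. It lies in $B(\A')$ only because of the Arnold identity $\omega_H\wedge(\omega_x-\omega_y)=-\omega_x\wedge\omega_y$.

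More to the point, given your induction hypothesis and the split Gysin sequence, middle exactness is \emph{equivalent} to injectivity of $\phi$. Indeed, if $\phi$ is injective and $\mathrm{res}(b)=0$, then $\phi(b)\in\ker\mathrm{Res}=\phi(i(B(\A')))$, which forces $b\in i(B(\A'))$. So invoking the five lemma assumes exactly the statement you announced at the outset as the content of the lemma: that no nonzero element of $B(\A)$ is exact.

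To close the gap you need an independent input for injectivity. Two options:

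\textbf{Orlik--Solomon route.} Put the combinatorial Orlik--Solomon algebra $A(\A)$ on the top row. This is the exterior algebra on symbols $e_H$ modulo $\partial(e_{H_1}\cdots e_{H_r})$ for dependent $H_1,\dots,H_r$.
\begin{enumerate}
\item Check that these relations hold among the forms. This gives a surjection $A(\A)\twoheadrightarrow B(\A)$ compatible with deletion and residue.
\item Prove exactness of $0\to A(\A')\to A(\A)\to A(\A'')\to 0$ combinatorially, e.g.\ via the basis of no-broken-circuit monomials.
\item Your diagram chase then gives $A(\A)\cong H^*(X(V,\A))$. Since this isomorphism factors through $A(\A)\twoheadrightarrow B(\A)$, that surjection is injective too.
\end{enumerate}

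\textbf{Hodge-theoretic route.} The $\omega_H$ and their products are global logarithmic forms on the simple normal crossings compactification $\overline{Y}(V,\A,\G)$, as the local expression of $\Omega$ in the charts $U_\S$ shows. By Deligne's $E_1$-degeneration, the map $H^0(\overline{Y},\Omega^p(\log D))\to H^p(Y(V,\A)(\C),\C)$ is injective. After base change to $\C$ this gives injectivity over $k$. Your Gysin induction gives surjectivity, since it only needs surjectivity for $\A'$ and $\A''$.

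Your final reduction from $X(V,\A)$ to $Y(V,\A)$ via $X(V,\A)\cong Y(V,\A)\times\mathbb{G}_m$ and K\"unneth is fine.
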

    \noindent
    In other words, the subcomplex of $\Omega_{Y(V,\A)/k}^*$ generated by the forms $\omega_H$ for $H\in \A$ is quasi-isomorphic to $\Omega_{Y(V,\A)/k}^*$ itself. There is a purely combinatorial description of $H^*_{\text{dR}}(Y(V,\A),k)$ by the so-called Orlik-Solomon algebra. Since this will not be necessary for the rest of the exposition, we refer the reader to \cite{Orlik-Terao-Arrangements_of_hyperplanes} for further details. \\
    Set for short $Y=Y(V,\A)$. For all $n\ge 1$ and $j=1,\dots, n-1$, let $N_j$ be the kernel of
    \begin{align*}
        H^1_{\text{dR}}(Y,k)^{\otimes n} & \longrightarrow H^1_{\text{dR}}(Y,k)^{\otimes (j-1)} \otimes_k H^2_{\text{dR}}(Y,k) \otimes_k H^1_{\text{dR}}(Y,k)^{\otimes( n -j-1)} \\
        \omega_{1}\otimes \dots \otimes \omega_{n} & \longmapsto 
        \omega_{1}\otimes \dots \otimes \omega_{j-1} \otimes (\omega_j\wedge \omega_{j+1})\otimes \omega_{j+2}\otimes \dots\otimes \omega_{n}.
    \end{align*}
    Define $B_n(V,\A)=\bigcap_{j=1}^{n-1} N_j$. Thus, $B_n(V,\A)$ consists of all the elements 
    \[
        \sum_{I=(i_1,\dots, i_n)} c_I \omega_{i_1}\otimes \dots \otimes \omega_{i_n} \in H^1_{\text{dR}}(Y,k)^{\otimes n}
    \]
    such that for all $j=1,\dots, n-1$
    \[
        \sum_{I=(i_1,\dots, i_n)} c_I \omega_{i_1}\otimes \dots \otimes \omega_{i_{j-1}} \otimes (\omega_{i_j}\wedge \omega_{i_{j+1}})\otimes \omega_{i_{j+2}}\otimes \dots\otimes \omega_{i_n}=0.
    \]
    Also set $B_0(V,\A)=k$. For the element $\omega_1\otimes \dots \otimes \omega_n$ of $B_n(V,\A)$ we will use the bar notation and denote it by $[\omega_1\vert \dots \vert \omega_n]$.
    \begin{definition}
        The \emph{reduced bar complex} of $(V,\A)$ is the $k$-vector space
        \[
            B(V,\A)=\bigoplus_{n\ge 0} B_n(V,\A).
        \]
        We also introduce the notation
        \[
            B'(V,\A)=B(V,\A)\otimes_k \mathcal{O}_{Y}.
        \]
    \end{definition}
    \noindent
    Notice that this convention differs slightly from the one of \cite{Brown-Multiple_zeta_values_and_periods_of_moduli_spaces_of_curves}. We will sometimes refer to $B'(V,\A)$ as the reduced bar complex, as well. The elements of $B_n(V,\A)$ are said to have \emph{weight} $n$. The $k$-vector space $B'(V,\A)$ can be endowed with the structure of a differential $k$-algebra, while $B(V,\A)$ with that of a graded differential Hopf-algebra. We briefly summarize all structures involved.\\
    $B(V,\A)$ is a $k$-algebra when equipped with the shuffle product, which we now define. Let $k\langle\omega_H\mid H\in \A\setminus\{H_0\}\rangle $ be the set of non-commutative polynomials with coefficients in $k$ in the indeterminates $\omega_H$ for $H\in \A\setminus\{H_0\}$. Consider two monomials $x,y\in k\langle\omega_H\mid H\in \A\setminus\{H_0\}\rangle $ of degree $m-1$ and $n-1$ respectively; take also $H, K\in \A\setminus\{H_0\}$. The shuffle product of $\omega_Hx$ and $\omega_K y$ is defined inductively on $n+m$ by
    \[
        \omega_Hx \,\sh\,\omega_Ky= \omega_H(x \,\sh\, \omega_K y)+\omega_K(\omega_Hx \,\sh\, y),
    \]
    together with $x \,\sh\, 1=1 \,\sh\, x = x$ and $1 \,\sh\, 1 =1$. This definition can be extended to the whole $k\langle\omega_H\mid H\in \A\setminus\{H_0\}\rangle$ by $k$-linearity. Once equipped with the shuffle product, $k\langle \omega_H\mid H\in \A\setminus\{H_0\}\rangle$ turns into a commutative $k$-algebra, the so-called free shuffle $k$-algebra on the set $\{\omega_H \mid H\in\A\setminus\{H_0\}\} $. \\
    One can check that $B(V,\A)$ naturally embeds in $k\langle\omega_H\mid H\in \A\setminus\{H_0\}\rangle $ with image closed under the shuffle product. Thus, we may endow $B(V,\A)$ with the structure of a commutative $k$-algebra, which is also graded by the weight.\\
    For convenience, we also introduce the notation
    \[
        W_n B(V,\A)=\bigoplus_{m=0}^n B_n(V,\A)
    \]
    for all $n\ge 0$. Thus, $W_\bullet B(V,\A)$ is an increasing filtration of $B(V,\A)$.
    \begin{remark}
        To have a description of the reduced bar complex which is independent of the choice of a hyperplane $H_0$ at infinity, consider formal variables $\omega_H$ for all $H\in \A$. Let $\beta$ be any basis over $k$ of the subspace of the vector space $\bigoplus_{H\in \A} k\,\omega_H$ given by those vectors whose coordinates sum to zero. The reduced bar complex then embeds in the free shuffle $k$-algebra over $\beta$. The explicit description given so far coincides with the choice $\beta=\{\omega_H-\omega_{H_0}\mid H\in \A\setminus\{H_0\}\}$.
    \end{remark}
    \begin{remark}
        If $Y(V,\A)$ is one-dimensional, then $B(V,\A)$ coincides with the whole free shuffle algebra $k\langle\omega_H\mid H\in \A\setminus\{H_0\} \rangle  $.
    \end{remark}
    \noindent
    We can also put a structure of bialgebra on $B(V,\A)$. As counit we consider the linear projection onto $B_0(V,\A)=k$. The coproduct is given by the deconcatenation coproduct: for $I=(i_1,\dots,i_n)$ and $\omega_I=[\omega_{i_1}\vert\dots\vert \omega_{i_n}]$
    \[
        \Delta \left(\sum_{I}c_I\omega_I\right)= \sum_{I}\sum_{[\phi_I\vert\psi_I]=\omega_I} c_I \phi_I\otimes \psi_I.
    \]
    This actually turns $B(V,\A)$ into a Hopf algebra, the antipode being given by
    \[
        S\left(\sum_{I=(H_1,\dots, H_n)} c_I \left[ \omega_{H_1}\middle\vert \dots \middle\vert \omega_{H_n} \right] \right) = \sum_{I=(H_1,\dots, H_n)} (-1)^nc_I \left[ \omega_{H_n}\middle\vert \dots \middle\vert \omega_{H_1} \right]
    \]
    for all $n\ge 0$. The Hopf algebra structure is compatible with the grading induced by the weight.\\
    There is a differential
    \[
        d \colon  \Omega^h_{Y/k}\otimes_{\mathcal{O}_Y} B'(V,\A)\to \Omega^{h+1}_{Y/k} \otimes_{\mathcal{O}_Y}  B'(V,\A)
    \]
    which maps
    \[
        \sum_{I=(i_1,\dots, i_n)} \phi_I \otimes [\omega_{i_1}\vert\dots\vert \omega_{i_n}]
    \]
    to the element
    \[
        \sum_I (-1)^{\deg \phi_I}\phi_I\wedge \omega_{i_1} \otimes [\omega_{i_2}\vert\dots\vert \omega_{i_n}] + \sum_{I=(i_1,\dots, i_n)} d\phi_I \otimes [\omega_{i_1}\vert\dots\vert \omega_{i_n}].
    \]
    This induces $l$ commuting derivations $\partial_1,\dots,\partial_l$ on $B'(V,\A)$, which becomes a differential $k$-algebra. It is also graded by the weight.
    \begin{remark}
        The reduced bar complex can be defined in much greater generality and can be given a deeper geometric meaning. For further details, we refer to \cite{Burgos-Gil-Fresan-Multiple_zeta_values_from_numbers_to_motives}, with the caveat that the algebra just introduced is the cohomology in degree $0$ of the reduced bar complex defined there.
    \end{remark}
    \noindent
    The de Rham cohomology groups $H^i(B'(V,\A),k)$ of $B(V,\A)$ are defined to be the cohomology of the complex 
    \[
        B'(V,\A)\to \Omega^1_{Y/k}\otimes_{\mathcal{O}_Y} B'(V,\A) \to \Omega^2_{Y/k}\otimes_{\mathcal{O}_Y} B'(V,\A) \to \dots
    \]
    A similar definition of cohomology groups can be given for an arbitrary differential $k$-algebra $A$, that is, a commutative $k$-algebra $A$ equipped with $l$ commuting derivations $\partial_1, \dots, \partial_l$. We now recall a few basic definitions regarding differential algebras following \cite{Brown-Multiple_zeta_values_and_periods_of_moduli_spaces_of_curves}.\\
    A differential $k$-algebra is said to be \emph{differentially simple} if $H^0(A,k)=k$ and $A$ is a simple module over the ring $A[\partial_1,\dots, \partial_l]$. It is not difficult to see that $\mathcal{O}_Y$ is a differentially simple $k$-algebra when endowed with its standard structure of differential algebra.
    \begin{definition}
        Let $A$ be a differentially simple $k$-algebra. A \emph{unipotent extension} of $A$ is a differential $k$-algebra $B$ such that 
        \begin{enumerate}
            \item $A$ is a differential subalgebra of $B$;
            \item There exists an increasing filtration $W^\bullet B$ of $B$ by differential subalgebras
            \[
                A=W^0B\subseteq W^1B\subseteq \dots \subseteq B
            \]
            such that $B=\bigcup_i W^iB$ and $W^{i+1}B$ is a $W^iB$-algebra generated by finitely many elements whose derivatives lie in $W^iB$.
        \end{enumerate} 
    \end{definition}
    \noindent
    Every unipotent extension of a differentially simple algebra is a polynomial algebra. The main reason for our interest in the reduced bar complex is the following result.
    \begin{theorem}[{\cite[Theorem 3.26]{Brown-Multiple_zeta_values_and_periods_of_moduli_spaces_of_curves}}]
        \label{theo: reduced bar complex is unipotent closure}
        $B'(V,\A)$ is a unipotent closure of $\mathcal{O}_Y$, that is,
        \begin{enumerate}
            \item $B'(V,\A)$ is a unipotent extension of $\mathcal{O}_Y$;
            \item $H^0(B'(V,\A),k)=k$;
            \item $H^1(B'(V,\A),k)=0$.
        \end{enumerate}
    \end{theorem}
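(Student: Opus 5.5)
The plan is to run the standard bar-construction argument directly on $B'(V,\A)=B(V,\A)\otimes_k\mathcal{O}_Y$, using only the explicit differential defined above and the Brieskorn description of $H^*_{\text{dR}}(Y,k)$. The first point is essentially formal. Set $W^nB'=W_nB(V,\A)\otimes_k\mathcal{O}_Y$. The formula for $d$ gives, for $\xi=[\omega_{i_1}\vert\dots\vert\omega_{i_n}]\in B_n(V,\A)$,
\[
d(1\otimes\xi)=\omega_{i_1}\otimes[\omega_{i_2}\vert\dots\vert\omega_{i_n}],
\]
so each derivation $\partial_j$ maps $W^n B'$ into $W^{n-1}B'$. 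The tail $[\omega_{i_2}\vert\dots\vert\omega_{i_n}]$ of an element of $B_n$ lies in $B_{n-1}$, because the integrability conditions $N_j$ for $j\ge 2$ only involve the tail. Each $B_n(V,\A)$ is finite-dimensional, since $H^1_{\text{dR}}$ is. Hence $W^{n}B'$ is generated over $W^{n-1}B'$ by a basis of $B_n$, whose derivatives lie in $W^{n-1}B'$. The filtration is multiplicative because the shuffle product is additive in weight. This shows that $B'$ is a unipotent extension of $\mathcal{O}_Y$.

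For $H^0=k$, take $f=\sum_{m\le n} f_m$ with $f_m\in B_m\otimes\mathcal{O}_Y$ and $df=0$, and look at the top weight $n\ge 1$. The weight-$(n-1)$ part of $df$ is $d_{\mathcal{O}}f_{n-1}+(\text{leading-letter map applied to } f_n)$, where the second term lies in $\Omega^1_{Y/k}\otimes B_{n-1}$ with coefficients being $k$-combinations of the $\omega_H$. The leading-letter map is $B_n\otimes\mathcal{O}_Y\to \Omega^1\otimes B_{n-1}$. By Brieskorn, the classes of the $\omega_H$ are linearly independent in $H^1_{\text{dR}}(Y,k)$, and no nonzero $k$-combination of them is exact. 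Write $f_n=\sum_\alpha g_\alpha\otimes \xi_\alpha$ with $\xi_\alpha$ a $k$-basis of $B_n$. Taking the top-weight component $d_{\mathcal O}f_n=0$ first shows that the $g_\alpha$ are constants. Then the weight-$(n-1)$ equation says that a nonzero $k$-combination of the $\omega_H$, tensored with $B_{n-1}$, is exact, which is impossible. Therefore $f_n=0$, and induction reduces $f$ to weight $0$, where $H^0(\mathcal{O}_Y)=k$ because $Y$ is irreducible.

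The main obstacle is $H^1=0$. Given a closed $\eta=\sum_{m\le n}\eta_m\in\Omega^1\otimes B'$, the goal is to construct a primitive by descending induction on weight, i.e.\ to produce $F$ with $dF$ killing the top-weight part of $\eta$. The closedness condition in the top weight says that $\eta_n$ is $d_{\mathcal O}$-closed in $\Omega^1\otimes B_n$. By Brieskorn, $\eta_n=d_{\mathcal O}g+\sum_H\omega_H\otimes c_H$ with $c_H\in B_n$. The $d_{\mathcal{O}}g$ part is removed by subtracting $g$. For the remainder, one needs $\sum_H \omega_H\otimes c_H=d(1\otimes\xi)$ for some $\xi\in B_{n+1}$. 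The candidate is $\xi=\sum_H[\omega_H\vert c_H]$, which lies in $B_{n+1}$ exactly when $\sum_H\omega_H\wedge(\text{first letter of }c_H)=0$ in $H^2$. Extracting this from $d\eta=0$ in the $\Omega^2\otimes B_{n-1}$ component, using again the injectivity of the Brieskorn algebra into forms, is the delicate bookkeeping step. I would handle it by working with a basis of $H^1$ and $H^2$ given by Orlik--Solomon monomials, and by proving simultaneously, by induction on $n$, the statement that the relevant cocycle condition is exactly the bar condition $N_1$. If this becomes too messy, the fallback is to cite the general statement in \cite{Brown-Multiple_zeta_values_and_periods_of_moduli_spaces_of_curves}, where it is proved for any smooth affine $Y$ whose cohomology is generated by closed forms and is pure of Tate type, and to verify these hypotheses via Brieskorn's lemma.
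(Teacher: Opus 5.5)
Your proposal is correct in substance and is the standard weight-induction argument. The paper gives no proof of this statement; it only cites Brown's Theorem~3.26, so your write-up is a self-contained version of the cited result. Two points need adjusting.

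\textbf{The filtration in (1).} The subspaces $W_nB(V,\A)\otimes_k\mathcal{O}_Y$ are not subalgebras: for instance $[\omega_H]\sh[\omega_H]=2[\omega_H\vert\omega_H]$ has weight $2$. The definition of a unipotent extension asks for a filtration by differential \emph{subalgebras}, and being a multiplicative filtration is not enough. Take instead $W^nB'$ to be the $\mathcal{O}_Y$-subalgebra generated by $B_0\oplus\dots\oplus B_n$. It is generated over $W^{n-1}B'$ by a finite basis of $B_n$. The derivatives of these generators lie in $\mathcal{O}_Y\otimes_k B_{n-1}$. Stability of $W^nB'$ under the $\partial_j$ then follows from the Leibniz rule.

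\textbf{The step you flag as delicate in (3).} It is immediate, and needs neither Orlik--Solomon bases, an auxiliary induction, nor the fallback citation. Reduce to the case $\eta_n=\sum_K\omega_K\otimes c_K$, and write $c_K=\sum_H\omega_H\otimes c_{K,H}$ with $c_{K,H}\in B_{n-1}$. With the paper's sign convention, the component of $d\eta=0$ in $\Omega^2_{Y/k}\otimes_k B_{n-1}$ reads
\[
  (d\otimes\mathrm{id})\,\eta_{n-1}=\sum_{K,H}\omega_K\wedge\omega_H\otimes c_{K,H}.
\]
Expand both sides in a basis of $B_{n-1}$.
\begin{itemize}
\item Each coefficient on the right lies in the degree-$2$ Brieskorn subalgebra, and it equals an exact form.
\item Brieskorn's lemma says this subalgebra maps \emph{isomorphically onto cohomology}. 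It therefore meets the exact forms only in $0$.
\item Hence the right-hand side vanishes as forms, and so in $H^2_{\mathrm{dR}}$.
\item This vanishing is exactly the condition $N_1$ for $\sum_K[\omega_K\vert c_K]$. The conditions $N_j$ for $j\ge 2$ hold because each $c_K\in B_n$.
\end{itemize}
What is used is injectivity of the Brieskorn algebra into cohomology, not merely into forms. The same fact is what makes your argument in (2) work. There, the $k$-combination of the $\omega_H$ is nonzero because the leading-letter map restricted to $B_n$ is just the inclusion $B_n\subseteq H^1_{\mathrm{dR}}(Y,k)\otimes_k B_{n-1}$.
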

    \noindent
    In particular, the last property ensures that in the reduced bar complex it is always possible to find $1$-primitives: given $f_1,\dots,f_l\in B'(V,\A)$ such that $\partial_i f_j=\partial_j f_i$ there exists $f\in B'(V,\A)$ such that $\partial_if=f_i$ for all $i=1,\dots, l$. This operation increases the weight by at most $1$.
    
    \subsubsection{The reduced bar complex of fiber type arrangements}
    If $(V,\A)$ is fiber type, the higher cohomology groups of $B'(V,\A)$ also vanish, as we now see.\\
    Let us fix $H\in \A$ and set for brevity $Y=Y(V,\A)$ and $Y'=Y(H^\perp, \A/H)$. Assume that there exists a modular element $M\in L(\A)$ such that $V^*=M\oplus H$. By Corollary~\ref{cor: projections to restrictions}, it follows that the quotient map $V\to V/M^\perp$ induces a projection $\pi \colon Y(V,\A)\to Y(H^\perp, \A/H)$ which is also a fibration. It is not difficult to see that, as an $\mathcal{O}_{Y'}$-algebra via $\pi$, $\mathcal{O}_Y$ takes the form
    \[
        \mathcal{O}_Y=\mathcal{O}_{Y'}[x]\left[ \frac{1}{x-f_1}, \dots, \frac{1}{x-f_m} \right]
    \]
    for some $f_1,\dots,f_n\in \mathcal{O}_{Y'}$ such that $f_i-f_j$ is invertible in $\mathcal{O}_{Y'}$ for all $i\neq j$.\\
    For $i=1,\dots, n$ set $\alpha_i=d\log(x-f_i)$ and consider the free shuffle $\mathcal{O}_{Y'}$-algebra
    \[
        B_{Y'}(Y)= \mathcal{O}_{Y'}\otimes_k k\left\langle \alpha_1,\dots, \alpha_m \right\rangle.
    \]
    We may endow $B_{Y'}(Y)$ with a derivation $\partial_x$ as follows:
    \[
        \partial_x(f\otimes [ \alpha_{i_1}\vert \dots \vert \alpha_{i_n}])=\partial_xf \otimes [ \alpha_{i_1}\vert \dots \vert \alpha_{i_n}] + \frac{f}{x-f_{i_1}}\otimes [\alpha_{i_2}\vert\dots \vert \alpha_{i_n}].
    \]
    The standard derivations of $\mathcal{O}_{Y'}$ may also be extended to derivations of $B_{Y'}(Y)$, although in a more subtle way. Consider the ring of Laurent series $\mathcal{O}_{Y'}[[\epsilon]][\epsilon^{-1}]$ with its standard derivation $\partial_\epsilon$. We adjoin one formal variable $\log \epsilon$ to this ring and we extend the derivation $\partial_\epsilon$ uniquely by the rule $\partial_\epsilon \log \epsilon = \epsilon^{-1}$. \\
    Fix an $\mathcal{O}_{Y'}$-linear map $p \colon  \mathcal{O}_Y\to \mathcal{O}_{Y'}[[\epsilon]][\epsilon^{-1}]$ such that $p\circ \partial_x=\partial_\epsilon\circ p$. Then $p$ extends uniquely to a map
    \[
        \phi \colon  B_{Y'}(Y) \to \mathcal{O}_{Y'}[[\epsilon]][\epsilon^{-1}][\log\epsilon]
    \]
    such that $\phi([ \alpha_{i_1}\vert \dots \vert \alpha_{i_n}])$ has zero constant coefficient for all choices of $n\ge 1$ and $i_1,\dots, i_n$. By \cite[Proposition~3.23]{Brown-Multiple_zeta_values_and_periods_of_moduli_spaces_of_curves}, the standard derivations on $\mathcal{O}_{Y'}$ extend uniquely to derivations on $B_{Y'}(Y)$ that commute with $\phi$ and with each other. Note that the definition of these derivations depends on the choice of $p$.\\
    The differential algebra $B_{Y'}(Y)$ is a relative unipotent closure of $\mathcal{O}_{Y'}$, in the sense that 
    \[
        H^0(B_{Y}(Y),k)=\mathcal{O}_{Y'} \qquad \text{and} \qquad H^1(B_{Y'}(Y),k)=0.
    \]
    \begin{proposition}[{\cite[Corollary 3.37]{Brown-Multiple_zeta_values_and_periods_of_moduli_spaces_of_curves}}]
        \label{prop: reduced bar complex for supersolvable arrangements}
        In the setting just introduced, there is an isomorphism of differential $k$-algebras
        \[
            B'(V,\A) \cong B'(H^\perp,\A/H)\otimes_{\mathcal{O}_{Y'}} B_{Y'}(Y).
        \]
    \end{proposition}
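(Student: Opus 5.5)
The plan is to build the isomorphism from the two natural maps into $B'(V,\A)$ and then show it is bijective by comparing weights.

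\textbf{The first factor.} The fibration $\pi\colon Y\to Y'$ of Corollary~\ref{cor: projections to restrictions} is induced by a linear map. Pulling back forms along it gives a morphism of differential algebras $B'(H^\perp,\A/H)\to B'(V,\A)$. This works because $\pi^*$ sends each $\omega_K$, for $K\in\A/H$, to a form $\omega_{K'}$ with $K'\in\A\vert_M$. It also commutes with wedge products, so it preserves the integrability conditions defining $B_n$.

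\textbf{The second factor.} For $B_{Y'}(Y)$, each $\alpha_i=d\log(x-f_i)$ is a $1$-form on $Y$, hence a combination of the $\omega_H$ by Brieskorn's theorem. The derivation $\partial_x$ on $B_{Y'}(Y)$ is a fiberwise integration. The key input is Theorem~\ref{theo: reduced bar complex is unipotent closure}: since $H^1(B'(V,\A),k)=0$ and $H^0=k$, the algebra $B'(V,\A)$ has all primitives. I would then construct the map $\Phi\colon B_{Y'}(Y)\to B'(V,\A)$ inductively on weight. Given the image $\Phi(w)$ of a word $w=[\alpha_{i_2}\vert\dots\vert\alpha_{i_n}]$, the image of $[\alpha_{i_1}\vert w]$ should be the unique element whose $x$-derivative is $\frac{1}{x-f_{i_1}}\Phi(w)$ and whose derivatives along $Y'$ are the prescribed ones. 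Those prescribed derivatives come from the extension of the derivations via the regularization $\phi$ with $\log\epsilon$ (Brown's Proposition~3.23). Integrability holds because the derivations on $B_{Y'}(Y)$ commute, so the corresponding $1$-form is closed. Uniqueness is fixed by the normalization of zero constant coefficient under $\phi$.

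\textbf{Bijectivity.} Tensoring the two maps over $\mathcal{O}_{Y'}$ gives a morphism of differential algebras. I would check bijectivity on the associated graded for the weight filtration. There it reduces to a statement about symbols: the bar construction of a fiber-type space splits as the tensor product of base and fiber symbols. At the level of $H^1$ this is the split exact sequence
\[
0\to H^1(Y')\to H^1(Y)\to H^1(F)\to 0.
\]
The integrability conditions $B_n$ then match those of the tensor product because $H^2(Y)\cong H^2(Y')\oplus H^1(Y')\otimes H^1(F)$. This is the Falk–Randell decomposition for fiber-type arrangements, and it is where modularity of $M$ enters.

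\textbf{Main obstacle.} The hardest step is the combinatorial identification of $\operatorname{gr}^W B'(V,\A)$ with the tensor product. This requires showing that the quadratic relations in the Orlik–Solomon algebra of $Y$ are generated by those of $Y'$ together with the relations $\alpha_i\wedge\alpha_j$ and the mixed relations $\alpha_i\wedge(\text{base forms})$. I would first try to verify this on the Koszul dual side, using that supersolvable Orlik–Solomon algebras are quadratic and Koszul. That reduces the bar complex to a Koszul complex, whose Hilbert series factors as $\prod(1-e_it)$ with $e_i$ the exponents. Comparing Hilbert series of both sides then gives bijectivity once injectivity is known, and injectivity is clear from the fiberwise description of the image of $\Phi$.
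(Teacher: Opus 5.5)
Your comparison map (pullback on the base factor, and lifting fiber words into $B'(V,\A)$ using $H^1(B'(V,\A),k)=0$) is the existence half of the argument behind Brown's Corollary 3.37, which the paper simply cites. The bijectivity half, however, has a genuine gap.

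\textbf{Bijectivity.} The setting only provides one modular element $M$ of codimension one with $V^*=M\oplus H$. So neither $\A$ nor $\A/H\cong\A\vert_M$ need be supersolvable. Koszulness of their Orlik--Solomon algebras is then unavailable, and your dimension count for $B_n(V,\A)$ is unsupported. (In the supersolvable case the series is $\prod_i(1-e_it)^{-1}$, not $\prod_i(1-e_it)$.)

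Graded injectivity is not ``clear'' either. It needs two facts you do not prove:
\begin{enumerate}
\item the top-weight symbol of $\Phi(w)$ is $w$ plus terms with fewer fiber letters;
\item the shuffle map $T^c(H^1(Y'))\otimes T^c(\langle\alpha_1,\dots,\alpha_m\rangle)\to T^c(H^1(Y))$ is injective.
\end{enumerate}
Also, $\alpha_i\wedge\alpha_j\neq 0$ in $H^2(Y)$, so your description of the quadratic relations is off.

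\textbf{Normalization.} A second, smaller gap: ``zero constant coefficient under $\phi$'' means nothing in $B'(V,\A)$, where $\phi$ is not defined. With arbitrary integration constants, $\Phi$ need not respect shuffles, yet the statement asks for an algebra isomorphism. Fix the constants with a $k$-algebra character $\chi$ of $B'(V,\A)$, e.g.\ the counit followed by evaluation at a $k$-point of $Y$, and impose $\chi(\Phi(w))=0$ for nonempty words. Inductively, $\Phi(a\sh b)-\Phi(a)\Phi(b)$ then has zero differential, so it lies in $k$, and it is killed by $\chi$.

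\textbf{The missing idea.} None of the combinatorics is needed, because the source $U=B'(H^\perp,\A/H)\otimes_{\mathcal{O}_{Y'}}B_{Y'}(Y)$ is itself a unipotent closure of $\mathcal{O}_Y$.
\begin{enumerate}
\item $H^0(U)=k$: $\partial_x$ acts only on the second factor and $H^0(B_{Y'}(Y))=\mathcal{O}_{Y'}$, so $\ker\partial_x=B'(H^\perp,\A/H)$, whose constants are $k$.
\item $H^1(U)=0$: given a closed $1$-form $\eta$ over $U$, the relative vanishing $H^1(B_{Y'}(Y))=0$ (surjectivity of $\partial_x$) gives $F$ with $\eta-dF$ free of $dx$. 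Closedness puts its coefficients in $\ker\partial_x$, so it is a closed form over $B'(H^\perp,\A/H)$, exact by Theorem~\ref{theo: reduced bar complex is unipotent closure}.
\item Injectivity: $U$ is a unipotent extension of $\mathcal{O}_Y$ with $H^0=k$, hence differentially simple. The kernel of your tensored map $\Psi\colon U\to B'(V,\A)$ is a proper differential ideal, so it is zero.
\item Surjectivity, by induction on weight: for $b\in B_n(V,\A)$, $d(1\otimes b)$ has coefficients in $W_{n-1}B'(V,\A)\subseteq\Psi(U)$. So it is $\Psi$ of a closed form, which has a primitive $u\in U$, and $\Psi(u)-1\otimes b\in H^0(B'(V,\A))=k$.
\end{enumerate}
This is Brown's route via uniqueness of unipotent closures, and it needs no supersolvability. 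Modularity enters only through $\mathcal{O}_Y=\mathcal{O}_{Y'}[x][(x-f_i)^{-1}]$ with $f_i-f_j$ invertible, which is what makes $B_{Y'}(Y)$ a relative unipotent closure.
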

    \noindent
    Thus, the fact that $(V,\A)$ fibers over $(H^\perp, \A/H)$ implies that the reduced bar complex decomposes as the tensor product of $B'(H^\perp,\A/H)$ and a one-dimensional reduced bar complex, corresponding to the fiber of the projection $\pi \colon  Y\to Y'$. It is important to note that the isomorphism in this proposition depends on several choices, such as, for example, $\pi$ and $p$.\\
    If $(V,\A)$ is fiber type, one may find a sequence of linear fibrations
    \[
        Y=Y_1\overset{\pi_1}{\longrightarrow} Y_2 \overset{\pi_2}{\longrightarrow} \dots \overset{\pi_{l-1}}{\longrightarrow} Y_{l-1},
    \]
    where each $Y_i$ is the complement of a hyperplane arrangement and $\pi_i$ is a fiber bundle projection with one-dimensional fibers. In this situation it follows that 
    \[
        B'(V,\A)=B_{Y_2}(Y_1)\otimes_{\mathcal{O}_{Y_2}} B_{Y_3}(Y_2)\otimes_{\mathcal{O}_{Y_3}} \dots \otimes_{\mathcal{O}_{Y_{l-2}}}  B_{Y_{l-1}}(Y_{l-2})\otimes_{\mathcal{O}_{Y_{l-1}}}B(Y_{l-1}).
    \]
    Since all factors in the right-hand side are relative unipotent closures, we deduce the following
    \begin{corollary}[{\cite[Corollary 3.39]{Brown-Multiple_zeta_values_and_periods_of_moduli_spaces_of_curves}}]
        \label{cor: vanishing top degree cohomology reduced bar complex}
        If $(V,\A)$ is a fiber type hyperplane arrangement, then $H^{i}(B'(V,\A))=0$ for all $i=1,\dots, l$. A primitive of an $h$-form in $\Omega^h_{Y/k}\otimes_{\mathcal{O}_Y} B'(V,\A)$ of weight $n$ has weight at most $n+1$.
    \end{corollary}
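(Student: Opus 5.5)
We argue by induction on $l=\dim Y$, using the tensor decomposition of Proposition~\ref{prop: reduced bar complex for supersolvable arrangements} iterated along the tower of fibrations
\[
Y=Y_1\to Y_2\to\dots\to Y_{l-1},
\]
\[
B'(V,\A)\cong B_{Y_2}(Y_1)\otimes_{\mathcal{O}_{Y_2}}\dots\otimes_{\mathcal{O}_{Y_{l-1}}} B(Y_{l-1}).
\]
Each factor $B_{Y_{i+1}}(Y_i)$ is a free shuffle algebra over $\mathcal{O}_{Y_{i+1}}$ on the one-forms $\alpha_j=d\log(x_i-f_j)$ in the fibre variable $x_i$. It is a relative unipotent closure: $H^0=\mathcal{O}_{Y_{i+1}}$ and $H^1=0$. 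The key step is a relative Poincaré lemma for a single fibre: the de Rham complex of $B'(V,\A)$ should be quasi-isomorphic, via the tensor decomposition, to the complex of the base $B'(Y_2)$. Then the vanishing for $Y$ follows from the vanishing for the base $Y_2$, which is again fiber type of dimension $l-1$. The base case $l=1$ is Theorem~\ref{theo: reduced bar complex is unipotent closure}, where only $H^1$ is at stake.

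Concretely, write an $h$-form $\eta$ on $Y$ with coefficients in $B'(V,\A)$ as $\eta=dx\wedge\eta_1+\eta_0$, where $\eta_0,\eta_1$ involve no $dx$ (here $x=x_1$ is the fibre coordinate). Assume $d\eta=0$.

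First, using $H^1=0$ fibrewise, choose $\gamma$ with $\partial_x\gamma=\eta_1$. This is possible because $\partial_x$ is surjective on $B_{Y_2}(Y_1)\otimes(\text{base})$. Indeed, $\partial_x$ applied to $f\otimes[\alpha_{i_1}|\dots]$ either lowers the weight or integrates $f$, and integrating a rational function in $x$ with poles at the $f_j$ only produces rational functions plus the letters $\alpha_j$, raising the weight by at most one.

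Next, replace $\eta$ by $\eta-d\gamma$. The result has no $dx$ component, and closedness then forces $\partial_x$ of its coefficients to vanish. Hence its coefficients lie in $H^0$ of the fibre direction, namely in $\mathcal{O}_{Y_2}\otimes B(\text{base})$, and $\eta-d\gamma$ is a closed form pulled back from the base. By induction it is exact there, with weight increasing by at most one. This gives the primitive and the weight bound.

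The main obstacle is that the derivations in the base directions on $B_{Y_2}(Y_1)$ are the subtle ones defined via the regularization map $p$ and Laurent series in $\epsilon$. We must check that $d$ on the tensor product really splits as $d_{\text{fibre}}+d_{\text{base}}$ with these commuting derivations, so that the "no $dx$, $\partial_x$-closed implies from base" step is legitimate. I would use the uniqueness statement of \cite[Proposition~3.23]{Brown-Multiple_zeta_values_and_periods_of_moduli_spaces_of_curves}: the kernel of $\partial_x$ equals $\mathcal{O}_{Y_2}\otimes(\text{base})$ because $\phi$ is injective and kills constants only at weight zero.

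The weight bookkeeping is then the routine part. Only the first step, the fibre integration, raises weight, and it does so by at most one per form. The subtraction $\eta-d\gamma$ keeps the weight at most $n+1$, and the primitive produced at the base step can be arranged compatibly, so the total primitive has weight at most $n+1$.
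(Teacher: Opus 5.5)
Your route is the paper's, which just cites Brown: iterate $B'(V,\A)\cong B'(H^\perp,\A/H)\otimes_{\mathcal{O}_{Y'}}B_{Y'}(Y)$ along the tower and use that each factor is a relative unipotent closure. The vanishing of $H^i$ goes through as you describe. The identification $\ker\partial_x=B'(H^\perp,\A/H)$ on the tensor product does not need injectivity of $\phi$. It follows from $\ker\partial_x=\mathcal{O}_{Y'}$ on $B_{Y'}(Y)$ together with the fact that $B'(H^\perp,\A/H)=B(H^\perp,\A/H)\otimes_k\mathcal{O}_{Y'}$ is free over $\mathcal{O}_{Y'}$.

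The gap is in the weight bound for $h<l$. After the fibre step you only know $\eta-d\gamma\in W_{n+1}$. Its weight-$(n+1)$ part is $-\sum_w w\otimes dc_w$, where $\sum_w w\otimes c_w$ is the weight-$(n+1)$ part of $\gamma$, and nothing in your choice of $\gamma$ makes the $c_w$ closed. Indeed, adding to $\gamma$ a term $w\otimes c$ with $w\in B_{n+1}(H^\perp,\A/H)$ and $c$ a non-closed base form keeps $\partial_x\gamma=\eta_1$ but puts $w\otimes dc$ into $\eta-d\gamma$. The induction hypothesis applied to a form of weight $n+1$ only returns a base primitive of weight $n+2$. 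So ``can be arranged compatibly'' is exactly the unproved step.

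In top degree $h=l$ the problem does not arise: $\eta-d\gamma$ is then an $l$-form without $dx$, hence $0$, and $\gamma$ itself is a primitive of weight $\le n+1$. This is the only case the paper uses later.

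For $h<l$ you need a specific $\gamma$, for instance one built by partial fractions in $x$.
\begin{itemize}
\item Write each coefficient $g\in\mathcal{O}_Y=\mathcal{O}_{Y'}[x][(x-f_j)^{-1}]$ uniquely as $\partial_xG+\sum_j r_j(g)\,(x-f_j)^{-1}$ with $G\in\mathcal{O}_Y$ and $r_j(g)\in\mathcal{O}_{Y'}$.
\item The recursive fibre primitive of $(b\otimes u)\,g$ then has top-weight part $\sum_j r_j(g)\,(b\otimes[\alpha_j|u])$.
\item Since $d$ strictly lowers the weight of the pure words $b\otimes u$, the top-weight coefficient forms $dx\wedge e+e^0$ of $\eta$ are closed, i.e.\ $\partial_x e^0=d_{\mathrm{base}}e$.
\item The map $r_j$ kills $\partial_x$-derivatives and commutes with $d_{\mathrm{base}}$, so each $r_j(e)$ is closed.
\end{itemize}
Hence $d\gamma\in W_n$, so $\eta-d\gamma\in W_n$, and the induction now yields a total primitive of weight $\le n+1$.
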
 
    \noindent
    For our arguments, a crucial point will be the vanishing of the cohomology in top degree for the reduced bar complex of a fiber type arrangement. Recall that supersolvable arrangements are fiber type.
    
    \subsubsection{Analytic manifolds with corners}
    In this section we also fix some conventions about manifolds with corners which will be used later in the exposition. Our definitions are the same as those of \cite{Brown-Multiple_zeta_values_and_periods_of_moduli_spaces_of_curves}. \\
    For $p,q\ge 1$ let $U_{p,q}=\R^p\times \R_+^q$, where $\R_+=\{x\in \R \mid x\ge 0\}$. The boundary of $U_{p,q}$ is 
    \[
        \partial U_{p,q}=\bigcup_{i+j=q-1} \R^p\times \R_+^i\times \{0\}\times\R_+^j.
    \]
    An analytic manifold with corners is a topological space that is locally homeomorphic to $U_{p,q}$ for some $p,q\ge 1$ and whose transition maps are appropriately defined analytic functions $U_{p,q}\to U_{p,q}$. We will now make precise how these latter functions are defined. Notice that, topologically, $\R_+^2$ is homeomorphic to $\R\times \R_+$, while these two spaces should not be isomorphic as manifolds with corners.\\
    Write $n=p+q$ and let $x_1,\dots, x_n$ be coordinates on $\R^n$ such that $U_{p,q}$ is defined by $x_i\ge 0$ for $i=1,\dots,q$. An analytic isomorphism $\phi=(\phi_1,\dots, \phi_n) \colon U_{p,q}\to U_{p,q}$ is a homeomorphism that enjoys the following properties:
    \begin{enumerate}
        \item $\phi$ is an analytic isomorphism, that is, it is an analytic function on an open neighborhood of $U_{p,q}$ in $\R^n$ with nowhere vanishing Jacobian on $U_{p,q}$;
        \item $\phi$ permutes the components of the boundary of $\partial U_{p,q}$. Thus, there is a permutation $\sigma$ of $\{1,\dots, q\}$ such that $\phi_i$ vanishes identically on $x_{\sigma(i)}=0$.
        \item for all $i=1,\dots, q$ we have 
        \[
            \frac{\partial\phi_i}{\partial x_{\sigma(i)}}\bigg\vert_{x_{\sigma(i)}=0}=1.
        \]
    \end{enumerate}
    The last condition is needed to have well-defined regularizations along the boundary, as we will explain later. The second condition implies that an analytic isomorphism $\phi \colon U_{p,q}\to U_{p,q}$ preserves the boundary $\partial U_{p,q}$. An analytic manifold with corners $X$ admits a stratification $X=X_1\supseteq X_2\supseteq \dots \supseteq X_{n+1}$, where $X_i$ is a manifold with corners and $\partial X_{i}=X_{i+1}$.
    \begin{remark}
        This definition of a manifold with corners, which agrees with the one of \cite{Brown-Multiple_zeta_values_and_periods_of_moduli_spaces_of_curves}, may seem somewhat restrictive, due to the first order behavior of the transition functions. A more general framework is offered by \cite{Dupont-Panzer-Prym-Regularized_integrals_and_manifold_with_log_corners}, but Brown's setup will be sufficient for our applications.
    \end{remark}
    For $p,q\ge 1$ let $\F(U_{p,q})^{\text{an}}$ be the ring of analytic functions on $U_{p,q}$ and 
    \[
        \F(U_{p,q})^{\log}=\F(U_{p,q})^{\text{an}}[\log x_1,\dots \log x_q],
    \] 
    where $\log x_i$ is the principal branch of the logarithm. For $i=1,\dots, q$ and $f\in \F(U_{p,q})^{\text{log}}$, we define the regularization of $f$ along $x_i=0$ to be the function in $\F(\{x_i=0\})^{\log}$ obtained by formally setting to zero $\log x_i$ and then restricting to $x_i=0$. This shall be denoted by $\text{Reg}(f,\{x_i=0\})$. \\
    Let $X$ be a manifold with corners. The rings $\F(U_{p,q})^{\text{an}}$ and $\F(U_{p,q})^{\log}$ glue to sheaves of rings $\F^{\text{an}}$ and $\F^{\log}$ on $X$. Let $D$ be a component of the boundary of $X$. The regularization maps just described yield a homomorphism $\text{Reg}(-,D): \F^{\log}(X)\to \F^{\log}(D)$. For this to be well defined, it is necessary to impose condition (3) in the definition of analytic isomorphism given before. For more details on these constructions, we refer to \cite[Section 4]{Brown-Multiple_zeta_values_and_periods_of_moduli_spaces_of_curves}.\\
    Analytic manifolds with corners provide a natural setup for Stokes' theorem. The reason why we allow logarithms is that we will need to integrate functions which have logarithmic singularities along the boundary of a manifold with corners. To be precise, the main technical tool that we will exploit is the following version of Stokes' theorem.
    \begin{theorem}[{\cite[Theorem 4.11]{Brown-Multiple_zeta_values_and_periods_of_moduli_spaces_of_curves}}]
        \label{theo: Stokes}
        Let $X$ be an analytic manifold with corners of dimension $n$. Let $\omega$ be a differential $n$-form on $X$ with coefficients in $\F^{\log}(X)$. Suppose that there is a differential $(n-1)$-form $\Omega$ on $X$ with coefficients in $\F^{\log}(X)$ such that $d\Omega=\omega$. Then
        \begin{enumerate}
            \item $\Omega$ extends continuously to the interior of $\partial X$;
            \item $\omega$ is integrable on $X$ and 
            \[
                \int_X \omega = \int_{\partial X} \Omega.
            \]
        \end{enumerate}
    \end{theorem}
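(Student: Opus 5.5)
The statement is local on $X$, so I would reduce it to the model spaces $U_{p,q}$ and then obtain the global identity by approximating the corners from inside. Throughout I assume that $\omega$ and $\Omega$ have compact support, or that $X$ is compact, so that the integrals make sense. Choose a finite cover of the support by charts isomorphic to open subsets of $U_{p,q}$ and a smooth partition of unity $\{\chi_\alpha\}$ subordinate to it. Replacing $\Omega$ by $\chi_\alpha\Omega$ changes $d\Omega$ by $d\chi_\alpha\wedge\Omega$, which still has coefficients in $\F^{\log}$ (times smooth functions). Every step below only uses that coefficients are smooth combinations of analytic functions and powers of $\log x_i$, so it suffices to treat a single chart $U_{p,q}$ with coordinates $x_1,\dots,x_n$, boundary given by $x_1\cdots x_q=0$, and $\Omega$ compactly supported.

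\textbf{Step 1: continuity of $\Omega$ on the interior of the boundary.} Fix $i\le q$ and write
\[
\Omega=\sum_j f_j\,dx_1\wedge\cdots\wedge\widehat{dx_j}\wedge\cdots\wedge dx_n .
\]
Only $f_i$ survives on the face $x_i=0$. Expand
\[
f_i=\sum_{k=0}^N a_k(x)(\log x_i)^k,
\]
where the $a_k$ are analytic in $x_i$, with coefficients involving $\log x_m$ for $m\neq i$. The coefficient of $\omega=d\Omega$ is $\sum_j \pm\partial_j f_j$. For $j\ne i$, the term $\partial_j f_j$ contains no factor $x_i^{-1}$, since differentiating in $x_j$ creates no $1/x_i$. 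On the other hand,
\[
\partial_i f_i=\sum_k \partial_i a_k\,(\log x_i)^k+\sum_k k\,a_k\,\frac{(\log x_i)^{k-1}}{x_i}.
\]
Requiring $\omega\in\F^{\log}$ forces the polar part to vanish: comparing coefficients of $(\log x_i)^{k-1}/x_i$ gives $a_k|_{x_i=0}=0$ for $k\ge1$. Hence $a_k=x_i b_k$ with $b_k$ analytic, and $a_k(\log x_i)^k\to0$ as $x_i\to0$. So $f_i$ extends continuously to $\{x_i=0\}$, with value $a_0|_{x_i=0}=\mathrm{Reg}(f_i,\{x_i=0\})$, at points where all other $x_m>0$.

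\textbf{Step 2: integrability of $\omega$.} Its coefficient is an analytic function times polynomials in $\log x_1,\dots,\log x_q$. Products of powers of $|\log x_m|$ are locally integrable on $\R_+^q$ by Fubini, so $\omega$ is integrable on the compact support.

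\textbf{Step 3: Stokes by truncation.} Let $X_\epsilon=\{x_i\ge\epsilon,\ i=1,\dots,q\}$, a smooth manifold with corners on which $\Omega$ is smooth, so classical Stokes gives
\[
\int_{X_\epsilon}\omega=\int_{\partial X_\epsilon}\Omega .
\]
As $\epsilon\to0$ the left side tends to $\int_X\omega$ by dominated convergence and Step 2. The right side is a sum over faces $\{x_i=\epsilon\}\cap X_\epsilon$ of $\int \pm f_i(\dots,\epsilon,\dots)$. I expect this to be the main obstacle: Step 1 gives only pointwise convergence away from the corners, and near the corners the other logarithms $\log x_m$ are unbounded. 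I would handle it using the explicit form $f_i=a_0+x_i\sum_{k\ge1} b_k(\log x_i)^k$. This gives a uniform bound
\[
|f_i(x)|\le C\prod_{m\ne i}\bigl(1+|\log x_m|\bigr)^N
\]
on the support, for $x_i\le1$. This bound is integrable on the face, so dominated convergence yields
\[
\int_{\{x_i=\epsilon\}} f_i\to\int_{\{x_i=0\}}\mathrm{Reg}(f_i),
\]
the limit sets of integration increasing to the whole face. Summing over $i$ gives $\int_X\omega=\int_{\partial X}\Omega$ locally.

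\textbf{Step 4: globalization.} Sum the local identities over the partition of unity. The terms $\int_X d\chi_\alpha\wedge\Omega$ cancel because $\sum_\alpha\chi_\alpha=1$. The boundary contributions are well defined and glue across charts because the transition maps satisfy condition (3), so the continuous extensions of Step 1, equivalently the regularizations, agree on overlaps.
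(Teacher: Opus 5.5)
The paper gives no proof of this statement (it is quoted from Brown), and your argument is correct and is essentially the standard proof of Brown's Theorem~4.11: localize by a partition of unity, show that pole-freeness of $d\Omega$ forces the $\log x_i$-dependent part of the tangential coefficient $f_i$ to be divisible by $x_i$, then truncate to $\{x_i\ge\epsilon\}$ and pass to the limit using the bound $C\prod_{m\neq i}(1+|\log x_m|)^N$ near the corners. Two small points could be tightened: in Step~1 you should also show that the $1/x_i$-polar terms of $\partial_i f_i$ cannot cancel against the $1/x_j$-polar terms of $\partial_j f_j$ for $j\neq i$ (multiply the relevant log-coefficient by $x_i$ and restrict to $x_i=0$ where the other $x_j>0$, using linear independence of log-monomials over meromorphic functions), and in Step~4 condition~(3) is not needed, since a continuous extension of the tangential part of $\Omega$ is automatically chart-independent, whereas (3) only matters for $\mathrm{Reg}$ of functions that do not extend continuously.
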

    \noindent
    We point out that the condition for $\Omega$ to have coefficients in $\F^{\log}(X)$ implies, in particular, that $\Omega$ cannot have poles along $\partial X$. In the one-dimensional case, the fact that $\Omega$ extends continuously to the interior of $\partial X$ is due to the fact that the differential form $\log x\, dx$ has a logarithmic singularity at $0$, but any primitive thereof, such as $x(\log x-1)$, extends continuously at $0$.

    \subsection{The holonomy equation}
        
    The goal of this section is to introduce the holonomy equation and discuss some classical results regarding its solutions. We mainly follow the exposition of \cite{De_Concini-Procesi-Hyperplane_arrangements_and_holonomy_equations}.
    
    \subsubsection{Existence and uniqueness of solutions}
    Let $k$ be a subfield of $\C$. Let $(V,\A)$ be a hyperplane arrangement over $k$, which we assume as usual essential. 
    \begin{definition}
        Let $A(V,\A)$ be the free non-commutative $k$-algebra on the set of symbols $t_H$ for all $H\in \A$ modulo the following relations: for all $H\in \A$, $X\in L(\A)$ with $\dim X= 2$ and $H\subseteq X$ 
        \[
        \left[ t_H,  \sum_{K\in \A, K\subseteq X} t_K\right]=0, \qquad \sum_{K\in \A}t_K=0.
        \]
        The \emph{holonomy algebra of $(V,\A)$} is the completion $\widehat{A}(V,\A)$ of $A(V,\A)$ at the two-sided ideal generated by all $t_H$ for $H\in \A$.   
    \end{definition}
    \noindent
    To simplify notation, for every $X\in L(\A)$ we shall write
    \[
        t_X=\sum_{H\in \A, H\subseteq X} t_H.
    \]
    Notice that $\widehat{A}(V,\A)$ is a graded possibly non-commutative $k$-algebra. Moreover, consider the free Lie algebra over $k$ on the symbols $t_H$ for $H\in \A$ and let $\f{g}$ be its quotient modulo the Lie subalgebra generated by $[t_H, t_X]$ for all $X\in L(\A), H\in \A$ with $\dim X=2$ and $H\subseteq X$. It is clear that $A(V,\A)$ is the universal enveloping algebra of $\f{g}$. As such, $A(V,\A)$ is a Hopf algebra when endowed with the coproduct $\Delta$ and the counit $\epsilon$ defined by
    \[
        \Delta(t_H)=t_H\otimes 1 + 1\otimes t_H, \qquad \epsilon(t_H)=0
    \]
    for all $H\in \A$. The antipode then maps $t_H$ to $-t_H$. It is then readily checked that $\widehat{A}(V,\A)$ carries an induced structure of completed Hopf algebra.
    \begin{remark}
        The holonomy algebra considered here is that of \cite{De_Concini-Procesi-Hyperplane_arrangements_and_holonomy_equations}, but with the extra relation $t_{V^*}=0$. The latter is needed to work with the projectivization $Y(V,\A)$ of the affine complement $X(V,\A)$, as in \cite{De_Concini-Procesi-Hyperplane_arrangements_and_holonomy_equations} only the case of affine complements of central arrangements is considered.
    \end{remark}
    \noindent
    The following lemma concerning the relations between the $t_H$'s will be used several times throughout our exposition.
    \begin{lemma}[ {\cite[Theorem 2.1]{De_Concini-Procesi-Hyperplane_arrangements_and_holonomy_equations}} ]
        \label{lemma: commutativity relations in holonomy algebra}
        Let $X,Y\in L(\A)$. Suppose that $X$ and $Y$ are $\G$-nested for some building set $\G$ for $L(\A)$ containing both $X$ and $Y$. Then $[t_X, t_Y]=0$.
    \end{lemma}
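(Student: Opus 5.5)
The plan is to reduce to the case where one of $X,Y$ contains the other, and to handle that case with the defining relations on two-dimensional lattice elements. The case where $X$ and $Y$ form a direct-sum decomposition will be reduced to the comparable case using the cone-like sum $X\oplus Y$.

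Since $\{X,Y\}$ is $\G$-nested, there are two cases. Either $X$ and $Y$ are comparable, say $Y\subseteq X$, or they are incomparable. In the incomparable case $X+Y\notin\G$, so $X\oplus Y$ is a $\G$-decomposition. In particular, every $H\in\A$ with $H\subseteq X+Y$ lies in $X$ or in $Y$. Hence
\[
t_{X+Y}=t_X+t_Y .
\]
Moreover $\{X,Y,X+Y\}$ is nested in a slightly enlarged building set: intersecting building sets is allowed, and by the second lemma on building sets we may take the minimal one containing $X$, $Y$ and $X+Y$. Thus, if the comparable case gives $[t_{X+Y},t_X]=0$, we get
\[
[t_X,t_Y]=[t_X,t_{X+Y}-t_X]=0 .
\]
So everything reduces to proving $[t_X,t_Y]=0$ when $Y\subseteq X$. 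Note that comparability alone needs no nestedness.

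For $Y\subseteq X$, I would first prove the key identity $[t_H,t_X]=0$ for every $H\in\A$ with $H\subseteq X$. Granting this, $t_Y=\sum_{H\subseteq Y}t_H$ is a sum of such $t_H$, and the claim follows by linearity.

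The identity $[t_H,t_X]=0$ is shown by induction on $\dim X$. For $\dim X\le 1$ it is trivial, and for $\dim X=2$ it is exactly the defining relation of $A(V,\A)$. For larger $X$, write
\[
t_X=t_H+\sum_{K\subseteq X,\,K\ne H}t_K ,
\]
and group the lines $K\neq H$ according to the two-dimensional element $Z=H+K\in L(\A)$, which lies in $X$. Each $K\ne H$ belongs to exactly one such $Z$. This gives
\[
t_X=t_H+\sum_{Z}(t_Z-t_H),
\]
the sum running over the rank-two flats $Z\subseteq X$ containing $H$. Each term satisfies $[t_H,t_Z-t_H]=[t_H,t_Z]=0$ by the defining relation, so $[t_H,t_X]=0$. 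No induction is even needed: the grouping into rank-two flats through $H$ does it directly. I expect this grouping step to be the crux. It is the classical argument (Kohno, De Concini–Procesi), and the point to check carefully is that lines in $X$ distinct from $H$ are partitioned by the planes $H+K$.

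The main remaining obstacle is the bookkeeping in the incomparable case: verifying that the relevant lines split between $X$ and $Y$. This follows from the definition of a decomposition applied to the elements $H\in\A$ with $H\subseteq X\oplus Y$, which gives $H=(H\cap X)\oplus(H\cap Y)$, forcing $H\subseteq X$ or $H\subseteq Y$.

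Finally, the relation $t_{V^*}=0$ is not needed for any of this. Everything happens inside $A(V,\A)$, and it passes to the completion $\widehat A(V,\A)$.
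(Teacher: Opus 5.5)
Your proof is correct, and it is the standard argument behind the cited result of De Concini--Procesi (the paper only quotes that result and gives no proof of its own). Grouping the lines $K\neq H$ of $X$ by the rank-two flats $H+K$ gives $[t_H,t_X]=0$ for every $H\subseteq X$ and every $X\in L(\A)$. In the incomparable case, the decomposition $X\oplus Y$ gives $t_{X+Y}=t_X+t_Y$, which reduces everything to the comparable pair $X\subseteq X+Y$.

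You should delete the aside that $\{X,Y,X+Y\}$ is nested in an enlarged building set. It is false: $X$ and $Y$ are incomparable, and their sum lies in that building set, which violates nestedness. You never actually use it, because the comparable case needs no building set at all.
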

    \noindent
    We now introduce the holonomy equation. First, for every $H\in \A$ fix a non-zero vector $y_H\in H$. Given $H_0\in \A$, let us set $x_H=\frac{y_H}{y_{H_0}}$ and consider the differential form $\omega_H=d\log x_H$ on $Y(V,\A)$. Let us formally define $\widetilde{\omega}_H=d\log y_H$ and consider the vector space $W=\bigoplus_{H\in \A} k\,\widetilde{\omega}_H$. Notice that $H^1_{\text{dR}}(Y(V,\A),k)$ is the subspace of $W$ consisting of those vectors whose coordinates sum to zero. If $t_H$ denotes the element of $W^*$ dual to $\widetilde{\omega}_H$ with respect to the basis $\{\widetilde{\omega}_H\mid H\in \A\}$, then $H^1_{\text{dR}}(Y(V,\A),k)$ is the kernel of $T=\sum_{H\in \A} t_H$. The map $W^*\otimes_k W\to W^*/(T)\otimes_k W$ induced by the quotient $W^*\to W ^*/(T)$ sends the element 
    \[
        \sum_{H\in \A} t_H \otimes \widetilde{\omega}_H
    \]
    to an element in the smaller subspace $W^*/(T)\otimes_k \Omega^1_{Y(V,\A)/k} $, namely
    \[
        \sum_{H\in \A\setminus\{H_0\}} t_H\otimes (\widetilde{\omega}_H-\widetilde{\omega}_{H_0})=
        \sum_{H\in \A\setminus\{H_0\}} t_H\otimes \omega_H.
    \]
    By abuse of notation, we will still denote this by
    \[
        \Omega= \sum_{H\in \A} t_H\otimes \omega_H \in A(V, \A)\otimes_k \Omega^1_{Y(V,\A)/k}.
    \]
    This argument shows that the definition of $\Omega$ is independent of $H_0$. we regard $\Omega$ as a formal differential form over $Y(V,\A)$. The relations in the definition of the holonomy algebra ensure that $\Omega$ is integrable, thus $\Omega\wedge \Omega=0$. The \emph{holonomy equation} associated with $(V,\A)$ is the formal differential equation 
    \[
        dF=\Omega F,
    \]
    where $F$ is a multi-valued function on the complex points of $Y(V,\A)$ with values in $\widehat{A}(V,\A)\widehat{\otimes}_k \C$.\\
    To spell this out in more detail, denote by $\widehat{Y}(V,\A)$ the universal cover of the complex points of $Y(V,\A)$. Fix a graded basis $\beta$ of $A(V,\A)$ as a $k$-vector space. Every element of $f\in\widehat{A}(V,\A)$ can be written uniquely as $f=\sum_{b\in \beta} \alpha_b\, b$
    for some $\alpha_b\in k$. A function $F \colon \widehat{Y}(V,\A)\to \widehat{A}(V,\A)\widehat{\otimes}_k\C$ is holomorphic if, when writing $F(z)=\sum_{b\in\beta} \alpha_b(z) \, b$, the functions $\alpha_b \colon  \widehat{Y}(V,\A)\to \C$ are holomorphic. Such $F$ is then called a multi-valued function on $Y(V,\A)$ with values in $\widehat{A}(V,\A)\widehat{\otimes}_k \C$. Notice that this definition does not depend on the choice of a graded basis for $A(V,\A)$.\\
    To ensure uniqueness of the solutions of the holonomy equation, we fix the asymptotic behaviour in a neighbourhood of a point in $\overline{Y}(V,\A,\G)$ which is a maximal intersection of irreducible boundary divisors. More precisely, fix a building set $\G$ for $L(\A)$ and let $\S$ be a maximal $\G$-nested set. Write $D_\S$ for the point which is the intersection of all irreducible boundary divisors $D_X$ for $X\in\S$. Moreover, let $u^\S_X$ for $X\in \S, X\neq V^*$ be the standard coordinates for the local affine chart $U_\S$ of $\overline{Y}(V,\A)$ with respect to the choice of an adapted basis for $\S$, which we fix once and for all. Given a multi-valued function $F$ on the complex points of $Y(V,\A)$ with values in $\widehat{A}(V,\A)\widehat{\otimes}_k \C$, we write
    \[
        F\sim \prod_{X\in \S, X\neq V^*} \left(u_X^\S\right)^{t_X}
    \]
    if there is a holomorphic function $f$ defined on a neighborhood of $D_\S$ in $\overline{Y}(V,\A,\G)$ in the analytic topology and with values in $\widehat{A}(V,\A)\widehat{\otimes}_k \C$ which takes the value $1$ at $D_\S$ and satisfies 
    \[
        F=f \prod_{X\in \S, X\neq V^*} \left(u_X^\S\right)^{t_X}.
    \]
    Here, for $X\in \S$, $X\neq V^*$ the notation $\left(u_X^\S\right)^{t_X}$ refers to the power series
    \[
        \left(u_X^\S\right)^{t_X}=e^{t_X\log u_X^\S}=\sum_{n=0}^\infty \left(\log u_X^\S\right)^n \frac{t_X^n}{n!},
    \]
    where the logarithm is viewed as a multi-valued function on $Y(V,\A)$. Notice that the product appearing above does not depend on the order of the factors; indeed, since $\S$ is $\G$-nested, we have $[t_X,t_Y]=0$ for all $X,Y\in \S$ by Lemma~\ref{lemma: commutativity relations in holonomy algebra}. We also remark that the function $f$ is required to be single-valued, so that it makes sense to ask for it to extend to the irreducible boundary divisors $D_X$ for $X\in \S$, $X\neq V^*$ in a neighborhood of $D_\S$.\\
    With this notation, the holonomy equation \emph{localized at $\S$}, or \emph{with boundary conditions at $D_\S$}, is the following differential equation with boundary conditions:
    \[
    \begin{cases}
        dF=\Omega F, \\
        F\sim \prod_{X\in S, X\neq V^*} \left(u_X^\S\right)^{t_X}.
    \end{cases}
    \]
    \begin{theorem}[{\cite{De_Concini-Procesi-Hyperplane_arrangements_and_holonomy_equations}}]
        \label{theo: solutions of holonomy equation}
        The holonomy equation localized at $\S$ admits a unique solution $L_\S$, which restricts to a holomorphic function on every simply connected open subset of $Y(V,\A)$.\\
        Moreover, the holomorphic function 
        \[
            f_\S= L_\S\prod_{X\in \S, X\neq V^*} \left(u_X^\S\right)^{-t_X}
        \]
        is \emph{unipotent}, i.e. it is a group-like element of $\widehat{A}(V,\A)$, so $\Delta f_\S=f_\S\otimes f_\S$.
    \end{theorem}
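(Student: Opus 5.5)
The plan follows the classical proof for KZ-type equations with regular singularities (De Concini–Procesi), worked out in the chart $U_\S$ around the point $D_\S$. Write $u_X=u^\S_X$ for $X\in\S\setminus\{V^*\}$. The first step is to express $\Omega$ in these coordinates. By the lemma on adapted bases, $x_H=\beta(p_\S(H))P^\S_H$ with $P^\S_H$ a unit near $D_\S$. Moreover $\beta(X)=\prod_{Y\supseteq X}u_Y$. Hence
\[
\omega_H=\sum_{Y\in\S,\,Y\supseteq p_\S(H)} d\log u_Y+d\log P^\S_H ,
\]
where the last term is holomorphic near $D_\S$. For a given $Y$, the $H$ with $p_\S(H)\subseteq Y$ are exactly the $H\subseteq Y$. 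Regrouping therefore gives
\[
\Omega=\sum_{X\in\S\setminus\{V^*\}} t_X\, d\log u_X+\Omega_0 ,
\]
with $\Omega_0$ holomorphic near $D_\S$. The relation $t_{V^*}=0$ is used here to discard the $V^*$ term.

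Next I look for $F=f\prod_X u_X^{t_X}$. The $t_X$ for $X\in\S$ commute by Lemma~\ref{lemma: commutativity relations in holonomy algebra}, so the product is well defined. With $\Lambda=\sum_X t_X\,d\log u_X$, the equation $dF=\Omega F$ becomes
\[
df=\Lambda f-f\Lambda+\Omega_0 f=[\Lambda,f]+\Omega_0 f,\qquad f(D_\S)=1 .
\]
I solve this degree by degree in the grading of $\widehat{A}(V,\A)$. Write $f=\sum_n f_n$ with $f_0=1$. In degree $n$ the equation reads
\[
df_n-[\Lambda,f_n]=\Omega_0 f_{n-1}+\text{(lower-degree terms)} .
\]
I expand $f_n$ as a power series in the $u_X$. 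For the monomial $u^m=\prod u_X^{m_X}$, the operator $\sum_X m_X\,d\log u_X-[\Lambda,\cdot]$ acts on $du_X$-components as $m_X-\mathrm{ad}\,t_X$. The operator $\mathrm{ad}\,t_X$ raises degree, so it is nilpotent on each graded piece. Thus $m_X-\mathrm{ad}\,t_X$ is invertible whenever $m_X\neq0$. This determines the coefficient of $u^m$ for every $m\neq0$, and the normalization $f_n(D_\S)=0$ for $n\geq1$ fixes the constant term.

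The recursion only determines each coefficient through its $du_X$-components, so compatibility must be checked. Integrability $\Omega\wedge\Omega=0$ and $d\Omega=0$ show that the right-hand side is closed for the twisted differential, which guarantees that the various $X$-components give the same answer. Convergence follows from Cauchy-type majorant estimates: $\|(m_X-\mathrm{ad}\,t_X)^{-1}\|$ is bounded uniformly on each finite-dimensional graded piece. This yields existence of a holomorphic $f$ near $D_\S$. Analytic continuation along paths of the flat connection $d-\Omega$ then extends $F=L_\S$ to the universal cover, i.e.\ to every simply connected open subset of $Y(V,\A)$. For uniqueness, suppose $F'$ is a second solution with $F'=f'\prod u_X^{t_X}$. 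Then $F^{-1}F'$ is constant, equal to $\prod u_X^{-t_X}\,f^{-1}f'\,\prod u_X^{t_X}$. Letting $u\to0$ along paths and using $f^{-1}f'\to1$ forces the constant to be $1$; equivalently, the degree-by-degree recursion admits no nonzero homogeneous solution with value $0$ at $D_\S$.

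For group-likeness, note that $\Omega$ has primitive coefficients $t_H$. Hence both $\Delta L_\S$ and $L_\S\otimes L_\S$ solve the holonomy equation for the connection $\Delta\Omega=\Omega\otimes1+1\otimes\Omega$ in $\widehat{A}\widehat{\otimes}\widehat{A}$. They also have the same asymptotics, because $\Delta(u^{t_X})=u^{t_X}\otimes u^{t_X}$. The uniqueness argument, run in the tensor algebra, gives $\Delta L_\S=L_\S\otimes L_\S$, and dividing by the group-like factor $\prod u_X^{t_X}$ shows $\Delta f_\S=f_\S\otimes f_\S$.

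The main obstacle is the compatibility (integrability) check in the recursion for $f$. One must verify that the twisted right-hand side is closed using only the relations of $A(V,\A)$ together with the commutation of the $t_X$ for $X\in\S$. Proving convergence cleanly is the secondary difficulty.
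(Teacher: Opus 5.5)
Your argument is correct, but it takes a genuinely different route from the paper's. The paper does no analysis here. It takes De Concini--Procesi's solution $\tilde L_\S$ on the model $\overline X(V,\A,\G)$ of the \emph{affine} complement, whose holonomy algebra lacks the relation $t_{V^*}=0$. It then multiplies by $(u^\S_{V^*})^{-t_{V^*}}$ and restricts to $D'_{V^*}\cong\overline Y(V,\A,\G)$, the same regularized-restriction mechanism as in Proposition~\ref{prop: restriction of holonomy equation to the boundary}. Existence, uniqueness and unipotency are thus inherited from the cited work. You instead reprove the De Concini--Procesi theorem directly in the projective chart $U_\S$, with $t_{V^*}=0$ built in. Your ingredients are a Frobenius-type recursion for $f$, continuation along the flat connection, uniqueness from the recursion, and group-likeness from $\Delta\Omega=\Omega\otimes1+1\otimes\Omega$ together with uniqueness in $\widehat{A}(V,\A)\widehat{\otimes}_k\widehat{A}(V,\A)$. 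The paper's version is two lines but leaves implicit why uniqueness and group-likeness survive the restriction to $D'_{V^*}$ and the passage to the quotient by the central element $t_{V^*}$. Yours is self-contained and never needs $\overline X(V,\A,\G)$, at the price of carrying out the analytic estimates yourself.

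Three things to tighten.

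First, as written your ``degree $n$'' equation mixes degrees, since $[\Lambda,f_n]$ lies in degree $n+1$. The recursion your inversion of $m_X-\mathrm{ad}\,t_X$ actually uses is an induction on the $u$-degree $|m|$, with coefficients $a_m\in\widehat{A}(V,\A)$. There $D_X=m_X-\mathrm{ad}\,t_X$ is invertible by a geometric series as soon as $m_X\neq0$.

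Second, in that set-up the compatibility you single out as the main obstacle is automatic. Write $\Omega_0=\sum_X\eta_X\,du_X$ and $\theta_X(g)=u_X\partial_{u_X}g-[t_X,g]-u_X\eta_X g$. The identities $d\Omega=0=\Omega\wedge\Omega$ and $[t_X,t_Y]=0$ give $[\theta_X,\theta_Y]=0$. Hence the lowest-order errors $E_X(m)$ of the partial solution satisfy $D_YE_X(m)=D_XE_Y(m)$. Then $a_m=-D_X^{-1}E_X(m)$, for any $X$ with $m_X\neq0$, solves all equations at once. This includes the constraints $[t_Y,a_m]=0$ when $m_Y=0$ (there $E_Y(m)=0$), which you did not mention.

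Third, in your first uniqueness argument the limit must be taken along a specific path, such as $u_X=s\to0^+$ for all $X$. Otherwise it can fail, because $\prod_X(u_X)^{\pm t_X}$ grows like powers of $\log u_X$. Your formal-uniqueness variant avoids this issue.
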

    \begin{proof}
        In \cite{De_Concini-Procesi-Hyperplane_arrangements_and_holonomy_equations} a solution $\tilde{L}_\S$ is given for the holonomy equation for $\overline{X}(V,\A,\G)$ with boundary condition at a maximal $\G$-nested set $\S$. In this case, the holonomy algebra is the same as the one proposed here, leaving out the relation $t_{V^*}=0$. The function $\tilde{L}_\S (u_{V^*}^\S)^{-t_{V^*}}$ extends to the open part of the divisor $D_{V*}\cong \overline{Y}(V,\A,\G)$ and yields the sought solution $L_\S$.
    \end{proof}

    \subsubsection{Restriction to boundary divisors}
    We now turn to the study of the restriction of a solution to the holonomy equation on boundary divisors. Given a building set $\G$ for $L(\A)$ and $X\in \G$, $X\neq V^*$, recall that by Proposition~\ref{prop: buondary divisor as product of compactifications} there is an isomorphism
    \[
        D_X\cong \overline{Y}(V/X^\perp, \A\vert_X, \G\vert_X)\times \overline{Y}(X^\perp, \A/X, \G/X).
    \]
    Let $\S$ be a maximal $\G$-nested set containing $X$ and fix an adapted basis $\beta$ for $\S$. We then have two maximal nested sets $\S\vert_X$ and $\S/X$ for $\G\vert_X$ and $\G/X$ respectively, as given in Lemma~\ref{lemma: restrictions and quotients of maximal nested sets}. These also come with canonically induced adapted bases and by Proposition~\ref{prop: local decomposition of boundary divisors} the above isomorphism restricts to an isomorphism at the level of affine charts
    \[
        U_\S\cap D_X\cong U_{\S\vert_X}\times U_{\S/X}.
    \]
    We wish to relate the solution $L_\S$ of the holonomy equation localized at $\S$ to the functions $L_{\S\vert_X}$ and $L_{\S/X}$ arising as solutions to the holonomy equations for $Y(V/X^\perp, \A\vert_X)$ and $Y(X^\perp, \A/X)$ localized at $\S\vert_X$ and $\S/X$ respectively. The function $L_\S \left(u_X^\S\right)^{-t_X}$ extends to a meromorphic function on $D_X$ and we will show that its restriction to $D_X$ coincides with $L_{\S\vert_X}L_{\S/X}$. Before doing so, we need to take care of a few details, since, for instance, these functions take values in different algebras.\\
    We start by showing that there is a natural embedding of graded $k$-algebras
    \[
        \widehat{A}(V/X^\perp, \A\vert_X)\widehat{\otimes}_k\widehat{A}(X^\perp, \A/X) \hookrightarrow \widehat{A}(V,\A).
    \]
    First, consider the $k$-algebra homomorphism
    \begin{gather*}
        i_1 \colon  A(V/X^\perp,\A\vert_X)\to A(V,\A),\\
        t_H\mapsto 
        \begin{cases}
            t_H & \text{if $H\neq \beta(X)$};\\
            t_{\beta(X)}+\sum_{K\in \A, K\not\subseteq X} t_K & \text{if $H=\beta(X)$}.
        \end{cases}
    \end{gather*}
    for all $H\in \A$, $H\subseteq X$.
    \begin{lemma}
        The map $i_1$ is well-defined, injective and it extends to a homomorphism $\widehat{A}(V/X^\perp,\A\vert_X)\to \widehat{A}(V,\A)$.
    \end{lemma}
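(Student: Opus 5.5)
The plan is to check the defining relations directly, then get injectivity from an explicit left inverse, and finally pass to completions using the grading.

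First I rewrite $i_1$ in a more symmetric form. The relation $t_{V^*}=\sum_{K\in\A}t_K=0$ in $A(V,\A)$ gives
\[
\sum_{K\in\A,\,K\not\subseteq X}t_K=-t_X .
\]
Hence $i_1(t_H)=t_H-\delta_{H,\beta(X)}\,t_X$ for all $H\in\A\vert_X$.

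\emph{Well-definedness.} The relations of $A(V/X^\perp,\A\vert_X)$ are of two kinds.

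The first is $\sum_{K\subseteq X}t_K=0$. Its image is $t_X-t_X=0$, so it is preserved.

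The second is $[t_H,t_Y]=0$ for $Y\in L(\A)$ with $Y\subseteq X$, $\dim Y=2$ and $H\subseteq Y$. Let $\epsilon$ be $1$ if $\beta(X)\subseteq Y$ and $0$ otherwise. Then the image of $t_Y$ is $t_Y-\epsilon t_X$, and the image of $t_H$ is $t_H-\delta_{H,\beta(X)}t_X$. So I must show
\[
\bigl[t_H-\delta_{H,\beta(X)}t_X,\ t_Y-\epsilon t_X\bigr]=0 .
\]
Expanding, this follows from three facts. The relation $[t_H,t_Y]=0$ holds in $A(V,\A)$ because $Y\in L(\A)$ has dimension $2$. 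The relations $[t_H,t_X]=0$ and $[t_Y,t_X]=0$ hold because in each case the two elements are comparable under inclusion. Comparable elements form a $\G$-nested set for the building set $\G=L(\A)\setminus\{0\}$, so Lemma~\ref{lemma: commutativity relations in holonomy algebra} applies.

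\emph{Injectivity.} I construct a homomorphism $p\colon A(V,\A)\to A(V/X^\perp,\A\vert_X)$ by setting $p(t_K)=t_K$ if $K\subseteq X$ and $p(t_K)=0$ otherwise. Again there are two kinds of relations to check.

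The relation $t_{V^*}=0$ maps to $t_X$, which vanishes in the target.

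Now take a relation $[t_H,t_Z]=0$ with $\dim Z=2$. If $Z\subseteq X$, its image is a relation of the target. If $Z\not\subseteq X$, then $Z\cap X$ contains at most one line of $\A$. So $p(t_Z)$ is either $0$ or $t_{H'}$ for the unique line $H'\subseteq Z\cap X$, and $p(t_H)$ is either $0$ or that same $t_{H'}$. Either way the commutator vanishes.

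Finally, $p\circ i_1$ fixes $t_H$ for $H\neq\beta(X)$. It also sends $t_{\beta(X)}$ to $t_{\beta(X)}-p(t_X)=t_{\beta(X)}-t_X=t_{\beta(X)}$, again using $t_X=0$ in the target. Hence $p\circ i_1=\mathrm{id}$, and $i_1$ is injective.

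\emph{Completion.} Both $i_1$ and $p$ send degree-one generators to homogeneous degree-one elements, so they are graded homomorphisms. In particular they map the augmentation ideal into the augmentation ideal, and all its powers into the corresponding powers. They therefore extend continuously to the completions. The identity $p\circ i_1=\mathrm{id}$ persists after completion, so the extension $\widehat{A}(V/X^\perp,\A\vert_X)\to\widehat{A}(V,\A)$ is injective as well.

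The only delicate step is the case analysis for $Z\not\subseteq X$ in the definition of $p$, and it reduces to the dimension count above.
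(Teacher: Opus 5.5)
Your proof is correct and follows the paper's argument for well-definedness: you use the same expansion of $[t_H-\delta_{H,\beta(X)}t_X,\ t_Y-\epsilon t_X]$, based on $t_{V^*}=0$ and the commutation lemma for nested pairs. Your choice of the building set $L(\A)\setminus\{0\}$ is slightly cleaner than the paper's appeal to $\G$-nestedness, since the $2$-dimensional flat $Y$ need not lie in $\G$. The paper calls injectivity and the extension to completions immediate; your left inverse $p$ is exactly the analogue of the map $j$ that the paper builds for $i_2$, and your check of its relations (including the case $Z\not\subseteq X$) is sound.
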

    \begin{proof}
        To prove that $i_1$ is well-defined, first observe that $i_1(t_X)=t_{V^*}=0$. Moreover, we need to check that for all $H\in\A$ and $Y\in L(\A)$ of dimension $2$ such that $H\subseteq Y\subseteq X$ we have $[i_1(t_H),i_1(t_Y)]=0$.\\
        If $Y$ does not contain $\beta(X)$, then $i_1(t_H)=t_H$ and $i_1(t_Y)=t_Y$, so the claim follows from the relations in $A(V,\A)$. If on the other hand $Y$ contains $\beta(X)$, then $i_1(t_Y)=t_Y+t_{V^*}-t_X=t_Y-t_X$. Thus, by Lemma~\ref{lemma: commutativity relations in holonomy algebra}
        \[
            [i_1(t_{\beta(X)}), i_1(t_Y)]=[t_{\beta(X)}-t_X, t_Y-t_X]=0,
        \]
        since $\{ \beta(X), X\}$, $\{\beta(X), Y\}$ and $\{Y, X\}$ are $\G$-nested. Similarly for the case $H\neq \beta(X)$. The rest of the claim is immediate.
    \end{proof}
    \noindent
    For $A(X^\perp, \A/X)$ we consider instead the $k$-algebra homomorphism given by
    \begin{gather*}
        i_2 \colon  A(X^\perp, \A/X) \to A(V,\A), \\
        t_{(H+X)/X} \mapsto 
        \begin{cases}
            \sum_{K\in \A, K+X=H+X} t_K & \text{if $H+X\neq \beta(X^+)+X$}; \\
            t_X+\sum_{K\in \A, K+X=H+X} t_K & \text{if $H+X=\beta(X^+)+X$,}
        \end{cases}
    \end{gather*}
    where $X^+$ is the smallest element of $\S$ which strictly contains $X$. Recall that $X^+$ is well-defined, as the set of elements of $\S$ containing $X$ is linearly ordered by inclusion.
    \begin{lemma}
        The map $i_2$ is well-defined and injective. Moreover, together with the embedding $A(V/X^\perp, \A\vert_X)\hookrightarrow A(V,\A)$ it induces an injective $k$-algebra homomorphism
        \[
            \widehat{A}(V/X^\perp, \A\vert_X)\widehat{\otimes}_k\widehat{A}(X^\perp, \A/X) \hookrightarrow \widehat{A}(V,\A).
        \]
    \end{lemma}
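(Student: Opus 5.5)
We verify three things in turn: that $i_2$ respects the defining relations of $A(X^\perp,\A/X)$, that it is injective, and that the images of $i_1$ and $i_2$ commute and together embed the completed tensor product. Throughout write $\bar H=(H+X)/X$ and $T_{\bar H}=\sum_{K\in\A,\,K+X=H+X}t_K$. The sum $T_{\bar H}$ runs over the lines of $\A$ contained in the two-dimensional element $H+X$ when $\dim X=1$, and in general over the lines $K\subseteq H+X$ with $K\not\subseteq X$.

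\textbf{Relations.} First, $\sum_{\bar H\in\A/X}i_2(t_{\bar H})=t_X+\sum_{K\not\subseteq X}t_K=t_{V^*}=0$. For a two-dimensional element $\bar Y=(Y+X)/X$ of $L(\A/X)$ with $\bar H\subseteq \bar Y$, we compute $i_2(t_{\bar Y})=t_{Y+X}-t_X$, with an extra $+t_X$ exactly when $\beta(X^+)+X\subseteq Y+X$. It then suffices to show $[T_{\bar H}(+t_X),\,t_{Y+X}-t_X(+t_X)]=0$. We use Lemma~\ref{lemma: commutativity relations in holonomy algebra}, choosing for each pair a building set containing both elements in which they are nested.
\begin{itemize}
\item[(a)] $X\subseteq Y+X$ gives $[t_X,t_{Y+X}]=0$.
\item[(b)] $X\subseteq H+X$ gives $[t_X,t_{H+X}]=0$, and $T_{\bar H}=t_{H+X}-t_X$.
\item[(c)] $H+X\subseteq Y+X$ gives $[t_{H+X},t_{Y+X}]=0$.
\end{itemize}
Expanding bilinearly, every bracket vanishes, so the defining relations are respected. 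The case analysis on whether $\beta(X^+)$ lies in $H+X$ or in $Y+X$ only adds multiples of $t_X$, which commute with everything in sight by (a) and (b).

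\textbf{Injectivity.} The natural route is a left inverse. I would use the surjection $A(V,\A)\to A(X^\perp,\A/X)$ given by $t_K\mapsto t_{(K+X)/X}$ for $K\not\subseteq X$, and for $K\subseteq X$ by $t_K\mapsto 0$, except $t_{\beta(X)}\mapsto -\sum_{\bar H}t_{\bar H}$ adjusted so that the sum-zero relation holds. Its composite with $i_2$ is the identity up to the $t_X$ correction, and that correction is killed because $t_X\mapsto 0$. Checking that this map is well defined on the two-dimensional relations needs the lattice map $L(\A)\to L(\A/X)$, which sends dimension-two elements either to lines or to dimension-two elements. Equivalently, and more cleanly, one can combine a single retraction $A(V,\A)\to A(V/X^\perp,\A\vert_X)\widehat\otimes A(X^\perp,\A/X)$, sending $t_K$ to $t_K\otimes1$ for $K\subseteq X$ and to $1\otimes t_{\bar K}$ otherwise (with the basis-vector corrections). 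One then verifies that its composite with $i_1\otimes i_2$ is the identity on generators. This single check proves injectivity of $i_1$, of $i_2$, and of the combined map at once.

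\textbf{Commutation and main obstacle.} For the tensor product map to be a well-defined algebra homomorphism we need $[i_1(a),i_2(b)]=0$ on generators. For $H\subseteq X$, Lemma~\ref{lemma: commutativity relations in holonomy algebra} gives $[t_H,t_{K+X}]=0$, since $H\subseteq X\subseteq K+X$, and $[t_H,t_X]=0$. Hence $t_H$ commutes with $T_{\bar K}=t_{K+X}-t_X$. For the corrected generator $t_{\beta(X)}-t_X$ (using $t_{V^*}=0$) the same argument applies. Compatibility with completions is automatic, since all maps preserve the grading. I expect the main obstacle to be the well-definedness of the retraction used for injectivity, namely showing that every two-dimensional relation of $A(V,\A)$ that mixes lines inside and outside $X$ maps to a relation. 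This is the only place where the nested-set combinatorics of $\G$ truly enters, and I would handle it by case distinction on $\dim(Y\cap X)\in\{0,1,2\}$.
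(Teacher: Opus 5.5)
Your check of the defining relations, and of the commutation of the two images, is correct and cleaner than the paper's. The elements $i_2(t_{\bar H})$ and $i_2(t_{\bar Y})$ are linear combinations of $t_X$, $t_{H+X}$, $t_{Y+X}$, which form a chain in $L(\A)$; the paper instead runs a case analysis through the $\G/X$-decomposition of $(Y+X)/X$. The gap is in injectivity.

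First, your map is not a left inverse of $i_2$. It sends every $t_K$ with $K\not\subseteq X$ to $t_{\bar K}$, so $\sum_{K+X=H+X}t_K$ goes to $n_{\bar H}\,t_{\bar H}$, where $n_{\bar H}=\#\{K\in\A : K+X=H+X\}$. The sum relation then forces $t_{\beta(X)}\mapsto-\sum_{\bar H}n_{\bar H}t_{\bar H}$, so $t_X$ is not killed unless every $n_{\bar H}=1$. The paper avoids this by keeping one representative per class and sending every other $t_K$ to $0$, in particular every $t_K$ with $K\subseteq X$.

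Second, and more seriously, the well-definedness you defer cannot be settled by a case distinction, because injectivity fails in the stated generality. For a rank-two flat $Y$ with $Y\cap X=0$, the image of $t_Y$ sums only over those lines of $(Y+X)/X$ whose chosen preimage lies inside $Y$. That partial sum need not commute with $t_{\bar H}$; the identity $j(t_Y)=t_{(Y+X)/X}$ in the paper's proof breaks down for the same reason.

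For a concrete failure, take $\A=\{\langle e_1\rangle,\langle e_2\rangle,\langle e_3\rangle,\langle e_1+e_2+e_3\rangle\}$ in $k^3$, $\G=\F(\A)$ and $X=\langle e_1\rangle$. Write $\bar e_i$ for the image of $\langle e_i\rangle$ in $\A/X$.
\begin{itemize}
\item Every rank-two flat contains exactly two lines, so $A(V,\A)$ is commutative.
\item $\A/X$ consists of three lines in a plane, so $A(X^\perp,\A/X)$ is free on two generators.
\item Hence $i_2([t_{\bar e_2},t_{\bar e_3}])=0$ although $[t_{\bar e_2},t_{\bar e_3}]\neq 0$, and no left inverse can exist.
\end{itemize}

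What is missing is a modular $M\in L(\A)$ with $V^*=X\oplus M$. Proposition~\ref{prop: restrictions into quotients of G-modular elts} supplies one under enough $\G$-modular elements, which is the setting where the lemma is used later. With such an $M$, your single-retraction idea works if you send
\begin{itemize}
\item $t_K\mapsto t_K\otimes 1$ for $K\subseteq X$,
\item $t_K\mapsto 1\otimes t_{(K+X)/X}$ for $K\subseteq M$,
\item $t_K\mapsto 0$ for all other $K$.
\end{itemize}
Three facts make this work. A rank-two flat not contained in $X$ (resp.\ $M$) meets it in at most one line of $\A$, so the relations are respected. Modularity makes $K\mapsto (K+X)/X$ a bijection $\A\vert_M\to\A/X$ compatible with rank-two flats, so each class $\{K : K+X=H+X\}$ contains exactly one line of $M$. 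Finally, $t_X\mapsto t_X\otimes 1=0$ and $\sum_{K\not\subseteq X}t_K\mapsto 1\otimes t_{V^*/X}=0$, so the composite with $i_1\otimes i_2$ is the identity on generators. This gives injectivity of the combined map directly, a point the paper's proof does not address beyond $i_2$.
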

    \begin{proof}
        It is apparent that $i_2(t_{V^*/X})=t_{V^*}=0$. To prove that $i_2$ is well-defined, we are left to check that for all $Y\in L(\A)$ with $\dim (Y+X)/X=2$ and $H\in \A$ with $H\subseteq Y+X$ we have 
        \[
        [i_2(t_{(Y+X)/X}),i_2(t_{(H+X)/X})]=0.
        \] 
        If we write $(Y+X)/X=\bigoplus_{i=1}^r (Y_i+X)/X$ for the $\G/X$-decomposition of the subspace $(Y+X)/X$, then $t_{(Y+X)/X}=\sum_{i=1}^r t_{(Y_i+X)/X}$. Since $\dim (Y+X)/X=2$, we have $r\le 2$. \\
        If $r=2$, without loss of generality we may assume $Y_1=H$. Since $(Y+X)/X\not\in \G$, for all $K_1,K_2\in \A$ such that $K_1+X=H+X$ and $K_2+X=Y_2+X$ we have $(K_1+K_2+X)/X=(Y+X)/X$, thus $K_1+K_2\not\in \G$. As a result, $\{K_1,K_2\}$ is a $\G$-nested set and $[t_{K_1},t_{K_2}]=0$ by Lemma~\ref{lemma: commutativity relations in holonomy algebra}. This shows that 
        \[
            [i_2(t_{(Y+X)/X}),i_2(t_{(H+X)/X})]=[i_2(t_{(Y_2+X)/X}), i_2(t_{(Y_1+X)/X})]=0
        \]
        if none between $Y_1+X$ and $Y_2+X$ equals $\beta(X^+)+X$. If this is not the case and, say, $Y_1+X=\beta(X^+)+X$, then either $Y_2\oplus X$ is a decomposition or $Y_2+X\in \G$. In the first case $Y_2$ is the only $K\in \A$ such that $K+X=Y_2+X$; since$\{Y_2,X\}$ is $\G$-nested, we have $[t_X,t_{Y_2}]=0$. Otherwise, $i_2(t_{(Y_2+X)/X})=t_{Y_2+X}-t_X$ and $t_X$ commutes with both itself and $t_{Y_2+X}$, given that $\{Y_2+X,X\}$ is $\G$-nested. \\
        If $r=1$, then we may assume $Y\in \G$. We observe that
        \[
            i_2(t_{(Y+X)/X})= 
            \begin{cases}
                t_Y & \text{if $Y\oplus X$ decomposition, $\beta(X^+)+X \not\subseteq Y+X$}; \\
                t_{Y+X}-t_X & \text{if $Y+ X\in \G$, $\beta(X^+)+X \not\subseteq Y+X$}; \\
                t_Y+t_X & \text{if $Y\oplus X$ decomposition, $\beta(X^+)+X \subseteq Y+X$};\\
                t_{Y+X} & \text{if $Y+ X\in \G$, $\beta(X^+)+X \subseteq Y+X$}.
            \end{cases}
        \]
        In order to check that $i_2(t_{(Y+X)/X})$ commutes with $i_2(t_{(H+X)/X})$ one can perform a case by case analysis. \\
        For example, suppose that $Y\oplus X$ is a decomposition and $Y+X$ contains $\beta(X^+)$. If $H\oplus X$ is a decomposition and $H\neq \beta(X^*)$, then $[t_H,t_X]=0$ because $\{H, X\}$ is $\G$-nested. Hence
        \[
            [i_2(t_{(Y+X)/X}), i_2(t_{(H+X)/X})]= [t_Y+t_X,t_H]=[t_Y,t_H]+[t_X,t_H]=0.
        \]
        For the remaining cases, we remark that, if $Y\oplus X$ is a decomposition, then $H\oplus X$ is one, too.\\
        For injectivity, choose a set of representatives $H_1,\dots, H_n\in \A$ for $\A/X$ and consider the $k$-algebra homomorphism $j \colon A(V,\A)\to A(X^\perp,\A/X)$ which sends $t_{H_i}$ to $t_{(H_i+X)/X}$ and $t_K$ to $0$ if $K\in \A$, $K\neq H_i$ for all $i=1,\dots, n$. To prove that $j$ is well-defined, let $Y\in L(\A)$ have dimension $2$ and take $H\in \A$ contained in $Y$. We want to see that $[j(t_Y),j(t_H)]=0$.\\
        If $H\neq H_i$ for all $i=1,\dots, n$, then $t_H=0$ and this claim trivially holds. Otherwise, without loss of generality we may assume $H=H_1$. Notice that
        \begin{align*}
        j(t_Y)&=\sum_{K\in \A, K\subseteq Y} j(t_K) =\sum_{i=1}^n \sum_{\substack{K\in \A,K\subseteq Y \\ K+X=H_i+X}} j(t_K) \\
              &= \sum_{i=1}^n \sum_{H_i+X\subseteq Y+X} t_{(H_i+X)/X}=t_{(Y+X)/X}.
        \end{align*}
        If $\dim (Y+X)/X=2$, then 
        \[
            [j(t_Y),j(t_{H_1})]=[t_{(Y+X)/X}, t_{(H_1+X)/X}]=0
        \]
        by the relations in $A(X^\perp, \A/X)$. If $\dim Y=1$, then $Y+X=H_1+X$ and the vanishing of $[j(t_Y),j(t_{H_1})]$ follows immediately. Thus, $j$ is well-defined. It is immediate to see that $j$ is a left-inverse of $i_2$, hence $i_2$ is injective.\\
        In order to get a homomorphism $A(V/X^\perp, \A\vert_X)\otimes_k A(X^\perp, \A/X) \to A(V,\A)$, we prove that the images of $A(V/X^\perp,\A\vert_X)$ and $A(X^\perp, \A/X)$ in $A(V,\A)$ commute with one another. It suffices to see that for all $H,K\in \A $ with $H\not \subseteq X$ and $K\subseteq X$ the commutator $[i_1(t_K),i_2(t_{(H+X)/X})]$ vanishes. Since $H\in \G$, we have observed above that $i_2(t_{(H+X)/X})$ is a linear combination of $t_{H+X}$, $t_H$ and $t_X$, while $i(t_K)$ equals either $t_K$ or $t_K-t_X$. Since $K\subseteq X$, the sets $\{K, X\}$, $\{K, H+X\}$ and $\{X,H+X\}$ are $\G$-nested. Moreover, $t_H$ appears only when $H\oplus X$ is a decomposition, in which case $\{H, K\}$  and $\{H, X\}$ are $\G$-nested. As a result, $[i_1(t_K),i_2(t_{(H+X)/X})]=0$, as desired.\\
        Given that all $k$-algebra homomorphisms considered so far respect the grading, the map $A(V/X^\perp, \A\vert_X)\otimes_k A(X^\perp, \A/X) \to A(V,\A)$ extends to the corresponding completions.
    \end{proof}
    \begin{remark}
        Notice that the embedding 
        \[
            \widehat{A}(V/X^\perp, \A\vert_X)\widehat{\otimes}_k\widehat{A}(X^\perp, \A/X) \hookrightarrow \widehat{A}(V,\A)
        \] 
        depends on the choice of a nested set $\S$ with an adapted basis. 
    \end{remark}
    \noindent
    Going back to the solutions of the holonomy equation, in view of the previous lemma we may regard $L_{\S\vert_X}$ and $L_{\S/X}$ as multi-valued functions with values in $\widehat{A}(V,\A)\widehat{\otimes}_k \C$. Also note that the asymptotic behavior of $L_\S$ around $D_\S$ ensures that the function $L_\S\left( u_X^\S\right)^{-t_X}$ extends to a meromorphic function on $U_\S\cap D_X$ in a neighborhood of $D_S$.\\
    We anticipate that, when computing explicitly with $\Omega$ in coordinates on the local chart $U_\S$ associated with a maximal nested set $\S$ and adapted basis $\beta$, we always fix as hyperplane at infinity the one given by $\langle \beta(V^*)\rangle$.
    \begin{proposition}
        \label{prop: restriction of holonomy equation to the boundary}
        The restriction of $L_\S\left( u_X^\S\right)^{-t_X}$ to $D_X$ equals the product $L_{\S\vert_X}L_{\S/X}$.
    \end{proposition}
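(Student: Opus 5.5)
The plan is to use the uniqueness part of Theorem~\ref{theo: solutions of holonomy equation}. We show that both sides, viewed as multi-valued functions on the open stratum of $D_X$ in $U_\S\cap D_X\cong U_{\S\vert_X}\times U_{\S/X}$ with values in $\widehat{A}(V,\A)\widehat{\otimes}_k\C$ via the embedding $i_1\otimes i_2$, satisfy the same differential equation and the same asymptotics at $D_\S$.

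\textbf{Step 1: the restricted equation.} Write $G=L_\S(u_X^\S)^{-t_X}$; since $t_X$ commutes with all $t_Y$, $Y\in\S$, we have $G=f_\S\prod_{Y\in\S,Y\neq X,V^*}(u_Y^\S)^{t_Y}$. Hence $G$ extends to a neighborhood of $D_\S$ in $D_X$ with the expected monodromy along the remaining $u_Y^\S$. In the chart $U_\S$ (hyperplane at infinity $\langle\beta(V^*)\rangle$) we write
\[
\Omega=t_X\frac{du_X^\S}{u_X^\S}+\Omega_{\mathrm{reg}},
\]
where $\Omega_{\mathrm{reg}}$ has no pole along $u_X^\S=0$. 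This uses Lemma~3.1 of De Concini--Procesi: each $x_H$ equals $\beta(p_\S(H))P_H^\S$, and the factor $u_X^\S$ divides $x_H/x_{\beta(V^*)}$ exactly when $H\subseteq X$. Then $dG=\Omega G-G\,t_X\,du_X^\S/u_X^\S=\Omega_{\mathrm{reg}}G+[t_X,G]\,du_X^\S/u_X^\S$. Restricting to $u_X^\S=0$, the commutator term must vanish because $G$ is regular there. So $dG|_{D_X}=\Omega_{\mathrm{reg}}|_{D_X}G|_{D_X}$, where $d$ now means differentiation in the coordinates $u_Y^\S$, $Y\neq X$.

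\textbf{Step 2: identify $\Omega_{\mathrm{reg}}|_{D_X}$ with $i_1(\Omega_{\S\vert_X})+i_2(\Omega_{\S/X})$.} This is the computational core. For $H\subseteq X$, the function $x_H/x_{\beta(X)}$ depends only on $u_Y^\S$ with $Y\subsetneq X$, and it is exactly the coordinate function of $\A\vert_X$ with the hyperplane at infinity $\beta(X)$. Grouping terms gives $\sum_{H\subseteq X}t_H\,d\log(x_H/x_{\beta(X)})$. The leftover coefficient of $d\log x_{\beta(X)}$, namely $t_X$, combines with the terms for $H\not\subseteq X$.

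For $H\not\subseteq X$, the proof of Proposition~\ref{prop: local decomposition of boundary divisors} shows that $P^\S_H|_{u_X=0}=P^{\S/X}_{(H+X)/X}$. Hence $d\log x_H|_{D_X}$ depends only on the class $H+X$, and the terms group into $\sum_{K+X=H+X}t_K$. The term $t_X\,d\log x_{\beta(X)}$ restricts to $t_X\,d\log\prod_{Y\supsetneq X}u_Y$, which is the $d\log$ of the coordinate of $\beta(X^+)+X$ in $\S/X$. This is exactly why $i_2$ adds $t_X$ to that generator. Symmetrically, the $i_1$ modification $t_{\beta(X)}\mapsto t_{\beta(X)}+\sum_{K\not\subseteq X}t_K$ accounts for the terms $-t_K\,d\log x_{\beta(X)}$ produced by rewriting $x_H$ relative to $x_{\beta(X)}$. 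I expect this bookkeeping of which $t$'s attach to the hyperplanes at infinity to be the main obstacle. I would verify it by expanding every form in the $d\log u_Y^\S$ basis, using $x_{\beta(Y)}=\prod_{Z\supseteq Y}u_Z$, and comparing the coefficients of each $d\log u_Y$ and of each $d\log P$ separately.

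\textbf{Step 3: uniqueness.} The two pieces of $\Omega_{\mathrm{reg}}|_{D_X}$ live on different factors of $U_{\S\vert_X}\times U_{\S/X}$, and their images commute by the last lemma. Therefore $L_{\S\vert_X}L_{\S/X}$ satisfies the same equation $dF=\Omega_{\mathrm{reg}}|_{D_X}F$. Its asymptotics at $D_\S$ are
\[
\prod_{Y\subsetneq X}u_Y^{i_1(t_Y)}\prod_{Y\not\subseteq X}u_Y^{i_2(t_{(Y+X)/X})}.
\]
A direct check shows $i_1(t_Y)=t_Y$ for $Y\subsetneq X$, since $t_Y$ contains $t_{\beta(X)}$ only when $Y=X$. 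Likewise $i_2(t_{(Y+X)/X})=t_Y$ for $Y\supsetneq X$, while for incomparable $Y$ the class sum also recovers $t_Y$. So $L_{\S\vert_X}L_{\S/X}$ matches the asymptotics of $G|_{D_X}$, which are $\prod_{Y\neq X}u_Y^{t_Y}$ times a function equal to $1$ at $D_\S$. The uniqueness argument of Theorem~\ref{theo: solutions of holonomy equation}, applied on the product chart (via the ratio $F_1^{-1}F_2$ being constant and normalized), then gives equality.
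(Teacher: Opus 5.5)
Your proposal is correct and follows essentially the paper's route: split off $t_X\,d\log u_X^\S$, discard the commutator term on $\{u_X^\S=0\}$, match the restricted form with $i_1(\Omega_1)+i_2(\Omega_2)$ by comparing coefficients of $d\log u_Y^\S$ and $d\log P_H^\S$ (using $i_1(t_Y)=t_Y$, $i_2(t_{(Y+X)/X})=t_Y$ and $P^\S_{\beta(Y)}=1$), then compare asymptotics and conclude by uniqueness. Your direct argument that $F_1^{-1}F_2$ is a constant forced to be $1$ by the asymptotics is a self-contained substitute for the paper's appeal to the holonomy equation of a product arrangement; one small correction to your bookkeeping is that, with $\beta(X)$ at infinity, the generator $t_{\beta(X)}$ carries the zero form in $\Omega_1$, so the modification in $i_1$ does not appear in $i_1(\Omega_1)$ (it is only needed to get $i_1(t_X)=0$), and the whole leftover $t_X\,d\log\prod_{Z\supsetneq X}u_Z^\S$ is absorbed by the $+t_X$ in $i_2$ --- attributing part of it to $i_1$ as well would count it twice.
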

    \begin{proof}
        In local coordinates $u_Y^\S$ on $U_\S$ we have
        \begin{align*}
            \Omega &=\sum_{H\in \A } t_H \otimes d\log x_H= \sum_{H\in \A} t_H \otimes d\log x_{p_\S(H)} + \sum_{H\in \A} t_H\otimes d\log P_{H}^\S\\
            &=\sum_{H\in \A} \sum_{Y\in \S, p_\S(H)\subseteq Y} t_H\otimes d\log u_Y^\S + \sum_{H\in \A} t_H\otimes d\log P_{H}^\S\\
            &= \sum_{Y\in \S} \left(\sum_{H\in\A, p_\S(H)\subseteq Y}t_H\right)\otimes d\log u_Y^\S + \sum_{H\in \A} t_H\otimes d\log P_{H}^\S\\
            &= \sum_{Y\in\S} t_Y\otimes \frac{du_Y^\S}{u_Y^\S} + \sum_{H\in \A} t_H\otimes d\log P_{H}^\S.
        \end{align*}
        Notice that the differential form $\sum_{H\in \A} t_H\otimes d\log P_{H}^\S$ does not have poles along the divisors $\{u_Y^\S=0\}$ for all $Y\in \S$.
        Analogous expressions hold also for the formal differential forms $\Omega_1$ and $\Omega_2$ associated with the holonomy equations on $Y(V/X^\perp, \A\vert_X)$ and $Y(X^\perp, \A/X)$ respectively.\\
        Under the embedding $i_1 \colon A(V/X^\perp, \A\vert_X)\hookrightarrow A(V,\A)$ it is clear that $\Omega_1$ becomes
        \[
            \Omega_1 = \sum_{Y\in\S\vert_X, Y\neq X} t_Y\otimes \frac{du_Y^\S}{u_Y^\S} + \sum_{H\in \A\vert_X} t_H\otimes d\log P_{H}^\S,
        \]
        where we are using the fact that the isomorphism $U_\S\cap D_X\cong U_{\S\vert_X}\times U_{\S/X}$ sends the coordinate $u_Y^{\S\vert_X}$ to $u_Y^\S$ for all $Y\in \S$, $Y\subsetneq X$. Notice that $P^\S_{\beta(X)}=1$ since $X\in \S$, hence $d\log P^\S_{\beta(X)}=0$.\\
        For $\Omega_2$ we first observe that for all $Y\in \S$, $Y\not\subseteq X$ we have $i_2(t_{(Y+X)/X})=t_Y$. Indeed, if $Y\oplus X$ is a decomposition, then $\beta(X^+)\not\subseteq Y$, or else the properties of adapted bases would imply $X\subseteq X^+=p_\S(\beta(X^+))\subseteq Y$; as we have seen in the proof of the previous lemma, this yields $i_2(t_{(Y+X)/X})=t_Y$. If on the other hand $Y+X\in \G$, then $X\subseteq Y$ for $\S$ is $\G$-nested. As $Y\not\subseteq X$, by minimality of $X^+$ we also have $X^+\subseteq Y$, hence $i_2(t_{(Y+X)/X})=t_Y$ follows from the definition of $i_2$.\\ 
        Then, for all $H\in \A$, $H\not\subseteq X$, denote by $\tilde{P}_H^\S$ the restriction of $P_H^\S$ to $\{u_X^\S=0\}$. Notice that $\tilde{P}_H^\S$ is the image of $P_H^\S$ under the isomorphism $U_\S\cap D_X\cong U_{\S\vert_X}\times U_{\S/X}$. The image of $\Omega_2$ via $i_2$ then becomes
        \begin{align*}
            \Omega_2 &= \sum_{Y\in\S, Y\not\subseteq X} i_2(t_{(Y+X)/X})\otimes d\log u_Y^{\S}+ \\
            & \qquad \sum_{(H+X)/X\in\A/X} i_2(t_{(H+X)/X})\otimes d\log P_{(H+X)/X}^{\S/X}\\
            &= \sum_{Y\in\S, Y\not\subseteq X} t_Y\otimes d\log u_Y^{\S} + \sum_{\substack{H\in\A, H\not\subseteq X\\ H+X\neq\beta(X^+)+X}} t_H\otimes d\log \tilde{P}_H^\S \\
             & \qquad + i_2(t_{(\beta(X^+)+X)/X})\otimes d\log \tilde{P}_{\beta(X^+)}^\S \\
            &=\sum_{Y\in\S, Y\not\subseteq X} t_Y\otimes \frac{du_Y^\S}{u_Y^\S} + \sum_{H\in\A, H\not\subseteq X} t_H\otimes d\log \tilde{P}_H^\S,
        \end{align*}
        using that for all $H\in \A$ with $H+X=\beta(X^+)+X$ we have $d\log\tilde{P}_H^\S=d\log P_{\beta(X^+)}^\S=0$. Indeed, $P^\S_{\beta(Y)}=1$ for all $Y\in \S$, so $\tilde{P}_H^\S=P_{\beta(X^+)}^\S=1$. \\
        Overall, this shows that 
        \begin{align*}
            \Omega_1+\Omega_2 &= \left(\Omega-t_X\otimes d\log u_X^\S\right)\Big\vert_{\left\{u_X^\S=0\right\}}.
        \end{align*}
        We now observe that
        \begin{align*}
        d\left( L_\S(u_X^\S)^{-t_X}\right) &= \Omega L_\S(u_X^\S)^{-t_X}-L_\S (u_X^\S)^{-t_X}t_Xd\log u_X^\S \\
        &= (\Omega-t_Xd\log u_X^\S) L_S(u_X^\S)^{-t_X} - \frac{1}{u_X^\S}\left[L_\S(u_X^\S)^{-t_X}, t_X\right]du_X^\S.
        \end{align*}
        The differential form $\Omega-t_Xd\log u_X^\S$ is holomorphic at $\{u_X^\S=0\}$ and so are  $L_\S(u_X^\S)^{-t_X}$ and its differential. This implies that $(u_X^\S)^{-1}\left[L_\S(u_X^\S)^{-t_X},t_X\right]$ is holomorphic as well at $\{u_X^\S=0\}$ in a neighborhood of the origin. As a result, 
        \begin{align*}
            d\left( L_\S(u_X^\S)^{-t_X}\right)\Big\vert_{\{u_X^\S=0\}}&= (\Omega-t_Xd\log u_X^\S)\big\vert_{\{u_X^\S=0\}} \left(L_S(u_X^\S)^{-t_X}\right)\Big\vert_{\{u_X^\S=0\}} \\
            &= (\Omega_1+\Omega_2)\left(L_S(u_X^\S)^{-t_X}\right)\Big\vert_{\{u_X^\S=0\}}.
        \end{align*} 
        Moreover, we have
        \begin{align*}
            d\left(L_{\S\vert_X} L_{\S/X} \right)&= \Omega_1 L_{\S\vert_X} L_{\S/X} + L_{\S\vert_X} \Omega_2L_{\S/X}\\
            &=(\Omega_1+\Omega_2) L_{\S\vert_X} L_{\S/X},
        \end{align*}
        since $[\Omega_2, L_{\S\vert_X}]=0$. This shows that $L_\S(u_X^\S)^{-t_X}$ and $L_{\S\vert_X} L_{\S/X}$ satisfy the same differential equation. Let us prove that they also share the same asymptotic behavior around the origin of $U_\S\cap D_X$. \\
        For $L_\S$ we know that 
        \[
            L_\S\left(u_X^\S\right)^{-t_X} \sim \prod_{Y\in \S, Y\neq X, V^*}\left(u_Y^\S\right)^{t_Y}.
        \]
        On the other hand,
        \begin{gather*}
            L_{\S\vert_X} L_{\S/X} \prod_{\substack{Y\in \S\vert_X\\ Y\neq X}}\left(u_Y^{\S\vert_X}\right)^{-t_Y}\prod_{\substack{Y\in \S/X \\ Y\neq V^*/X}}\left(u_Y^{\S/X}\right)^{-t_Y} \\ = \left(L_{\S\vert_X} \prod_{\substack{Y\in \S\vert_X\\ Y\neq X}}\left(u_Y^{\S\vert_X}\right)^{-t_Y}\right) \left( L_{\S/X} \prod_{\substack{Y\in \S/X \\ Y\neq V^*/X}}\left(u_Y^{\S/X}\right)^{-t_Y}\right) \sim 1,
        \end{gather*}
        since $[t_Y, L_{\S/X}]=0$ for all $Y\in \S\vert_X$. Hence, $L_\S(u_X^\S)^{-t_X}$ and $L_{\S\vert_X} L_{\S/X}$ are solutions to the same differential problem
        \[
            \begin{cases}
                dF=(\Omega_1+\Omega_2)F, \\
                F\sim \prod_{\substack{Y\in \S\vert_X\\ Y\neq X}}\left(u_Y^{\S\vert_X}\right)^{t_Y}\prod_{\substack{Y\in \S/X \\ Y\neq V^*/X}}\left(u_Y^{\S/X}\right)^{t_Y}.
            \end{cases}
        \]
        This coincides with the holonomy equation of 
        \[
            Y(V/X^\perp\oplus X^\perp, \A\vert_X\sqcup \A/X) \cong Y(V/X^\perp, \A\vert_X)\times Y(X^\perp, \A/X)
        \]
        localized at the $\G'$-nested set $(\S\vert_X\setminus \{X\})\sqcup (\S/X\setminus \{V^*/X\})\sqcup \{V/X^\perp\oplus X^\perp\}$, where 
        \[
            \G'= (\G\vert_X\setminus \{X\})\sqcup (\G/X\setminus \{V^*/X\})\sqcup \{V/X^\perp\oplus X^\perp\}.
        \]
        The equality $L_\S(u_X^\S)^{-t_X}=L_{\S\vert_X} L_{\S/X}$ follows at last from the uniqueness of the solution of the holonomy equation with given boundary conditions.
    \end{proof}
    \begin{remark}
        The study carried out in \cite{De_Concini-Procesi-Hyperplane_arrangements_and_holonomy_equations} deals with the restriction of the solutions of the holonomy equation at the hyperplanes $H^\perp$ with $H\in \A$ and their intersections, but not at the divisors $D_X$ for $X\in \G$ in $\overline{Y}(V,\A,\G)$. To handle this case, it seems necessary to introduce the embeddings $A(V/X^\perp, \A\vert_X)\to A(V,\A)$ and $A(X^\perp, \A/X)\to A(V,\A)$ studied above.
    \end{remark}

    \subsection{Properties of the solutions of the holonomy equation}
        
    As we have seen, the solution $L_\S$ of the holonomy equation localized at $\S$ can be regularized and restricted to the irreducible boundary divisors $D_X$ for all $X\in\S$, where it equals $L_{\S\vert_X}L_{\S/X}$. We specialize now to hyperplane arrangements with enough modular elements. In this case, we show that there is a similar decomposition of $L_\S$ depending on $L_{\S\vert_X}$ and $L_{\S/X}$ defined, however, on the whole $Y(V,\A)$, not only on the open subset $Y(V/X^\perp, \A\vert_X)\times Y(X^\perp, \A/X)\subseteq D_X$. The existence of retractions $\overline{Y}(V,\A,\G)\to D_X$ induced by linear projections at the level of arrangements enables us to argue as in \cite{Brown-Multiple_zeta_values_and_periods_of_moduli_spaces_of_curves} in the case of the braid arrangement.

    \subsubsection{Global decomposition with respect to a boundary divisor}
    Let $(V,\A)$ be a hyperplane arrangement and $\G$ a building family for $L(\A)$. Let $X\in\G$ and assume that there is a $\G$-modular element $M\in \G$ such that $V^*=M\oplus X$. By Proposition~\ref{prop: restrictions into quotients of G-modular elts} this is the case if $(V,\A)$ has enough $\G$-modular elements. For a maximal $\G$-nested set $\S$ with $M$-adapted basis $\beta$ and $X\in\S$, let $L_\S$, $L_{\S\vert_X}$ and $L_{\S/X}$ be the solutions of the holonomy equation on $Y(V,\A)$, $Y(V/X^\perp, \A\vert_X)$ and $Y(X^\perp, \A/X)$ respectively, localized at the corresponding nested sets. \\
    Let $\pi \colon  \overline{Y}(V,\A,\G)\to D_X$ be the retraction described in Proposition~\ref{prop: retractions to boundary divisors} with respect to $M$. Also denote by
    \[
        \pi_1 \colon  \overline{Y}(V,\A,\G)\to \overline{Y}(V/X^\perp, \A\vert_X, \G\vert_X), \quad \pi_2 \colon  \overline{Y}(V,\A,\G)\to \overline{Y}(X^\perp, \A/X, \G/X)
    \]
    the projections of $\pi$ onto the two factors of $D_X$. By abuse of notation, we keep the same notation also for the restrictions of $\pi_1$ and $\pi_2$ to $Y(V,\A)$, which therefore take value in $Y(V/X^\perp, \A\vert_X)$ and $Y(X^\perp, \A/X)$ respectively. Let us write
    \[
        L_1=L_{\S\vert_X}\circ \pi_1, \quad L_2=L_{\S/X}\circ \pi_2,
    \]
    which are multi-valued functions on $Y(V,\A)$. Via the embeddings of holonomy algebras in the previous sections, we regard $L_1$ and $L_2$ as functions taking values in $\widehat{A}(V,\A)$.\\
    Let $\Omega_1$ and $\Omega_2$ be the differential forms associated with the holonomy equation on $Y(V/X^\perp, \A\vert_X)$ and $Y(X^\perp, \A/X)$ respectively. Since the adapted basis for $\S$ is $M$-adapted, the explicit description of $\pi_1$ and $\pi_2$ in local coordinates implies that 
    \begin{align*}
        \pi_1^*\Omega_1 &= \sum_{Y\in\S\vert_X, Y\neq X} t_Y\otimes d\log u_Y^\S + \sum_{H\in \A\vert_X} t_H\otimes d\log P_{H}^\S, \\
        \pi_2^*\Omega_2 & =\sum_{Y\in\S, Y\not\subseteq X} t_Y\otimes d\log u_Y^\S + \sum_{H\in\A, H\not\subseteq X} t_H\otimes d\log \tilde{P}_H^\S,
    \end{align*}
    similarly to what we have seen in the proof of Proposition~\ref{prop: restriction of holonomy equation to the boundary}. Thus, for $i=1,2$ we have $dL_i=\pi_i^*\Omega_i L_i$, so $d(L_i)^{-1}=-L_i^{-1}(dL_i)L_i^{-1}=-L_i^{-1}\pi_i^*\Omega_i$.
    \begin{proposition}
    \label{prop: global decomp. of holonomy solutions}
        There is a decomposition
        \[
            L_\S=hL_1L_2,
        \]
        where $h$ is a multi-valued function with values in $\widehat{A}(V,\A)$ uniquely determined by the differential equation
        \[
            \frac{\partial h}{\partial u_X^\S} = \left( t_X\otimes \frac{1}{u_X^\S} + \sum_{H\in\A} t_H \otimes \frac{1}{P_H^\S}\frac{\partial P_H^\S}{\partial u_X^\S}\right) h
        \]
        with boundary condition that $h \left( u_X^\S\right)^{-t_X}$ restricts on $\{u_X^\S=0\}$ to the constant function $1$.
    \end{proposition}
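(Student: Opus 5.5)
Define $h$ directly by $h := L_\S L_2^{-1}L_1^{-1}$; here $L_1,L_2$ are invertible in $\widehat{A}(V,\A)\widehat{\otimes}_k\C$ because their constant terms are $1$. Then $L_\S = hL_1L_2$ holds by construction. The content of the proposition is therefore that this $h$ satisfies the stated equation in $u_X^\S$ and the stated boundary condition. Uniqueness then follows from the standard uniqueness of solutions of a Fuchsian ODE in $u_X^\S$ with prescribed normalized asymptotics $h\sim (u_X^\S)^{t_X}$, the other coordinates being treated as parameters, exactly as in Theorem~\ref{theo: solutions of holonomy equation}.

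\textbf{Step 1: the differential of $h$.} First, $L_1$ and $L_2$ commute: they take values in the images of $i_1$ and $i_2$, which commute by the lemma on the embedding $\widehat{A}(V/X^\perp,\A\vert_X)\widehat{\otimes}\widehat{A}(X^\perp,\A/X)\hookrightarrow\widehat{A}(V,\A)$. Using $dL_\S=\Omega L_\S$ and $d(L_i^{-1})=-L_i^{-1}\pi_i^*\Omega_i$, we get
\[
dh=\Omega h - h\, L_1L_2(\pi_1^*\Omega_1+\pi_2^*\Omega_2)L_2^{-1}L_1^{-1}.
\]
Here I use that $\pi_1^*\Omega_1$ commutes with $L_2$ and $\pi_2^*\Omega_2$ commutes with $L_1$. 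By the explicit coordinate description of $\pi_1,\pi_2$ for an $M$-adapted basis, the forms $\pi_1^*\Omega_1$ and $\pi_2^*\Omega_2$ involve no $du_X^\S$: the coordinates of $U_{\S\vert_X}$ and $U_{\S/X}$ pull back to $u_Y^\S$ with $Y\neq X$. In addition, $\tilde P^\S_H$ is $P_H^\S$ with $u_X^\S$ set to zero. Hence, taking the $du_X^\S$-component,
\[
\frac{\partial h}{\partial u_X^\S}=\Omega_{u_X^\S}\,h,
\]
where $\Omega_{u_X^\S}$ is the coefficient of $du_X^\S$ in the local expression
\[
\Omega=\sum_{Y\in\S}t_Y\, d\log u_Y^\S+\sum_H t_H\, d\log P^\S_H
\]
from the proof of Proposition~\ref{prop: restriction of holonomy equation to the boundary}. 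This coefficient is exactly $t_X/u_X^\S+\sum_H t_H\,\partial_{u_X^\S}\log P_H^\S$, so we obtain the stated equation.

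\textbf{Step 2: the boundary condition.} Write
\[
h\,(u_X^\S)^{-t_X}=L_\S(u_X^\S)^{-t_X}\,L_2^{-1}L_1^{-1}.
\]
The factor $t_X$ commutes with $L_1,L_2$: the entries of $\Omega_1,\Omega_2$ are $t_Y$ with $\{Y,X\}$ nested and $t_H$ with $\{H,X\}$ nested or $H\subseteq X$, so Lemma~\ref{lemma: commutativity relations in holonomy algebra} applies, and this also uses $t_X$ commuting with $i_1(t_{\beta(X)})$. The function $L_\S(u_X^\S)^{-t_X}$ extends to $\{u_X^\S=0\}$, and by Proposition~\ref{prop: restriction of holonomy equation to the boundary} its restriction is $L_{\S\vert_X}L_{\S/X}$. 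On the other hand, since $\pi$ is a retraction onto $D_X$, the restriction of $L_1L_2$ to $D_X$ is also $L_{\S\vert_X}L_{\S/X}$. Hence $h(u_X^\S)^{-t_X}$ restricts to $1$.

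\textbf{Main obstacle.} The delicate point is Step 1. We must make sure that in $dh$ the contributions of $\pi_i^*\Omega_i$ exactly cancel all $du_Y^\S$-components with $Y\neq X$ up to terms that do not matter, or, more modestly, that we only need the $u_X^\S$-component, which is what the statement asks for. We must also justify that conjugation by $L_1L_2$ does not produce any $du_X^\S$ terms. This rests on $M$-adaptedness, which guarantees that $\pi_1,\pi_2$ are coordinate projections forgetting $u_X^\S$, and on the commutation relations in the holonomy algebra. I would also double-check that $h$ is multi-valued only through $\log u_X^\S$ near $D_\S$. This is needed so that the uniqueness argument for the Fuchsian equation applies, with the other $u_Y^\S$ as holomorphic parameters.
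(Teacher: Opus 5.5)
Your proposal is correct. Step 1 and the uniqueness claim follow the paper, while Step 2 obtains the boundary condition by a shorter route.

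There is one slip in Step 1. From $d(L_i^{-1})=-L_i^{-1}\pi_i^*\Omega_i$ and $[\pi_2^*\Omega_2,L_1]=0$ one gets $dh=\Omega h-h(\pi_1^*\Omega_1+\pi_2^*\Omega_2)$, with no conjugation by $L_1L_2$. Your conjugated version would require $\pi_i^*\Omega_i$ to commute with $L_i$, which fails in general. This does not affect your conclusion, because you only use the $du_X^\S$-component. The paper sidesteps the issue by noting that $L_1,L_2$ do not depend on $u_X^\S$, so $\partial h/\partial u_X^\S=(\partial L_\S/\partial u_X^\S)L_2^{-1}L_1^{-1}$ directly. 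Your ``main obstacle'' about conjugation is therefore moot.

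For the boundary condition, the paper first writes $h=f_\S f_2^{-1}f_1^{-1}(u_X^\S)^{t_X}$ near $D_\S$, where $f_\S,f_1,f_2$ are the normalized holomorphic parts. This shows that $h(u_X^\S)^{-t_X}$ extends across $\{u_X^\S=0\}$. It also shows that near $D_\S$ the only multivaluedness of $h$ comes from $\log u_X^\S$, which settles the point you wanted to double-check. The paper then shows that the differential of $L_2^{-1}L_1^{-1}L_\S(u_X^\S)^{-t_X}$ vanishes along $\{u_X^\S=0\}$ and uses the normalization at $D_\S$. In effect this reruns the argument of Proposition~\ref{prop: restriction of holonomy equation to the boundary}.

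You instead quote that proposition directly and combine it with $\pi\vert_{D_X}=\mathrm{id}$, so that $L_1L_2$ and $L_\S(u_X^\S)^{-t_X}$ both restrict to $L_{\S\vert_X}L_{\S/X}$. This is cleaner and reuses what is already proved. You should add one remark: the branches agree, because $\pi$ fixes $D_\S$ and all the functions are normalized there.

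For uniqueness, the paper restricts to the fibres $\pi^{-1}(p)$. On these each $P_H^\S$ is affine-linear in $u_X^\S$, so $h$ satisfies a one-dimensional holonomy equation there. Your argument, local uniqueness at the regular singular point $u_X^\S=0$ followed by analytic continuation, is equivalent.
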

    \begin{proof}
        Let $h=L_\S L_2^{-1}L_1^{-1}$. Since $[\pi_2^*\Omega_2, L_1]=0$, we have
        \begin{align*}
            dh &= \Omega L_\S L_2^{-1} L_1^{-1} - L_\S L_2^{-1} \pi_2^*\Omega_2 L_1^{-1}-L_\S L_2^{-1} L_1^{-1} \pi_1^*\Omega_1\\
            &=\Omega h - h(\pi_1^*\Omega_1+\pi_2^*\Omega_2).
        \end{align*}
        Since $L_1$ and $L_2$ do not depend on the variable $u_X^\S$, we have
        \[
            \frac{\partial h}{\partial u_X^\S} = \frac{\partial L_\S}{\partial u_X^\S}L_2^{-1} L_1^{-1}= \left( t_X\otimes \frac{1}{u_X^\S} + \sum_{H\in\A} t_H \otimes \frac{1}{P_H^\S}\frac{\partial P_H^\S}{\partial u_X^\S}\right) h.
        \]
        Fix a complex point $p$ of $Y(V/X^\perp, \A\vert_X)\times Y(X^\perp, \A/X)\subseteq D_X$. For $H,K\in \A$ write $H\sim K$ when the restriction of $P_H^\S$ and $P_K^\S$ at the fiber $\pi^{-1}(p)$ coincide. Notice that these restrictions are always polynomials of degree at most $1$ in $u_X^\S$, say $P_H^\S\vert_{\pi^{-1}(p)}= a_H u_X^\S+b_H$. We then have
        \[
            \frac{\partial }{\partial u_X^\S}h\vert_{\pi^{-1}(p)}= \left( t_X\otimes \frac{1}{u_X^\S} + \sum_{[K]\in\faktor{\A}{\sim}}\sum_{H\sim K} t_H \otimes \frac{1}{P_H^\S\vert_{\pi^{-1}(p)}}\frac{\partial P_H^\S\vert_{\pi^{-1}(p)}}{\partial u_X^\S}\right) h\vert_{\pi^{-1}(p)}.
        \]
        For all $[K]\in \A/\sim$, write $t_{[K]}=\sum_{H\sim K} t_H$. It is then apparent that $h\vert_{\pi^{-1}(p)}$ satisfies a one-dimensional holonomy equation in the formal variables $t_{[K]}$ for $K\in \A$ with respect to the hyperplane arrangement consisting of the points $0$ and $-\frac{b_H}{a_H}$ for $H\in \A$ with $a_H\neq 0$. \\
        In particular, at each fiber $\pi^{-1}(p)$ the function $h$ is uniquely determined by its asymptotic behavior at $u_X^\S=0$, which we now compute. Let us write
        \[
            f_\S=L_\S \prod_{Y\in \S} \left(u_Y^\S\right)^{-t_Y}, \quad f_1=L_1 \prod_{Y\in \S, Y\subseteq X} \left(u_Y^\S\right)^{-t_Y},  \quad f_2=L_2 \prod_{Y\in \S, Y\not\subseteq X} \left(u_Y^\S\right)^{-t_Y},
        \]
        which extend holomorphically to a neighborhood of $D_\S$ with value $1$ at $D_\S$. Since $[f_2,t_Y]=0$ for all $Y\in \S$ with $Y\subsetneq X$ and $[f_1,t_X]=[f_2,t_X]=0$, we infer that 
        \[
            h=f_\S f_2^{-1}f_1^{-1} \left( u_X^\S\right)^{t_X}. 
        \]
        Thus, $h \left( u_X^\S\right)^{-t_X}$ extends to $\{u_X^\S=0\}$. Consider the function $h'=L_2^{-1}L_1^{-1}L_\S$, which satisfies
        \[
            dh'= L_2^{-1}L_1^{-1}(\Omega-\pi_1^*\Omega_1-\pi_2^*\Omega_2)L_\S.
        \]
        It follows that 
        \[
            d\left( h'\left( u_X^\S\right)^{-t_X}\right)= L_2^{-1}L_1^{-1}(\Omega-t_X d\log u_X^\S-\pi_1^*\Omega_1-\pi_2^*\Omega_2)L_\S +L_2^{-1}L_1^{-1} [t_X,L_\S]d\log u_X^\S.
        \]
        As in the proof of Proposition~\ref{prop: restriction of holonomy equation to the boundary}, the restriction of the differential of $ h'\left( u_X^\S\right)^{-t_X}$ to $\{u_X^\S=0\}$ vanishes. Given that $f_\S$, $f_1$ and $f_2$ all take the value $1$ at $D_\S$, we conclude that $h'\left( u_X^\S\right)^{-t_X}$ restricts to the constant function $1$ on $\{u_X^\S=0\}$. The same holds for $h\left( u_X^\S\right)^{-t_X}=L_1L_2h'L_2^{-1}L_1^{-1}\left( u_X^\S\right)^{-t_X}$. 
    \end{proof}
    
    \subsubsection{Associators}
    Let us now inspect the relation between the solutions of the holonomy equation with different boundary conditions. Let $\S_1$ and $\S_2$ be maximal $\G$-nested sets and $L_{\S_1}$ and $L_{\S_2}$ be the solutions of the holonomy equation for $Y(V,\A)$ localized at $\S_1$ and $\S_2$ respectively. We are implicitly fixing adapted bases for $\S_1$ and $\S_2$.
    \begin{lemma}
        There exists a constant power series $G(\S_1,\S_2)\in \widehat{A}(V,\A)\widehat{\otimes}_k\C$ such that $L_{\S_2}=L_{\S_1}G(\S_1,\S_2)$. 
    \end{lemma}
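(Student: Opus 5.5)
Set $G=L_{\S_1}^{-1}L_{\S_2}$, computed on a fixed simply connected open subset $U$ of $Y(V,\A)(\C)$ where both solutions are single-valued holomorphic functions (Theorem~\ref{theo: solutions of holonomy equation}). The plan is to show that $G$ is well defined on $U$ and has vanishing differential there, so that it is a constant element of $\widehat{A}(V,\A)\widehat{\otimes}_k\C$.

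The first step is to check that $L_{\S_1}$ is invertible in the completed algebra. By Theorem~\ref{theo: solutions of holonomy equation} we may write $L_{\S_1}=f_{\S_1}\prod_{X}(u_X^{\S_1})^{t_X}$. Here $f_{\S_1}$ is group-like, so its constant term in degree $0$ is $1$. Each factor $(u_X^{\S_1})^{t_X}=e^{t_X\log u_X^{\S_1}}$ is invertible, with inverse $e^{-t_X\log u_X^{\S_1}}$. More simply, any element of $\widehat{A}(V,\A)\widehat{\otimes}_k\C$ whose degree-zero part is a nonzero scalar is invertible via a geometric series, because the completion is taken at the augmentation ideal. The degree-zero part of $L_{\S_1}$ satisfies $dF_0=0$ with the asymptotics forcing $F_0=1$, so $L_{\S_1}$ is invertible.

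The second step is the differential computation. We have
\[
    d(L_{\S_1}^{-1})=-L_{\S_1}^{-1}(dL_{\S_1})L_{\S_1}^{-1}=-L_{\S_1}^{-1}\Omega,
\]
and therefore
\[
    dG=-L_{\S_1}^{-1}\Omega L_{\S_2}+L_{\S_1}^{-1}\Omega L_{\S_2}=0.
\]
Since $\Omega$ is multiplied on the left in the holonomy equation, right multiplication by a constant preserves the set of solutions, and this is exactly why $G$ appears on the right. Working coefficient by coefficient in a graded basis of $A(V,\A)$, each coefficient of $G$ is a holomorphic function on the connected set $U$ with zero differential, hence a constant. So $G$ is a constant power series on $U$.

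The only subtlety is multivaluedness. Both $L_{\S_i}$ are functions on the universal cover $\widehat{Y}(V,\A)$, which is connected. Running the same computation there gives a constant $G(\S_1,\S_2)$ on the whole cover, with $L_{\S_2}=L_{\S_1}G(\S_1,\S_2)$ identically. I expect this to be the main point requiring care: one must fix compatible branches, that is, the same point of the universal cover, for both solutions, so that the constant is well defined. Once this is done, the statement follows.
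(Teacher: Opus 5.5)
Your proof is correct and follows the paper's argument: set $G=L_{\S_1}^{-1}L_{\S_2}$ and use $d(L_{\S_1}^{-1})=-L_{\S_1}^{-1}\Omega$ to get $dG=0$. Your extra remarks, that $L_{\S_1}$ is invertible because its degree-zero part is $1$ and that constancy follows from working on the connected universal cover with fixed branches, fill in details the paper leaves implicit.
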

    \begin{proof}
        Let $G(\S_1,\S_2)=L_{\S_1}^{-1}L_{\S_2}$. Since $d L_{\S_1}^{-1}=-L_{\S_1}^{-1}\Omega$, we have
        \[
            dG(\S_1,\S_2)=d\left(L_{\S_1}^{-1}L_{\S_2} \right)= -L_{\S_1}^{-1}\Omega L_{\S_2}+ L_{\S_1}^{-1}\Omega L_{\S_2}=0.
        \]
    \end{proof}
    \noindent
    Our next goal is to describe the numbers appearing as coefficients of the constant series $G(\S_1,\S_2)$ when we let the maximal $\G$-nested sets $\S_1$ and $\S_2$ vary. However, notice that $G(\S_1,\S_2)$ depends on the choice of adapted bases for $\S_1$ and $\S_2$. In order to have more control on the coefficients of these series, we will impose some restrictions on the maximal $\G$-nested sets and adapted bases under consideration.\\
    Let $\Sigma$ be a family of maximal $\G$-nested sets equipped with adapted bases. An element of $\Sigma$ will be denoted by a pair $(\S,\beta)$, where $\S$ is a maximal $\G$-nested set and $\beta$ is an adapted basis for $\S$. Assume that the maximal $\G$-nested sets appearing in $\Sigma$ make up a connected subgraph of the graph of maximal $\G$-nested sets. \\
    Let us briefly mention the geometric meaning of $\Sigma$. Suppose that we want to compute a period integral over some relative cohomology class $\Delta$ with boundary contained in a set of irreducible components of $\overline Y(V,\A,\G)\setminus Y(V,\A)$. The canonical stratification of $\overline Y(V,\A,\G)\setminus Y(V,\A)$ by intersection of boundary divisors induces a structure of manifold with corners on $\Delta$. The stratum of dimension zero corresponds to a family $\Sigma$ of maximal $\G$-nested sets. The datum of an adapted basis for each $\S\in \Sigma$ allows us to recover local charts for $\Delta$. \\
    Given $(\S_1,\beta_1),(\S_2,\beta_2)\in \Sigma$, there are corresponding local charts $U_{\S_1,\beta_1}$ and $U_{\S_2,\beta_2}$ of $\overline{Y}(V,\A,\G)$, whose coordinates we denote by $u_X^{\S_1}$ and $u_Y^{\S_2}$ respectively, for all $X\in \S_1$ and $Y\in \S_2$. Notice that these coordinates depend on the choice of the adapted bases $\beta_1$ and $\beta_2$, although these have been suppressed in the notation. Recall that, given $Y\in \S_2$, we have defined a polynomial $P^{\S_1}_{\beta_2(Y)}$ in the coordinates $u_{X}^{\S_1}$ which does not vanish at the origin of $U_{\S_1,\beta_1}$. Denote by $\alpha^{\S_1,\S_2}_Y\in k$ the value of this polynomial at the origin. Once again, this number depends on the choice of $\beta_1$ and $\beta_2$. Since $\alpha_{Y}^{\S_1,\S_2}\neq 0$, we can take $\log \alpha_Y^{\S_1,\S_2}$ for a given determination of the logarithm.
    \begin{definition} $ $
    
        \begin{enumerate}
            \item We denote by $F_\Sigma(V,\A,\G)$ the $k$-subalgebra of $\C$ generated by $2\pi i$ and the numbers $\log \alpha_Y^{\S_1,\S_2}$ for all $(\S_1,\beta_1),(\S_2,\beta_2)\in \Sigma$ and $Y\in \S_2$.
            \item The \emph{associator algebra} of $(V,\A,\G)$ with respect to $\Sigma$ is the $k$-subalgebra $Z_\Sigma(V,\A,\G)$ of $\C$ generated by the coefficients of $G(\S_1,\S_2)$ for all maximal $\G$-nested sets $(\S_1,\beta_1), (\S_2,\beta_2)\in \Sigma$. 
        \end{enumerate}
    \end{definition}
    \begin{remark}
        \label{remark: logarithms reduce to 2 pi i}
        Notice that the presence of $2\pi i$ in $F_\Sigma(V,\A,\G)$ makes it unnecessary to specify a determination of the logarithm. Moreover, logarithms of roots of unity are rational multiples of $2\pi i$. Thus, if all $\alpha_Y^{\S_1,\S_2}$ are roots of unity, then $F_\Sigma(V,\A,\G)=k[2\pi i]$.
    \end{remark}
    \begin{remark}
        The name \emph{associator algebra} is not standard, but it will come in handy during the rest of the exposition. We opted for this name given the analogy with Drinfeld's associator \cite{Drinfeld-On_quasitriangular_quasiHopf_algebras_and_on_a_group_that_is_closely_connected_with_Gal_Qbar_Q}.
    \end{remark}
    \noindent
    We may define an increasing filtration $W_\bullet Z_\Sigma(V,\A,\G)$ in $Z_\Sigma(V,\A,\G)$ as follows. First, for $\S_1,\S_2\in \Sigma$ we let $W_n(\S_1,\S_2)$ be the $k$-vector space generated by the coefficients of the terms of degree at most $n$ of $G(\S_1,\S_2)$. We will see that $\sum_n W_n(\S_1,\S_2)$ is a $k$-algebra filtered by the $W_n(\S_1,\S_2)$'s in the next section. Then, $Z_\Sigma(V,\A,\G)$ is generated as a $k$-algebra by all the $k$-algebras $\sum_n W_n(\S_1,\S_2)$ for $\S_1,\S_2\in\Sigma$. Hence, it comes with an induced filtration, which will be referred to as \emph{weight filtration}.\\
    Our goal is to describe the associator algebra of $(V,\A,\G)$ in terms of the analogous algebras of the $1$-dimensional arrangements obtained from $(V,\A,\G)$ by iterated restrictions and deletions. To make this precise, let $\S$ be a maximal $\G$-nested set and fix $X\in \S$, $X\neq V^*$. Writing $\S'=\S\setminus \{X\}$, the closed subscheme $D_{\S'}$ of $\overline{Y}(V,\A,\G)$ has dimension one. Set $A$ to be the sum of all $Y\in \S'$ which are contained in $X^+$ and let $(V_{\S'},\A_{\S'},\G_{\S'})$ denote the arrangement
    \[
        \left(A^\perp/X^\perp, (\A\vert_{X^+})/A, (\G\vert_{X^+})/A \right).
    \]
    We then have an isomorphism $D_{\S'}\cong \overline{Y}(V_{\S'},\A_{\S'},\G_{\S'})$. \\
    For convenience of notation, we say that a $\G$-nested set $\S'$ is $\Sigma$-admissible if there are maximal $\G$-nested sets $\S,\mathcal{T}\in \Sigma$ containing $\S'$ such that for some $X\in \S$ we have $\S\setminus \{X\}=\S\cap \mathcal{T}$ or, equivalently, for some $Y\in \mathcal{T}$ we have $\mathcal{T}\setminus \{Y\} = \S\cap \mathcal{T}$. This means that $\S$ and $\mathcal{T}$ are connected by an edge in the graph of maximal $\G$-nested sets. In other words, the two distinct points $D_{\S}$ and $D_{\mathcal{T}}$ are both contained in the closed subscheme $D_{\S'}$.\\
    For a $\Sigma$-admissible $\G$-nested set $\S'$, one can define an induced family $\Sigma'$ of maximal $\G_{\S'}$-nested subsets with adapted bases. Indeed, given $(\mathcal{T},\beta)\in\Sigma$ such that $\S'\subseteq \mathcal{T}$, the unique element of $\mathcal{T}\setminus \S'$ yields a maximal $\G_{\S'}$-nested set, provided we also add $V_{\S'}^*$, of course. This nested set also carries an adapted basis induced by $\beta$ via restrictions and quotients.
    \begin{proposition}
        \label{prop: decomposition of associators}
        If $(V,\A)$ has enough $\G$-modular elements, then
        \[
            Z_{\Sigma}(V,\A,\G)=F_\Sigma(V,\A,\G)\otimes_k \bigotimes_{\S'} Z_{\Sigma'}(V_{\S'},\A_{\S'},\G_{\S'}),
        \]
        the last tensor product running over all $\Sigma$-admissible $\G$-nested sets $\S'$.
    \end{proposition}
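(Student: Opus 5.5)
The plan is to reduce, via the connectedness of $\Sigma$, to pairs of adjacent maximal nested sets, and then to peel off one boundary divisor at a time until only a one-dimensional arrangement remains, using Proposition~\ref{prop: restriction of holonomy equation to the boundary}. The equality in the statement is read as an equality of $k$-subalgebras of $\C$: the left side is generated by the right-hand factors.

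\textbf{Reduction to edges.} Since $G(\S_1,\S_2)G(\S_2,\S_3)=G(\S_1,\S_3)$ and $G(\S_2,\S_1)=G(\S_1,\S_2)^{-1}$, and the sets in $\Sigma$ form a connected graph, $Z_\Sigma(V,\A,\G)$ is generated by the coefficients of $G(\S,\mathcal{T})$ for adjacent $(\S,\beta),(\mathcal{T},\gamma)\in\Sigma$. The inverse of a group-like series has coefficients in the algebra generated by those of the series, via the antipode.

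A change of adapted basis on a fixed $\S$ rescales the coordinates. Concretely, $u^\S_X$ changes by a unit whose value at $D_\S$ is one of the constants $\alpha^{\S_1,\S_2}_Y$. Since the $t_X$, $X\in\S$, commute (Lemma~\ref{lemma: commutativity relations in holonomy algebra}), the two normalised solutions differ by the factor $\prod_X \exp(t_X\log\alpha)$. Its coefficients lie in $F_\Sigma(V,\A,\G)$, and this is where the logarithms enter.

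\textbf{The inductive step.} Fix adjacent $\S,\mathcal{T}$ with $\S'=\S\cap\mathcal{T}$, and argue by induction on $\dim V$. If $\dim Y(V,\A)=1$, the claim is the definition of $Z_{\Sigma'}$. Otherwise choose $Y\in\S'\setminus\{V^*\}$. I would take the bases $M$-adapted for a $\G$-modular complement $M$ of $Y$, which Proposition~\ref{prop: restrictions into quotients of G-modular elts} provides. Then $u^\S_Y$ and $u^{\mathcal{T}}_Y$ agree up to a constant factor on a neighbourhood of $D_Y$; this constant is again one of the $\alpha$'s and produces an $F_\Sigma$-factor. Since $t_Y$ commutes with both solutions' normalisations, we can write
\[
G(\S,\mathcal{T})=\bigl(L_\S(u^\S_Y)^{-t_Y}\bigr)^{-1}\bigl(L_{\mathcal{T}}(u^\S_Y)^{-t_Y}\bigr)\cdot(\text{$F_\Sigma$-factor}).
\]
The left-hand side is constant, so we may restrict to $D_Y$. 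Proposition~\ref{prop: restriction of holonomy equation to the boundary}, together with the fact that the images of the two holonomy algebras commute, then gives
\[
G(\S,\mathcal{T})=G(\S\vert_Y,\mathcal{T}\vert_Y)\,G(\S/Y,\mathcal{T}/Y)
\]
up to $F_\Sigma$-factors. Because $\S$ and $\mathcal{T}$ differ in exactly one element, one of the two factors is trivial. The other is an edge associator for $(V/Y^\perp,\A\vert_Y,\G\vert_Y)$ or for $(Y^\perp,\A/Y,\G/Y)$, and both arrangements have enough modular elements by the lemma on restrictions and quotients. By induction, that factor lies in $F\cdot Z_{\Sigma'}$ for the corresponding one-dimensional stratum. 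That stratum is exactly $D_{\S'}\cong\overline{Y}(V_{\S'},\A_{\S'},\G_{\S'})$, by iterating Lemma~\ref{lemma: restrictions and quotients of maximal nested sets}, so we obtain the inclusion $\subseteq$.

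\textbf{Reverse inclusion.} Every coefficient of an edge associator of $(V_{\S'},\A_{\S'},\G_{\S'})$ appears as a coefficient of $G(\S,\mathcal{T})$ after undoing the $F_\Sigma$-factors. This uses the injectivity of the embeddings $\widehat{A}(V/X^\perp,\A\vert_X)\widehat{\otimes}_k\widehat{A}(X^\perp,\A/X)\hookrightarrow\widehat{A}(V,\A)$ from the previous section, composed along the chain of restrictions.

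\textbf{Main obstacle.} I expect the hard part to be comparing $u^\S_Y$ with $u^{\mathcal{T}}_Y$ near $D_Y$. A priori their ratio is a unit that varies along $D_Y$. If so, $\exp(t_Y\log(\text{unit}))$ would be a non-constant factor, and the clean restriction argument would fail. Here I would use the retraction of Proposition~\ref{prop: retractions to boundary divisors} and the explicit $M$-adapted form of $\pi$ in coordinates, to show that this ratio is pulled back from $D_Y$ and in fact constant. Failing that, I would absorb it into the function $h$ of Proposition~\ref{prop: global decomp. of holonomy solutions}, whose regularisation on $\{u^\S_Y=0\}$ is $1$.
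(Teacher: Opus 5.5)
\textbf{Verdict: there is a genuine gap in the inductive step.} Your reduction to edges and your treatment of a change of adapted basis on a fixed $\S$ agree with the paper. The gap is at exactly the point you flag, and the resolution you propose does not work.

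\textbf{The ratio $u^{\mathcal{T}}_Y/u^{\S}_Y$ is not constant along $D_Y$.} You claim that, for $M$-adapted bases, $u^{\S}_Y$ and $u^{\mathcal{T}}_Y$ differ by a constant near $D_Y$. This is false, and no choice of bases rescues it. Take the braid arrangement $\A_2$, so $\overline{Y}=\overline{\f{M}}_{0,5}$. Let $Y=\langle x_1\rangle$, $\S=\{\langle x_1\rangle,\langle x_1,x_2\rangle,V^*\}$ and $\mathcal{T}=\{\langle x_1\rangle,\langle x_1,x_3\rangle,V^*\}$.
\begin{itemize}
\item $Y$ is the only element of $\S\cap\mathcal{T}$ besides $V^*$, so you cannot choose a different $Y$.
\item The successor of $Y$ is the swapped element in both sets. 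Hence $u^{\S}_Y=x_1/\beta(\langle x_1,x_2\rangle)$ and $u^{\mathcal{T}}_Y=x_1/\gamma(\langle x_1,x_3\rangle)$.
\item For $M=\langle x_2,x_3\rangle$ the ratio $r=u^{\mathcal{T}}_Y/u^{\S}_Y$ equals $x_2/x_3$. This is the coordinate on $D_Y\cong\mathbb{P}^1$ that vanishes at the point $D_\S$. For any other bases, $r$ restricts to $\pm x_2/x_3$ on $D_Y$.
\end{itemize}

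\textbf{Consequence for the restricted solutions.} The computation in the proof of Proposition~\ref{prop: restriction of holonomy equation to the boundary} shows what the two regularized restrictions $\tilde L_\S$ and $\tilde L_{\mathcal T}$ satisfy. They solve $dF=(\Omega-t_Y\,d\log u^{\S}_Y)\vert_{D_Y}F$ and $dF=(\Omega-t_Y\,d\log u^{\mathcal{T}}_Y)\vert_{D_Y}F$ respectively. These equations differ by $t_Y\,d\log r$. Equivalently, the embeddings $i_2$ attached to $(\S,\beta)$ and $(\mathcal{T},\gamma)$ differ: the extra $t_Y$ sits on the generator indexed by $\beta(Y^+)+Y$, and $Y^+$ changes. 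Hence $\tilde L_\S^{-1}\tilde L_{\mathcal T}$ is a constant times $(r\vert_{D_Y})^{-t_Y}$, not a constant.

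So your identity $G(\S,\mathcal T)=G(\S\vert_Y,\mathcal T\vert_Y)\,G(\S/Y,\mathcal T/Y)$ up to $F_\Sigma$-factors does not follow from Proposition~\ref{prop: restriction of holonomy equation to the boundary}. Both sides must be read in a single embedding, and moving $L_{\mathcal T/Y}$ from the $\mathcal T$-embedding to the $\S$-embedding is an extra computation you have not done.

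\textbf{A missing justification.} Writing $G(\S,\mathcal T)=(L_\S(u^\S_Y)^{-t_Y})^{-1}(L_{\mathcal T}(u^\S_Y)^{-t_Y})$ requires $[G(\S,\mathcal{T}),t_Y]=0$. The fact that $t_Y$ commutes with the normalising factors does not give this, because $t_Y$ does not commute with $f_\S$.

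\textbf{How the paper handles this.} The paper never restricts along all of $D_Y$.
\begin{itemize}
\item It uses the retraction $\pi_Y$ and Proposition~\ref{prop: global decomp. of holonomy solutions} to write $L_{\S_i}=h_i\,\pi_Y^*\tilde L_i$.
\item It expresses the discrepancy between $G(\S_1,\S_2)$ and the boundary associator as $(\pi_Y^*\tilde L_2)^{-1}h_2^{-1}h_1\,\pi_Y^*\tilde L_2$.
\item It evaluates this by regularizing at the single point $D_{\S_1}$. There $\mathrm{Reg}(h_1,D_{\S_1})=1$ and $\mathrm{Reg}(h_2^{-1},D_{\S_1})=\mathrm{Reg}\bigl((u^{\S_2}_Y)^{-t_Y},D_{\S_1}\bigr)$.
\item In $\S_1$-coordinates, $u^{\S_2}_Y$ is a monomial in the $u^{\S_1}_Z$ times $P^{\S_1}_{\beta_2(Y)}/P^{\S_1}_{\beta_2(Y^+)}$. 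Regularization kills the monomial part, which is exactly what carries your non-constant ratio.
\item What remains is an exponential of $t_Y$ times a difference of logarithms of the values of these $P$'s at $D_{\S_1}$. These are $\alpha$-constants, so the result lies in $F_\Sigma$.
\end{itemize}
The same device also absorbs the factor $r^{-t_Y}$ coming from the change of embedding, since near $D_{\S_1}$ that factor is likewise a monomial times a unit.

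The missing idea is therefore to compare at the $0$-dimensional stratum, where only the $\alpha$-values survive. Your fallback of absorbing the ratio into $h$ points in this direction, but that is precisely the step that still has to be carried out.
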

    \begin{proof}
        Suppose first that at least two distinct maximal $\G$-nested sets appear in $\Sigma$. Take $(\S_1,\beta_1),(\S_2,\beta_2)\in \Sigma$. Since we assume $\Sigma$ to be a connected subgraph of the graph of maximal $\G$-nested subsets, there are $(\mathcal{T}_1,\gamma_1),\dots, (\mathcal{T}_n,\gamma_n)\in \Sigma $ such that $(\S_1,\beta_1)=(\mathcal{T}_1,\gamma_1)$, $(\S_2,\beta_2)=(\mathcal{T}_n,\gamma_n)$ and each pair $\mathcal{T}_i,\mathcal{T}_{i+1}$ is connected by an edge. It is apparent that 
        \[
            G(\S_1,\S_2)=G(\mathcal{T}_1,\mathcal{T}_2)\dots G(\mathcal{T}_{n-1},\mathcal{T}_n).
        \]
        Notice that, if $\S_1=\S_2$, by our assumption on $\Sigma$ there is $\S_3$ connected to $\S_1$, hence $G(\S_1,\S_2)=G(\S_1,\S_3)G(\S_3,\S_2)$. In all cases, we need only to compute $G(\S_1,\S_2)$ when $\S_1$ and $\S_2$ are connected by an edge, which means that there are $X_1\in \S_1$ and $X_2\in \S_2$ such that $X_1\neq X_2$ and $\S_1\setminus\{X_1\}=\S_2\setminus\{X_2\}=\S_1\cap \S_2$. By definition, $\S_1\cap \S_2$ is $\Sigma$-admissible.\\
        Let $L_{\S_1}$ and $L_{\S_2}$ be the solutions of the holonomy equation of $Y(V,A)$ localized at $\S_1$ and $\S_2$ respectively, with respect to the adapted bases $\beta_1$ and $\beta_2$. Also consider their regularized restrictions
        \begin{gather*}
            L_{\S_1}'=\left( L_{\S_1} \prod_{Y\in \S_1\cap \S_2} \left(u_Y^{\S_1}\right)^{-t_Y} \right)\Bigg\vert_{U_{\S_1}\cap D_{\S_1\cap\S_2}}, \\
            L_{\S_2}'=\left( L_{\S_2} \prod_{Y\in \S_1\cap \S_2} \left(u_Y^{\S_2}\right)^{-t_Y} \right)\Bigg\vert_{U_{\S_2}\cap D_{\S_1\cap\S_2}}.
        \end{gather*}
        Proposition~\ref{prop: restriction of holonomy equation to the boundary} implies that $L_{\S_1}'$ and $L_{\S_2}'$ are the solutions of the holonomy equation on $Y(V_{\S_1\cap \S_2},\A_{\S_1\cap\S_2})$ localized at the image of $X_1$ and $X_2$ in $\G_{\S_1\cap\S_2}$ respectively. As a result, there is a constant series $G(X_1,X_2)$ such that $L_{\S_2}'=L_{\S_1}'G(X_1,X_2)$. Notice that the coefficients of $G'(X_1,X_2)$ belong to the associator algebra $Z_{\Sigma'}(V_{\S_1\cap\S_2}, \A_{\S_1\cap\S_2},\G_{\S_1\cap\S_2})$, where $\Sigma'$ is the family of $\G$-nested sets with adapted bases induced on $D_{\S_1\cap \S_2}$ by $\Sigma$.\\
        We shall prove that $G(\S_1,\S_2)$ equals $G(X_1,X_2)$ up to multiplication by a constant power series with coefficients in $F_\Sigma(V,\A,\G)$. We argue inductively by repeatedly projecting onto the divisors $D_Y$ for $Y\in\S_1\cap \S_2$.\\
        Fix $Y\in \S_1\cap \S_2$. For $i=1,2$ the functions
        \[
            \tilde{L}_i=\left( L_i \left( u_Y^{\S_i}\right)^{-t_Y}\right)\Big\vert_{\left\{u_Y^{\S_i}=0\right\}}
        \]
        are solutions to the holonomy equation on $D_Y$ localized at the nested sets $\tilde{\S_1}$ and $\tilde{\S_2}$ induced by $\S_1$ and $\S_2$. Thus, there is a constant power series $G(\tilde{\S}_1,\tilde{\S}_2)$ in the holonomy algebra associated with $D_Y$ such that $\tilde{L}_2=\tilde{L}_1 G(\tilde{\S}_1,\tilde{\S}_2)$. Since $(V,\A)$ has enough $\G$-modular elements, by Proposition~\ref{prop: retractions to boundary divisors} there is a retraction $\pi_Y \colon  \overline{Y}(V,\A,\G)\to D_Y$ induced by a linear projection $V\to Y^\perp$. For $i=1,2$ let us write $L_{\S_i}=h_i\pi_Y^*\tilde{L}_i$ as in Proposition~\ref{prop: global decomp. of holonomy solutions}, where $\pi_Y^*\tilde{L}_i=\tilde{L}_i\circ \pi$. We then have
        \begin{align*}
            G(\S_1,\S_2)^{-1}\pi_Y^*G(\tilde{\S}_1,\tilde{\S}_2) &= (L_2)^{-1} L_1(\pi_Y^*\tilde{L}_1)^{-1}\pi_Y^*\tilde{L}_2\\
            &= (\pi_Y^*\tilde{L}_2)^{-1} h_2^{-1} h_1 \pi_Y^*\tilde{L}_1 (\pi_Y^*\tilde{L}_1)^{-1}\pi_Y^*\tilde{L}_2\\
            &= (\pi_Y^*\tilde{L}_2)^{-1} h_2^{-1} h_1\pi_Y^*\tilde{L}_2.
        \end{align*}
        Since this is a constant series, it agrees with its regularization along $D_{\S_1}$. For regularizations, we work with the manifold with corners $\{ 0\le u_Z^{\S_1}<\epsilon\mid Z\in \S_1\}$ for $\epsilon>0$ small enough, allowing logarithms along $\{u_Z^{\S_1}=0\}$ for all $Z\in \S_1$. By Proposition~\ref{prop: global decomp. of holonomy solutions}, it is clear that $\text{Reg}\,(h_1, D_{\S_1})=1$. On the other hand, if $Y^+$ is the successor of $Y$ in $\S_2$, we have
        \begin{align*}
            u_Y^{\S_2} &= \frac{x_{\beta_2(Y)}}{x_{\beta_2(Y^+)}}=\frac{x_{\beta_1(p_{\S_1}(\beta_2(Y)))}}{x_{\beta_1(p_{\S_1}(\beta_2(Y^+)))}}\frac{P^{\S_1}_{\beta_2(Y)}}{P^{\S_1}_{\beta_2(Y^+)}} \\
            &= \prod_{\substack{Z\in\S_1 \\ p_{\S_1}(\beta_2(Y)) \subseteq Z}} u_Z^{\S_1}
               \prod_{\substack{Z\in\S_1 \\ p_{\S_1}(\beta_2(Y^+)) \subseteq Z}} \left(u_Z^{\S_1}\right)^{-1}
               \frac{P^{\S_1}_{\beta_2(Y)}}{P^{\S_1}_{\beta_2(Y^+)}}.
        \end{align*}
        As a result,
        \begin{align*}
            \text{Reg}\left( h_2^{-1}, D_{\S_1}\right) &= \text{Reg}\left( \left(u_Y^{\S_2}\right)^{-t_Y} g_2^{-1}, D_{\S_1}\right) = \text{Reg}\left( \left(u_Y^{\S_2}\right)^{-t_Y}, D_{\S_1}\right) \\
            &=\exp\left( t_Y\left( \log \alpha_{\beta_2(Y)}^{\S_1,\S_2}-\log\alpha_{\beta_2(Y^+)}^{\S_1,\S_2}\right)\right).
        \end{align*}
        Since $t_Y$ commutes with $\pi_Y^*\tilde{L}_2$, we conclude that
        \begin{align*}
            \pi_Y^*G(\tilde{\S}_1,\tilde{\S}_2)&= \text{Reg}\left( h_2^{-1}, D_{\S_1}\right)G(\S_1,\S_2)\\
            &=\exp\left( t_Y\left( \log \alpha_{\beta_2(Y)}^{\S_1,\S_2}-\log\alpha_{\beta_2(Y^+)}^{\S_1,\S_2}\right)\right)G(\S_1,\S_2).
        \end{align*}
        We can iterate this argument for all $Y'\in \S_1\cap \S_2$, $Y'\neq Y $, choosing at each step a projection $\pi_{Y'} \colon D_Y\to D_Y\cap D_{Y'}$ and exchanging the role of $G(\S_1,\S_2)$ with $G(\tilde{\S}_1,\tilde{\S}_2)$ and of $Y$ with $Y'$. The composition of the maps $\pi_Y$ for $Y\in \S_1\cap \S_2$ yields a projection $\pi \colon \overline{Y}(V,\A,\G)\to D_{\S_1\cap \S_2}$ and it is clear that the last constant in the sequence coincides with $G(X_1,X_2)$. As a result, we conclude that $\pi^*G(X_1,X_2)=HG(\S_1,\S_2)$ for some power series $H$ with coefficients in $F_{\Sigma}(V,\A,\G)$. \\
        It remains to deal with the case when $\Sigma$ contains only one maximal $\G$-nested sets, say $\S$. Let $\beta$ and $\gamma$ be two adapted bases for $\S$ such that $(\S,\beta),(\S,\gamma)\in \Sigma$. We write $L_\beta$ and $L_\gamma$ for the solutions of the holonomy equation on $Y(V,\A)$ localized at $\S$ with respect to $\beta$ and $\gamma$. Also put coordinates $u_X^\beta$ and $u_X^\gamma$ for $X\in \S$ on $U_{\S,\beta}$ and $U_{\S,\gamma}$ respectively. There is a constant power series $G(\beta,\gamma)$ such that $L_\gamma=L_\beta G(\beta,\gamma)$. In what follows, we take regularizations with respect to $U_{\S,\beta}$.\\
        Since $G(\beta,\gamma)$ is constant, it coincides with its regularization along $D_\S$. Thus,
        \begin{align*}
            G(\beta,\gamma)&=\text{Reg}\left( \left(L_\beta\right)^{-1}L_\gamma, D_\S \right) = \text{Reg}\left( L_\gamma, D_\S \right) \\
            &= \text{Reg}\left( f_\gamma \prod_{X\in \S} \left( u_Y^{\gamma}\right)^{t_X}, D_\S \right) \\
            &= \prod_{X\in \S} \text{Reg}\left( \left( u_Y^{\gamma}\right)^{t_X}, D_\S \right).
        \end{align*}
        Computations analogous to the ones carried out in the previous case show that the last series has coefficients in $F_\Sigma(V,\A,\G)$.
    \end{proof}
    \begin{remark}
        Recall that we have defined a filtration on $Z_\Sigma(V,\A,\G)$ by first considering for all $\S_1,\S_2$ the $k$-vector space $W_n(\S_1,\S_2)$ generated by the coefficients of terms of degree at most $n$ in $G(\S_1,\S_2)$. We have claimed that $\sum_n W_n(\S_1,\S_2)$ is actually a $k$-algebra. In the proof of Proposition~\ref{prop: decomposition of associators} we have seen that $G(\S_1,\S_2)$ is the regularization of $L_{\S_2}$ at $D_{\S_1}$. Thus, the fact that $\sum_n W_n(\S_1,\S_2)$ is closed under products follows immediately once we show that the $k$-vector space generated by the coefficients of $L_{\S_2}$ is naturally a $k$-algebra. This will be a direct consequence of Proposition~\ref{prop: realizing reduced bar complex as functions} in the next section.
    \end{remark}
    \subsubsection{Considerations on monodromy}
    We conclude this section with some remarks about the monodromy of the solutions of the holonomy equation. Let $(V,\A)$ be a hyperplane arrangement and $\G$ a building set for $L(\A)$. Let $\S$ be a maximal $\G$-nested set and denote by $L_\S$ the solution of the holonomy equation associated with $Y(V,\A)$ localized at $\S$. Also write
    \[
        f_\S= L_\S \prod_{Y\in\S} \left(u_Y^\S\right)^{-t_Y}
    \]
    as in Theorem~\ref{theo: solutions of holonomy equation}.\\
    Given $X\in \G$, denote by $M_X$ the monodromy operator given by analytic continuation along a loop around $D_X$. In order to describe $M_XL_\S$, we first assume that $X\in \S$. Observe that $f_\S$ is single-valued on a neighborhood of the point $D_\S$, hence $M_Xf_\S=f_\S$. Moreover, 
    \[
        M_X \left(u_X^\S\right)^{t_X}= M_X \exp\left(t_X \log u_X^\S\right)= \exp\left(t_X \log u_X^\S +2\pi i t_X \right) = \left(u_X^\S\right)^{t_X} e^{2\pi i t_X}.
    \]
    As a result, we have
    \[
        M_XL_\S= L_\S e^{2\pi i t_X}.
    \]
    If $X\notin \S$, we may consider a maximal $\G$-nested set $\S'$ that contains $X$. If $L_{\S'}$ is the solution of the holonomy equation localized at $\S'$, we have $L_\S= L_{\S'}G(\S',\S)$. As a result,
    \[
        M_XL_\S= M_X  L_{\S'}G(\S',\S) = L_{\S'}e^{2\pi i t_X} G(\S',\S)= L_\S G(\S,\S') e^{2\pi i t_X} G(\S',\S).
    \]
    This shows that the monodromy ring of $L_\S$ can be computed through the coefficients of the associators $G(\S,\S')$.

    \subsection{Generalized multiple polylogarithms}
        
    Let $(V,\A)$ be an essential hyperplane arrangement over $k$. Fix a building set $\G$ for $L(\A)$. In this section, we prove that the $k$-algebra $\mathcal{L}_\S(V,\A)$ generated by the coefficients of the solution of the holonomy equation localized at a maximal $\G$-nested set $\S$ is isomorphic, as a differential $k$-algebra, to the reduced bar complex of $Y(V,\A)$. As a consequence, we deduce the existence of primitives in said algebra under the assumption that $(V,\A)$ is supersolvable. \\
    In \cite{Brown-Multiple_zeta_values_and_periods_of_moduli_spaces_of_curves}, this is carried out in the case of the braid arrangement by decomposing both $\mathcal{L}_\S(V,\A)$ and $B(V,\A)$ into a tensor product of $1$-dimensional reduced bar complexes using a sequence of fibrations provided by the assumption of supersolvability. However, the isomorphism between $\mathcal{L}_\S(V,\A)$ and $B(V,\A)$ is valid more in general for arbitrary hyperplane arrangements, as we now show. We start with a result by Kohno which relates the reduced bar complex to the holonomy algebra of $(V,\A)$.
    \begin{proposition}[Kohno]
    \label{prop: kohno}
        The $k$-linear dual of $B(V,\A)$ is isomorphic to $\widehat{A}(V,\A)$.
    \end{proposition}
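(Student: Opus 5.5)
The plan is to realize both sides as graded duals of one another via quadratic duality. The holonomy algebra is the completion of a quadratic algebra, and the reduced bar complex $B(V,\A)$, as defined above, is the degree-zero bar cohomology of the degree-one generated, quadratically constrained cohomology. Concretely, set $E=H^1_{\mathrm{dR}}(Y(V,\A),k)$ and $R=\ker\bigl(\wedge\colon E\otimes E\to H^2_{\mathrm{dR}}(Y(V,\A),k)\bigr)$. By definition,
\[
B_n(V,\A)=\bigcap_{j=1}^{n-1}E^{\otimes(j-1)}\otimes R\otimes E^{\otimes(n-j-1)} .
\]
The goal is to show that $A(V,\A)\cong T(E^*)/(R^\perp)$ as graded algebras, where $R^\perp\subseteq E^*\otimes E^*$ is the annihilator of $R$. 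Once this holds, the degree-$n$ piece is $E^{*\otimes n}/\sum_j E^{*\otimes(j-1)}\otimes R^\perp\otimes E^{*\otimes(n-j-1)}$. Finite-dimensional linear algebra then identifies it with the dual of $B_n(V,\A)$, because the annihilator of a sum of subspaces is the intersection of their annihilators. Taking the product over $n$ gives $\widehat{A}(V,\A)\cong\prod_n B_n(V,\A)^*=B(V,\A)^*$.

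The first step is to identify the generators. Since $E$ is the kernel of the sum map on $W=\bigoplus_H k\,\widetilde\omega_H$, we get $E^*=W^*/(T)$ with $T=\sum_H t_H$. This matches the generators $t_H$ of $A(V,\A)$ modulo the relation $\sum_K t_K=0$.

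The second, and main, step is to compute $R^\perp$ and check that it is spanned by the images of the commutators $[t_H,t_X]$, for $\dim X=2$ and $H\subseteq X$. I would use Brieskorn's description together with the Orlik--Solomon presentation. Degree two of the cohomology of the affine complement $X(V,\A)$ decomposes as a direct sum over the rank-two flats $X$. For each such $X$, the kernel of $\wedge$ restricted to the span of the $\widetilde\omega_H$ with $H\subseteq X$ is spanned by the symmetric tensors $\widetilde\omega_H\otimes\widetilde\omega_H$ and by the combinations killed by the Orlik--Solomon relation. A dimension count then gives the annihilator. Within a flat $X$ containing $m$ lines, the relations $[t_H,t_X]$ span $m-1$ independent elements, and the kernel of $\wedge$ on the $\wedge^2$ part has codimension $m-1$. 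Tensors $\widetilde\omega_H\otimes\widetilde\omega_K$ with $H+K$ not in a common flat carry no relation in cohomology beyond commutativity, which is the relation $[t_H,t_{H+K}]=0$ when $H+K$ has only two lines.

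To pass from the affine complement to the projective one, I would use $X(V,\A)\cong Y(V,\A)\times\C^*$ after projecting away from the class $\widetilde\omega_{H_0}$. Modding out by $T$ on the dual side corresponds to restricting to $E\subseteq W$, and this is compatible with the product formula.

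The main obstacle is precisely the claim that the relations are quadratic and exactly dual, that is, $R^\perp=\operatorname{span}\{[t_H,t_X]\}$. The rest is formal. I would cite Kohno's original computation, or verify it locally flat by flat via the decomposition above. Note that Koszulness, which holds in the supersolvable case, is not needed: the statement only concerns $B^0$, the intersection of the quadratic conditions, and that is dual to the quadratic algebra by the linear algebra alone.
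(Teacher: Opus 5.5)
Your proposal is correct, but it takes a different route from the paper's.

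\textbf{The paper's route.} The paper does no quadratic-duality computation of its own. It quotes Kohno's Propositions 2.3 and 3.1, which already identify the dual of the reduced bar complex of the \emph{affine} complement $X(V,\A)$ with the completed holonomy algebra, with the relation $\sum_H t_H=0$ omitted. It then passes to $Y(V,\A)$ by noting that $H^1_{\mathrm{dR}}(Y)\hookrightarrow H^1_{\mathrm{dR}}(X)$ has image $\ker(\sum_H t_H)$. So the dual of $B(V,\A)$ is a quotient of non-commutative power series in the $t_H$ modulo $\sum_H t_H$.

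\textbf{Your route.} You reprove the content of Kohno's result directly. First you use the formal identity $B_n(V,\A)^*\cong E^{*\otimes n}/\sum_j E^{*\otimes(j-1)}\otimes R^\perp\otimes E^{*\otimes(n-j-1)}$ (the annihilator of an intersection is the sum of annihilators). Then you compute $R^\perp$ flat by flat from Brieskorn's decomposition of $H^2$.

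\textbf{What each buys.} Your version is self-contained and visibly needs neither supersolvability nor Koszulness. It also gives a cleaner affine-to-projective step. With $W=\bigoplus_H k\,\widetilde\omega_H$ and $R_W=\ker(\wedge\colon W\otimes W\to H^2_{\mathrm{dR}}(X))$, the only input is injectivity of $H^2_{\mathrm{dR}}(Y)\to H^2_{\mathrm{dR}}(X)$, which follows from $X\cong Y\times\C^*$. This gives $R=R_W\cap(E\otimes E)$. Hence $R^\perp$ is the image of $R_W^\perp$ in $E^*\otimes E^*$, and $T(E^*)/(R^\perp)=T(W^*)/\bigl(\sum_H t_H,\,[t_H,t_X]\bigr)$. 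The paper's ``combined with the above result of Kohno'' tacitly needs the same injectivity, to know $B_n(V,\A)=B_n(X)\cap E^{\otimes n}$, but does not state it. The paper's advantage is brevity, at the cost of resting entirely on the citation.

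\textbf{Two details to tighten in your main step.}
\begin{enumerate}
\item The kernel of $\wedge$ contains all of $\mathrm{Sym}^2 W$, including the mixed tensors $\widetilde\omega_H\otimes\widetilde\omega_K+\widetilde\omega_K\otimes\widetilde\omega_H$, not only the squares $\widetilde\omega_H\otimes\widetilde\omega_H$. This is exactly what forces $R^\perp$ to consist of antisymmetric tensors, i.e.\ commutators.
\item The count ``$m-1$ versus $m-1$'' in a rank-two flat only closes once you have one inclusion. You need that $[t_H,t_X]=\sum_{K\subseteq X,\,K\neq H}t_H\wedge t_K$ annihilates the Arnold relations $\widetilde\omega_1\wedge\widetilde\omega_2-\widetilde\omega_1\wedge\widetilde\omega_3+\widetilde\omega_2\wedge\widetilde\omega_3$ that span the kernel. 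This is a one-line check.
\end{enumerate}
Your separate remark about pairs $H,K$ ``not in a common flat'' is just the $m=2$ case of the same count, since every pair of distinct lines spans a rank-two flat.
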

    \begin{proof}
        By \cite[Proposition 2.3]{Kohno-Bar_complex_of-the_Orlik_Solomon_algebra} and \cite[Proposition 3.1]{Kohno-Bar_complex_of-the_Orlik_Solomon_algebra}, the dual of the reduced bar complex of the affine complement $X(V,\A)$ is isomorphic to a $k$-algebra defined in the same way as the holonomy algebra $\widehat{A}(V,\A)$ leaving out the relation $\sum_{H\in\A} t_H=0$. In degree $1$, for all $H\in \A$ the form $d\log x_H$ is dual to the element $t_H$. \\
        For all $H\in \A$, fix a vector $y_H\in H$. Also fix a hyperplane $H_0 \in \A$ and let $x_H=y_H/y_{H_0}$ for $H\in \A$, so the first de Rham cohomology group of $Y(V,\A)$ has a basis over $k$ given by $d\log x_H$ for $H\in \A\setminus\{H_0\}$. On the other hand, let $t_H\in H^1_{\text{dR}}(X(V,\A),k)^\lor $ be dual to $d\log y_H$ with respect to the standard basis of $ H^1_{\text{dR}}(X(V,\A),k)$ given by $d\log y_K$ for $K\in \A$. It is apparent that the canonical quotient morphism $X(V,\A)\to Y(V,\A)$ induces an injective map $H^1_{\text{dR}}(Y(V,\A),k)\to H^1_{\text{dR}}(X(V,\A),k)$ which sends $d\log x_H$ to $d\log y_H-d\log y_{H_0}$. It follows that the image of this map is the kernel of $\sum_{H\in \A} t_H$, so $H^1_{\text{dR}}(Y(V,\A),k)^\lor$ is the quotient of $H^1_{\text{dR}}(X(V,\A),k)^\lor$ modulo the line generated by $\sum_{H\in\A} t_H$.\\
        The reduced bar complex $B(V,\A)$ of $Y(V,\A)$ is a $k$-subalgebra of the free shuffle algebra of a basis of $H^1_{\text{dR}}(Y(V,\A),k)$ over $k$. Its dual is therefore a quotient of the dual of said tensor algebra, which is given by the $k$-algebra of non-commutative power series in the indeterminates $\{ t_H\mid H\in \A\}$ modulo the relation $\sum_{H\in\A} t_H=0$. Combined with the above result of Kohno, we conclude that the dual of $B(V,\A)$ is isomorphic to $\widehat{A}(V,\A)$.
    \end{proof}
    \begin{remark}
        Other than only the linear structure, also the Hopf algebra structure of $B(V,\A)$ is dual to the one of $A(V,\A)$. This can be easily checked simply at the level of the free shuffle algebra in which the reduced bar complex embeds and its dual.
    \end{remark}
    \noindent
    Let us turn to the study of the multi-valued functions arising as coefficients of a solution of the holonomy equation. Fix a graded basis $\beta$ of $B(V,\A)$ as a $k$-vector space. In view of Proposition~\ref{prop: kohno}, every element $F\in \widehat{A}(V,\A)\widehat{\otimes}_k\C$ can be written uniquely as $F=\sum_{w\in \beta} f_w w^\lor$ with $f_w\in \C$, where for all $w\in\beta$ we denote by $w^\lor$ its dual element with respect to the basis $\beta$.
    \begin{definition}
        Let $F$ be a multi-valued function on $Y(V,\A)$ with values in $\widehat{A}(V,\A)\widehat{\otimes}_k \C$. For a point $z$ in the universal covering of $Y(V,\A)(\C)$, let us write $F=\sum_{w\in\beta} f_w(z) w^\lor$ with $f_w(z)\in\C$.
        \begin{enumerate}
            \item We denote by $C(F)$ the $k$-algebra generated by the multi-valued functions of the form $z \mapsto f_w(z)$ for all $w\in \beta$.
            \item Let $\G$ be a building set for $L(\A)$ and $\S$ a maximal $\G$-nested set. If $L_\S$ is the solution of the holonomy equation on $Y(V,\A)$ with boundary condition at $D_\S$, we call $\mathcal{L}_\S(V,\A)=C(L_\S)$ the algebra of \emph{generalized multiple polylogarithms} on $Y(V,\A)$ localized at $\S$.
        \end{enumerate}
    \end{definition}
    \begin{remark}
        The definition of $C(F)$ does not depend on the choice of a graded basis $\beta$ for $B(V,\A)$. Indeed, let $\beta'$ be another such basis and write $F=\sum_{v\in \beta'}f'_v v^\lor$ with $f'_v\in \C$. for all $w\in \beta$ write $w=\sum_{v\in \beta'}\alpha_{w,v} v$ with $\alpha_{w,v}\in k$, where the sum is finite. We then have $f'_v=\sum_{w\in\beta} f_w\alpha_{w,v}$, which shows that the $k$-algebra generated by the $f_v$'s is contained in the one generated by the $f_w$'s. Exchanging the roles of $\beta$ and $\beta'$ yields the claim.
    \end{remark}
    \noindent
    Let $F$ be a multi-valued function on $Y(V,\A)$ with values in $\widehat{A}(V,\A)\widehat{\otimes}_k \C$ and, for a fixed graded basis $\beta$ of $B(V,\A)$, write $F(z)=\sum_{w\in\beta}f_w(z)w^\lor$. Consider the $k$-linear map $\phi_F \colon B(V,\A)\to C(F)$ which sends $w\in\beta$ to $f_w$. Our next goal is to prove that $\phi_F$ is a differential $k$-algebra isomorphism when $F=L_\S$ is a solution of the holonomy equation, after tensoring with the regular functions of $Y(V,\A)$. We first treat the $k$-algebra structure.
    \begin{lemma}
        If $F(z)$ is a group-like element for all $z$ in the universal covering of $Y(V,\A)(\C)$, then $\phi_F$ is a $k$-algebra homomorphism.
    \end{lemma}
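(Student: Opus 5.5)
The plan is to use the duality of Proposition~\ref{prop: kohno} between $B(V,\A)$ and $\widehat{A}(V,\A)$. Under this duality the shuffle product on $B(V,\A)$ is dual to the coproduct $\Delta$ on $\widehat{A}(V,\A)$ (see the remark following that proposition). Write $\langle\cdot,\cdot\rangle$ for the pairing $\widehat{A}(V,\A)\times B(V,\A)\to k$, extended $\C$-linearly. By construction, for every $w\in B(V,\A)$ we have
\[
    \phi_F(w)(z)=\langle F(z), w\rangle .
\]
This is the identity on the basis $\beta$, and it extends to all of $B(V,\A)$ by $k$-linearity, since both sides are linear in $w$ and $F(z)=\sum_{v\in\beta} f_v(z) v^\lor$.

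The key compatibility is
\[
    \langle a, w_1\,\sh\, w_2\rangle=\langle \Delta a, w_1\otimes w_2\rangle
\]
for all $a\in\widehat{A}(V,\A)$ and $w_1,w_2\in B(V,\A)$. I would verify this at the level of the free shuffle algebra $k\langle \omega_H\rangle$ and its dual, the algebra of non-commutative power series in the $t_H$ with $t_H$ primitive. There it is the classical statement that the shuffle product is dual to the coproduct making letters primitive: the coefficient of a word $u$ in $\Delta(v_1\cdots v_n)$ for letters $v_i$ is a sum over shuffles. One then passes to the subalgebra $B(V,\A)$ and the quotient $\widehat{A}(V,\A)$. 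This is compatible because Kohno's isomorphism is induced from this ambient duality, $B(V,\A)$ is closed under $\sh$, and the coproduct on $\widehat{A}(V,\A)$ is the one induced by primitivity of the $t_H$.

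Granting this, the computation is immediate. For fixed $z$, group-likeness $\Delta F(z)=F(z)\otimes F(z)$ gives
\[
    \phi_F(w_1\,\sh\, w_2)(z)=\langle F(z)\otimes F(z), w_1\otimes w_2\rangle=\phi_F(w_1)(z)\,\phi_F(w_2)(z).
\]
Unitality follows similarly: the counit on $\widehat{A}$ is dual to the unit $1\in B_0=k$, and group-like elements have counit $1$, so $\phi_F(1)=\epsilon(F(z))=1$. Since these hold pointwise on the universal cover, $\phi_F$ is a $k$-algebra homomorphism into $C(F)$.

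The main obstacle is the duality $\langle a,w_1\,\sh\,w_2\rangle=\langle\Delta a,w_1\otimes w_2\rangle$ after completion and quotient. Two points need care. First, the pairing on $\widehat{A}\widehat{\otimes}\widehat{A}$ with $B\otimes B$ must be well defined; it is, because $B$ is graded with finite-dimensional pieces and only finitely many graded components pair nontrivially. Second, the relations of $A(V,\A)$ must be exactly the annihilator of $B(V,\A)$; this is precisely Kohno's result, so the compatibility descends.
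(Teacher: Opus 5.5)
Your proof is correct and follows the paper's argument: pair $F(z)$ against $w_1\,\sh\,w_2$, use that the shuffle product is dual to the coproduct, and apply $\Delta F(z)=F(z)\otimes F(z)$. Your remarks on unitality and on why the duality passes through the completion and Kohno's quotient fill in details the paper leaves implicit.
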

    \begin{proof}
        Notice that for all $w\in B(V,\A)$ and all $z$ as in the statement we have $f_w(z)=\langle F(z), w\rangle$. As a result, for all $w_1,w_2\in \beta$
        \begin{align*}
            \phi_{F}(w_1 \,\sh\, w_2) (z) &= \langle F(z), w_1 \,\sh\, w_2 \rangle
            =\langle \Delta F(z), w_1\otimes w_2\rangle = \langle F(z)\otimes F(z), w_1\otimes w_2\rangle \\
            &= \langle F(z),w_1\rangle \langle F(z), w_2\rangle=\phi_F(w_1)(z)\phi_F(w_2)(z),
        \end{align*}
        hence the claim follows by $k$-linearity.
    \end{proof}
    \noindent
    Together with Theorem~\ref{theo: solutions of holonomy equation}, this lemma implies that for all maximal $\G$-nested sets $\S$ the map $\phi_{L_\S} \colon  B(V,\A)\to \mathcal{L}_\S(V,\A)$ is a $k$-algebra homomorphism.\\
    On taking the tensor product with the regular functions of $Y(V,\A)$, we get a map
    \[
        \phi_F \colon  B(V,\A)\otimes_k \mathcal{O}_{Y(V,\A)} \to C(F)\otimes_k \mathcal{O}_{Y(V,\A)}
    \]
    which is $k$-linear and, if $F$ is group-like, also a $k$-algebra homomorphism.
    \begin{proposition}
    \label{prop: realizing reduced bar complex as functions}
        Let $\G$ be a building set for $L(\A)$ and $\S$ a maximal $\G$-nested set. The $k$-algebra $\mathcal{L}_\S(V,\A)\otimes_k \mathcal{O}_{Y(V,\A)}$ has a natural structure of differential $k$-algebra. Moreover, the $k$-algebra homomorphism $\phi_{\mathcal{L}_\S}$ is an isomorphism of differential $k$-algebras.
    \end{proposition}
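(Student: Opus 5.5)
The plan is to transport the differential structure of $B'(V,\A)$ to $\mathcal{L}_\S(V,\A)\otimes_k\mathcal{O}_{Y(V,\A)}$ via $\phi_{L_\S}$, after showing that $\phi_{L_\S}$ is bijective and intertwines the two natural derivations.

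\textbf{Differential structure.} The derivations on $\mathcal{L}_\S(V,\A)\otimes_k \mathcal{O}_{Y(V,\A)}$ will simply be the partial derivatives $\partial_1,\dots,\partial_l$ of multi-valued holomorphic functions in affine coordinates $t_i$. Closure of the algebra under them follows from the holonomy equation. Writing $L_\S=\sum_{w\in\beta} f_w w^\lor$, the equation $dL_\S=\Omega L_\S$ with $\Omega=\sum_H t_H\otimes\omega_H$ gives, after pairing with $w$,
\[
df_w=\langle \Omega L_\S, w\rangle=\sum_{H}\omega_H\,\langle L_\S, t_H^\lor\cdot w\rangle .
\]
By duality (Proposition~\ref{prop: kohno} and the remark after it), left multiplication by $t_H$ on $\widehat{A}(V,\A)$ is transpose to the operator on $B(V,\A)$ sending $[\omega_{i_1}\vert\dots\vert\omega_{i_n}]$ to the coefficient of $\omega_H$ in the first letter times $[\omega_{i_2}\vert\dots\vert\omega_{i_n}]$. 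Hence $df_w$ is an $\mathcal{O}_Y$-linear combination of the $f_{w'}$ with coefficients the forms $\omega_H$. This shows closure, and comparing with the bar differential $d$ on $B'(V,\A)$ shows $d\circ\phi_{L_\S}=\phi_{L_\S}\circ d$. Extended $\mathcal{O}_Y$-linearly, $\phi_{L_\S}$ is therefore a differential $k$-algebra homomorphism, using the group-like lemma together with Theorem~\ref{theo: solutions of holonomy equation}. Surjectivity is automatic from the definition of $C(L_\S)$.

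\textbf{Injectivity.} This is the main obstacle. I would argue by induction on weight, which $d$ lowers. Suppose $\xi=\sum_j g_j\otimes w_j\in B'(V,\A)$ has nonzero image-kernel element of minimal weight $n$, i.e. $\phi(\xi)=0$. Then $\phi(d\xi)=d\phi(\xi)=0$. The weight-$(n-1)$ part of $d\xi$ must vanish by minimality, and working modulo lower weight this forces the top-weight symbol of $\xi$ to have constant coefficients. One then reduces to showing that a nonzero $w\in B_n(V,\A)$ cannot satisfy $f_w\in W_{n-1}$ modulo $\mathcal{O}_Y$-combinations.

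For this I would use the asymptotics $L_\S=f_\S\prod (u_X^\S)^{t_X}$, or more robustly monodromy. The coefficients of $L_\S$ are linearly independent over the field of rational functions, which is a standard argument for iterated integrals (Chen's theorem / a Ree-type lemma). Suppose a relation $\sum g_w f_w=0$ of minimal length holds with $g_w$ rational. Differentiating and using $df_w=\sum_H\omega_H f_{w_H'}$ yields a shorter relation unless the $g_w$ are constant. A constant relation would contradict $L_\S$ being invertible, since the family $f_w$ determines $L_\S$ and $\langle L_\S,\cdot\rangle$ is nondegenerate. Concretely, the values of the $f_w$ near $D_\S$ span the dual of $B_{\le n}$, because the leading terms $\prod (\log u_X^\S)^{m_X}$ together with the higher-order expansion separate words.

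If this direct argument for the constant case is delicate for general $\A$, my fallback is the reduction to a regularization at $D_\S$ along a generic path. There the iterated integrals of $\omega_H$ are known to be linearly independent over $\mathcal{O}_Y$ by Brown's work on unipotent closures, via Theorem~\ref{theo: reduced bar complex is unipotent closure} and the $H^0=k$ statement. Since $B'(V,\A)$ is a unipotent extension with $H^0=k$, any nonzero differential ideal must contain $1$. The kernel of $\phi$ is a differential ideal not containing $1$, so it vanishes. This last observation is the cleanest route, and I expect to use it.
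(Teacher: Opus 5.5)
Your proposal is correct and follows the paper's proof. Closure under $d$ is read off coefficientwise from $dL_\S=\Omega L_\S$, and compatibility with the bar differential is the duality between left multiplication by $t_H$ and left deconcatenation; the paper phrases this as $dL_n=\Omega L_{n-1}$ for the canonical element $L_n=\sum_u u\otimes u^\lor$. Surjectivity holds by definition, and injectivity follows because $\ker\phi_{L_\S}$ is a differential ideal of the differentially simple algebra $B'(V,\A)$ that does not contain $1$.

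The exploratory injectivity arguments you give before that are unnecessary and not rigorous as stated. In particular, pointwise invertibility of $L_\S(z)$ does not by itself exclude $\langle L_\S(z),w_0\rangle\equiv 0$ for some $w_0\neq 0$. You should commit directly to the differential-simplicity argument.
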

    \begin{proof}
        Let us decompose $A(V,\A)=\bigoplus_{n\ge 0} A_n$ and $B(V,\A)=\bigoplus_{n\ge 0} B_n$ according to the degree. Fix a basis $\beta_n$ of $B_n$ as a $k$-vector space for every $n\ge 0$ and let us write $L_\S=\sum_{n=0}^\infty \sum_{w\in \beta_n} f_w w^\lor$. For $H\in \A$, set $\omega_H=d\log x_H$ in projective coordinates. Given that $L_\S$ satisfies the holonomy equation, we have
        \[
        dL_\S=\Omega L_\S=\sum_{n=0}^\infty\sum_{w\in\beta_n} \sum_{H\in\A} f_w\omega_H \otimes t_Hw^\lor. 
        \]
        Let $T$ be the free shuffle algebra over the set $\{\omega_H\mid H\in \A\}$ and let $T_n$ denote its degree $n$ part. The embedding $B_n\hookrightarrow T_n $ yields a quotient map $T_n^\lor \to A_n$. For all $w\in \beta_n$ and $H\in \A$ the element $t_Hw^\lor\in T_{n+1}^\lor$ is sent via this map to a certain linear combination of the elements in $\beta_{n+1}$, say $t_Hw^\lor= \sum_{v\in\beta_{n+1}} \alpha_v^{w,H}v^\lor$ with $\alpha_v^{w,H}\in k$. It follows that for all $v\in \beta_{n+1}$
        \[
            df_v = \sum_{w\in \beta_{n}}\sum_{H\in\A} \alpha_v^{w,H} f_w \omega_H,
        \]
        which shows that $df_v\in \mathcal{L}_\S(V,\A)\otimes_k \Omega^1_{Y(V,\A)/k}$. As a result, $\mathcal{L}_\S(V,\A)\otimes_k \mathcal{O}_{Y(V,\A)}$ has a natural structure of differential $k$-algebra.\\
        For the second claim, let $\gamma_n$ be the standard basis of $T_n$ given by non-commutative monomials and consider $L_n=\sum_{u\in\gamma} u\otimes u^\lor\in T_n\otimes_k T_n^\lor$, the dual elements being taken with respect to $\gamma_n$. Via the isomorphism $T_n\otimes_k T_n^\lor\cong \text{Hom}_k(T_n,T_n)$, the element $L_n$ corresponds to the identity map of $T_n$. Under the restriction map $T_n\otimes_k T_n^\lor\cong \text{Hom}_k(T_n,T_n) \to \text{Hom}_k(B_n,T_n)\cong T_n\otimes_k B_n^\lor$ the identity is sent to the inclusion $B_n\hookrightarrow T_n$, which lies in the image of the canonical map $B_n\otimes_k 
        B_n^\lor\cong \text{Hom}_k(B_n,B_n)\to \text{Hom}_k(B_n,T_n)\cong T_n\otimes_k B_n^\lor$ induced by the same inclusion. As an element of $B_n\otimes_k B_n^\lor$, the image of $L_n$ is $\sum_{w\in \beta_n} w\otimes w^\lor$, the duals being taken with respect to $\beta_n$.\\
        The differential $d \colon B(V,\A)\to B(V,\A)\otimes_k\Omega^1_{Y(V,\A)/k}$ extends naturally to a $k$-linear map $d \colon  T\to T_n\otimes_k\Omega^1_{Y(V,\A)/k}$ by setting $d([\omega_{H_1}\vert \dots \vert \omega_{H_n}])=\omega_{H_1}\otimes[\omega_{H_2}\vert \dots \vert \omega_{H_n}]$ for $H_1,\dots, H_n\in \A$. Although the latter map is not a differential, as $d^2\neq 0$ outside of $B(V,\A)$ in general, tensoring with the identity of $T_n^\lor$ and of $B_n^\lor$ still gives a commutative diagram
        \[
        \begin{tikzcd}
            T_n \otimes_k T_n^\lor \ar[r, "d"] \ar[d] &  \Omega^1_{Y(V,\A)/k} \otimes_k   T_{n-1} \otimes_k T_{n-1}^\lor  \ar[d] \\
            T_n\otimes_k B_n^\lor \ar[r, "d"] & \Omega^1_{Y(V,\A)/k} \otimes_k  T_{n-1} \otimes_k B_{n-1}^\lor \\
            B_n\otimes_k B_n^\lor \ar[r, "d"] \ar[u, hook] & \Omega^1_{Y(V,\A)/k} \otimes_k  B_{n-1} \otimes_k B_{n-1}^\lor \ar[u, hook]
        \end{tikzcd}
        \]
        Consider the differential form $\Omega=\sum_{H\in \A} \omega_H \otimes [\omega_H]^\lor$, which we regard as an element of $\Omega^1_{Y(V,\A)/k}\otimes_k T_1^\lor$. We then have $dL_n=\Omega L_{n-1}$. Indeed, ordering the elements of $\A$ as $H_1,\dots, H_s$ for convenience of notation, we have
        \begin{align*}
            \Omega L_{n-1}&=\sum_{I=(i_1,\dots, i_{n-1})}\sum_{H\in A} \omega_H \otimes [\omega_{H_{i_1}}\vert \dots \vert \omega_{H_{i_{n-1}}}] \otimes [\omega_H]^\lor [\omega_{H_{i_1}}\vert \dots \vert \omega_{H_{i_{n-1}}}]^\lor\\
            &=\sum_{I=(i_1,\dots, i_{n-1})}\sum_{H\in A} \omega_H \otimes [\omega_{H_{i_1}}\vert \dots \vert \omega_{H_{i_{n-1}}}] \otimes [\omega_H\vert \omega_{H_{i_1}}\vert \dots \vert \omega_{H_{i_{n-1}}}]^\lor\\
            &= \sum_{I=(i_1,\dots, i_n)} \omega_{H_{i_1}}\otimes [\omega_{H_{i_2}}\vert \dots \vert \omega_{H_{i_{n}}}] \otimes [\omega_{H_{i_1}}\vert \dots \vert \omega_{H_{i_{n}}}]^\lor \\
            &= \sum_{I=(i_1,\dots, i_n)} d([ \omega_{H_{i_1}}\vert \dots \vert \omega_{H_{i_{n}}}]) \otimes [ \omega_{H_{i_1}}\vert \dots \vert \omega_{H_{i_{n}}}]^\lor \\
            &= \sum_{u\in \gamma_n} du\otimes u^\lor = dL_n.
        \end{align*}
        Here, we are using the fact that for $v,w\in T $ we have $v^\lor w^\lor =[v\vert w]^\lor$. Indeed, for all $u\in \gamma$ we have
        \[
            \langle v^\lor w^\lor, u \rangle =\langle v\otimes w, \Delta u \rangle = \langle v\otimes w, \sum_{[u_1\vert u_2]= u} u_1\otimes u_2 \rangle =  \sum_{[u_1\vert u_2]= u} \langle v, u_1\rangle \langle w, u_2\rangle,
        \]
        which equals $1$ if $u=[v\vert w]$ and $0$ otherwise.\\
        Following the diagram above, if we denote by $L_n$ also its image in $T_n\otimes B_n^\lor$, it is clear that the identity $d L_n=\Omega L_{n-1}$ takes place in $\Omega^1_{Y(V,\A)/k} \otimes_k  B_{n} \otimes_k B_{n}^\lor $ as well.\\
        On the other hand, denote by $L_{\S,n}$ the part of degree $n$ of $L_\S$. This can be regarded as an element of $\mathcal{L}_\S(V,\A)\otimes A_n= \mathcal{L}_\S(V,\A)\otimes B_n^\lor$. Tensoring with the identity of $B_n^\lor$ we have a map $\phi_{L_\S} \colon  B_n\otimes B_n^\lor \to \mathcal{L}_\S(V,\A)\otimes B_n^\lor $ which clearly maps $L_n$ to $L_{\S,n}$. Moreover, 
        \[
            \phi_{L_\S}(dL_n)=\phi_{L_\S}(\Omega L_{n-1})= \Omega\phi_{L_\S}(L_{n-1})=\Omega L_{\S,n-1}=dL_{\S,n}=d\phi_{L_\S}(L_n).
        \]
        By passing to the coefficients it follows that for all $w\in B_n$ we have $\phi_{L_\S}(dw)=df_w$. This means that $\phi_{L_\S} \colon  B(V,\A)\otimes_k \mathcal{O}_{Y(V,\A)}\to \mathcal{L}_\S(V,\A)\otimes_k \mathcal{O}_{Y(V,\A)}$ is a homomorphism of differential $k$-algebras.\\
        It is clear that $\phi_{L_\S}$ is surjective, thus non-zero. Since $B(V,\A)\otimes_k \mathcal{O}_{Y(V,\A)}$ is a differentially simple $k$-algebra and $\phi_{L_\S}$ respects the differential, said map is also injective, hence an isomorphism.
    \end{proof}
    \begin{remark}
        The weight filtration $W_\bullet B(V,\A)$ induces a corresponding weight filtration $W_\bullet \mathcal{L}_\S(V,\A)$ via the above isomorphism.
    \end{remark}
    \noindent
    In view of Theorem~\ref{theo: reduced bar complex is unipotent closure}, we deduce the following
    \begin{corollary}
        \label{cor: generalized polylogs are unipotent closure}
        Let $\G$ be a building set for $L(\A)$ and $\S$ a maximal $\G$-nested set. The differential $k$-algebra $\mathcal{L}_\S(V,\A)\otimes_k \mathcal{O}_{Y(V,\A)}$ of generalized multiple polylogarithms localized at $\S$ is a unipotent closure of $\mathcal{O}_{Y(V,\A)}$.
    \end{corollary}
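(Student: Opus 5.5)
The corollary should follow by transporting Theorem~\ref{theo: reduced bar complex is unipotent closure} along the isomorphism of Proposition~\ref{prop: realizing reduced bar complex as functions}. The one thing to watch is that the isomorphism is compatible with the inclusion of $\mathcal{O}_{Y(V,\A)}$, so that ``unipotent closure of $\mathcal{O}_{Y(V,\A)}$'' is a statement about the same base algebra on both sides.

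\emph{Step 1: the base inclusion.} Since $B_0(V,\A)=k$, the map $\phi_{L_\S}$ sends the unit $1\in B_0$ to the constant coefficient of $L_\S$. This coefficient is $1$: write $L_\S=f_\S\prod (u_X^\S)^{t_X}$, note that $f_\S$ takes the value $1$ at $D_\S$, and observe that the degree zero part of $dL_\S=\Omega L_\S$ is zero, so the degree zero part of $L_\S$ is constant and hence equal to $1$. Therefore
\[
    \phi_{L_\S}\colon B(V,\A)\otimes_k\mathcal{O}_{Y(V,\A)}\to \mathcal{L}_\S(V,\A)\otimes_k\mathcal{O}_{Y(V,\A)}
\]
restricts to the identity on $\mathcal{O}_{Y(V,\A)}=B_0\otimes_k\mathcal{O}_{Y(V,\A)}$. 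Because it is an isomorphism of differential $k$-algebras, the derivations $\partial_1,\dots,\partial_l$ on the two sides correspond to each other.

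\emph{Step 2: the filtration.} Define $W^i$ on the target as $\phi_{L_\S}\bigl(W_iB(V,\A)\otimes_k\mathcal{O}_{Y(V,\A)}\bigr)$. Each $W^i$ is a differential subalgebra, and $W^{i+1}$ is generated over $W^i$ by finitely many elements whose derivatives lie in $W^i$. These properties hold on the source by the first part of Theorem~\ref{theo: reduced bar complex is unipotent closure}, and a differential algebra isomorphism preserves them. So the target is a unipotent extension of $\mathcal{O}_{Y(V,\A)}$.

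\emph{Step 3: cohomology.} The de Rham complexes $\Omega^\bullet_{Y/k}\otimes_{\mathcal{O}_Y}(-)$ of the two algebras are isomorphic via $\mathrm{id}\otimes\phi_{L_\S}$, because $\phi_{L_\S}$ is $\mathcal{O}_Y$-linear and commutes with $d$. Hence $H^0=k$ and $H^1=0$ carry over from Theorem~\ref{theo: reduced bar complex is unipotent closure}.

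The main point needing care is Step 1, namely checking that $\phi_{L_\S}$ is genuinely $\mathcal{O}_Y$-linear and fixes $\mathcal{O}_Y$, rather than only being $k$-linear. This holds because the isomorphism was constructed by tensoring the $k$-linear map $B\to\mathcal{L}_\S$ with $\mathcal{O}_Y$, and the degree zero coefficient of $L_\S$ is $1$.
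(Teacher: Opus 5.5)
Your argument is the paper's. The corollary is deduced directly from Theorem~\ref{theo: reduced bar complex is unipotent closure} by transport along the differential-algebra isomorphism $\phi_{L_\S}$ of Proposition~\ref{prop: realizing reduced bar complex as functions}. Your Step~1 check that the constant coefficient of $L_\S$ is $1$, so that $\phi_{L_\S}$ fixes $\mathcal{O}_{Y(V,\A)}$, supplies a detail the paper leaves implicit.

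There is one slip in Step~2. $W_iB(V,\A)\otimes_k\mathcal{O}_{Y(V,\A)}$ is not a subalgebra, because the shuffle product of two weight-one elements has weight two. Moreover, the theorem only asserts that \emph{some} admissible filtration exists. You should therefore transport that filtration instead, or take the subalgebras generated by $\phi_{L_\S}\bigl(W_iB(V,\A)\otimes_k\mathcal{O}_{Y(V,\A)}\bigr)$; the conclusion is unaffected.
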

    \begin{remark}
        It is worth noting that Proposition~\ref{prop: realizing reduced bar complex as functions} holds for arbitrary hyperplane arrangements, not necessarily supersolvable or with enough modular elements. This fact was proven in \cite{Brown-Multiple_zeta_values_and_periods_of_moduli_spaces_of_curves} for the braid arrangement using Proposition~\ref{prop: reduced bar complex for supersolvable arrangements} inductively and comparing the one-dimensional cases only. 
    \end{remark}
    \noindent
    Under fiber type assumptions, one can make the higher de Rham cohomology groups of the differential algebra $\mathcal{L}_\S(V,\A)\otimes_k \mathcal{O}_{Y(V,\A)}$ vanish through Corollary~\ref{cor: vanishing top degree cohomology reduced bar complex}.
    \begin{corollary}
        \label{cor: generalized polylogs have primitives}
        Let $\G$ be a building set for $L(\A)$ and $\S$ a maximal $\G$-nested set. If $(V,\A)$ is fiber type, then for all $i=1,\dots, l$
        \[
            H^i_{\textnormal{dR}}( \mathcal{L}_\S(V,\A)\otimes_k \mathcal{O}_{Y(V,\A)})=0.
        \]
        Moreover, a primitive of a closed differential form $F\in \mathcal{L}_\S(V,\A)\otimes_k \Omega^i_{Y(V,\A)/k}$ has weight at most one higher than $F$.
    \end{corollary}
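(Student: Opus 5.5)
The plan is to transport the statement from the reduced bar complex to the algebra of generalized multiple polylogarithms via the isomorphism of Proposition~\ref{prop: realizing reduced bar complex as functions}. That proposition provides an isomorphism of differential $k$-algebras
\[
    \phi_{L_\S} \colon B(V,\A)\otimes_k \mathcal{O}_{Y(V,\A)} \longrightarrow \mathcal{L}_\S(V,\A)\otimes_k \mathcal{O}_{Y(V,\A)}.
\]
Being compatible with the derivations, this map induces an isomorphism of the full de Rham complexes $\Omega^\bullet_{Y(V,\A)/k}\otimes_{\mathcal{O}_Y}(-)$. Indeed, both differentials are determined by the derivations $\partial_1,\dots,\partial_l$ together with the wedge product on $\Omega^\bullet_{Y/k}$, and $\phi_{L_\S}$ is $\mathcal{O}_Y$-linear, so it commutes with $d$ in every degree. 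Consequently
\[
    H^i_{\text{dR}}(\mathcal{L}_\S(V,\A)\otimes_k\mathcal{O}_Y)\cong H^i(B'(V,\A)) \quad \text{for all } i.
\]

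The vanishing then follows from Corollary~\ref{cor: vanishing top degree cohomology reduced bar complex}, which gives $H^i(B'(V,\A))=0$ for $i=1,\dots,l$ whenever $(V,\A)$ is fiber type. To realize the fiber type structure one uses a tower of linear fibrations $Y_1\to Y_2\to\dots$. For supersolvable arrangements in our projective sense, such a tower was explained after Proposition~\ref{prop: fiber type equals supersolvable}. Together with Proposition~\ref{prop: reduced bar complex for supersolvable arrangements}, this tower yields the tensor decomposition of $B'(V,\A)$ into relative unipotent closures, so the hypotheses of that corollary are met.

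For the weight statement, I would use that $\phi_{L_\S}$ sends $W_nB(V,\A)\otimes\mathcal{O}_Y$ onto $W_n\mathcal{L}_\S(V,\A)\otimes\mathcal{O}_Y$, since the weight filtration on $\mathcal{L}_\S$ is defined by transport along this map. Now take a closed form $F$ of weight $n$. Its preimage under $\phi_{L_\S}$ is a closed form of weight $n$ in $\Omega^i_{Y/k}\otimes B'(V,\A)$, which by the corollary admits a primitive of weight at most $n+1$. Applying $\phi_{L_\S}$ to that primitive gives the required primitive of $F$.

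The main point requiring care is the compatibility of the isomorphism with the differential in higher form degrees and with the filtration. In both cases the compatibility reduces to $\mathcal{O}_Y$-linearity and to the fact that the map preserves degrees. One then needs to confirm that the weight bound in Corollary~\ref{cor: vanishing top degree cohomology reduced bar complex} applies in each degree $i$ and not only in top degree. That bound comes from integrating one fiber variable at a time in the tensor decomposition, and each such step raises the weight by at most one.
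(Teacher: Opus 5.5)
Your proposal is correct and follows the paper's argument. The differential $k$-algebra isomorphism $\phi_{L_\S}$ of Proposition~\ref{prop: realizing reduced bar complex as functions} is $\mathcal{O}_{Y(V,\A)}$-linear, so it identifies the full de Rham complexes, and it identifies the weight filtrations by definition; it therefore transports Corollary~\ref{cor: vanishing top degree cohomology reduced bar complex}, vanishing and weight bound included, to $\mathcal{L}_\S(V,\A)\otimes_k\mathcal{O}_{Y(V,\A)}$. Your paragraph rebuilding the tower of fibrations via supersolvability is unnecessary, since fiber type is exactly the hypothesis of that corollary.
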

    \noindent
    Thanks to Proposition~\ref{prop: realizing reduced bar complex as functions}, one can realize the reduced bar complex of $(V,\A)$ as an algebra of multi-valued functions on $Y(V,\A)(\C)$. Another approach would have been to exploit Chen's iterated integrals \cite{Chen-Iterated_path_integrals}, which we now briefly describe.\\
    Set for short $Y=Y(V,\A)(\C)$ and let $\widehat{Y}$ be its universal covering. Fix a point $x\in Y$ and consider a path $\gamma \colon  [0,1]\to Y$ starting at $x$. Let $\omega_1,\dots, \omega_n$ be closed algebraic differential $1$-forms on $Y$. As usual, we denote by $[\omega_1\,\vert\, \dots \,\vert\, \omega_n]$ the corresponding element in the tensor algebra over $\Omega^1_{Y(V,\A)/k}$. The \emph{iterated integral} of $[\omega_1\,\vert\, \dots \,\vert\, \omega_n]$ along $\gamma$ is defined as
    \[
        \int_\gamma [\omega_1\,\vert\, \dots \,\vert\, \omega_n] = \int_{0<t_1<\dots<t_n<1} \gamma^*\omega_1(t_1)\wedge\dots \wedge \gamma^*\omega_n(t_n).
    \]
    This definition can be extended by $k$-linearity to the tensor algebra over $\Omega^1_{Y(V,\A)/k}$. The iterated integral of $\sum_{I}c_I [\omega_{i_1}\,\vert\, \dots \,\vert\, \omega_{i_n}] $ only depends on the homotopy class of $\gamma$ if and only if $\sum_{I}c_I [\omega_{i_1}\,\vert\, \dots \,\vert\, \omega_{i_n}] $ belongs to the reduced bar complex of $(V,\A)$. In this case, it can be regarded as a multi-valued function on $Y$. A version with tangential base points is also possible.\\
    Associating to an element in the reduced bar complex the corresponding iterated integral yields an isomorphism of differential algebras. Thus, iterated integrals provide an explicit description of the solutions of the holonomy equation treated so far. The version with tangential base points corresponds to the boundary condition at points which are maximal intersections of boundary divisors in our setting.

    \subsection{Regularized primitives}
        
    Let $(V,\A)$ be an essential hyperplane $(l+1)$-arrangement over $k$, $\G$ a building set for $L(\A)$ and $\S$ a maximal $\G$-nested set. As we have seen in Proposition~\ref{prop: realizing reduced bar complex as functions}, there is an isomorphism of differential $k$-algebras 
    \[
        B(V,\A)\otimes_k \mathcal{O}_{Y(V,\A)} \cong \mathcal{L}_\S(V,\A)\otimes_k \mathcal{O}_{Y(V,\A)}.
    \]
    In analogy with the reduced bar complex, for $i=1,\dots, l$ let us set
    \[
        \Omega^i\mathcal{L}_\S(V,\A) = \mathcal{L}_\S(V,\A)\otimes_k \Omega^i_{Y(V,\A)/k}.
    \]
    If $(V,\A)$ is supersolvable, Corollary~\ref{cor: generalized polylogs have primitives} ensures that for every $F\in \Omega^l \mathcal{L}_\S(V,\A)$ there is some $G \in \Omega^{l-1} \mathcal{L}_\S(V,\A)$ such that $dG=F$. Our goal is to use the existence of primitives to reduce the computation of integrals on $\overline{Y}(V,\A,\G)$ to the one of integrals on the boundary divisors $D_X$ for $X\in \G$. The key ingredient is Stoke's theorem for functions with logarithmic singularities.\\
    However, to apply Stoke's theorem it is necessary to find a primitive which has no poles along the boundary of the integration domain. That is, suppose we want to integrate $F$ over a homology class with boundary contained in a set of divisors $D_X$ for $X\in \G$, along which $F$ has no poles. This integral equals the one of $G$ on the boundary, provided that $G$ has no poles either along the divisors $D_X$ which intersect the boundary. Thus, we need to remove possible spurious poles from $G$ along divisors where $F$ has at most logarithmic singularities. To achieve this, it is essential to assume that $(V,\A)$ has enough $\G$-modular elements.\\
    Let $\Sigma$ be a family of maximal $\G$-nested sets equipped with an adapted basis. We assume that $\Sigma$ defines a connected subgraph of the graph of all maximal $\G$-nested sets. There is a corresponding associator algebra $Z_\Sigma(V,\A,\G)$. If we define 
    \[
        \mathcal{L}_\Sigma(V,\A)= \mathcal{L}_\S(V,\A)\otimes_k Z_\Sigma(V,\A,\G),
    \]
    we obtain a $k$-algebra of multi-valued functions on $Y(V,\A)(\C)$ which include all the coefficients of the solution $L_\S$ for $\S\in \Sigma$ of the holonomy equation localized at $\S$. Similarly, we define $\Omega^i\mathcal{L}_\Sigma (V,\A)$ by taking the tensor product with $\Omega^i_{Y(V,\A)/k}$, as above. Notice that the content of Corollary~\ref{cor: generalized polylogs have primitives} applies to $\mathcal{L}_\Sigma(V,\A)$ as well.\\
    Our goal is to prove the following
    \begin{proposition}
        \label{prop: regularizing primitives}
        Suppose that $(V,\A)$ has enough $\G$-modular elements. Let $F\in\Omega^l\mathcal{L}_\Sigma(V,\A)$ and suppose that for every $\S\in \Sigma$ and $X\in \S$ the differential form $F$ has no poles along the boundary divisor $D_X$. Then there exists a primitive $G\in \Omega^{l-1}\mathcal{L}_\Sigma(V,\A)$ of $F$ which has no poles along the boundary divisors $D_X$ for all $X\in \S$, $\S\in \Sigma$.
    \end{proposition}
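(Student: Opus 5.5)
Corollary~\ref{cor: generalized polylogs have primitives} applies, since arrangements with enough $\G$-modular elements are supersolvable, hence fiber type. It gives some primitive $G_0\in \Omega^{l-1}\mathcal{L}_\Sigma(V,\A)$ of $F$. The plan is to correct $G_0$ by closed forms, one boundary divisor at a time, following Brown's argument for $\overline{\f{M}}_{0,n}$. The retractions of Proposition~\ref{prop: retractions to boundary divisors} replace the forgetful maps.

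\textbf{Removing a pole along one divisor.} Fix $\S\in\Sigma$ and $X\in\S$, and work in the chart $U_\S$ with an $M$-adapted basis, where $M$ is a $\G$-modular complement of $X$. Near $\{u_X^\S=0\}$, expand the coefficients of $G$ as Laurent series in $u_X^\S$ with coefficients polynomial in $\log u_X^\S$. By Proposition~\ref{prop: global decomp. of holonomy solutions}, every element of $\mathcal{L}_\S$ is a combination of products $h\cdot\pi^*(\text{functions on }D_X)$. Moreover, Corollary~\ref{cor: embedding regualr functions with enough modular elements} shows that the Laurent coefficients of regular functions are pulled back from $D_X$ through $\pi$. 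So the polar part of $G$ along $D_X$ can be written as
\[
    \sum_{n\ge1}(u_X^\S)^{-n}\,\pi^*\eta_n ,
\]
where the $\eta_n$ are forms on $D_X$ with coefficients in the generalized polylogarithms of $D_X\cong \overline{Y}_1\times\overline{Y}_2$, possibly multiplied by powers of $\log u_X^\S$ and by $du_X^\S$.

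Since $dG=F$ has no pole along $D_X$, comparing polar parts in $u_X^\S$ degree by degree shows the polar part is closed modulo regular terms. Concretely, the top pole coefficient satisfies $d\eta_n=0$ on $D_X$, and the lower ones satisfy a triangular system. Induction on $\dim$ gives vanishing of the relevant de Rham cohomology for $D_X$, which is a product of arrangements with enough modular elements by the lemma on restrictions and quotients. Hence one can write each polar term as $d$ of something of the form $(u_X^\S)^{-n}\pi^*\theta_n$, and subtract that exact form from $G$. Because everything is pulled back along the global retraction $\pi$, the correction is a global element of $\Omega^{l-2}\mathcal{L}_\Sigma$, not merely a local one. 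The correction changes $G$ only by an exact form, so $dG=F$ is preserved. The $\log u_X^\S$ factors cause no extra difficulty because $d\log u_X^\S$ has only a simple pole, and one inducts on the power of $\log$ as well.

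\textbf{Iterating over all divisors.} Order the finitely many divisors $D_X$, for $X\in\S$ and $\S\in\Sigma$, and remove their poles successively. The key point is that the correction made for $D_X$ introduces no new pole along a divisor $D_Y$ already treated. The correction terms are $\pi_X^*$ of forms on $D_X$ times powers of $u_X^\S$. Along $D_Y$ with $Y\in\S$, $Y\ne X$, the explicit coordinate description of $\pi_X$, namely $u_Y^\S\mapsto u_Y^\S$, shows that $\pi_X$ sends $D_Y$ into $D_X\cap D_Y$. So poles of $\theta_n$ along $D_X\cap D_Y$ would pull back to poles along $D_Y$, and one must check that $\theta_n$ itself has no pole there. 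I would arrange this by choosing the primitive $\theta_n$ on $D_X$ regularized along the induced divisors. This is exactly the statement of the proposition for the lower-dimensional arrangements making up $D_X$, with the induced family $\Sigma$, so the whole argument becomes an induction on $l$. The base case $l=1$ is trivial, since $(l-1)$-forms are functions, and primitives of forms without poles in the one-dimensional polylog algebra acquire only logarithmic singularities.

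\textbf{Main obstacle.} The hardest step is this compatibility: showing that the corrections for different divisors do not reintroduce poles along one another. Related to it is verifying that the polar coefficients really lie in the polylogarithm algebra of $D_X$ pulled back by the chosen retraction, even though $M$ and the $M$-adapted basis change from divisor to divisor and from chart to chart. I would first handle this on a single chart by using Lemma~\ref{lemma: commutativity relations in holonomy algebra} and the nestedness of $\S$. I would then use the associators $G(\S_1,\S_2)$, whose coefficients lie in $Z_\Sigma$ by Proposition~\ref{prop: decomposition of associators}, to pass between charts. This is why the algebra must be enlarged to $\mathcal{L}_\Sigma$.
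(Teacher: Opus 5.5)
Your overall architecture matches the paper's, and Brown's. You start from a primitive given by Corollary~\ref{cor: generalized polylogs have primitives}, kill the polar part along one $D_X$ at a time with a correction made global by the retraction $\pi$, and iterate. But the step that makes the correction harmless is misdescribed, and in the paper it needs neither cohomology of $D_X$ nor induction on $l$.

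\textbf{The closedness step is misdescribed.} Your claim that the top pole coefficient is closed on $D_X$ is false. Adding to $G$ the exact form $d\bigl((u_X^\S)^{-1}\pi^*\theta\bigr)$ creates the top term $-(u_X^\S)^{-2}du_X^\S\wedge\pi^*\theta$ with $\theta$ arbitrary. With logarithms, the polar part is not even closed modulo pole-free terms. The order-zero terms $b^X_{i,0}\log^iu_X^\S$, $i\ge1$, in the $du_X^\S$-free coefficient feed simple poles into $dG$. This is exactly the simple pole of $d\log u_X^\S$ that you dismiss.

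The paper's argument is direct. Write $G=\sum_Y g_Y\omega_Y$ with $g_Y=\sum_{i,j}b^Y_{i,j}(u_X^\S)^j\log^iu_X^\S$. Let $H$ be the polar part with coefficients replaced by $\pi^*b^Y_{i,j}$, plus $\sum_{i\ge1}\pi^*b^X_{i,0}\log^iu_X^\S\,\omega_X$. Since the $\pi^*b^Y_{i,j}$ do not depend on $u_X^\S$ in an $M$-adapted chart, $dH$ contains only terms $(u_X^\S)^j\log^iu_X^\S$ with $j\le-1$. On the other hand, $dH=F-d(G-H)$, and $d(G-H)$ has at most logarithmic singularities; this is why the order-zero logarithmic terms go into $H$. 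Hence $dH=0$, and $G'=G-H$ is the new primitive.

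This also disposes of what you call the main obstacle. $H$ is built from the Laurent coefficients of $G$ itself, so along any divisor where $G$ already has at most logarithmic singularities, so does $H$. Nothing has to be chosen or regularized. Your fix, choosing $\theta_n$ via the proposition on $D_X$, would in any case require a version of the proposition for products $\overline{Y}_1\times\overline{Y}_2$.

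\textbf{The genuine gap is membership in $\mathcal{L}_\Sigma$.} You assert that, because the correction is pulled back along $\pi$, it lies in $\Omega^{l-2}\mathcal{L}_\Sigma(V,\A)$. Pulling back only makes it globally defined on $Y(V,\A)$. The real content is whether $\pi^*$ of the Laurent coefficients, or of your $\theta_n$ (which live in the polylogarithm algebra of $D_X$), belongs to $\mathcal{L}_\Sigma(V,\A)\otimes_k\mathcal{O}_{Y(V,\A)}$. Neither the commutativity relations in the holonomy algebra nor the associators address this.

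The paper supplies it as Lemma~\ref{lemma: generalized polylogs contain regularized restrictions}: for $f\in\mathcal{L}_\S(V,\A)\otimes_k\mathcal{O}_{Y(V,\A)}$, the function $\pi^*\text{Reg}(f,\{u_X^\S=0\})$ lies in the same algebra. The proof is by induction on weight.
\begin{enumerate}
\item Weight $0$ is Corollary~\ref{cor: embedding regualr functions with enough modular elements}.
\item In general, $\partial/\partial u_Y^\S$ commutes with $\pi^*\text{Reg}$, so by induction all derivatives of $\pi^*\text{Reg}(f)$ lie in the algebra.
\item Since $H^1$ of this unipotent closure vanishes, some element of the algebra has the same derivatives, and it differs from $\pi^*\text{Reg}(f)$ by a constant.
\end{enumerate}
Combined with closure of the algebra under derivatives, this is what places $\pi^*b^Y_{i,j}$, and hence $H$, in $\Omega^{l-1}\mathcal{L}_\Sigma(V,\A)$. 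You need this lemma, or an equivalent argument, for either version of the proof.
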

    \noindent
    Before providing a proof, we need one more lemma.
    \begin{lemma}
    \label{lemma: generalized polylogs contain regularized restrictions}
        Let $\S$ be a maximal $\G$-nested set and $X\in \S$. Suppose that there is a $\G$-modular element $M\in \G$ such that $V^*=X\oplus M$ and let $\pi \colon \overline{Y}(V,\A,\G)\to D_X$ be a retraction as in Proposition~\ref{prop: retractions to boundary divisors}.\\
        Then for all $f\in L_\S(V,\A)\otimes_k \mathcal{O}_{Y(V,\A)}$ the regularized restriction
        \[
            \pi^*\textnormal{Reg}\left( f, \left\{u_X^\S=0\right\} \right)
        \]
        belongs to $L_\S(V,\A)\otimes_k \mathcal{O}_{Y(V,\A)}$.
    \end{lemma}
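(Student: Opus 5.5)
The plan is to reduce everything to the global decomposition of Proposition~\ref{prop: global decomp. of holonomy solutions}. Since $f\mapsto \pi^*\textnormal{Reg}(f,\{u_X^\S=0\})$ is a $k$-algebra homomorphism, it suffices to treat $\mathcal{O}_{Y(V,\A)}$ and the coefficient functions of $L_\S$.

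For $g\in\mathcal{O}_{Y(V,\A)}$, the regularization along $\{u_X^\S=0\}$ is simply the restriction, or, if $g$ has a pole there, the relevant Laurent coefficient. Corollary~\ref{cor: embedding regualr functions with enough modular elements} shows that pulling such a coefficient back by $\pi$ gives a rational function on $Y(V,\A)$. Its poles lie only along hyperplanes of the arrangement: the map $\pi$ is induced by linear projections to $Y(V/X^\perp,\A\vert_X)$ and to $Y(X^\perp,\A/X)\cong Y(V/M^\perp,\A\vert_M)$, and both of these are complements of arrangements whose hyperplanes pull back to hyperplanes of $\A$. Hence the result is regular on $Y(V,\A)$.

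For the coefficients of $L_\S$, I would work with the whole series. Choose an $M$-adapted basis, which changes $L_\S$ only by a constant in $F_\Sigma$-type logarithms of coordinate rescalings; I need to check that this harmless change stays inside $\mathcal{L}_\S$. Then write $L_\S=hL_1L_2$. By Proposition~\ref{prop: global decomp. of holonomy solutions}, $h(u_X^\S)^{-t_X}$ restricts to $1$ on $\{u_X^\S=0\}$, while $L_1,L_2$ do not depend on $u_X^\S$. Therefore
\[
\textnormal{Reg}\bigl(L_\S,\{u_X^\S=0\}\bigr)=\bigl(L_{\S\vert_X}L_{\S/X}\bigr)\big\vert_{D_X},
\]
which agrees with Proposition~\ref{prop: restriction of holonomy equation to the boundary}. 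One must also check that $\textnormal{Reg}(h)=1$: the factor $(u_X^\S)^{t_X}$ contributes only $\log u_X^\S$ terms, which are set to zero, and $t_X$ commutes with the relevant factors. Pulling back by $\pi$ then gives
\[
\pi^*\textnormal{Reg}(L_\S)=L_1L_2=(L_{\S\vert_X}\circ\pi_1)(L_{\S/X}\circ\pi_2).
\]
It remains to show that the coefficients of $L_1L_2$ lie in $\mathcal{L}_\S(V,\A)\otimes\mathcal{O}$. I would prove this with the holonomy equation: $L_1L_2$ satisfies $d(L_1L_2)=(\pi_1^*\Omega_1+\pi_2^*\Omega_2)L_1L_2$. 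Here $\pi_1^*\Omega_1+\pi_2^*\Omega_2$ is a form with coefficients $d\log$ of linear forms in $\A$, valued in the image of the embedding $\widehat{A}(V/X^\perp,\A\vert_X)\widehat{\otimes}\widehat{A}(X^\perp,\A/X)\hookrightarrow\widehat{A}(V,\A)$. Consequently each coefficient of $L_1L_2$ is an iterated-integral-type function. Concretely, $L_1=L_{\S\vert_X}\circ\pi_1$ is a solution of the holonomy equation of $Y(V,\A)$ pulled back, so its coefficients are images under $\phi_{L_\S}$ of elements of $B(V,\A)$ obtained by pulling back bar elements along $\pi_1$. Pullback along a morphism of complements preserves the bar condition, and the same holds for $\pi_2$ and for the shuffle product of the two.

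The main obstacle is matching base points. Identifying $\pi_i^*$ of a bar element with a function in $\mathcal{L}_\S$ requires that the boundary conditions, meaning the tangential base point at $D_\S$, correspond under $\pi_i$. The $M$-adapted coordinate description of $\pi$ shows that $\pi_1$ and $\pi_2$ send the chart $U_\S$ to $U_{\S\vert_X}$ and $U_{\S/X}$ by forgetting coordinates, so $D_\S$ maps to $D_{\S\vert_X}$ and $D_{\S/X}$, and the asymptotics $\prod (u_Y^\S)^{t_Y}$ are compatible. Uniqueness of the holonomy solution, as in Proposition~\ref{prop: restriction of holonomy equation to the boundary}, then identifies $L_i$ with the regularized pullback functions. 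I would settle this first, via uniqueness in Theorem~\ref{theo: solutions of holonomy equation}.
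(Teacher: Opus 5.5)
\textbf{The gap is your opening reduction.} The map $f\mapsto\pi^*\mathrm{Reg}\bigl(f,\{u_X^\S=0\}\bigr)$ is a $k$-algebra homomorphism only on functions without poles along $D_X$, where $\mathrm{Reg}$ means: set $\log u_X^\S=0$, then restrict. But $\mathcal{O}_{Y(V,\A)}$ contains $1/u_X^\S$, and extending $\mathrm{Reg}$ by ``the relevant Laurent coefficient'' is not multiplicative.

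You also cannot fall back on pole-free generators. By Proposition~\ref{prop: realizing reduced bar complex as functions}, $\mathcal{L}_\S(V,\A)\otimes_k\mathcal{O}_{Y(V,\A)}$ is free over $\mathcal{O}_{Y(V,\A)}$ on the coefficient functions $\ell_j$ of $L_\S$. So a pole-free $f$ has a unique expression $\sum_j o_j\ell_j$, and the $o_j$ can have poles along $D_X$ that are compensated by vanishing of the $\ell_j$ there. Already on $\mathbb{P}^1\setminus\{0,1,\infty\}$ with base point $0$, the function $f=\log(1-z)/z$ has $\mathrm{Reg}(f,0)=-1$, while your recipe gives $\mathrm{Reg}(1/z)\,\mathrm{Reg}(\log(1-z))=0$.

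For such $f$ you need the higher-order terms in $u_X^\S$ of the $\ell_j$ along $D_X$. Proposition~\ref{prop: restriction of holonomy equation to the boundary} and $\mathrm{Reg}(h)=1$ give only the leading term $L_{\S\vert_X}L_{\S/X}$. This is exactly the case that matters later: in the proof of Proposition~\ref{prop: regularizing primitives}, the lemma is applied to $u_X^\S$-derivatives of $(u_X^\S)^{N_Y}a_i^Y$, i.e.\ to functions obtained by clearing poles along $D_X$.

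The paper argues differently, by induction on weight. It differentiates $g=\pi^*\mathrm{Reg}(f)$ along the $u_Y^\S$ and integrates using $H^1=0$ (Corollary~\ref{cor: generalized polylogs are unipotent closure}), which avoids multiplicativity. However, its step ``all derivatives of $f$ have weight $i$'' fails for non-constant $\mathcal{O}_{Y(V,\A)}$-coefficients, and the constant of integration is not shown to lie in $k$. So the paper's proof does not dispose of this case cleanly either.

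\textbf{How to complete your route.} Your treatment of the generators is sound. Cleanly stated: the projection $\widehat{A}(V,\A)\to\widehat{A}(V/X^\perp,\A\vert_X)$ dual to $\pi_1^*$ sends $L_\S$ to $L_{\S\vert_X}\circ\pi_1$ by uniqueness, and likewise for $\pi_2$ via $M$. The missing piece is to push the global decomposition to all orders.
\begin{itemize}
\item Write $h(u_X^\S)^{-t_X}=\sum_{n\ge0}\tilde h_n(u_X^\S)^n$ with $\tilde h_0=1$. The fibrewise equation of Proposition~\ref{prop: global decomp. of holonomy solutions} becomes $(n-\mathrm{ad}\,t_X)\tilde h_n=\sum_{m<n}R_{n-1-m}\tilde h_m$, where the $R_j$ are the Taylor coefficients in $u_X^\S$ of $\sum_H t_H\,\partial\log P_H^\S/\partial u_X^\S$.
\item Since $\mathrm{ad}\,t_X$ raises degree, this recursion determines each $\tilde h_n$ over $k$, with $\pi^*\tilde h_n\in\mathcal{O}_{Y(V,\A)}\widehat{\otimes}_k\widehat{A}(V,\A)$.
\item Hence, after setting $\log u_X^\S=0$, the coefficient of $(u_X^\S)^n$ in $\langle L_\S,w\rangle$ pulls back to $\langle(\pi^*\tilde h_n)L_1L_2,w\rangle$. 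By deconcatenation this lies in $\mathcal{L}_\S(V,\A)\otimes_k\mathcal{O}_{Y(V,\A)}$.
\item Combined with Corollary~\ref{cor: embedding regualr functions with enough modular elements} for the Laurent coefficients of the $o_j$, this treats every pole-free $f$, with the constants in $k$ automatically.
\end{itemize}

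\textbf{The basis change.} Switching to an $M$-adapted basis is not automatically harmless. Changing adapted bases multiplies $L_\S$ on the right by $\prod_Y\exp(t_Y\log\alpha_Y)$ with $\alpha_Y\in k^\times$, which in general changes both $\mathcal{L}_\S$ and the regularization. Instead, for $Y\not\subseteq X$, take the $M$-component of $\beta(Y)$ in $V^*=X\oplus M$; it spans a line of $\A$ by modularity of $M$. All ratios of new to old coordinates are then identically $1$ on $D_X$, so neither $L_\S$ nor $\mathrm{Reg}$ changes.
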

    \begin{proof}
        We argue by induction on the weight of $f$. In case $f$ has weight $0$, we have $f\in \mathcal{O}_{Y(V,\A)}$, hence the claim follows from Corollary~\ref{cor: embedding regualr functions with enough modular elements}. Notice that the content of Corollary~\ref{cor: embedding regualr functions with enough modular elements} still works assuming the existence of the retraction $\pi$, independently of the presence of enough modular elements. \\
        Suppose that the statement holds for all functions in $W^iL_\S(V,\A)\otimes_k \mathcal{O}_{Y(V,\A)}$ for some $i\ge 1$ and let $f$ have weight $i+1$. Set for short
        \[
            g=\pi^*\textnormal{Reg}\left( f, \left\{u_X^\S=0\right\} \right).
        \]
        For all $Y\in \S$ we have
        \[
            \frac{\partial g}{\partial u_Y^\S} = \pi^*\textnormal{Reg}\left( \frac{\partial f}{\partial u_Y^\S}, \left\{u_X^\S=0\right\} \right).
        \]
        Since all derivatives of $f$ have weight $i$, by induction hypothesis all derivatives of $g$ belong to $L_\S(V,\A)\otimes_k \mathcal{O}_{Y(V,\A)}$. By Corollary~\ref{cor: generalized polylogs are unipotent closure} there is $h\in L_\S(V,\A)\otimes_k \mathcal{O}_{Y(V,\A)}$ whose derivatives agree with those of $g$. It follows that $h$ equals $g$ up to an additive constant in $k$. We conclude that $g\in L_\S(V,\A)\otimes_k \mathcal{O}_{Y(V,\A)}$, as desired.
    \end{proof}
    \noindent
    We are now ready to prove Proposition~\ref{prop: regularizing primitives}. The presence of enough modular elements allows us to exploit the same strategy as in \cite{Brown-Multiple_zeta_values_and_periods_of_moduli_spaces_of_curves}.
    \begin{proof}
        Let $F\in\Omega^l\mathcal{L}_\Sigma(V,\A)$ be as in the statement. By Corollary~\ref{cor: generalized polylogs have primitives} we may find $G\in \Omega^{l-1}\mathcal{L}_\Sigma(V,\A)$ such that $dG=F$. Let $\S\in \Sigma$ and fix $X\in D_\S$. We show that there is $G'\in \Omega^{l-1}\mathcal{L}_\Sigma(V,\A) $ such that $dG'=F$ and without poles along $D_X$. Moreover, we choose $G'$ in such a way that it has no poles along all irreducible boundary divisors where $G$ has at most logarithmic singularities. The statement then follows by iterating this argument for all $X\in \G$ appearing in the maximal $\G$-nested sets in $\Sigma$, replacing at each time $G$ with the new primitive $G'$. \\
        Since $(V,\A)$ has enough $\G$-modular elements, by Proposition~\ref{prop: retractions to boundary divisors} there is a retraction $\pi \colon  \overline{Y}(V,\A,\G)\to D_X$ induced by a linear projection at the level of arrangements. We work in the local chart $U_\S$ with respect to an adapted basis $\beta$ such that $(\S,\beta)\in \Sigma$. For all $Y\in \S$ write $\omega_Y=\bigwedge_{Z\neq Y} du_Z^\S$. Thus,
        \[
            G=\sum_{Y\in \S} g_Y \omega_Y
        \]
        for some $g_Y\in \mathcal{L}_\Sigma(V,\A)\otimes_k \mathcal{O}_{Y(V,\A)}$. Functions in $\mathcal{L}_\Sigma(V,\A)$ have at most logarithmic singularities along $D_X$, hence we may write $g_Y=\sum_{i=0}^{n_Y} a_i^Y\log^i u_X^\S$ for some $a_{i}^Y\in \mathcal{L}_\Sigma(V,\A)\otimes_k \mathcal{O}_{Y(V,\A)}$. By expanding the $a_i^Y$'s into Laurent series in a neighborhood of $D_\S$, we have
        \[
            G=\sum_{Y\in \S}\sum_{i=0}^{n_Y}\sum_{j=-N_Y}^\infty b_{i,j}^Y\left( u_X^\S\right)^{j} \log^i u_X^\S
        \]
        for some $N_Y\ge 0$ and functions $b_{i,j}^Y$ which do not depend on $u_X^\S$. Notice that
        \[
            b_{i,j}^Y=\frac{1}{(N_Y+j)!}\left(\frac{\partial^{N+j}}{\partial\left( u_X^\S\right)^{N+j}}\left(\left( u_X^\S\right)^{N_Y}a_i^Y\right)\right)\Bigg\vert_{\left\{ u_X^\S=0\right\}}.
        \]
        By Proposition~\ref{prop: realizing reduced bar complex as functions} $\mathcal{L}_\Sigma(V,\A)\otimes_k \mathcal{O}_{Y(V,\A)}$ is closed under taking derivatives, so Lemma~\ref{lemma: generalized polylogs contain regularized restrictions} implies that $\pi^*b^Y_{i,j}\in \mathcal{L}_\Sigma(V,\A)\otimes_k \mathcal{O}_{Y(V,\A)}$. Consider now
        \[
            h_Y=
            \begin{cases}
                \sum_{i=0}^{n_Y} \sum_{j=-N_Y}^{-1} \pi^*b^Y_{i,j} \left( u_X^\S\right)^{j} \log^i u_X^\S & \text{if $Y\neq X$}; \\
                \sum_{i=0}^{n_X} \sum_{j=-N_Y}^{-1} \pi^*b^X_{i,j} \left( u_X^\S\right)^{j} \log^i u_X^\S + \sum_{i=1}^{n_X} \pi^* b_{i,0}^X \log^iu_X^\S & \text{if $Y=X$},
            \end{cases}
        \]
        and set $H=\sum_{Y\in \S} h_Y \omega_Y\in \Omega^{l-1} \mathcal{L}_\Sigma(V,\A)$.\\
        It is straightforward to see that $dH$ is either zero or has a pole along $D_X$. Indeed, for $Y\neq X$
        \[
            d(h_Y\omega_Y) = \pm \frac{\partial h_Y}{\partial  u_Y^\S}\bigwedge_{Z\in \S} du_Z^\S= \sum_{i=1}^{n_Y} \sum_{j=-N_Y}^{-1} \frac{\partial \pi^*b^Y_{i,j}}{\partial  u_Y^\S} \left( u_X^\S\right)^{j} \log^i u_X^\S\bigwedge_{Z\in \S}du_Z^\S,
        \]
        while, for $Y=X$, up to the sign $d(h_X\omega_X)$ equals
        \[
            \left(\sum_{i=0}^{n_X} \sum_{j=-N_Y}^{-1} \pi^*b^X_{i,j} 
             \frac{(j-i\log u_X^\S) \log^{i-1} u_X^\S}{\left( u_X^\S\right)^{1-j}} +
            \sum_{i=1}^{n_X} i\pi^* b_{i,0}^X \frac{\log^{i-1}u_X^\S}{u_X^\S}\right) \bigwedge_{Z\in \S}du_Z^\S.
        \]
        On the other hand, set $G'=G-H$. Notice that $G'$ has no pole along $D_X$. To check this, it is enough to prove that $g_Y-h_Y$ has no pole along $D_X$ for all $Y\in X$. For $Y\neq X$, the Laurent expansion of $g_Y-h_Y$ at $\left\{u_X^\S=0\right\}$ reads
        \[
            g_Y-h_Y=\sum_{i=0}^{n_Y}\sum_{j=0}^\infty b_{i,j}^Y\left( u_X^\S\right)^{j} \log^i u_X^\S,
        \]
        which has at most logarithmic singularities along $\left\{u_X^\S=0\right\}$. For $Y=X$ we have
        \[
            g_X-h_X=\sum_{j=0}^\infty b_{0,j}^Y\left( u_X^\S\right)^{j}+\sum_{i=0}^{n_Y}\sum_{j=1}^\infty b_{i,j}^Y\left( u_X^\S\right)^{j} \log^i u_X^\S,
        \]
        which presents no poles along $D_X$ either.\\
        Moreover, $dG'$ also has at most logarithmic singularities along $D_X$. This is clear for $d(g_Y-h_Y)\omega_Y$ for $Y\neq X$, since it only involves taking the derivative in $u_Y^\S$. If instead $Y=X$, all powers of $\log u_X^\S$ in the above expansion of $g_X-h_X$ are multiplied by a positive power of $u_X^\S$. Since 
        \[
            \frac{\partial }{\partial u_X^\S}\left( \left(u_X^\S\right)^{j}\log^i u_X^\S\right)= \left(u_X^\S\right)^{j-1}\log^{i-1}u_X^\S \left(i+j\log u_X^\S \right), 
        \]
        for every $j\ge 1$ this function has no pole along $\left\{u_X^\S=0\right\}$, hence the same holds for $d(g_X-h_X)\omega_X$.\\
        By assumption, $F$ has no poles along $D_X$. Thus, $dH=dG-dG'=F-dG'$ has no poles along this divisor either. By our previous analysis of $dH$, we conclude that $dH=0$, hence $dG'=F$.\\
        Finally, if $G$ has at most logarithmic singularities along a divisor $D_Y$ for $Y$ in a maximal $\G$-nested set contained in $\Sigma$, the functions $\pi^*b_{i,j}$ satisfy the same property. This implies that also $H$ has no pole along $D_Y$, and thus the same is true for $G'$. We have therefore constructed a primitive $G'\in \Omega^{l-1}\mathcal{L}_\Sigma(V,\A)$ of $F$ which has at most logarithmic singularities along $D_X$ and which does not introduce further poles where $G$ has at most logarithmic singularities, as desired.
    \end{proof}
    \begin{remark}
    In the proof, it is crucial to use the retraction $\pi$ to turn the coefficients $b^Y_{i,j}$ into functions globally defined on $Y(V,\A)$. In the absence of such retraction, the differential form $H$ could only be defined locally in a neighborhood of $D_X$, thus preventing to run the same strategy to remove poles at the remaining boundary divisors.
    \end{remark}

    \subsection{Periods of arrangements with enough modular elements}
        
    This section is devoted to the proof pf Theorem~\ref{theo: periods of hyperplane arrangements with enough modular elements}. We start by introducing the precise setup in which to expose the main theorem.\\
    Suppose $(V,\A)$ is an essential $(l+1)$-hyperplane arrangement over $k$. Let $\mathcal{P}$ and $\mathcal{R}$ be disjoint subsets of $\G$ not containing $V^*$. We define
    \[
        D^\mathcal{P}=\bigcup_{P\in \mathcal{P}} D_P, \quad D^\mathcal{R}=\bigcup_{R\in \mathcal{R}} D_R.
    \]
    Consider a homology class
    \[
        \Delta\in H_l\left(\overline{Y}(V,\A,\G)\setminus D^\mathcal{P}, D^\mathcal{R}\setminus \left( D^\mathcal{R}\cap D^\mathcal{P}\right), k\right).
    \]
    In order to compute integrals of algebraic differential forms over $\Delta$, we need to impose a few restrictions on $\Delta$.\\
    Since our goal is to compute integrals over $\Delta$, by linearity of the integral we may assume that $\Delta$ is represented by a compact manifold with corners inside $\overline{Y}(V,\A,\G)$ whose boundary lies in $D^\mathcal{R}$ and which does not intersect $D^\mathcal{P}$. By abuse of notation we shall denote this manifold by $\Delta$ as well. \\
    Moreover, it is possible to assume that $\Delta$ is stratified according to the canonical stratification of $\overline{Y}(V,\A,\G)$ induced by the simple normal crossings divisor $D^\mathcal{R}$. Thus, $\Delta$ can be written as the disjoint union of locally closed submanifolds $\Delta^{(i)}$ for $i=0,\dots, l$ of real codimension $i$ in $\Delta$ such that $\Delta^{(i)}$ is contained in the intersection of exactly $i$ irreducible divisors $D_R$ with $R\in \mathcal{R}$. In particular, $\Delta^{(0)}$ consists of a finite set of points which are maximal intersections of irreducible boundary divisors. Consequently, each of these points corresponds to a maximal $\G$-nested set $\S\subseteq \G$; we denote by $\Sigma_\Delta$ the collection of these maximal $\G$-nested sets arising from $\Delta^{(0)}$. We also attach to every $\mathcal{S}\in \Sigma_\Delta$ an adapted basis in such a way that the open subsets $U_\S\cap \Delta$ of $\Delta$ for all $\S\in \Sigma_\Delta$ induce an atlas for $\Delta$ as a manifold with corners.\\
    From now on, let us assume that $(V,\A)$ has enough $\G$-modular elements. In order to avoid polydromy problems with the generalized multiple polylogarithms introduced in the previous sections, we introduce the following restriction on $\Delta$.
    \begin{definition}
        We say that $\Delta$ is \emph{admissible} if for all $x\in \Delta$ the map
        \[
            \pi_1(\Delta, x)\to \pi_1(\overline{Y}(V,\A,\G)\setminus D^\mathcal{P}, x)
        \]
        induced by the inclusion $\Delta\hookrightarrow \overline{Y}(V,\A,\G)\setminus D^\mathcal{P} $ is the zero map.
    \end{definition}
    \begin{lemma}
        \label{lemma: admissible homology classes}
        Assume that $(V,\A)$ has enough $\G$-modular elements; let $X\in \mathcal{R}$. If $\Delta$ is admissible, then the homology class
        \[
            \partial \Delta\cap D_X \in H_{l-1}\left(D_X\setminus (D_X\cap D^\mathcal{P}), D_X\cap D^\mathcal{R}\setminus \left(D_X\cap D^\mathcal{R}\cap D^\mathcal{P}\right), k\right).
        \]
        is admissible for the hyperplane arrangement associated with $D_X$.
    \end{lemma}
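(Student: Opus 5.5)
We have to show that for every $x\in \partial\Delta\cap D_X$ the map
\[
\pi_1(\partial\Delta\cap D_X, x)\to \pi_1\bigl(D_X\setminus (D_X\cap D^\mathcal{P}), x\bigr)
\]
is zero. The plan is to factor this map through $\Delta$, using the retraction $\pi\colon \overline{Y}(V,\A,\G)\to D_X$ of Proposition~\ref{prop: retractions to boundary divisors}, which exists because $(V,\A)$ has enough $\G$-modular elements.

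Since $\partial\Delta\cap D_X$ is a closed face of the manifold with corners $\Delta$, a loop $\gamma$ in $\partial\Delta\cap D_X$ based at $x$ is also a loop in $\Delta$. Admissibility of $\Delta$ then gives a null-homotopy $\Gamma$ of $\gamma$ inside $\overline{Y}(V,\A,\G)\setminus D^\mathcal{P}$. Push $\Gamma$ forward by $\pi$. Because $\pi$ is a retraction, $\pi\circ\gamma=\gamma$, so $\pi\circ\Gamma$ is a null-homotopy of $\gamma$ inside $D_X$. It remains to check that $\pi\circ\Gamma$ avoids $D_X\cap D^\mathcal{P}$.

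This is the main obstacle: we need $\pi\bigl(\overline{Y}(V,\A,\G)\setminus D^\mathcal{P}\bigr)\subseteq D_X\setminus D^\mathcal{P}$. The statement as stated is not true for an arbitrary retraction, so I would prove it from the coordinate description. Fix $\S$ containing $X$ and an $M$-adapted basis. On $U_\S$, the map $\pi$ keeps the coordinates $u^\S_Y$ for $Y\neq X$ and sets $u^\S_X=0$. Consequently, for $P\in\S$ with $P\neq X$, a point lies in $D_P$ exactly when its image does, since $\pi^{-1}(D_P\cap D_X)\cap U_\S=D_P\cap U_\S$.

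For general $P\in\mathcal{P}$ with $D_P\cap D_X\neq\emptyset$ (so $\{P,X\}$ is nested), I would argue globally. On $D_X\cong \overline{Y}_1\times\overline{Y}_2$, the divisor $D_P\cap D_X$ is the pullback of a boundary divisor of $\overline{Y}_1$ (if $P\subsetneq X$) or of $\overline{Y}_2$ (the one indexed by $(P+X)/X$, if $P\not\subseteq X$). On the other hand, $\pi_1$ and $\pi_2$ are restrictions of the coordinate projections $\prod_{Y\in\G}\mathbb{P}(V/Y^\perp)\to\prod_{Y\in\G\vert_X}$, respectively $\prod_{Y\in\G\vert_M}$. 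So the preimage of that boundary divisor is the boundary divisor $D_P$, respectively $D_{(P+X)\cap M}$, of $\overline{Y}(V,\A,\G)$; this uses Proposition~\ref{prop: restrictions into quotients of G-modular elts}(3).

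In the case $P\not\subseteq X$, the preimage is therefore $D_{(P+X)\cap M}$ rather than $D_P$, and I expect this mismatch to be the delicate point. I would try to resolve it in one of two ways.

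The first option is to choose $M$ with $M\supseteq P$ whenever possible. The preimage of $D_{(P+X)/X}$ under $\pi_2$ is the union of the $D_Q$ with $(Q+X)\cap M=(P+X)\cap M$. Since the retraction is only defined on $\overline{Y}(V,\A,\G)$, I would show that a path in the complement of $D^\mathcal{P}$ can be homotoped, staying in that complement, to one avoiding these extra $D_Q$. This works because those divisors have real codimension two, so a generic perturbation of $\Gamma$ avoids them.

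The second option is to use the general-position argument for $\pi\circ\Gamma$ directly. The set $D_X\cap D^\mathcal{P}$ has real codimension $2$ in $D_X$, and $\Gamma$ can be taken transverse to the finitely many codimension-$2$ subvarieties $\pi^{-1}(D_X\cap D_P)$. Two-dimensional disks generically meet real codimension-$2$ subsets only in isolated points, which is not sufficient by itself. So the real input must be that $\pi^{-1}(D_X\cap D^\mathcal{P})\setminus D^\mathcal{P}$ has codimension $2$, and $\Gamma$ must be chosen to avoid it. Since that set is the complement of a divisor inside a union of divisors, I would instead restrict to the open region where $\pi$ is smooth and use the fiber-bundle property of the modular projection (Proposition~\ref{prop: geometric meaning of modular element}) to lift homotopies in $D_X\setminus D^\mathcal{P}$.

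Finally, the relative-boundary condition (the boundary lies in $D_X\cap D^\mathcal{R}$) is immediate from the stratification of $\Delta$.
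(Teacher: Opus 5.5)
Your strategy is the paper's. Its proof puts $\pi_1(\partial\Delta\cap D_X,x)$, $\pi_1(\Delta,x)$, $\pi_1(D_X\setminus D^{\mathcal{P}},x)$ and $\pi_1(\overline{Y}\setminus D^{\mathcal{P}},x)$ (with $\overline{Y}=\overline{Y}(V,\A,\G)$) in a commutative square. It then declares the bottom map injective because $D_X\hookrightarrow\overline{Y}$ has a retraction, which is your push-forward of $\Gamma$ by $\pi$.

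\textbf{The key containment is false.} You are right that the argument silently needs $\pi(\overline{Y}\setminus D^{\mathcal{P}})\subseteq D_X\setminus D^{\mathcal{P}}$. Your proposal does not establish it, and it fails in general.
\begin{itemize}
\item Take the braid arrangement $\A_2$, so $\overline{Y}\cong\overline{\f{M}}_{0,5}$ is the blow-up of $\mathbb{P}^2$ at its four triple points. Take $X=\langle x_1\rangle$ and $M=\langle x_2,x_3\rangle$. Then $\pi$ is the lift of the projection from $[1:0:0]$ onto the line $x_1=0$. For $P=\langle x_1,x_2\rangle$ this gives $\pi^{-1}(D_X\cap D_P)=D_P\cup D_{\langle x_2\rangle}$. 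The only other choice, $M=\langle x_1-x_2,x_2-x_3\rangle$, gives $D_P\cup D_{\langle x_1-x_2\rangle}$.
\item For $X=\langle x_1,x_2\rangle\supsetneq P=\langle x_1\rangle$, $\pi$ is the projection from $[0:0:1]$, and $\pi^{-1}(D_X\cap D_P)=D_P\cup D_{\langle x_1,x_3\rangle}$.
\end{itemize}
So these preimages are not the single divisors $D_P$, resp.\ $D_{(P+X)\cap M}$, that you claim. Your computation on $U_\S$ is only local: points outside $U_\S$, here on $D_{\langle x_2\rangle}$, are mapped into $U_\S\cap D_X$.

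Neither of your repairs works.
\begin{itemize}
\item Choosing $M\supseteq P$ is impossible when $P\supsetneq X$, since $M\cap X=0$.
\item A generic $2$-dimensional null-homotopy meets real codimension-$2$ divisors in isolated points rather than missing them, as you note yourself in the second option.
\item The fibre-bundle lifting in the second option is not yet an argument.
\end{itemize}

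\textbf{No argument can close the gap for admissibility as defined.} Take $X=\langle x_1\rangle$ and $\mathcal{P}=\{\langle x_1,x_2\rangle,\langle x_1,x_3\rangle\}$. Then $D_X\setminus D^{\mathcal{P}}\cong\C^\times$. On the other hand, $\overline{Y}\setminus D^{\mathcal{P}}$ is a blow-up of $\mathbb{P}^2$ at two points with two further points removed, hence simply connected. So the bottom map of the square is $\Z\to 0$; the paper's one-line justification has the same gap. In particular, every null-homotopy in $\overline{Y}\setminus D^{\mathcal{P}}$ of a generator of $\pi_1(D_X\setminus D^{\mathcal{P}})$ must cross $\pi^{-1}(D_X\cap D^{\mathcal{P}})\setminus D^{\mathcal{P}}$.

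The statement itself fails. In the chart $x_1/x_3=st$, $x_2/x_3=t$ of the blow-up at $[0:0:1]$, take $\mathcal{R}=\{\langle x_1\rangle,\langle x_1-x_2\rangle\}$ and the annulus $\{s\in[0,1],\ |t|=1/2\}$.
\begin{itemize}
\item Its interior lies in $Y(V,\A_2)$, its boundary lies in $D_{\langle x_1\rangle}\cup D_{\langle x_1-x_2\rangle}$, and it misses $D^{\mathcal{P}}$.
\item It is admissible, yet its face $\{s=0\}$ generates $\pi_1(D_X\setminus D^{\mathcal{P}})$.
\item Functions in $\mathcal{L}_\S(V,\A)$ are not single-valued on it, so the proof of the main theorem really uses a stronger notion of admissibility.
\end{itemize}

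\textbf{What your push-forward argument does prove.} By Proposition~\ref{prop: retractions to boundary divisors}, $\pi$ maps $Y(V,\A)$ into the open stratum $Y(V/X^\perp,\A\vert_X)\times Y(X^\perp,\A/X)$ of $D_X$. Suppose admissibility is phrased as: loops in the interior of $\Delta$ are trivial in $\pi_1(Y(V,\A))$. Take a loop in the interior of the face, push it slightly into $\Delta$, contract it in $Y(V,\A)$, and apply $\pi$. This contracts the original loop in the open stratum of $D_X$.
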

    \begin{proof}
        For all $x\in \partial \Delta \cap D_X$ we look at the diagram
        \[
        \begin{tikzcd}
            \pi_1(\partial\Delta \cap D_X,x) \ar[r] \ar[d] & \pi_1(\Delta, x) \ar[d] \\
            \pi_1(D_X\setminus D_X\cap D^\mathcal{P},x) \ar[r] & \pi_1(\overline{Y}(V,\A,\G)\setminus D^\mathcal{P},x)
        \end{tikzcd}
        \]
        induced by the inclusions. The map in the bottom row is injective, since the inclusion $D_X\hookrightarrow \overline{Y}(V,\A,\G)$ admits a retraction $\pi \colon  \overline{Y}(V,\A,\G)\to D_X$ by Proposition~\ref{prop: retractions to boundary divisors}. Since the map in the right column is zero, it follows that also the one in the left column is zero.  
    \end{proof}
    \noindent
    We are now ready to state and prove our main result.
    \begin{theorem}
        \label{theo: periods of hyperplane arrangements with enough modular elements}
        Let $(V,\A)$ be an essential $(l+1)$-arrangement over $k$. Let $\G$ be a building set for $L(\A)$ and suppose that $(V,\A)$ has enough $\G$-modular elements. Fix two disjoint subsets $\mathcal{P},\mathcal{R}$ of $\G$ not containing $V^*$.\\
        Consider an admissible homology class 
            \[
                 \Delta\in H_l\left(\overline{Y}(V,\A,\G)\setminus D^\mathcal{P}, D^\mathcal{R}\setminus \left( D^\mathcal{R}\cap D^\mathcal{P}\right), k\right).
            \]
        Let $F\in W_i\Omega^l\mathcal{L}_{\Sigma_\Delta}(V,\A)$ be a differential form with possible poles only along the divisors $D_X$ for all $X\in \mathcal{P}$. Then
        \[
            \int_\Delta F \in W_{l+i}Z_{\Sigma_\Delta}(V,\A,\G).
        \]
    \end{theorem}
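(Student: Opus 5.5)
The proof goes by induction on $l=\dim Y(V,\A)$, following Brown's strategy: take a primitive of $F$, apply Stokes' theorem, and reduce to the boundary divisors $D_X$, $X\in\mathcal{R}$.

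\emph{Base case.} For $l=1$ the class $\Delta$ is a path between points of $D^\mathcal{R}$. A primitive of $F$ lies in $\mathcal{L}_{\Sigma_\Delta}$ and has weight at most $i+1$. Evaluating its regularized values at the endpoints gives coefficients of the associators $G(\S_1,\S_2)$ times elements of $Z_{\Sigma_\Delta}$. These lie in $W_{1+i}Z_{\Sigma_\Delta}$, and $F_\Sigma$ is contained in $Z_\Sigma$ by Proposition~\ref{prop: decomposition of associators}.

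\emph{Inductive step, constructing the primitive.} By Corollary~\ref{cor: generalized polylogs have primitives}, supersolvability (which follows from having enough $\G$-modular elements) gives a primitive $G\in W_{i+1}\Omega^{l-1}\mathcal{L}_{\Sigma_\Delta}(V,\A)$ of $F$. For Stokes we need $G$ pole-free along every $D_X$ that meets $\Delta$. We apply Proposition~\ref{prop: regularizing primitives}, though not with poles forbidden along all divisors: $F$ may have poles along $D_P$ with $P\in\mathcal{P}$, but $\Delta$ avoids $D^\mathcal{P}$. So we use the variant of its proof that removes poles only along $D_X$ for $X\in\mathcal{R}$. That argument is local to each $X$ and uses the retraction $\pi\colon\overline{Y}(V,\A,\G)\to D_X$. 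It does not increase weight, because the terms $\pi^*b_{i,j}$ come from regularized restrictions of derivatives (Lemma~\ref{lemma: generalized polylogs contain regularized restrictions}). Admissibility of $\Delta$ guarantees that $G$ has a single-valued branch on $\Delta$. In the charts $U_\S\cap\Delta$, $\S\in\Sigma_\Delta$, its coefficients then lie in $\F^{\log}$, since the functions in $\mathcal{L}$ are $f_\S\prod (u_Y^\S)^{t_Y}$, i.e. holomorphic times polynomials in logarithms. Theorem~\ref{theo: Stokes} now gives
\[
\int_\Delta F=\sum_{X\in\mathcal{R}}\int_{\partial\Delta\cap D_X}\mathrm{Reg}(G,D_X).
\]

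\emph{Inductive step, identifying the boundary terms.} Each $\mathrm{Reg}(G,D_X)$ is computed via Proposition~\ref{prop: restriction of holonomy equation to the boundary}, which says $L_\S(u_X^\S)^{-t_X}$ restricts to $L_{\S\vert_X}L_{\S/X}$. Hence the regularized restriction of a coefficient of $L_\S$ is a coefficient of $L_{\S\vert_X}L_{\S/X}$, an element of the generalized polylogarithms on $D_X\cong \overline{Y}_1\times\overline{Y}_2$, with the same weight bound $i+1$. Its poles lie only along $D_X\cap D^\mathcal{P}$. Lemma~\ref{lemma: admissible homology classes} shows that $\partial\Delta\cap D_X$ is admissible. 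The arrangement $\A\vert_X\sqcup\A/X$ has enough modular elements by the preceding lemmas, so the induction hypothesis applies in dimension $l-1$ (or to a product, by Fubini, with the theorem applied to each factor). It places the boundary integral in $W_{l-1+i+1}Z_{\Sigma'}$ for the induced family $\Sigma'$.

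\emph{Assembling.} Finally, Proposition~\ref{prop: decomposition of associators} shows that the associator algebras of the boundary arrangements, for families induced from $\Sigma_\Delta$, embed weight-compatibly into $Z_{\Sigma_\Delta}(V,\A,\G)$. The $\Sigma'$-admissible nested sets of $D_X$ correspond to $\Sigma_\Delta$-admissible ones containing $X$. This gives $\int_\Delta F\in W_{l+i}Z_{\Sigma_\Delta}$.

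The main obstacle is the bookkeeping in this last step. We must check that the weight filtration is respected when passing to $D_X$; in particular the extra $\log\alpha$ and $2\pi i$ factors from regularization should count with weight one. We must also check that $\Sigma'$ is exactly the family induced on $\partial\Delta\cap D_X$.
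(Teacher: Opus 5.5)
Your proposal is essentially the paper's proof. It has the same steps: induction on $l$; a primitive free of poles along the boundary of $\Delta$ from Proposition~\ref{prop: regularizing primitives}; Stokes' theorem; restriction to $D_X$ via Proposition~\ref{prop: restriction of holonomy equation to the boundary}; Lemma~\ref{lemma: admissible homology classes}; and assembly via Proposition~\ref{prop: decomposition of associators}. You do not need a variant of Proposition~\ref{prop: regularizing primitives}: it applies as stated, because every element other than $V^*$ of a nested set in $\Sigma_\Delta$ lies in $\mathcal{R}$.

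The one thing to tighten is your first option in the inductive step. $D_X\cong\overline{Y}_1\times\overline{Y}_2$ is not the compactification of a single $(l-1)$-dimensional arrangement, since the direct sum $\A\vert_X\sqcup\A/X$ has $l$-dimensional projective complement. So you must take your parenthetical route, as the paper does: K\"unneth-decompose $\partial\Delta\cap D_X$, write $G\vert_{D_X}=\sum_r G^{(1)}_{X,r}G^{(2)}_{X,r}$, and apply Fubini and the induction hypothesis to each factor. The weight bookkeeping you flag is also left by the paper to Proposition~\ref{prop: decomposition of associators} without further argument.
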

    \begin{proof}
        By linearity of the integral, we may assume that $\Delta$ is connected. We argue by induction on the dimension $l$ of $Y(V,\A)$.\\
        Let us first deal with the case $l=1$, so we have $Y(V,\A)(\C)=\mathbb{P}^1_k(\C)\setminus \{ a_1,\dots, a_n\}$ with $a_1,\dots, a_n\in k\cup\{\infty\}$. Up to renaming the $a_i$'s and writing $\Delta$ as a sum of several paths, we may assume that $\mathcal{R}$ corresponds to the points $\{a_1,a_2\}$ and $\Delta$ is a path from $a_1$ to $a_2$. Since $\Delta$ is admissible, it does not wind around any $a_i$ with $i\neq 1,2$.\\
        Let $F$ be given as in the statement. According to Proposition~\ref{prop: regularizing primitives} there exists a primitive $G\in W_{i+1}\mathcal{L}_{\Sigma_\Delta}(V,\A)\otimes_k \mathcal{O}_{Y(V,\A)}$ of $F$ which has no poles at $a_1$ and $a_2$. Thus, $G$ extends continuously to the boundary of $\Delta$ and by Theorem~\ref{theo: Stokes}
        \[
            \int_\Delta F=\int_{\partial \Delta} G = G(a_2)-G(a_1).
        \]
        Let $L_1$ and $L_2$ be the solutions of the holonomy equation localized at $a_1$ and $a_2$ respectively. It is apparent that $G(a_r)$ belongs to the $k$-algebra $C_r$ generated by the coefficients of the series $\text{Reg}(L_1, a_r)$ for $r=1,2$. Since $\text{Reg}(L_1, a_1)=1$ by the boundary condition imposed on $L_1$, we have $C_1=k$. On the other hand, there is a constant series $G_{1,2}$ such that $L_2=L_1G_{1,2}$. Given that $G_{1,2}$ is constant, we have
        \[
            G_{1,2}=\text{Reg}(G_{1,2},a_2)=\text{Reg}(L_2^{-1}L_1,a_2) = \text{Reg}(L_1,a_2).
        \]
        This shows that $C_2$ coincides with $Z_{\Sigma_\Delta}(V,\A,\G)$, hence the validity of the claim for the one-dimensional case.\\
        Suppose now that the statement holds up to dimension $l-1$. Consider a differential form $F\in W_i\Omega^l\mathcal{L}_{\Sigma_\Delta}(V,\A)$ with possible poles only along the irreducible components of $D^\mathcal{P}$. By Proposition~\ref{prop: regularizing primitives} there is $G\in W_{i+1}\Omega^{l-1} \mathcal{L}_{\Sigma_\Delta}(V,\A)$ that has no poles along $D_X$ for $X\in \mathcal{R}$ and satisfies $dF=G$. Since $\Delta$ is admissible, both $F$ and $G$ are single-valued on $\Delta$. By Theorem~\ref{theo: Stokes}, $G$ extends continuously to the boundary of $\Delta$ and 
        \[
            \int_\Delta F= \int_{\partial\Delta} G= \sum_{X\in \mathcal{R}}\int_{\partial \Delta \cap D_X} G.
        \]
        For all $X\in \mathcal{R}$ we know that 
        \[
            D_X\cong \overline{Y}(V/X^\perp, \A\vert_X, \G\vert_X)\times \overline{Y}(X^\perp, \A/X, \G/X).
        \]
        Let $\Sigma_{\Delta, X}$ be the subset of $\Sigma$ consisting of those maximal $\G$-nested sets which contain $X$. We further define
        \[
            \Sigma_{\Delta,X}^{(1)}=\left\{\S\vert_X \mid \S\in \Sigma_{\Delta,X} \right\}, \quad 
            \Sigma_{\Delta,X}^{(2)}=\left\{\S/X \mid \S\in \Sigma_{\Delta,X} \right\},
        \]
        with the convention that the associated adapted bases are naturally induced by the adapted bases in $\Sigma$. Notice that $\Sigma_{\Delta,X}^{(1)}$ and $\Sigma_{\Delta, X}^{(2)}$ are families of maximal $\G\vert_X$- and $\G/X$-nested sets, respectively.\\
        By Proposition~\ref{prop: restriction of holonomy equation to the boundary}, $G\vert_{D_X}$ lies in
        \begin{gather*}
            W_{i+1}\Omega^{l-1}\left(\mathcal{L}_{\Sigma_{\Delta, X}^{(1)}}(V/X^\perp, \A\vert_X)\otimes_k \mathcal{L}_{\Sigma_{\Delta,X}^{(2)}}(X^\perp, \A/X) \right)\\
            \cong W_{i+1}\left(\Omega^{l_1} \mathcal{L}_{\Sigma_{\Delta, X}^{(1)}}(V/X^\perp, \A\vert_X)\otimes_k \Omega^{l_2}\mathcal{L}_{\Sigma_{\Delta,X}^{(2)}}(X^\perp, \A/X) \right),
        \end{gather*}
        where 
        \begin{gather*}
            l_1=\dim \overline{Y}(V/X^\perp, \A\vert_X, \G\vert_X)=\dim X -1, \\
            l_2=\dim \overline{Y}(X^\perp, \A/X, \G/X)=\dim V-\dim X-1=l-1-\dim X.
        \end{gather*}
        We may thus write
        \[
            G\vert_{D_X}=\sum_{r=1}^{n_X} G_{X,r}^{(1)}G_{X,r}^{(2)}
        \]
        with $G_{X,r}^{(1)}\in \Omega^{l_1}\mathcal{L}_{\Sigma_{\Delta, X}^{(1)}}(V/X^\perp, \A\vert_X)$, $G_{X,r}^{(2)}\in \Omega^{l_2}\mathcal{L}_{\Sigma_{\Delta,X}^{(2)}}(X^\perp, \A/X) $ in such a way that the weight of their product is at most $i+1$.\\
        On the other hand, let $\mathcal{P}_X$ be the set of $P\in \mathcal{P}$ such that $\{X,P\}$ is $\G$-nested, that is, $D_P\cap D_X\neq \emptyset$. Notice that $\mathcal{P}_X$ defines two subsets $\mathcal{P}_X^{(1)}$ and $\mathcal{P}_X^{(2)}$ of $\G\vert_X$ and $\G/X$ respectively. Define $\mathcal{R}_X$, $\mathcal{R}_X^{(1)}$, $\mathcal{R}_X^{(2)}$ in an analogous way. We then have an isomorphism
        \begin{gather*}
            H_{l-1}\left( D_X\setminus D^{\mathcal{P}_X}, D^{\mathcal{R}_X}\setminus \left(D^{\mathcal{R}_X}\cap D^{\mathcal{P}_X} \right),k \right) \cong \\
            \cong \bigoplus_{a+b=l-1} H_{a}\left( \overline{Y}(V/X^\perp, \A\vert_X, \G\vert_X)\setminus D^{\mathcal{P}_X^{(1)}}, D^{\mathcal{R}^{(1)}_X}\setminus \left(D^{\mathcal{R}^{(1)}_X}\cap D^{\mathcal{P}^{(1)}_X} \right), k \right)\otimes_k\\
            \otimes_k H_{b}\left(\overline{Y}(X^\perp, \A/X, \G/X)\setminus D^{\mathcal{P}_X^{(2)}}, D^{\mathcal{R}^{(2)}_X}\setminus \left(D^{\mathcal{R}^{(2)}_X}\cap D^{\mathcal{P}^{(2)}_X} \right), k \right).
        \end{gather*}
        The summand of $\partial \Delta \cap D_X$ corresponding to the pair $(a,b)=(l_1,l_2)$ may be written as
        \[
            (\partial\Delta \cap D_X )_{(l_1,l_2)}= \sum_{s=1}^{m_X} \Delta_{X,s}^{(1)}\otimes \Delta^{(2)}_{X,s}. 
        \]
        with $\Delta_{X,s}^{(1)}$ and $\Delta^{(2)}_{X,s} $ in the homology groups suggested by the notation. This summand of $\partial\Delta\cap D_X$ is the only one appearing in the computation of the integral for dimension reasons.\\
        By Fubini's theorem we therefore deduce that 
        \[
            \int_\Delta F = \sum_{X\in \mathcal{R}}\int_{\partial \Delta \cap D_X} G = 
            \sum_{X\in \mathcal{R}} \sum_{r=1}^{n_X} \sum_{s=1}^{m_X} \int_{\Delta^{(1)}_{X,s}}  G_{X,r}^{(1)} \int_{\Delta_{X,s}^{(2)}} G_{X,r}^{(2)}.
        \]
        For $t=1,2$ the integral
        \[
            \int_{\Delta^{(t)}_{X,s}}  G_{X,r}^{(t)} 
        \]
        satisfies the assumptions needed for the induction hypothesis. Indeed, the holomogy class $\Delta^{(t)}_{X,s}$ is admissible by Lemma~\ref{lemma: admissible homology classes}. Moreover, the differential form $G_{X,r}^{(t)}$ lies in $ \Omega^{l_1}\mathcal{L}_{\Sigma_{\Delta, X}^{(1)}}(V/X^\perp, \A\vert_X)$ or $ \Omega^{l_2}\mathcal{L}_{\Sigma_{\Delta,X}^{(2)}}(X^\perp, \A/X)$ and has possible poles only along $D^{\mathcal{P}_X^{(1)}}$ or $D^{\mathcal{P}_X^{(2)}}$. The claim then follows by induction thanks to Proposition~\ref{prop: decomposition of associators}.
    \end{proof}

    \subsection{One-dimensional arrangements}
        
    In this section, we study in more detail the holonomy equation for one-dimensional hyperplane arrangements. This is a classical topic; we follow the exposition and the notation of \cite{Brown-Multiple_zeta_values_and_periods_of_moduli_spaces_of_curves}.\\
    Consider a one-dimensional hyperplane arrangement over $k$, corresponding to $\mathbb{P}^1$ with the points $\sigma_0,\sigma_1,\dots, \sigma_n\in k$ and $\infty$ removed. We may assume that $\sigma_0=0$ and $\sigma_1=1$. In our usual notation for hyperplane arrangements, this means to consider a two-dimension $k$-vector space $V=kx_1\oplus kx_2$ with arrangement $\A$ given by the lines $\langle x_1\rangle$, $\langle x_2\rangle$, $\langle x_1-\sigma_i x_2\rangle$ for all $i=1,\dots, n$. The only building set for $L(\A)$ is $L(\A)$ itself. To work in local charts, we always endow the nested set $\{\langle x_1-\sigma_i x_2\rangle, V^*\} $ with the adapted basis $\gamma$ such that $\gamma(\langle x_1-\sigma_i x_2\rangle)= x_1-\sigma_i x_2$ and $\gamma(V^*)=x_2$.\\
    Let $\omega_i=d\log (z-\sigma_i)$. Since $\mathbb{P}^1$ has dimension $1$, the reduced bar complex of this arrangement is isomorphic to the free shuffle $k$-algebra in the indeterminates $\omega_0,\dots,\omega_n$. Let $L$ be the solution to the holonomy equation with boundary condition at $0$, that is, at the nested set $\{\langle x_1\rangle, V^*\}$. This can be then written as
    \[
        L(z)=\sum_{m=0}^\infty \sum_{I=(i_0,\dots,i_m)} L_I(z) \omega_{i_0}^\lor\dots \omega_{i_m}^\lor
    \]
    and the algebra of generalized multiple polylogarithms is generated by the functions $L_I$'s, which are called \emph{hyperlogarithms}. For a simpler notation, let $\beta$ be the standard basis of the free shuffle algebra over $\omega_0,\dots,\omega_n$ given by monomials. We then write 
    \[
        L(z)=\sum_{w\in\beta} L_w(z) w^\lor,
    \]
    so $L_I(z)=L_w(z)$ for $w=[\omega_{i_0}\vert\dots \vert \omega_{i_m}]$.\\
    In this case, the holonomy equation is equivalent to the system of differential equations
    \[
        \frac{d}{dz} L_{\omega_jw}(z) = \frac{L_{w}(z)}{z-\sigma_{j}}
    \]
    for all $j=0,\dots, n$ and $w\in\beta$.\\
    Taking into account the boundary condition at $\sigma_0=0$, one can check that for all $m\ge 1$
    \[
        L_{[\omega_0^m]}(z)=\frac{1}{m!}\log^m z.
    \]
    On the other hand, take $w=[\omega_{i_0}\vert\dots \vert \omega_{i_m}]$ with $i_m\neq 0$. We then have
    \[
        L_{\omega_j w}(z)=\int_0^z\frac{L_{w}(t)}{z-\sigma_j}dt,
    \]
    which we think of as an equality between multi-valued functions. The condition $i_m\neq 0$ is there to ensure convergence of the integral in the right-hand side. Inductively, this represents $L_w(z)$ as the iterated integral
    \[
        L_w(z)=\int_0^z \frac{1}{t_0-\sigma_{i_0}}\int_0^{t_0}\frac{1}{t_1-\sigma_{i_1}}\dots \int_0^{t_{m-1}}\frac{1}{t_m-\sigma_{i_m}}dt_1\dots dt_m,
    \]
    which will also be denoted by $\int_0^z w$ for short.\\
    Let $\beta_c$ be the subset of $\beta$ consisting of those monomials which do not end with $\omega_0$. To extend the definition of $L_w(z)$ to all elements of the reduced bar complex, it suffices to observe that every $w\in\beta$ can be written as 
    \[
        w=\sum_{v\in\beta_c}\sum_{n\ge0} a_{v,n} v \,\sh\, [\omega_0^n]
    \]
    for some uniquely determined $a_{v,n}\in k$. Since the functions $L_w$ are $k$-algebra homomorphisms in $w$, this extends the definition of $L_w$ to all elements of the reduced bar complex.\\
    We may describe these functions more explicitly locally around zero by introducing multiple polylogarithms. Take $r\ge 1$ and let $n_1,\dots, n_r\in \Z$ be positive integers. The multiple polylogarithm associated with $(n_1,\dots, n_r)$ is defined in a neighborhood of the origin by the power series
    \[
        \text{Li}_{n_1,\dots, n_r}(z_1,\dots, z_r)=\sum_{0<k_1<\dots <k_r} \frac{z_1^{k_1}\dots z_r^{k_r}}{k_1^{n_1}\dots k_r^{n_r}}  
    \]
    in the complex variables $z_1,\dots, z_r$. This power series converges absolutely for $|z_i|<1$. If $n_r\ge 2$, convergence extends to $|z_i|\le 1$. Our convention for the notation agrees with that of \cite{Brown-Multiple_zeta_values_and_periods_of_moduli_spaces_of_curves}, while the definition of \cite{Zhao-Analytic_continuation_of_multiple_polylogarithms} coincides with $\text{Li}_{n_r,\dots, n_1}(z_r,\dots, z_1)$. The quantity $n_1+\dots+n_r$ is called \emph{weight} of the multiple polylogarithm $\text{Li}_{n_1,\dots, n_r}(z_1,\dots,z_r)$.\\
    Given $w\in \beta_c$, we may write
    \[
        w=[\omega_0^{n_r-1}\vert \omega_{j_r}\vert \omega_0^{n_{r-1}-1}\vert \omega_{j_{r-1}}\vert\dots \vert \omega_0^{n_1-1}\vert \omega_{j_1}]
    \]
    with $\omega_{j_1\neq 0}$. For $k=1,\dots, n$, set $h_k=n-j_k+1$ and define $y_1,\dots, y_n$ as follows:
    \[
        y_1=(\sigma_2\dots \sigma_n)^{-1},\;\dots,y_{n-2}=(\sigma_2\sigma_3)^{-1}, \;y_{n-1}=\sigma_2^{-1}.
    \]
    Thus, we have
    \[
        \sigma_2=y_{n-1}^{-1},\; \sigma_3=(y_{n-2}y_{n-1})^{-1}, \; \dots,\; \sigma_l=(y_1\dots y_{n-1})^{-1}.
    \]
    By computing explicitly the derivatives of the polylogarithms using their series expansion, it is possible to see that 
    \[
        L_w(y_n)= (-1)^r\text{Li}_{n_1,\dots, n_r}\left( \frac{y_{h_1}\dots y_{n}}{y_{h_2}\dots y_n},\dots, \frac{y_{h_{r-1}}\dots y_n}{y_{j_r}\dots y_n},y_{h_r}\dots y_n\right).
    \]
    Although in the right-hand side we are computing a multiple polylogarithm at some points of absolute value possibly greater than $1$, the corresponding series still converges for $|y_n|$ sufficiently small. \\
    Indeed, notice that the variable $y_n$ always appears in the last argument of the right-hand side. Let $z_1,\dots, z_r$ be complex numbers such that for $i=1,\dots, r-1$ we have a uniform upper bound of the form $|z_i|<C$ for some $C>0$. Suppose that $|z_r|<C^{-r+1}$. For all $k_1,\dots k_r$ with $0<k_1<\dots < k_r$ we then have
    \[
        \left|\frac{z_1^{k_1}\dots z_r^{k_r}}{k_1^{n_1}\dots k_r^{n_r}} \right|\le \frac{C^{k_1+\dots +k_{r-1}-(r-1)k_r}}{k_1^{n_1}\dots k_r^{n_r}}<\frac{1}{k_1^{n_1}\dots k_r^{n_r}}.
    \]
    Hence, the convergence of the series defining $\text{Li}_{n_1,\dots, n_r}(z_1,\dots, z_r)$ for this choice of parameters is ensured.\\
    The formula above expresses hyperlogarithms in terms of multiple polylogarithms. Conversely, along the same lines one can express multiple polylogarithms in terms of iterated integrals, thus recovering analytic continuation for polylogarithms.\\
    For $i=1,\dots, n$ let $L_i$ be the solution of the holonomy equation localized at $\sigma_i$. Consider the associator $G_i$ such that $L=L_iG_i$. To compute the coefficients of $G_i$, we may observe that 
    \[
        G_i=\text{Reg}(G_i,\sigma_i)=\text{Reg}(LL_i^{-1},\sigma_i)=\text{Reg}(L,\sigma_i).
    \]
    It is easily seen that regularizing $L$ with respect to $\sigma_i$ means to only consider the words $w$ in the reduced bar complex that do not start with $\omega_i$. The coefficients of $G_i$ are therefore obtained by evaluating at $z=\sigma_i$ the functions $L_w(z)$ for all $w$ which do not start with $\omega_i$. When allowed by convergence, this means that the coefficients of $G_i$ are values of multiple polylogarithms evaluated at a specific set of points. It is worth mentioning that these coefficients are the main ingredients of the description of the integrals computed by Theorem~\ref{theo: periods of hyperplane arrangements with enough modular elements}.\\

    Let us now focus on the main case of interest. Let $q\ge 1$ be an integer and $\mu$ a primitive $q$-th root of unity in $\C$. Consider the one-dimensional arrangement defined over $\Q(\mu)$ obtained by removing from $\mathbb{P}^1$ the points $0$, $\infty$ and all the $q$-th roots of unity.\\
    The discussion above implies that the coefficients of the associators of this arrangement are numbers of the form 
    \[
        \text{Li}_{n_1,\dots, n_r}\left( \mu^{s_1},\dots, \mu^{s_r}\right)=
        \sum_{0< k_1<\dots<k_r}\frac{\mu^{k_1s_1+\dots + k_rs_r}}{k_1^{n_1}\dots k_r^{n_r}}
    \]
    for arbitrary $s_1,\dots, s_r\in \Z$. These numbers are called \emph{multiple polylogarithmic values}. \\
    A special case worth mentioning is $q=1$, which yields \emph{multiple zeta values}:
    \[
        \zeta(n_1,\dots, n_r)=\sum_{0<k_1<\dots<k_r} \frac{1}{k_1^{n_1}\dots k_r^{n_r}}
    \]
    for integers $n_1,\dots, n_r\ge 1$ with $n_r\ge 2$. The $\Q$-vector space generated by multiple zeta values is a $\Q$-algebra naturally filtered by the weight. It is conjectured that the weight actually defines a grading of this algebra. Combined with some known results \cite{Brown-Mixed_Tate_motives_over_Z}, this conjecture would imply the algebraic independence of odd zeta values together with $\pi$. There are several known relations between multiple zeta values of the same weight; for a detailed account on the subject, we refer to \cite{Burgos-Gil-Fresan-Multiple_zeta_values_from_numbers_to_motives}.\\
    For $q\ge 1$, among noticeable examples of numbers which are $\Q(\mu)$-linear combinations of multiple polylogarithmic values there is Catalan's constant
    \[
        G=\sum_{n=0}^\infty \frac{(-1)^n}{(2n+1)^2}
        =-\int_{0}^1\frac{\log x}{1+x^2} \,dx,
    \]
    whose irrationality is still an open problem. In general, values of the $\beta$-function
    \[
        \beta(s)=\sum_{n=0}^\infty \frac{(-1)^n}{(2n+1)^s}
    \]
    at positive integers greater than $1$ can also be expressed in terms of multiple polylogarithmic values. \\
    The ones just mentioned are particular instances of Dirichlet $L$-values. Given a Dirichlet character $\chi$, the associated Dirichlet $L$-function is defined as
    \[
        L(\chi,s)=\sum_{n=1}^\infty \frac{\chi(n)}{n^s}
    \]
    for $\text{Re}(s)>1$. Since Dirichlet characters only assume values in zero and roots of unity, it is not difficult to express $L(\chi, n)$ for $n\ge 2$ as a $\Q(\mu)$-linear combination of multiple polylogarithmic values. To draw this connection more explicitly, it is first useful to consider the Hurwitz zeta function
    \[
        \zeta(s,x)=\sum_{n=0}^\infty \frac{1}{(n+x)^s},
    \]
    which converges absolutely for $\text{Re}(s)>1$ and $x\in \C$, $x\neq 0,-1,-2,\dots$ and has analytic continuation to $s\in \C\setminus\{1\}$. \\
    By manipulating the involved series, one can check that 
    \[
        L(\chi,s)=\frac{1}{k^s}\sum_{m=1}^k\chi(m)\zeta\left(s,\frac{m}{k} \right),
    \]
    where $k$ is the conductor of $\chi$. On the other hand, the Hurwitz zeta function satisfies a functional equation \cite{Fine-Note_on_the_Hurwitz_zeta_function} of the following form:
    \[
        \frac{(2\pi)^s}{\Gamma(s)}\zeta(1-s,x)=i^{-s}\text{Li}_s\left( e^{2 \pi i x} \right) + i^s \text{Li}\left( e^{-2 \pi i x} \right)
    \]
    for all $x\in \C$ with $0<\text{Re}(x)<1$. Evaluating this identity at rational $x$ and integer $s$ yields the desired relation between Dirichlet $L$-values and multiple polylogarithmic values.\\
    Recent irrationality proofs for odd zeta values \cites{Fischler-Sprang-Zudilin-Many-odd-zeta_values_are_irrational, Lai-Yu-A_note_on_the_number_of_irrational_odd_zeta_values} exploit linear forms in Hurwitz zeta values. Writing these in terms of polylogarithms draws a connection with periods of algebraic varieties, which should, in turn, give geometric inputs to irrationality proofs.

    \subsection{The reflection arrangement of the full monomial group}
        
    In this section, we sketch how to deduce Theorem~\ref{theo: intro full monomial group} from Theorem~\ref{theo: intro periods of arrangements with enough modular elements} in the introduction. We also hint at possible applications to irrationality proofs.\\
    Let $q\ge 1$ and fix a primitive $q$-th root of unity $\mu$. Set $k=\Q(\mu)$ and let $V$ be a $k$-vector space of dimension $l$ with a fixed basis $x_1, \dots, x_l$ for the dual space $V^*$. Recall the reflection arrangement of the full monomial group from Definition~\ref{def: arrangement full monomial group}
    \[
        \A_{l,q}=\left\{\langle x_1\rangle ,\dots,\langle x_{l+1}\rangle \right\} \cup \{ \langle x_i-\mu^n x_j\rangle \mid n=1,\dots, q,\; i,j=1,\dots,l+1, \;i\neq j \}.
    \]
    As we have seen throughout the exposition, this arrangement has enough $\F(\A_{l,q})$-modular elements, and every one-dimensional hyperplane arrangement obtained from $\A_{l,q}$ by iterated restrictions and quotients is isomorphic to $\A_{1,q}$ or $\A_{1,1}$. \\
    We now isolate some admissible homology classes of $\overline{Y}(V,\A_{l,q}, \F(\A_{l,q}))$ relative to the boundary divisor. Given $\sigma\in \text{Sym}_{l+1}$ and $n=(n_0,\dots, n_l)$ with $n_i\in\{1,\dots, q\}$, we define $\Delta_{\sigma, n}$ as the subset of the complex points of $Y(V,\A)$ defined by the following conditions in projective coordinates $[x_1:\dots:x_{l+1}]$:
    \begin{gather*}
        0<|x_{\sigma(1)}|<\dots < |x_{\sigma(l+1)}|, \\
        \text{$\text{arg}(\mu^{-n_i}x_i)=\text{arg}(\mu^{-n_j}x_j)$ for all $i,j=1,\dots,l+1$}.
    \end{gather*}
    In affine coordinates $t_i=\frac{x_i}{x_{l+1}}$, with $t_{l+1}=1$, these conditions are equivalent to
    \begin{gather*}
        0<|t_{\sigma(1)}|<\dots < |t_{\sigma(l+1)}|, \\
        \text{$\text{arg}(t_i)=\frac{2\pi }{q}(n_i-n_0)$ for all $i=1,\dots, l$},
    \end{gather*}
    choosing the standard branch of the argument. Let us denote by $\overline{\Delta}_{\sigma,n}$ the closure of $\Delta_g$ in $\overline{Y}(V,\A_{l,q},\F_{l,q})(\C)$ in the analytic topology. This defines an admissible $l$-homology class of $\overline{Y}(V,\A_{l,q},\F(\A_{l,q}))$ relative to the boundary divisor. \\
    The irreducible divisors $D_X$ for $X\in \F(\A_{l,q})$ that intersect the boundary of $\overline{\Delta}_{\sigma,n}$ are precisely the ones with 
    \begin{enumerate}
        \item either $X=X(\delta)$ for some $\delta=\{\sigma(1),\sigma(2),\dots, \sigma(i)\}$ with $i\le l$;
        \item or $X=X(\lambda,\nu)$ with $\lambda=\{\sigma(i),\sigma(i+1),\dots, \sigma(j)\}$ for some $i<j$ and $\nu(k)=\mu^{n_k}$.
    \end{enumerate}
    For such choices of $\delta$ we assign the vector $ \mu^{-n_{\sigma_i}}x_{\sigma(i)}$, while for such $\lambda,\nu$ the vector $\mu^{-n_{\sigma(j)}}x_{\sigma(j)}-\mu^{-n_{\sigma(i)}}x_{\sigma(i)}$. This allows us to define adapted bases for each maximal $\F(\A_{l,q})$-nested set that contain only elements of $\F(\A_{l,q})$ whose associated divisor intersects the boundary of $\overline{\Delta}_{\sigma,n}$. The standard local charts on $\overline{Y}(V,\A_{l,q},\F(\A_{l,q}))$ for these adapted bases induce a structure of manifold with corners on $\overline{\Delta}_{\sigma,n}$. \\
    Let $\Sigma$ be the set of these maximal $\F(\A_{l,q})$-nested set with such adapted bases. By working explicitly in coordinates, one may check that $Z_{\Sigma}(V,\A_{l,q},\F(\A_{l,q}))=\Q(\mu)[2\pi i]$, in view of Remark~\ref{remark: logarithms reduce to 2 pi i}. Combining all these properties together with the observations of the previous section, this leads to the following
    \begin{theorem}
    \label{theo: full monomial group}
        Let $\omega$ be an algebraic $l$-differential form on $Y(V,\A_{l,q})$ which has no poles along the irreducible boundary divisors intersecting the boundary of $\overline{\Delta}_{\sigma,n}$. Then the integral
        \[
            \int_{\overline\Delta_{\sigma,n}} \omega
        \]
        is a $\Q(\mu) [2\pi i]$-linear combination of multiple polylogarithmic values of weight at most $l$.
    \end{theorem}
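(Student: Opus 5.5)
The plan is to specialize Theorem~\ref{theo: periods of hyperplane arrangements with enough modular elements} to $(V,\A_{l,q})$ with $\G=\F(\A_{l,q})$, $\mathcal{P}=\emptyset$, $\mathcal{R}$ the set of $X$ listed in (1)--(2), and $\Delta=\overline{\Delta}_{\sigma,n}$, with $\Sigma_\Delta=\Sigma$. Then we identify $Z_{\Sigma}(V,\A_{l,q},\F(\A_{l,q}))$ via Proposition~\ref{prop: decomposition of associators} and the one-dimensional analysis.

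\textbf{Step 1: check the hypotheses.} We need that $(V,\A_{l,q})$ has enough $\F(\A_{l,q})$-modular elements. The paper has already noted this: the hyperplanes $\langle x_i\mid i\neq j\rangle$ are $\F$-modular, and their intersection is zero, so Lemma~\ref{lemma: criterion for enough modular elements} applies in its $\G$-version. Next, $\overline{\Delta}_{\sigma,n}$ is admissible. I would argue this by noting that it is the closure of a contractible real cell: the $\mu$-rotated coordinates have fixed arguments and ordered moduli, so it is a simplex-like region. Hence its fundamental group, and a fortiori its image, is trivial. The form $\omega$ is algebraic, so it lies in $W_0\Omega^l\mathcal{L}_\Sigma(V,\A_{l,q})$. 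By hypothesis it has no poles along divisors meeting $\partial\overline{\Delta}_{\sigma,n}$. Poles along the other boundary divisors are harmless, because those divisors do not meet $\Delta$, so they can be put in $\mathcal{P}$. The theorem then gives $\int\omega\in W_l Z_\Sigma$.

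\textbf{Step 2: compute $Z_\Sigma$.} Apply Proposition~\ref{prop: decomposition of associators} inductively. It reduces $Z_\Sigma$ to $F_\Sigma$ tensored with associator algebras of one-dimensional arrangements obtained by iterated restriction and deletion. These are all isomorphic to $\A_{1,q}$ or $\A_{1,1}$, i.e.\ $\mathbb{P}^1$ minus $0$, $\infty$ and the $q$-th (respectively first) roots of unity.

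For the constants $\alpha_Y^{\S_1,\S_2}$, the chosen adapted basis vectors are $\mu^{-n_{\sigma(i)}}x_{\sigma(i)}$ and their differences. So the values of $P^{\S_1}_{\beta_2(Y)}$ at the origin are products of roots of unity and $\pm1$. By Remark~\ref{remark: logarithms reduce to 2 pi i}, this gives $F_\Sigma=\Q(\mu)[2\pi i]$.

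For the one-dimensional associators, Section~2.6 shows that the coefficients of $G_i=\operatorname{Reg}(L,\sigma_i)$ are regularized hyperlogarithms evaluated at roots of unity. These are $\Q(\mu)[2\pi i]$-combinations of $\operatorname{Li}_{n_1,\dots,n_r}(\mu^{s_1},\dots,\mu^{s_r})$ with $n_r\ge2$, where divergent words are handled by shuffle regularization against $\omega_0$ and $\omega_i$. The weight filtration is preserved: degree-$m$ coefficients have weight $\le m$. So $W_l Z_\Sigma$ is spanned over $\Q(\mu)[2\pi i]$ by products of MPL values of total weight $\le l$. Products of such values are again combinations of MPL values by the stuffle product of the series, with weight additive, which gives the claim.

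\textbf{Main obstacle.} The delicate point is the one-dimensional regularization at a root of unity $\sigma_i$. We must show that $L_w(\sigma_i)$ for words not starting with $\omega_i$ is a convergent MPL value at roots of unity, up to $2\pi i$ and $\Q(\mu)$ factors. The plan is to rescale $z\mapsto \sigma_i z$ so the endpoint becomes $1$. Then we use the shuffle decomposition over $\beta_c$ and the explicit series formula for $L_w$, and check convergence at $|z|=1$ when the last index is $\ge 2$. Words ending in $\omega_1$ at the endpoint are removed by the regularization. A secondary check is the coordinate verification that the $\alpha$'s are roots of unity; this should be routine from the explicit adapted bases.
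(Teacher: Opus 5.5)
Your route is the one the paper sketches. You specialize Theorem~\ref{theo: periods of hyperplane arrangements with enough modular elements} to $\G=\F(\A_{l,q})$, get $F_\Sigma=\Q(\mu)[2\pi i]$ from Remark~\ref{remark: logarithms reduce to 2 pi i}, and reduce via Proposition~\ref{prop: decomposition of associators} to associators of $\A_{1,q}$ and $\A_{1,1}$. (The paper's sentence ``$Z_{\Sigma}=\Q(\mu)[2\pi i]$'' has to be read as a statement about $F_\Sigma$, as you read it.)

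\textbf{The step that fails.} You describe the one-dimensional associator coefficients as $\Q(\mu)[2\pi i]$-combinations of $\text{Li}_{n_1,\dots,n_r}(\mu^{s_1},\dots,\mu^{s_r})$ with $n_r\ge 2$. That is wrong for $q\ge 2$. Regularizing at the endpoint $\sigma_i$ discards only the words beginning with $\omega_i$, which is exactly the divergent case $n_r=1$ with last argument $1$. Words beginning with $\omega_j$, $\sigma_j\neq\sigma_i$, survive. They give convergent values with $n_r=1$ and last argument a nontrivial root of unity.

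Concretely, take $q=2$, $l=1$ and all $n_i$ equal. Then $\overline{\Delta}_{\mathrm{id},n}=[0,1]$, the form $\omega=dt/(1+t)$ has no poles at $0$ or $1$, and
\[
\int_0^1\frac{dt}{1+t}=\log 2=-\text{Li}_1(-1).
\]
This is also a degree-one coefficient of the associator of $\mathbb{P}^1\setminus\{0,\pm1,\infty\}$ between $0$ and $1$. Under your restriction together with your weight bound, the weight-$\le 1$ span collapses to $\Q[2\pi i]$. Your argument would therefore assert $\log 2\in\Q[2\pi i]$, which contradicts Schanuel's conjecture and is certainly not established by your plan. The condition $n_r\ge 2$ is the right one only when the last argument is $1$, for example when $q=1$. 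The introduction's ``$s_r\ge 2$'' has the same defect; the definition in the one-dimensional section does not impose it.

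\textbf{The fix.} Allow every convergent value, i.e.\ $(n_r,\mu^{s_r})\neq(1,1)$. Your ``main obstacle'' must then also cover the conditionally convergent case. When $n_r=1$, identifying $\lim_{z\to1^-}L_w(z)$ with the value of the series needs two things:
\begin{enumerate}
\item a partial-summation argument that the series converges;
\item Abel's theorem, rather than absolute convergence on the closed disc.
\end{enumerate}
Your stuffle linearization of products still works. The truncated stuffle identity is exact, and every term it produces again has $(n_r,z_r)\neq(1,1)$.

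\textbf{Two smaller points.}
\begin{enumerate}
\item Take $\mathcal{P}$ to be all $X\neq V^*$ outside $\mathcal{R}$ from the start, rather than $\mathcal{P}=\emptyset$. This is legitimate because those $D_X$ do not meet $\overline{\Delta}_{\sigma,n}$.
\item For admissibility, contractibility of the open cell does not by itself make its closure simply connected. You need the manifold-with-corners structure of $\overline{\Delta}_{\sigma,n}$, specifically a collar of the boundary, to pass from the open cell to its closure.
\end{enumerate}
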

    \noindent
    Period integrals over the arrangement of the full monomial group have appeared several times in the literature in connection with irrationality proofs. For example, Zudilin \cite{Zudilin-Arithmetic_of_Catalan's_constant_and_its_relatives} considers integrals of the form
    \[
        I_{l,n}=\int_{[0,1]^l} \frac{(1-s_1\dots s_l)\prod_{j=1}^l s_j^{n-\frac{1}{2}} (1-s_j)^n }{(1+s_1\dots s_l)^{3n+2}}ds_1\dots ds_l
    \]
    and shows that they are rational linear combinations even $\beta$-values which yield the irrationality of one among $\beta(2),\beta(4),\dots, \beta(12)$. This result was further improved by Lai and Yu \cite{Lai-Yu-A_note_on_the_number_of_irrational_odd_zeta_values}.\\
    On a different note, Zlobin \cite{Zlobin-Expansion_of_multiple_integrals_in_linear_forms} computes explicitly in terms of multiple zeta values integrals of the form
    \[
        \int_{[0,1]^l} \frac{\prod_{i=1}^l s_i^{a_i-1}(1-s_i)^{b_i-a_i-1} }{\prod_{j=1}^m (1-s_{k_j}\dots s_l)^{c_j}}ds_1\dots ds_l.
    \]
    for suitable choices of integer parameters $a_i,b_i,c_i $. These are generalizations of the classical linear forms in zeta values used by Ball and Rivoal \cite{Ball-Rivoa-Irrationalité_d'une_infinité_de_valuers_de_la_fonction_zeta_aux_entiers_impairs} to show the infinitude of the dimension of the $\Q$-vector space generated by odd zeta values. When the parameters $a_i,b_i,c_i$ are taken to be rational, one obtains period integrals over the reflection arrangement of the full monomial group. By Theorem~\ref{theo: full monomial group}, these are linear combinations of multiple polylogarithmic values. \\
    As already mentioned in the previous section, other irrationality proofs for zeta values \cites{Fischler-Sprang-Zudilin-Many-odd-zeta_values_are_irrational, Lai-Yu-A_note_on_the_number_of_irrational_odd_zeta_values} make use of linear combinations of Hurwitz zeta values to sharpen previous asymptotic lower bounds. So far no integral representation of these linear forms has been exploited, and it would be interesting to study these examples in the geometric setup suggested by Theorem~\ref{theo: full monomial group}. Brown \cite{Brown-Irrationality_proofs_for_zeta_values_moduli_spaces_and_dinner_parties} has conducted a study of this kind  on the moduli spaces of curves in relation to previous irrationality proofs. To this extent, he has isolated promising families of periods of these spaces, the so-called \emph{cellular integrals}, which make geometric symmetries more apparent in irrationality proofs. However, these integrals cannot recover the linear forms used in \cites{Fischler-Sprang-Zudilin-Many-odd-zeta_values_are_irrational, Lai-Yu-A_note_on_the_number_of_irrational_odd_zeta_values}. The reflection arrangement of the full monomial group seems a natural candidate to extend Brown's approach to multiple polylogarithmic values. \\
    A study of the symmetries of the arrangements in this section and their role in irrationality proofs would also deserve attention. Given the length of this exposition, we defer a more thorough investigation of these inputs to upcoming work.

    \bibliographystyle{alpha}
    \bibliography{Literatur.bib}
\end{document}